\def\RR{{\bf R}}
\def\ZZ{{\bf Z}}
\def\bigmid {\ \left|{\Large \strut}\right.}
\numberwithin{equation}{section}
\newcommand{\Argmin}{\mathop{\rm Argmin} }
\newtheorem{Thm}{Theorem}[section]
\newtheorem{Prop}[Thm]{Proposition}
\newtheorem{Lem}[Thm]{Lemma}
\theoremstyle{definition}
\newtheorem*{Clm}{Claim}
\newtheorem{Ex}[Thm]{Example}
\title{Uniform semimodular lattices 
	and valuated matroids}
\author{Hiroshi HIRAI \\
Department of Mathematical Informatics, \\
Graduate School of Information Science and Technology,   \\
The University of Tokyo, Tokyo, 113-8656, Japan.\\
\texttt{\normalsize hirai@mist.i.u-tokyo.ac.jp}
}
\begin{document}

\maketitle
\begin{abstract}
	In this paper, we present a lattice-theoretic characterization for valuated matroids, which is an extension of
	the well-known cryptomorphic equivalence between matroids 
	and geometric lattices ($=$ atomistic semimodular lattices).
	We introduce a class of semimodular lattices, called uniform semimodular lattices, and establish a cryptomorphic 
	equivalence between integer-valued valuated matroids 
	and uniform semimodular lattices. 
	Our result includes a coordinate-free lattice-theoretic characterization
	of integer points in tropical linear spaces,  incorporates  
	the Dress-Terhalle completion process of valuated matroids, 
	and  establishes a smooth connection with Euclidean buildings of type A.
\end{abstract}

Keywords: Valuated matroid, uniform semimodular lattice, geometric lattice, tropical linear space, tight span, Euclidean building.

\section{Introduction}
Matroids can be characterized 
by various cryptomorphically equivalent axioms; see e.g.,~\cite{Aigner}. 
Among them, a lattice-theoretic characterization by Birkhoff~\cite{Birkhoff} 
is well-known: 
The lattice of flats of any matroid
is a {\em geometric lattice} ($=$ atomistic semimodular lattice), and
any geometric lattice gives rise to a simple matroid.

The goal of the present article is to extend 
this classical equivalence to {\em valuated matroids} 
(Dress and Wenzel~\cite{DressWenzel_greedy,DressWenzel}).
Valuated matroid is a quantitative generalization 
of matroid, which abstracts linear dependence structures of vector spaces 
over a field with a non-Archimedean valuation. 
A valuated matroid is defined as 
a function on the base family of a matroid
satisfying a certain exchange axiom that originates from the Grassmann-Pl\"ucker identity.
Just as matroids, valuated matroids enjoy nice optimization properties; 
they can be optimized by a greedy algorithm, 
and this property characterizes valuated matroids.
In the literature of combinatorial optimization,
the theory of valuated matroids has evolved into  
{\em discrete convex analysis}~\cite{MurotaBook}, 
which is a framework of ``convex" functions on discrete structures 
generalizing matroids and submodular functions.
In tropical geometry (see e.g., \cite{TropicalBook}), 
a valuated matroid 
is called a {\em tropical Pl\"ucker vector}.
The space of valuated matroids is understood as
a tropical analogue of the Grassmann variety in algebraic geometry;
see \cite{Speyer08,SpeyerSturmfels04}.

While Murota and Tamura~\cite{MurotaTamura01} established 
cryptomorphically equivalent axioms for valuated matroids
in terms of (analogous notions of) {\em circuits}, {\em cocircuits}, {\em vectors}, and {\em covectors}, a lattice-theorectic axiom has never been given in the literature.
The goal of this paper is to establish a lattice-theoretic axiom for valuated matroids 
by introducing a new class of semimodular lattices,
called {\em uniform semimodular lattices}.
This class of lattices can be 
 viewed as an affine-counterpart of geometric lattices, and is defined by a fairly simple axiom:
They are semimodular lattices 
with the property that the operator 
$x \mapsto $ (the join of all elements covering $x$)
is an automorphism.
This operator was introduced in a companion paper~\cite{HH18a} 
to characterize 
Euclidean buildings in a lattice-theoretic way.

The main result of this paper is 
a cryptomorphic equivalence 
between uniform semimodular lattices and integer-valued valuated matroids. 
The contents of this equivalence and its intriguing features 
are summarized as follows:
\begin{itemize}
	\item
	A valuated matroid is constructed from 
	a uniform semimodular lattice ${\cal L}$ as follows. 
	We introduce the notion of a {\em ray} and {\em end} in ${\cal L}$.
	A ray is a chain of ${\cal L}$ with a certain straightness property, and
    an end is an equivalence class of the parallel relation on the set of rays.
    Ends will play the role of atoms in a geometric lattice. 
	We introduce a matroid ${\bf M}^{\infty}$ on the set $E$ of ends, 
	called the {\em matroid at infinity}, 
	which will be the underlying matroid of our valuated matroid. 
	As expected from the name, 
	this construction is inspired by 
	the {\em spherical building at infinity} in a Euclidean building.
	A $\ZZ^n$-sublattice ${\cal S}(B)$ ($\simeq \ZZ^n$) is naturally associated with each base $B$ in ${\bf M}^{\infty}$, and plays the role of 
	apartments in a Euclidean building.
	Then a valuated matroid $\omega = \omega^{{\cal L},x}$ on $E$ is 
	defined from apartments and any fixed $x \in {\cal L}$; 
	the value $\omega(B)$ 
	is the negative of a ``distance" between $x$ and ${\cal S}(B)$.
	It should be emphasized that this construction is done purely in a coordinate-free  lattice-theoretic manner.
	
	\item  The reverse construction of a 
	uniform semimodular lattice from a valuated matroid uses 
	the concept of the {\em tropical linear space}. 
	The tropical linear space ${\cal T}(\omega)$ is a polyhedral object in $\RR^E$ associated with a valuated matroid $\omega$ on $E$. 
	This concept and the name were introduced by Speyer~\cite{Speyer08} 
	in the literature of tropical geometry. 
	Essentially equivalent concepts were earlier considered 
	by Dress and Terhalle~\cite{DressTerhalle93,DressTerhalle98}  
	as the {\em tight span}
	and by Murota and Tamura~\cite{MurotaTamura01} 
	as the {\em space of covectors}.
	In the case of a matroid (i.e., $\{0,-\infty\}$-valued valuated matroid), the tropical linear space reduces to the {\em Bergman fan} of the matroid,   
	which is viewed as a geometric realization of the order complex of 
	the geometric lattice of flats~\cite{ArdilaKlivans06}. 
	We show that the set ${\cal L}(\omega) := {\cal T}(\omega) \cap \ZZ^E$ of integer points 
	in ${\cal T}(\omega)$ forms a uniform semimodular lattice. 
	Then the original $\omega$ 
	is recovered by the above construction (up to the projective-equivalence), 
	and ${\cal T}(\omega)$ is a geometric realization 
	of a special subcomplex of the order complex of ${\cal L}(\omega)$.
	Thus our result establishes
	a  coordinate-free lattice-theoretic characterization 
	of tropical linear spaces.
	
	\item The above constructions incorporate, in a natural way,  
	the {\em completion process of valuated matroids} by Dress and Terhalle~\cite{DressTerhalle93}, 
	which is 
	a combinatorial generalization of the $p$-adic completion.
	They introduced an ultrametric metrization of the underlying set $E$
	by a valuated matroid $\omega$, and 
	a completeness concept for valuated matroids 
	in terms of the completeness of this metrization of $E$.
	They proved that any (simple) valuated matroid $(E, \omega)$ is (uniquely) extended to 
	a complete valuated matroid $(\bar E, \bar \omega)$, which is called  
	a {\em completion} of $(E, \omega)$.

	We show that the space $E$ of ends in a uniform semimodular lattice ${\cal L}$
	admits an ultrametric metrization $d$, and it is complete in this metric, 
	where $d$ coincides with the Dress-Terhalle metrization of
	the constructed valuated matroid $\omega^{{\cal L},x}$.
	Then the process $\omega \to {\cal L}(\omega) \to \omega^{{\cal L}(\omega),x}$ coincides with 
	the Dress-Terhalle completion of~$\omega$. 
	
	\item Our result sums up, from a lattice-theory side, 
	connections between 
	valuated matroids and {\em Euclidean buildings} 
	(Bruhat and Tits~\cite{BruhatTits}), pointed out by~\cite{DressKahrstromMoulton11,DressTerhalle98,JoswigSturmfelsYu07}; see also a recent work~\cite{Zhang2018}.   
    Let us recall the spherical situation, and 
    recall a {\em modular matroid}, which is a matroid 
    whose lattice of flats is a modular lattice.
    We can say that 
    a modular matroid is equivalent to a {\em spherical building of type A}~\cite{Tits}.
    Indeed, a classical result of Birkhoff~\cite{Birkhoff} says that
    a modular geometric lattice is precisely the direct product of subspace lattices of 
    projective geometries. Another classical result by Tits~\cite{Tits} says 
    that a spherical building of type A is the order complex of 
    the direct product of subspace lattices of projective geometries. 
    
    An analogous relation is naturally established for valuated matroids
    by introducing the notion of  
    a {\em modular valuated matroid}, which is defined as a valuated matroid 
    such that the corresponding uniform semimodular lattice is a modular lattice.
    The companion paper~\cite{HH18a} showed that 
    uniform modular lattices are cryptomorphically equivalent to Euclidean buildings of type A.
    Thus a modular valuated matroid $\omega$ 
    is equivalent to a Euclidean building of type A, 
    in which (the projection of) the tropical linear space ${\cal T}(\omega)$ 
    is a geometric realization of the Euclidean building.
	This generalizes a result by Dress and Terhalle~\cite{DressTerhalle98}
	obtained for the Euclidean building of ${\rm SL}(F^n)$ for a valued field $F$.
\end{itemize}

The rest of this paper is organized as follows.
Sections~\ref{sec:pre} and \ref{sec:valuated} are preliminary sections 
on lattice, (valuated) matroids, and tropical linear spaces.
Section~\ref{sec:uniform} 
constitutes the main body of our results on uniform semimodular lattices. 
Section~\ref{sec:example} discusses 
representative examples of valuated matroids 
in terms of uniform semimodular lattices.

\section{Preliminaries}\label{sec:pre}

Let $\RR$ denote the set of real numbers. 
Let $\ZZ$ and $\ZZ_+$ denote the sets of integers and nonnegative integers, respectively.
For a set $E$ (not necessarily finite), 
let $\RR^E$, $\ZZ^E$, and $\ZZ_+^E$
denote the sets of all functions from $E$ to $\RR$, $\ZZ$, and $\ZZ_+$, respectively.
A function $g: E \to \ZZ$ is said to be {\em upper-bounded} 
if there is $M \in \ZZ$ such that $g(e) \leq M$ for all $e \in E$.
If $|g(e)| \leq M$  for all $e \in E$,  then $g$ is said to be {\em bounded}.
Let ${\bf 1}$ denote the all-one vector in $\RR^E$, i.e., ${\bf 1}(e) = 1$ $(e \in E)$. 
For a subset $F \subseteq E$, let ${\bf 1}_F$ denote the incidence vector 
of $F$ in $\RR^E$, i.e., ${\bf 1}_{F}(e) = 1$ if $e \in F$ and zero otherwise.
${\bf 1}_{\{e\}}$ is simply denoted by ${\bf 1}_{e}$.
Let ${\bf 0}$ denote the zero vector.
For $x,y \in \RR^E$, let $\min (x,y)$ and $\max (x,y)$ 
denote the vectors in $\RR^E$ obtained from $x,y$
by taking componentwise minimum and maximum, respectively; namely  
$\min(x,y)(e) = \min (x(e),y(e))$ and $\max(x,y)(e) = \max (x(e),y(e))$ for $e \in E$.
The vector order $\leq$ on $\RR^E$ is defined by
$x \leq y$ if $x(e) \leq y(e)$ for all $e \in E$.
For $e \in E$ and $B \subseteq E$, 
we denote $B \cup \{e\}$ and $B \setminus \{e\}$
by $B + e$ and $B - e$, respectively.

\subsection{Lattices}\label{subsec:lattice}
We use the standard terminology on posets (partially ordered sets) 
and lattices (see, e.g., \cite{Aigner,Birkhoff}), where $\preceq$ denotes a partial order relation, and 
$x \prec y$ means $x \preceq y$ and $x \neq y$.
A lattice is a poset 
${\cal L}$ such that every pair $x,y$ has 
the greatest common lower bound $x \wedge y$ and 
the least common upper bound $x \vee y$; 
the former is called the {\em meet} of $x,y$, and 
the latter is called the {\em join} of  $x,y$.
For a subset $S \subseteq {\cal L}$, 
the greatest lower bound of $S$ (the {\em meet} of $S$) is denoted by 
$\bigwedge S$ (if it exists), and the least upper bound of $S$ (the {\em join} of $S$) 
is denoted by $\bigvee S$ (if it exists). 
For elements $x,y$ with $x \preceq y$, the {\em interval} $[x,y]$ of $x,y$ 
is the set of elements $z$ with $x \preceq z \preceq y$.
If  $[x,y] = \{x,y\}$ and $x \neq y$, then we say that $y$ {\em covers} $x$ 
and write $x \prec_1 y$.
A {\em chain} is a totally ordered subset ${\cal C}$ of ${\cal L}$;  
a chain will be written, say, as 
$x_0 \prec x_1 \prec \cdots \prec x_m \prec \cdots$.
The {\em length} of chain ${\cal C}$ is defined as its cardinality $|{\cal C}|$ minus one.
In this paper, we deal with lattices satisfying the following finiteness assumption:
\begin{itemize}
	\item[(F)] No interval $[x,y]$ has a chain of infinite length.
\end{itemize}
An order-preserving bijection $\varphi:{\cal L} \to {\cal L}'$ 
is called an {\em isomorphism}.
If ${\cal L} = {\cal L'}$, then an isomorphism $\varphi$ 
is called an {\em automorphism} on ${\cal L}$.
A {\em sublattice} of a lattice ${\cal L}$ is a subset ${\cal L'} \subseteq {\cal L}$
with the property that $x,y \in {\cal L'}$ imply $x \wedge y, x \vee y \in {\cal L}'$.
Intervals are sublattices.
An {\em atom} is an element that covers 
the minimum ${\bar 0} = \bigwedge {\cal L}$.
The rank of ${\cal L}$ (having the minimum and maximum) is defined as  
the maximum length of a maximal chain of ${\cal L}$.
A {\em height function} of a lattice ${\cal L}$ is 
an integer-valued function $r:{\cal L} \to \ZZ$ 
such that 
$r(y) = r(x) + 1$ for any $x,y \in {\cal L}$ with $x \prec_1 y$.

A lattice ${\cal L}$ is said to be {\em semimodular} 
if 
$x \wedge a \prec_1 a$ implies $x \prec_1 x \vee a$  for any $x,a \in {\cal L}$. 
From the definition, we easily see that
a semimodular lattice satisfies the Jordan-Dedekind chain condition:
\begin{itemize}
	\item[(JD)] For any interval $[x,y]$, all maximal chains in $[x,y]$ have the same length. 
\end{itemize}
We denote this length by $r[x,y]$, which is finite by (F).

\begin{Lem}\label{lem:semimodular}
	For a lattice ${\cal L}$, the following conditions are equivalent:
	\begin{itemize}
		\item[{\rm (1)}] ${\cal L}$ is semimodular.
		\item[{\rm (2)}] For $a,b \in {\cal L}$, if $a,b$ cover $a \wedge b$, then $a \vee b$ covers $a,b$.
		\item[{\rm (3)}] ${\cal L}$ admits a height function $r$ satisfying 
		\begin{equation}\label{eqn:semimo}
		r(x) + r(y) \geq r(x \wedge y) + r(x \vee y) \quad (x,y \in {\cal L}).
		\end{equation}
	\end{itemize}
\end{Lem}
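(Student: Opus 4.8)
The plan is to establish the two equivalences (1)$\Leftrightarrow$(2) and (1)$\Leftrightarrow$(3), the latter carrying almost all of the weight since it amounts to manufacturing a height function out of semimodularity. For (1)$\Rightarrow$(2): if $a,b$ both cover $a\wedge b$, apply the defining implication of semimodularity to the pair $(b,a)$ — from $b\wedge a\prec_1 a$ we get $b\prec_1 b\vee a$ — and symmetrically to $(a,b)$, giving $a\prec_1 a\vee b$, so $a\vee b$ covers both. For (2)$\Rightarrow$(1): property (2) by itself already forces (JD), by the standard induction on the length of a shortest maximal chain of an interval $[x,y]$ — when two maximal chains diverge at the bottom into distinct covers $a_1,b_1$ of $x=a_1\wedge b_1$, route both of them through $a_1\vee b_1$, which covers $a_1$ and $b_1$ by (2) — so $r[x,y]$ is well-defined and, by (F), finite. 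Then $x\wedge a\prec_1 a\Rightarrow x\prec_1 x\vee a$ follows by induction on $k=r[x\wedge a,x]$: the case $k=0$ is immediate; for $k\geq1$ pick $x'$ with $x\wedge a\preceq x'\prec_1 x$ (then automatically $x'\wedge a=x\wedge a$ and $r[x\wedge a,x']=k-1$), obtain $x'\prec_1 x'\vee a$ by induction, note $x\neq x'\vee a$ (else $a\preceq x$, contradicting $x\wedge a\neq a$) and $x\wedge(x'\vee a)=x'$, and apply (2) to $x$ and $x'\vee a$ to conclude that $x\vee a=x\vee(x'\vee a)$ covers $x$. Henceforth (1) and (2) are used interchangeably.

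For (1)$\Rightarrow$(3): assuming semimodularity, (JD) holds, so $r[x,y]$ is well-defined, finite by (F), and additive, $r[u,v]+r[v,w]=r[u,w]$ for $u\preceq v\preceq w$. The key lemma is a perturbation estimate: for $a\prec_1 b$ and any $c$, either $c\vee a=c\vee b$ or $c\vee a\prec_1 c\vee b$. Indeed $b\wedge(c\vee a)\in\{a,b\}$ since $a\preceq b\wedge(c\vee a)\preceq b$ and $a\prec_1 b$; if it equals $b$ then $b\preceq c\vee a$ forces $c\vee a=c\vee b$, and if it equals $a$ then $(c\vee a)\wedge b=a\prec_1 b$, so semimodularity gives $c\vee a\prec_1(c\vee a)\vee b=c\vee b$. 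Applying this lemma step by step along a maximal chain from $x$ to $y$ and using (JD) yields, for $x\preceq y$ and any $c$, the bound $r[c\vee x,\,c\vee y]\leq r[x,y]$.

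Now construct the height function. Since ${\cal L}$ need not have a least element, fix any $o\in{\cal L}$ and set $r(x):=r[o\wedge x,x]-r[o\wedge x,o]$; additivity across the quadrilateral $o\wedge x\preceq x,o\preceq o\vee x$ shows equivalently $r(x)=r[o,o\vee x]-r[x,o\vee x]$. To check $r(y)=r(x)+1$ for $x\prec_1 y$, put $s:=r[o\vee x,o\vee y]$, which lies in $\{0,1\}$ by the perturbation lemma applied to $x\prec_1 y$ with $c=o$; then $r[o,o\vee y]-r[o,o\vee x]=s$, while comparing $r[x,o\vee y]=r[x,y]+r[y,o\vee y]=1+r[y,o\vee y]$ with $r[x,o\vee y]=r[x,o\vee x]+s$ gives $r[y,o\vee y]-r[x,o\vee x]=s-1$, so $r(y)-r(x)=s-(s-1)=1$. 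Thus $r$ is a height function, hence $r(v)-r(u)=r[u,v]$ for $u\preceq v$, and the submodular inequality $r(x)+r(y)\geq r(x\wedge y)+r(x\vee y)$ rearranges to $r[x\wedge y,x]\geq r[y,x\vee y]$, which is precisely the bound of the previous paragraph with $c=y$ applied to the comparable pair $x\wedge y\preceq x$ (using $y\vee(x\wedge y)=y$).

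Finally (3)$\Rightarrow$(1): a height function changes by exactly $1$ along each cover, so (JD) holds, $r(v)-r(u)=r[u,v]\geq1$ whenever $u\prec v$, and $r(v)=r(u)+1$ with $u\prec v$ forces $u\prec_1 v$. If $x\wedge a\prec_1 a$, then $r(a)=r(x\wedge a)+1$, and $x\neq x\vee a$ (else $a\preceq x$), so $r(x\vee a)\geq r(x)+1$; the submodular inequality gives $r(x\vee a)\leq r(x)+r(a)-r(x\wedge a)=r(x)+1$, whence $r(x\vee a)=r(x)+1$ and $x\prec_1 x\vee a$. The main obstacle is the height-function construction in (1)$\Rightarrow$(3): a general semimodular lattice, unlike a geometric one, has no bottom element, so $r(x)=r[{\bar 0},x]$ is unavailable, and the real content is checking that the base-pointed formula is a bona fide height function — this is exactly where the perturbation lemma and the observation $s\in\{0,1\}$ come in; the submodular inequality and the reverse implications are then routine.
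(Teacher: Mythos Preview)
Your proof is correct and follows essentially the same route as the paper's (labeled there a ``sketch''): the height function is the same formula $r(x)=r[o,\,o\vee x]-r[x,\,o\vee x]$ anchored at a base point, the verification that $r$ increases by $1$ along covers is the same two-case analysis (your $s\in\{0,1\}$), and the submodular inequality is proved by the same chain-lifting argument (your ``perturbation lemma'' is exactly what the paper uses when it says ``by the semimodularity, $y\vee z$ covers $x\vee z$''). The only differences are cosmetic: the paper joins a chain from $x\wedge y$ to $y$ with $x$ whereas you join a chain from $x\wedge y$ to $x$ with $y$, and you spell out the directions (1)$\Leftrightarrow$(2) and (3)$\Rightarrow$(1) that the paper dismisses as ``easy or obvious.''
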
 
\begin{proof}[Sketch of proof]
We verify (1) $\Rightarrow$ (3); other directions are easy or obvious.	
Fix $z \in {\cal L}$, 
define $r:{\cal L} \to \ZZ$ by  $r(x) := r[z, x \vee z] - r[x, x \vee z]$.
Consider an element $y$ that covers $x$.
If $y \preceq x \vee z$, then $x \vee z  = y \vee z$ and 
$r[y,y \vee z] = r[x,x \vee z] - 1$.
If $y \not \preceq x \vee z$, then
by semimodularity, $y \vee z$ covers $x \vee z$, 
and hence $r[y,y \vee z] = r[x,x \vee z]$ and  $r[z,y \vee z] = r[z,x \vee z] +1$.
Thus $r$ is a height function.

We show (\ref{eqn:semimo}).
Consider a maximal chain 
$x \wedge y = z_0 \prec_1 z_1 \prec_1 \cdots \prec_1 z_k = y$, 
where $k = r[x\wedge y, y]$ by (JD).
Consider a chain $x = x \vee z_0 \preceq  x \vee z_1 \preceq \cdots \preceq x\vee z_k = x \vee y$, which contains a maximal chain in $[x, y]$ 
by the semimodularity.
This implies $r(x \vee y) - r(x) = r[x, x \vee y] \leq k = r[x \wedge y, y] = r(y) - r(x \wedge y)$, and (\ref{eqn:semimo}).
\end{proof}
A {\em modular pair} is a pair $x,y \in {\cal L}$ satisfying (\ref{eqn:semimo})
in equality.
A {\em geometric lattice} is a semimodular lattice  
such that it has the minimum and maximum, and every element can be represented as 
the join of atoms. 
A {\em hyperplane} in a geometric lattice is 
an element that is covered by the maximum element.
The following is well-known.
\begin{Lem}[{See, e.g., \cite[Section II.3]{Aigner}}]\label{lem:hyperplane}
	Let ${\cal L}$ be a geometric lattice.
	\begin{itemize}
		\item[{\rm (1)}] Every element in ${\cal L}$ is 
		written as the meet of hyperplanes.
		\item[{\rm (2)}] Every interval in ${\cal L}$ is a geometric lattice.
	\end{itemize}
\end{Lem}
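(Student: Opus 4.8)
\emph{Plan of proof.} The two parts are independent of each other, and both can be obtained directly from the ingredients already available: semimodularity, the fact that every element is a join of atoms, the presence of a minimum $\bar 0$ and a maximum element $\bar 1$, and the chain condition (F). No matroid theory is needed. I would first dispatch (2), then (1).

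For part (2): an interval $[x,y]$ is a sublattice with least element $x$ and greatest element $y$, and its covering relation, meet, and join coincide with those of ${\cal L}$; hence semimodularity is inherited with no work. It remains to verify that $[x,y]$ is atomistic, i.e.\ that every $z \in [x,y]$ is the join of the atoms of $[x,y]$ (the covers of $x$) lying below it, the case $z = x$ being the empty join. So fix $z \succ x$ and put $z' := \bigvee\{u : x \prec_1 u \preceq z\}$, so $x \prec z' \preceq z$. I would show $z' = z$ by contradiction: if $z' \prec z$, then since ${\cal L}$ is atomistic there is an atom $a \preceq z$ with $a \not\preceq z'$, hence $a \not\preceq x$; thus $x \wedge a = \bar 0 \prec_1 a$, so semimodularity gives $x \prec_1 x \vee a \preceq z$, exhibiting $x \vee a$ as a cover of $x$ below $z$, whence $a \preceq x \vee a \preceq z'$ — a contradiction. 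So $[x,y]$ is geometric.

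For part (1): fix $x$, let ${\cal H}_x = \{h : h \text{ hyperplane},\ x \preceq h\}$ and $\bar x := \bigwedge {\cal H}_x$. The inclusion $x \preceq \bar x$ is immediate, and a maximal chain from $x$ to $\bar 1$ (finite by (F)) shows that ${\cal H}_x = \emptyset$ forces $x = \bar 1$; so I may assume $x \prec \bar 1$, making $\bar x$ an honest meet. For the reverse inequality, suppose $\bar x \neq x$; then $\bar x$, being a join of atoms, has an atom $a \preceq \bar x$ with $a \not\preceq x$. Consider $S := \{z \in {\cal L} : x \preceq z,\ a \not\preceq z\}$; it contains $x$, its chains lie in $[x,\bar 1]$, so by (F) it has a maximal element $h$. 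The crux is to prove $h$ is a hyperplane. If not, there is $w$ with $h \prec_1 w \prec \bar 1$; maximality of $h$ forces $a \preceq w$, and since $a$ is an atom not below $h$, semimodularity gives $h \prec_1 h \vee a \preceq w$, so $w = h \vee a$; the same maximality argument shows that every cover of $h$ contains $a$ and hence equals $h \vee a$, so $h$ has a \emph{unique} cover $w$. But $w \prec \bar 1$ has a cover $v$, and atomisticity gives an atom $b \preceq v$ with $b \not\preceq w$; then $b \not\preceq h$, so $h \prec_1 h \vee b$, forcing $h \vee b = w$ and thus $b \preceq w$ — a contradiction. Therefore $h$ is a hyperplane with $x \preceq h$, i.e.\ $h \in {\cal H}_x$, so $a \preceq \bar x \preceq h$, contradicting $a \not\preceq h$. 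Hence $\bar x = x$.

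The routine steps are all of part (2) and the trivial inclusion $x \preceq \bar x$ in part (1). The main obstacle is the heart of part (1): showing the maximal element $h$ of $S$ is a hyperplane. The delicate point there is the two-stage use of the hypotheses — maximality of $h$ first pins down $h \vee a$ as the \emph{unique} cover of $h$, and then atomisticity applied one level further up, at $w$, combined with semimodularity, produces a second cover of $h$, contradicting uniqueness. (Alternatively, one may simply cite the referenced source, but the argument above is self-contained.)
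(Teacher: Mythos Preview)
Your proof is correct. The paper does not supply its own argument for this lemma; it simply cites Aigner, so there is nothing to compare against beyond noting that your self-contained treatment is a faithful unpacking of the standard proof one finds in that reference. One minor point you leave implicit in part~(1) is the existence of $\bar x = \bigwedge {\cal H}_x$ when ${\cal H}_x$ may be infinite; this is harmless here because a geometric lattice under assumption~(F) has finite rank, so the set of common lower bounds of ${\cal H}_x$ (which is closed under $\vee$ and contains $\bar 0$) has a maximum by picking any element of maximal height.
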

A {\em modular lattice} is a lattice ${\cal L}$ such that 
for every triple $x,y,z \in {\cal L}$ with $x \preceq z$ 
it holds $x \vee (y \wedge z) = (x \vee y) \wedge z$.
A modular lattice is precisely a semimodular lattice in which every pair is modular.

\subsection{Matroids}\label{subsec:matroid}
Here we introduce matroids on a possibly infinite ground set, where 
our treatment follows \cite[Chapter VI]{Aigner}.
 A {\em matroid} ${\bf M} = (E,{\cal I})$ is 
 a pair of a set $E$
 and a family ${\cal I}$ of subsets of $E$ 
 such that $\emptyset \in {\cal I}$, 
 $I' \subseteq I \in  {\cal I}$ implies $I' \in {\cal I}$, 
  and for $I,I' \in {\cal I}$ with $|I| < |I'|$
 there is $e \in I' \setminus I$ such that $I + e \in {\cal I}$, 
 and $\max_{I \in {\cal I}} |I| < + \infty$.
 A member of ${\cal I}$ is called an {\em independent set}. 
 A maximal independent set is called a {\em base}.
 The set of all bases is denoted by ${\cal B}$.
 A matroid can be defined by the base family, and also be written as
 ${\bf M} = (E, {\cal B})$. 
 Bases have the same cardinality $(< +\infty)$, 
 which is called the {\em rank} of ${\bf M}$.
 A {\em loop} is an element $e \in E$ such that no base contains $e$.
 Non-loop elements $e,f \in E$ are said to be {\em parallel} 
 if no base contains both $e$ and $f$. 
 The parallel relation gives rise to 
 an equivalence relation on the set of non-loop elements, 
 and an equivalence class 
 is called a {\em parallel class}.
 If matroid ${\bf M}$ has no loop and no parallel pair, then ${\bf M}$ is called {\em simple}.
 For a subset $E' \subseteq E$ obtained by 
selecting one element from each parallel class, 
we obtain a simple matroid ${\bf M}' = (E', {\cal I}')$ on $E'$, 
where ${\cal I}' := \{ I \in {\cal I} \mid I \subseteq E' \}$.
This matroid ${\bf M}'$ is  called a {\em simplification} of ${\bf M}$.
 The {\em rank function} $\rho:2^E \to \ZZ$ is defined by 
 $\rho(X) := \max\{ |I| \mid I \in {\cal I}: I \subseteq X\}$.
 The {\em closure operator} ${\rm cl}$ is defined by 
 ${\rm cl}(X) = \{e \in E \mid \rho(X+e) = \rho(X) \}$.
 A {\em flat} is a subset $F \subseteq E$ with ${\rm cl} (F) = F$.
 A parallel class is exactly a flat $F$ with $\rho(F) = 1$. 
 The family of all flats becomes a lattice with respect to the inclusion order.
 
 Let us review the relationship between matroids and geometric lattices.
 Let ${\cal L}$ be a geometric lattice with height function $r$. 
 Assume $r(\bar 0) = 0$.
 A subset $I$ of atoms of ${\cal L}$ is called {\em independent} 
 if $r( \bigvee I  ) = |I|$. 
 
 \begin{Thm}[{\cite{Birkhoff}; see \cite[Chapter VI]{Aigner}}]\label{thm:Birkhoff}
 	\begin{itemize}
 		\item[{\rm (1)}] For a geometric lattice ${\cal L}$ with rank $n$,  
 		the pair ${\bf M}_{\cal L}$ of the set of atoms and the family of independent atoms is a simple matroid with rank $n$. 
 		\item[{\rm (2)}] 
 		The family of flats of a matroid ${\bf M}$ with rank $n$ 
 		is a geometric lattice ${\cal L}$ with rank $n$, 
 		where ${\bf M}_{\cal L}$ 
 		is a simplification of ${\bf M}$. 
 	\end{itemize} 
\end{Thm}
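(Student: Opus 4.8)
The plan is to deduce both parts from Lemma~\ref{lem:semimodular}, which recasts semimodularity as the existence of a submodular height function, together with the standard dictionary between rank/height functions and covering relations, and with the basic matroid facts (submodularity of $\rho$, $\rho({\rm cl}(X))=\rho(X)$, basis exchange) recorded in Section~\ref{subsec:matroid}. One recurring simplification: by condition~(F) and the boundedness of the height function $r$ of a geometric lattice (resp.\ of the matroid rank function $\rho$), every independent set and every chain occurring below is finite, so all joins $\bigvee I$ written below exist and every argument reduces to the finite case.

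For part~(1) I would first record two elementary facts about the height function of ${\cal L}$, normalized so that $r(\bar 0)=0$: iterating \eqref{eqn:semimo} over the atoms of a finite set $I$ gives $r(\bigvee I)\le|I|$, and $r$ is \emph{strictly} increasing along $\prec$ since it rises by exactly one across each cover. Nonemptiness and heredity of the family of independent sets of atoms are then immediate, because adjoining the atoms of $I\setminus I'$ one at a time raises $r$ by at most one per step. The exchange axiom is the main step: if $I,I'$ are independent with $|I|<|I'|$ and no $e\in I'\setminus I$ makes $I+e$ independent, then $r(\bigvee I\vee e)=r(\bigvee I)$ forces $e\preceq\bigvee I$ for every such $e$, hence $\bigvee I'\preceq\bigvee I$, contradicting $|I'|=r(\bigvee I')\le r(\bigvee I)=|I|$. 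To fix the rank of ${\bf M}_{\cal L}$ at $n:=r(\bar 1)$, I would show that a maximal independent set $I$ satisfies $\bigvee I=\bar 1$: otherwise, as ${\cal L}$ is atomistic there is an atom $a\not\preceq\bigvee I$, whence $a\wedge\bigvee I=\bar 0\prec_1 a$, so $\bigvee I\prec_1\bigvee I\vee a$ by semimodularity and $I+a$ would still be independent. Simplicity then follows: each singleton atom is independent (no loops), and for distinct atoms $a,b$ we have $a\wedge b=\bar 0$, so Lemma~\ref{lem:semimodular}(2) gives $r(a\vee b)=2$, i.e.\ $\{a,b\}$ is independent (no parallel pair).

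For part~(2), the flats form a lattice ${\cal L}$ under inclusion with $F\wedge G=F\cap G$, $F\vee G={\rm cl}(F\cup G)$, minimum ${\rm cl}(\emptyset)$ and maximum $E$. The fact doing most of the work is: if $H$ is a flat with $F\subseteq H$ and $\rho(H)=\rho(F)$, then $H=F$ (a basis of $F$ is already a basis of $H$, so $H\subseteq{\rm cl}(F)=F$). From this, for a flat $F$ and $e\notin F$ the flat ${\rm cl}(F+e)$ covers $F$ with $\rho({\rm cl}(F+e))=\rho(F)+1$ (using $\rho(F+e)=\rho(F)+1$ since $e\notin{\rm cl}(F)$, and $\rho({\rm cl}(X))=\rho(X)$); more importantly, $\rho$ restricts to a height function on ${\cal L}$, since $F\prec_1 G$ with $e\in G\setminus F$ gives $F\subsetneq{\rm cl}(F+e)\subseteq G$, hence $G={\rm cl}(F+e)$ and $\rho(G)=\rho(F)+1$. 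Combining $\rho({\rm cl}(X))=\rho(X)$ with matroid submodularity $\rho(X)+\rho(Y)\ge\rho(X\cup Y)+\rho(X\cap Y)$ turns the inequality into \eqref{eqn:semimo} for $\wedge$ and $\vee$, so ${\cal L}$ is semimodular by Lemma~\ref{lem:semimodular}(3). The atoms of ${\cal L}$ are exactly the rank-$1$ flats, i.e.\ the parallel classes, and any flat $F$ with basis $\{e_1,\dots,e_k\}$ equals ${\rm cl}(\{e_1\})\vee\cdots\vee{\rm cl}(\{e_k\})$, so ${\cal L}$ is atomistic---hence geometric---of rank $\rho(E)-\rho({\rm cl}(\emptyset))=n$. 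Finally, picking one element $e_F$ from each parallel class $F$ identifies the atom set of ${\cal L}$ with a transversal $E'$; a set $\{F_1,\dots,F_k\}$ of atoms is independent in ${\bf M}_{\cal L}$ iff $\rho(\{e_{F_1},\dots,e_{F_k}\})=\rho({\rm cl}(\{e_{F_1}\}\cup\cdots\cup\{e_{F_k}\}))=k$, i.e.\ iff $\{e_{F_1},\dots,e_{F_k}\}$ is independent in ${\bf M}$, so ${\bf M}_{\cal L}$ is (isomorphic to) the simplification of ${\bf M}$ on $E'$.

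I expect the main obstacle to be establishing, in part~(2), that the matroid rank function descends to a genuine height function on the lattice of flats---equivalently, the ``same-rank flat'' observation and the cover lemma above---since this is exactly where one must reason via bases to locate the covers of a flat and to identify flats of a prescribed rank; once this is in place, the submodular inequality and the atomistic property are routine. In part~(1) the analogous crux is the exchange axiom, which leans on the strict monotonicity of $r$ across covers together with \eqref{eqn:semimo}. The possibly infinite ground set is not a real difficulty: boundedness of $r$ and $\rho$ keeps all independent sets and all relevant chains finite.
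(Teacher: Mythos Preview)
The paper does not give its own proof of Theorem~\ref{thm:Birkhoff}; the result is quoted as classical with references to Birkhoff and Aigner, and no argument is supplied. Your argument is correct and follows the standard route: part~(1) derives the independence axioms from the semimodular inequality~\eqref{eqn:semimo} together with the strict monotonicity of the height function across covers, and part~(2) checks that the matroid rank $\rho$ restricts to a height function on the lattice of flats satisfying~\eqref{eqn:semimo}, then invokes Lemma~\ref{lem:semimodular}(3). There is nothing to compare against, and your writeup would serve perfectly well as the omitted proof.
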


\section{Valuated matroids and tropical linear spaces}\label{sec:valuated}

Let ${\bf M} = (E,{\cal B})$ be a matroid with rank $n$.
A {\em valuated matroid} on ${\bf M}$ 
is a function $\omega: {\cal B} \to \RR$ satisfying: 
\begin{itemize}
	\item[(EXC)] For $B, B' \in {\cal B}$ and $e \in B \setminus B'$ there is $e' \in B' \setminus B$ such that
	\begin{equation}
	\omega(B) + \omega(B') \leq \omega(B + e' - e ) + \omega(B' + e - e').
	\end{equation}
\end{itemize}
A valuated matroid $\omega$ is viewed as a function on the set of 
all $n$-element subsets of $E$ by defining $\omega(B) = - \infty$ 
for $B \not \in {\cal B}$. 
A valuated matroid is also written as a pair $(E, \omega)$.
A valuated matroid is called {\em simple} if 
the underlying matroid is a simple matroid.
\begin{Lem}[{\cite{DressTerhalle93}}]\label{lem:parallel}
	Let $(E, \omega)$ be a valuated matroid.
	If $e,f \in E$ are parallel in the underlying matroid, then there is $\alpha \in \RR$ 
	such that $\omega(K+e) = \omega (K+f) + \alpha$
	for every $(n-1)$-element subset $K \subseteq E \setminus \{e,f\}$.
\end{Lem}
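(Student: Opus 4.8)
The plan is to reduce the statement to an invariance property along the base-exchange graph of the contraction ${\bf M}/e$, and then to extract that invariance from a single application of the exchange axiom (EXC).

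First I would isolate a purely matroidal observation: for every $(n-1)$-element subset $K \subseteq E \setminus \{e,f\}$ we have $K + e \in {\cal B}$ if and only if $K + f \in {\cal B}$. Since $e$ and $f$ are parallel non-loops, $\rho(\{e,f\}) = 1$, so ${\rm cl}(\{e\}) = {\rm cl}(\{f\})$. If $K + e$ is a base, then $e \notin {\rm cl}(K)$; were $f \in {\rm cl}(K)$, we would get $e \in {\rm cl}(\{f\}) \subseteq {\rm cl}(K)$, a contradiction, so $K + f$ is independent of size $n$, hence a base, and the converse is symmetric. (Equivalently, $\{e,f\}$ is the fundamental circuit of $f$ relative to $K+e$.) Therefore the family ${\cal K} := \{K : K + e \in {\cal B}\}$ equals $\{K : K + f \in {\cal B}\}$; note that ${\cal K}$ is nonempty, as $e$ is a non-loop, and every $K \in {\cal K}$ avoids both $e$ and $f$. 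Setting $\delta(K) := \omega(K+e) - \omega(K+f)$ for $K \in {\cal K}$, the lemma is equivalent to the assertion that $\delta$ is constant on ${\cal K}$; outside ${\cal K}$ both values are $-\infty$, so any $\alpha$ is admissible there.

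The heart of the argument is to show $\delta(K) = \delta(K')$ for $K, K' \in {\cal K}$ with $|K \triangle K'| = 2$, say $K' = K - a + b$. I apply (EXC) to $B := K + e$ and $B' := K' + f$ with distinguished element $e \in B \setminus B' = \{e,a\}$, obtaining $e' \in B' \setminus B = \{f,b\}$ with $\omega(K+e) + \omega(K'+f) \leq \omega(B + e' - e) + \omega(B' + e - e')$. The choice $e' = b$ cannot occur: then $B' + e - b = K - a + e + f$, which is dependent because $\{e,f\}$ has rank $1$ and hence this set has rank at most $n-1$ by subadditivity of $\rho$, so the right-hand side is $-\infty$ while the left-hand side is finite (both $K+e$ and $K'+f$ being bases) — a contradiction. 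Hence $e' = f$, and since $B + f - e = K + f$ and $B' + e - f = K' + e$, we get $\delta(K) \leq \delta(K')$; swapping the roles of $K$ and $K'$ yields the reverse inequality, so $\delta(K) = \delta(K')$.

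Finally, ${\cal K}$ is the base family of the matroid ${\bf M}/e$, whose base-exchange graph is connected: for bases $B \neq B'$ and $x \in B \setminus B'$, the independence augmentation axiom applied to $B - x$ and $B'$ produces $y \in B' \setminus B$ with $B - x + y$ a base, and one inducts on $|B \triangle B'|$. Hence any two members of ${\cal K}$ are linked by single swaps inside ${\cal K}$, none of which touches $e$ or $f$, and along such a path $\delta$ is constant by the previous step; so $\delta$ is globally constant, and we take $\alpha$ to be this constant. I expect the only delicate point to be the case distinction in the application of (EXC): one must rule out the wrong exchange partner $e' = b$, and this is exactly where the parallelism of $e$ and $f$ enters, together with the convention that $\omega$ equals $-\infty$ on non-bases.
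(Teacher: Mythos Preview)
The paper does not supply its own proof of this lemma; it is simply quoted from Dress--Terhalle~\cite{DressTerhalle93} without argument. Your proof is correct and complete: the reduction to constancy of $\delta(K) = \omega(K+e) - \omega(K+f)$ along single swaps in the base family $\mathcal{K}$ of ${\bf M}/e$, the application of (EXC) to $B = K+e$ and $B' = K'+f$ with the observation that the choice $e' = b$ forces $B' + e - e'$ to contain the dependent pair $\{e,f\}$ and hence yields $-\infty$ on the right-hand side, and the connectivity of the base-exchange graph (which works because bases are finite even when $E$ is not) all go through. There is nothing to compare against in the paper itself.
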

Therefore no essential information is lost
when a valuated matroid $(E,\omega)$ is restricted to a simplification 
of the underlying matroid.
The obtained simple valuated matroid $(\tilde E, \tilde \omega)$ 
is called a {\em simplification} of $(E, \omega)$.

For $\omega: {\cal B} \to \RR$ and $x \in \RR^E$, 
define $\omega + x:{\cal B} \to \RR$  by
\begin{equation*}
(\omega+ x)(B) := \omega(B) + \sum_{e \in B} x(e) \quad (B \in {\cal B}).
\end{equation*}
It is easy to see from (EXC) that 
$\omega + x$ is a valuated matroid 
if $\omega$ is a valuated matroid.
Two valuated matroids $\omega$ and $\omega'$ are said to be 
{\em projectively equivalent} if $\omega' = \omega + x$ for some $x \in \RR^E$.

For $\omega: {\cal B} \to \RR$, 
let ${\cal B}_{\omega}$ be 
the set of all bases $B$ that 
attain $\max_{B \in {\cal B}} \omega(B)$.
A direct consequence of (EXC) is as follows.
\begin{Lem}[{\cite{DressWenzel_greedy}; see \cite[Theorem 5.2.7]{MurotaMatrix}}]\label{lem:opt}
	Let $\omega$ be a valuated matroid on $(E, {\cal B})$.
	A base $B \in {\cal B}$ belongs to ${\cal B}_{\omega}$ if and only if
	\begin{equation*}
	\omega (B - e + f) \leq \omega(B)
	\end{equation*}
	for all $e \in B$ and $f \in E \setminus B$ with $B - e + f \in {\cal B}$.
\end{Lem}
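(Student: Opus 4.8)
The claim is the local-to-global optimality criterion for valuated matroids, and the plan is to prove it by the usual exchange argument, inducting on the size of a symmetric difference of bases. The ``only if'' direction is immediate: if $B \in {\cal B}_{\omega}$, then $\omega(B) = \max_{B' \in {\cal B}} \omega(B') \geq \omega(B - e + f)$ for every admissible single swap, so there is nothing to prove.

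For the ``if'' direction, suppose $B \in {\cal B}$ satisfies $\omega(B - e + f) \leq \omega(B)$ for all $e \in B$ and $f \in E \setminus B$ with $B - e + f \in {\cal B}$, and suppose for contradiction that $B \notin {\cal B}_{\omega}$, i.e. some base $B'$ has $\omega(B') > \omega(B)$. Since the rank $n$ is finite, $|B \setminus B'| \leq n$, so among all bases $B'$ with $\omega(B') > \omega(B)$ we may choose one minimizing $|B \setminus B'|$. As $\omega(B') > \omega(B)$ forces $B \neq B'$, we have $B \setminus B' \neq \emptyset$; fix $e \in B \setminus B'$. Applying (EXC) to $B$, $B'$, $e$ yields $e' \in B' \setminus B$ with $\omega(B) + \omega(B') \leq \omega(B + e' - e) + \omega(B' + e - e')$. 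The left side is finite (both $B,B'$ are bases), so the right side is finite too; hence both $B + e' - e$ and $B' + e - e'$ lie in ${\cal B}$ (recall $\omega$ takes the value $-\infty$ off ${\cal B}$). Now $B + e' - e$ is an admissible single swap of $B$, so the hypothesis gives $\omega(B + e' - e) \leq \omega(B)$; substituting this and cancelling the finite $\omega(B)$ gives $\omega(B' + e - e') \geq \omega(B') > \omega(B)$. Setting $B'' := B' + e - e'$, one checks $B \setminus B'' = (B \setminus B') \setminus \{e\}$ (since $e \in B$ now belongs to $B''$ while $e' \notin B$), so $|B \setminus B''| = |B \setminus B'| - 1$, contradicting minimality. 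Therefore no such $B'$ exists and $B \in {\cal B}_{\omega}$. (In particular, this also shows that the existence of a locally optimal base forces $\max_{B \in {\cal B}} \omega(B)$ to be attained, so the statement remains meaningful when $E$ is infinite.)

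I do not anticipate a genuine obstacle. The two points needing a moment's care are: (a) that the element $e'$ supplied by (EXC) actually makes $B + e' - e$ and $B' + e - e'$ \emph{bases}, so that the single-swap hypothesis applies and the inequality is not vacuous --- this is exactly where finiteness of $\omega(B) + \omega(B')$ together with the $-\infty$ convention is used; and (b) selecting the correct quantity to induct on, namely $|B \setminus B'|$, and verifying it strictly decreases when $B'$ is replaced by $B''$. Everything else is bookkeeping; the argument is the valuated analogue of the optimality condition for matroid bases, with (EXC) playing the role of the ordinary basis-exchange axiom.
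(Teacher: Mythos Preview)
Your proof is correct. The paper itself does not supply a proof of this lemma; it merely states it with a citation to Dress--Wenzel and Murota (the line preceding the lemma says only ``A direct consequence of (EXC) is as follows''). The argument you give---choosing a counterexample $B'$ minimizing $|B\setminus B'|$, applying (EXC) once, using the local-optimality hypothesis to bound $\omega(B+e'-e)$, and deducing a strictly closer counterexample $B'' = B'+e-e'$---is exactly the standard proof found in the cited references, so there is nothing further to compare.
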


One can also observe from (EXC)
that for a valuated matroid $\omega$, the maximizer family  
${\cal B}_{\omega}$ is the base family of a matroid.
Murota~\cite{Murota97} proved that 
this property characterizes valuated matroids when $E$ is finite.
\begin{Lem}[{\cite{Murota97}; see \cite[Theorem 5.2.26]{MurotaMatrix}}]\label{lem:murota}
	Let ${\bf M} = (E,{\cal B})$ be a matroid. 
	An upper-bounded 
	integer-valued function $\omega:{\cal B} \to \ZZ$ is a valuated matroid 
	if and only if 
	$(E, {\cal B}_{\omega+x})$ is a matroid 
	for every bounded integer vector $x \in \ZZ^E$.
\end{Lem}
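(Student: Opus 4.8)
The plan is to establish the two implications separately. The ``only if'' direction is the substantive one, and I would prove it by reducing to the case of a \emph{finite} ground set, which is Murota's theorem~\cite{Murota97}; the reduction is effected by localizing a hypothetical failure of (EXC) to the finite set $B\cup B'$ and then transporting the finite counterexample back to $E$ via a penalizing vector.

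\emph{The ``if'' direction.} Assume $\omega$ is a valuated matroid and fix a bounded $x\in\ZZ^E$. Since $\omega$ is upper-bounded and $x$ is bounded, $\omega+x$ is an upper-bounded integer-valued function, so $\max_{B\in{\cal B}}(\omega+x)(B)$ is attained and ${\cal B}_{\omega+x}\neq\emptyset$. As remarked before the statement, $\omega+x$ is again a valuated matroid, and the maximizer family of a valuated matroid is a matroid base family; concretely, for $B,B'\in{\cal B}_{\omega+x}$ and $e\in B\setminus B'$, (EXC) applied to $\omega+x$ gives $e'\in B'\setminus B$ with $(\omega+x)(B)+(\omega+x)(B')\le(\omega+x)(B-e+e')+(\omega+x)(B'+e-e')$, whose left side is twice the maximum while each right-hand term is at most the maximum, so $B-e+e'\in{\cal B}_{\omega+x}$ — the basis-exchange axiom. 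Hence $(E,{\cal B}_{\omega+x})$ is a matroid. I expect this part to be routine.

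\emph{The ``only if'' direction.} I would argue by contraposition. Suppose (EXC) fails, witnessed by $B,B'\in{\cal B}$ and $e\in B\setminus B'$ with $\omega(B)+\omega(B')>\omega(B-e+e')+\omega(B'+e-e')$ for every $e'\in B'\setminus B$. Put $E_0:=B\cup B'$ (finite), let ${\bf M}_0$ be the restriction of ${\bf M}$ to $E_0$, whose base family is ${\cal B}_0=\{C\in{\cal B}:C\subseteq E_0\}$ and whose rank is still $n$ (since $B\in{\cal B}_0$), and set $\omega_0:=\omega|_{{\cal B}_0}$. Every base appearing in the failure of (EXC) lies in ${\cal B}_0$, so $\omega_0$ violates (EXC) for ${\bf M}_0$. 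As $E_0$ is finite, Murota's theorem yields a (finite-domain, hence bounded) vector $x_0\in\ZZ^{E_0}$ with $(E_0,({\cal B}_0)_{\omega_0+x_0})$ not a matroid. Now fix $M\in\ZZ$ with $\omega\le M$ on ${\cal B}$, pick $N\in\ZZ$ so large that $M+n\max_{f\in E_0}|x_0(f)|-N<\max_{C\in{\cal B}_0}(\omega_0+x_0)(C)$, and define $x\in\ZZ^E$ by $x:=x_0$ on $E_0$ and $x:=-N$ off $E_0$; this $x$ takes finitely many values and is bounded. Any base of ${\bf M}$ using an element outside $E_0$ then has $\omega+x$-value below $\max_{C\in{\cal B}_0}(\omega_0+x_0)(C)$, so ${\cal B}_{\omega+x}=({\cal B}_0)_{\omega_0+x_0}$, all of whose members are contained in $E_0$. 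Since adjoining loops does not affect matroidality, $(E,{\cal B}_{\omega+x})$ is a matroid iff $(E_0,({\cal B}_0)_{\omega_0+x_0})$ is, which fails; this contradicts the hypothesis.

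The main obstacle is precisely this reduction: one must localize \emph{both} the failure of (EXC) and the resulting failure of the matroid axiom to a single finite set $E_0$, and then calibrate the penalty $-N$ — using the global upper bound $M$ on $\omega$ — so that the $(\omega+x)$-optimal bases never leave $E_0$, whence the finite counterexample $x_0$ is faithfully recovered over $E$. Once this bookkeeping is in place, the statement follows directly from the finite case of Murota's theorem together with (EXC), and I do not anticipate any genuinely new difficulty.
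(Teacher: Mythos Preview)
Your argument is correct and matches the paper's approach: localize to the finite set $E_0=B\cup B'$, apply Murota's finite theorem there, and transport the resulting test vector back to $E$ by assigning a large negative penalty off $E_0$ so that ${\cal B}_{\omega+x}=({\cal B}_0)_{\omega_0+x_0}$. The one slip is terminological: you have swapped the labels---the direction ``maximizers are always matroids $\Rightarrow$ (EXC)'' is the \emph{if} part (and is the one needing the reduction), while ``(EXC) $\Rightarrow$ maximizers are matroids'' is the \emph{only if} part.
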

\begin{proof}[Proof: Reduction to finite case]
	We reduce the proof of the if-part to finite case.
	Consider bases $B,B' \in {\cal B}$. 
	Let $E' := B \cup B'$, and let $(E',\omega')$ be the restriction of $(E,\omega)$. 
	By the upper-boundedness of $\omega$, 
	for $x' \in \ZZ^{E'}$, by choosing a large positive integer $M$ and 
	by defining $x(e) := - M$ $(e \in E \setminus E')$,  
	we can extend $x'$ to bounded vector $x \in \ZZ^E$ so that
	${\cal B}_{\omega + x} ={\cal B}_{\omega'+x'} \subseteq 2^{E'}$.
    Thus the exchange property (EXC) of 
    $\omega$ on $B$ and $B'$ follows from that of~$\omega'$.
\end{proof}
Next we introduce the {\em tropical linear space}~\cite{MurotaTamura01,Speyer08} 
of a  valuated matroid.
Let $\omega$ be an integer-valued valuated matroid on $(E, {\cal B})$.
To deal with a possible infiniteness of $E$, 
we here employ the following definition. 
The {\em tropical linear space} ${\cal T}(\omega)$ of $\omega$ is defined 
as the set of all vectors $x \in \RR^E$ such that matroid 
${\bf M}_{\omega + x} = (E, {\cal B}_{\omega +x})$
has no loop, i.e.,
\begin{equation*}
{\cal T}(\omega) := \{ x \in \RR^E \mid  \mbox{${\bf M}_{\omega + x}$ has no loop}\}.
\end{equation*}
This definition tacitly imposes 
that the maximum of $\omega+x$ for $x \in {\cal T}(\omega)$ 
is attained by some $B \in {\cal B}$. 
According to the definition in \cite{MurotaTamura01,Speyer08},  
the tropical linear space 
is the set of all points $x \in \RR^E$ satisfying:
\begin{itemize}
	\item[(TW)] For any $(n+1)$-element subset $C \subseteq E$, 
	the maximum of $\omega(C - f ) - x(f)$ over all $f \in C$ with $C - f \in {\cal B}$ is attained at least twice.
\end{itemize}
(In the definition of \cite{MurotaTamura01}, the sign of $x$ is opposite.)
Speyer~\cite[Proposition 2.3]{Speyer08} proved that the two definitions are 
equivalent when $E$ is finite.
Our infinite setting  needs a little care; 
we prove a slightly modified equivalence in Lemma~\ref{lem:trop} below.

In the literature, 
the tropical linear space is referred to as its projection ${\cal T}(\omega)/ \RR{\bf 1}$, 
since $x \in {\cal T}(\omega)$ implies 
$x+ \RR {\bf 1} \subseteq {\cal T}(\omega)$.
Earlier than \cite{MurotaTamura01,Speyer08}, 
Dress and Terhalle~\cite{DressTerhalle93,DressTerhalle98} introduced
the concept of 
the {\em tight span} ${\cal TS}(\omega)$ of $\omega$, which is defined by 
\begin{equation*}
{\cal TS}(\omega) := \left\{ p \in \RR^E \bigmid 
p(e) = \max_{B \in {\cal B}: e \in B} \{ \omega(B) - \sum_{f \in B \setminus \{e\}} p(f)\} \quad (e \in E) \right\}.
\end{equation*}
Observe that ${\cal TS}(\omega)$ is the set of representatives of the negative of ${\cal T}(\omega)/ \RR{\bf 1}$. More precisely, it holds
\begin{equation}\label{eqn:TS}
{\cal TS}(\omega)  = - \{x \in {\cal T}(\omega) \mid \max_{B \in {\cal B}} (\omega + x)(B) = 0 \}.
\end{equation}

Dress and Terhalle~\cite{DressTerhalle93,DressTerhalle98} introduced 
an ultrametric metrization of 
the ground set $E$ of a valuated matroid $\omega$, which we explain below.
Let us recall the notion of an ultrametric. 
An {\em ultrametric} on a set $X$
is a metric $d: X \times X \to \RR_+$ satisfying the ultrametric inequality
\begin{equation}\label{eqn:ultrametric}
d(x,y) \leq \max \{d(x,z), d(z,y)\} \quad (x,y,z \in X).
\end{equation}
For $p \in {\cal TS}(\omega)$,
define $D_p: E \times E \to \RR$ by 
\begin{equation*}
D_p(e,f) := \left\{
\begin{array}{ll}
\exp (\max \{ (\omega - p) (B) \mid B \in {\cal B}: \{e,f\} \subseteq B \}) & {\rm if}\ e \neq f,\\
0 & {\rm if}\ e = f
\end{array}\right.
 \quad (e,f \in E).
\end{equation*}
\begin{Prop}[\cite{DressTerhalle93}]
	Let $(E,\omega)$ be a simple valuated matroid. 
	For $p \in {\cal TS}(\omega)$, we have the following:
	\begin{itemize}
		\item[{\rm (1)}] $D_{p}$ is an ultrametric.
		\item[{\rm (2)}] For $q \in {\cal TS}(\omega)$, 
		it holds $\alpha D_p \leq D_q \leq \beta D_p$ for some $\alpha,\beta > 0$.
	\end{itemize} 
\end{Prop}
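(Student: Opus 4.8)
The plan is to transfer the exchange axiom (EXC) into a statement about the exponentiated quantities defining $D_p$. Throughout write $\nu:=\omega-p$, which is again a valuated matroid on ${\bf M}=(E,{\cal B})$ by the remark following (EXC). Unwinding the definition of ${\cal TS}(\omega)$ (equivalently, (\ref{eqn:TS})), the hypothesis $p\in{\cal TS}(\omega)$ says exactly that $\nu(B)\le 0$ for every base $B$, and that for each $e\in E$ some base $B\ni e$ satisfies $\nu(B)=0$; that is, ${\bf M}_{\omega-p}$ is loopless of rank $n$. For $e\ne f$ set
\begin{equation*}
\nu(e,f):=\max\{\,\nu(B)\mid B\in{\cal B},\ \{e,f\}\subseteq B\,\},
\end{equation*}
so that $\log D_p(e,f)=\nu(e,f)$. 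Since ${\bf M}$ is simple, any two distinct elements form an independent pair and hence lie in a common base, so $\nu(e,f)$ is a well-defined real number with $\nu(e,f)\le 0$; thus $D_p$ takes values in $(0,1]$ off the diagonal, is symmetric, and vanishes only on the diagonal. As the ultrametric inequality implies the triangle inequality, the only substantial part of~(1) is
\begin{equation*}
\nu(e,g)\le\max\{\nu(e,f),\,\nu(f,g)\}\qquad(e,f,g\ \text{pairwise distinct}),
\end{equation*}
which, applying $\exp$, is equivalent to (\ref{eqn:ultrametric}) for $D_p$.

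For this inequality I would argue by a single base exchange. Pick a base $B_0$ with $\{e,g\}\subseteq B_0$ and $\nu(B_0)=\nu(e,g)$. If $f\in B_0$ we are done, since then $\nu(e,f)\ge\nu(B_0)=\nu(e,g)$. Otherwise pick, using looplessness of ${\bf M}_{\omega-p}$, a base $B^{*}\ni f$ with $\nu(B^{*})=0$, and apply (EXC) to the pair $B^{*},B_0$ and the element $f\in B^{*}\setminus B_0$; this produces $e_0\in B_0\setminus B^{*}$ with
\begin{equation*}
\nu(B^{*})+\nu(B_0)\ \le\ \nu(B^{*}-f+e_0)+\nu(B_0+f-e_0).
\end{equation*}
Since $\nu(B^{*})=0$ and $\nu(B^{*}-f+e_0)\le 0$, this forces $\nu(B_0)\le\nu(B_0+f-e_0)$. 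The base $B_1:=B_0+f-e_0$ contains $f$, and, being obtained from $B_0\supseteq\{e,g\}$ by deleting the single element $e_0$, still contains $e$ or $g$; hence $\{e,f\}\subseteq B_1$ or $\{f,g\}\subseteq B_1$, so $\max\{\nu(e,f),\nu(f,g)\}\ge\nu(B_1)\ge\nu(B_0)=\nu(e,g)$, as required. The point making the exchange non-decreasing in $\nu$ is precisely the tight-span property ``$\nu\le 0$ with value $0$ attained through every element'', so this is where $p\in{\cal TS}(\omega)$ is used; this short exchange step is the conceptual core of part~(1).

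For part~(2) write $\nu_p:=\omega-p$ (the $\nu$ above) and $\nu_q:=\omega-q$, and $\nu_p(e,f),\nu_q(e,f)$ for the associated two-point quantities, so $\log D_q(e,f)-\log D_p(e,f)=\nu_q(e,f)-\nu_p(e,f)$. For any base $B$ one has $\nu_q(B)-\nu_p(B)=\sum_{h\in B}\bigl(p(h)-q(h)\bigr)$. Choosing a base $B^{*}\supseteq\{e,f\}$ attaining $\nu_p(e,f)$ and a base $C^{*}\supseteq\{e,f\}$ attaining $\nu_q(e,f)$, the inequalities $\nu_q(B^{*})\le\nu_q(e,f)$ and $\nu_p(C^{*})\le\nu_p(e,f)$ sandwich $\nu_q(e,f)-\nu_p(e,f)$ between $\sum_{h\in B^{*}}(p(h)-q(h))$ and $\sum_{h\in C^{*}}(p(h)-q(h))$; consequently
\begin{equation*}
\bigl|\log D_q(e,f)-\log D_p(e,f)\bigr|\ \le\ n\cdot\sup_{h\in E}|p(h)-q(h)| .
\end{equation*}
Setting $K:=\sup_{h\in E}|p(h)-q(h)|$, the claim then holds with $\alpha:=e^{-nK}$, $\beta:=e^{nK}$. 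The only remaining ingredient is the lemma that $p-q$ is bounded on $E$ whenever $p,q\in{\cal TS}(\omega)$; this is vacuous for finite $E$, and in the infinite case I expect it to require the finer structure of the tight span — the fixed-point equations defining ${\cal TS}(\omega)$, applied along common bases, together with connectivity of the underlying matroid — as in the analysis of Dress and Terhalle. I regard establishing this boundedness as the main obstacle of the whole proof; granting it, part~(2) is just the elementary estimate above, and part~(1) is the exchange argument.
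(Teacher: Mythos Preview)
The paper does not prove this proposition; it is quoted from \cite{DressTerhalle93} without argument. The closest the paper comes is the analogous lattice statement Proposition~\ref{prop:ultrametric}, later linked to $D_p$ via Lemma~\ref{lem:d=D}, so there is no line-by-line comparison to make.

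Your argument for (1) is correct. The single exchange against a base $B^*\ni f$ with $\nu(B^*)=0$, combined with $\nu\le 0$ on all bases, forces $\nu(B_0+f-e_0)\ge\nu(B_0)$, and since only one element of $B_0\supseteq\{e,g\}$ is removed, the new base contains $f$ together with at least one of $e,g$. This is a clean direct argument from (EXC); by contrast, where the paper proves the analogous ultrametric inequality (Proposition~\ref{prop:ultrametric}(1)) it goes through the rooted-tree picture of $x$-rays rather than a base exchange, so your route is genuinely different and closer to the valuated-matroid axioms.

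For (2) your reduction and the estimate $\bigl|\nu_q(e,f)-\nu_p(e,f)\bigr|\le n\sup_h|p(h)-q(h)|$ are correct, but the boundedness of $p-q$ that you flag is a genuine gap: you have not established it, and for infinite $E$ it is not automatic. It may help to see how the paper's own framework closes the analogous gap in the integer-valued case. In the proof of Theorem~\ref{thm:main2}(3) one sees that for $x,y\in{\cal L}(\omega)$ the vectors $x-(x\wedge y)$ and $y-(x\wedge y)$ are upper-bounded, because each covering step has the form $u\prec_1 u+{\bf 1}_F$ (Lemma~\ref{lem:[x,x+1]}(2)) and there are only $r[x\wedge y,x]$, respectively $r[x\wedge y,y]$, such steps; hence $x-y$ is bounded. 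Proposition~\ref{prop:ultrametric}(3) then obtains the Lipschitz equivalence with the explicit constants $\exp(-r[x,x\vee y])$ and $\exp(r[y,x\vee y])$ via the order-theoretic inequality $\delta_z(e,f)\le\delta_x(e,f)\le\delta_z(e,f)+r[x,z]$ for $z\succeq x$. So the paper's route uses finiteness of intervals in the lattice rather than a direct sup-norm bound on $p-q$; for the proposition as stated (real-valued $\omega$, arbitrary $p,q\in{\cal TS}(\omega)$) you would need to supply the missing boundedness argument from \cite{DressTerhalle93}.
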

A simple valuated matroid $(E,\omega)$ is called {\em complete} 
if the metric space $(E, D_p)$ is a complete metric space.
By the property (2)
the convergence property is independent of the choice of $p \in {\cal TS}(\omega)$.
A {\em completion} of a valuated matroid $(E, \omega)$ 
is a complete valuated matroid $(\bar E,\bar \omega)$ with the properties 
that 
$\bar E$ contains $E$ as a dense subset, and 
$\omega$ is equal to the restriction of $\bar \omega$ to $n$-element subsets in $E$. 
\begin{Thm}[\cite{DressTerhalle93}]
	For a simple valuated matroid $(E, \omega)$, 
	there is a (unique) completion $(\bar E, \bar \omega)$ of $(E, \omega)$.
\end{Thm}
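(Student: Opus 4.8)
The plan is to build the completion $(\bar E, \bar\omega)$ by a standard metric-completion argument combined with the tropical-linear-space description of $\omega$, and then to verify the exchange axiom (EXC) and uniqueness. Fix once and for all a point $p \in {\cal TS}(\omega)$, so that $D := D_p$ is an ultrametric on $E$ by the preceding proposition; by part~(2) of that proposition the completeness notion and the dense-extension requirement do not depend on this choice. Let $(\bar E, \bar D)$ be the usual metric completion of $(E, D)$: its points are equivalence classes of Cauchy sequences in $E$, with $\bar D$ the limit metric, and $E$ embeds isometrically as a dense subset. Since an ultrametric inequality passes to limits, $\bar D$ is again an ultrametric; in particular, for any $\bar e, \bar f \in \bar E$ one may choose Cauchy representatives that are eventually "$D$-close" in a way that stabilizes, which is the technical engine behind everything below.

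The heart of the construction is to extend $\omega$. Here I would exploit the formula for $D_p$ together with the tight-span identity: for $e \ne f$ in $E$, $\log D(e,f) = \max\{(\omega - p)(B) : \{e,f\}\subseteq B\}$, and more generally the value $\omega(B)$ for any base $B$ is recoverable from pairwise data via the ultrametric structure of simple valuated matroids (this is exactly the content that makes the Dress--Terhalle metrization faithful for simple valuated matroids). Concretely: given $\bar e_1,\dots,\bar e_n \in \bar E$, pick Cauchy sequences $(e^{(k)}_i)_k$ in $E$ converging to $\bar e_i$; I claim the sequence $\omega(\{e^{(k)}_1,\dots,e^{(k)}_n\})$ (with the convention $\omega = -\infty$ off ${\cal B}$) is eventually constant, and define $\bar\omega$ to be that eventual value. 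Proving this stabilization — and that it is independent of the chosen representatives — is the step I expect to be the main obstacle; it requires showing that small $\bar D$-perturbations of the arguments do not change the $\omega$-value, which one deduces from the ultrametric inequality \eqref{eqn:ultrametric} by a pigeonhole argument on the finitely many pairwise distances involved, splitting into the cases where two perturbed points land in the same small $D$-ball (where parallelism, Lemma~\ref{lem:parallel}, and the choice $p$ of base point force the increments to cancel) versus distinct balls.

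Once $\bar\omega$ is defined, I would verify: (i) $\bar E$ carries a matroid structure of the same rank $n$, with base family $\bar{\cal B} = \{B \subseteq \bar E : |B| = n,\ \bar\omega(B) \ne -\infty\}$, by transporting (EXC) through the stabilization; (ii) $\bar\omega$ restricts to $\omega$ on $n$-subsets of $E$, which is immediate since constant Cauchy sequences may be used as representatives; (iii) $(\bar E, \bar\omega)$ satisfies (EXC) — given two bases of $\bar E$ and an element to exchange, approximate everything by bases of $E$, apply (EXC) there for $\omega$, and pass to the limit using that only finitely many elements are involved so the approximation stabilizes simultaneously; (iv) the Dress--Terhalle metric of $\bar\omega$, restricted to $E$, equals $D$ (clear from the defining formula for $D_p$ and (ii)), hence extends continuously to $\bar D$, and $(\bar E, \bar D)$ is complete by construction, so $(\bar E, \bar\omega)$ is complete; also $\bar E$ is simple, since a loop or parallel pair in $\bar E$ would, by density and the continuity of $\bar\omega$, produce one in $E$. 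For uniqueness: if $(\bar E', \bar\omega')$ is another completion, then $E$ is dense in both $\bar E$ and $\bar E'$ for metrics that agree on $E$ (both equal $D$ up to the scaling of the cited proposition), so the identity on $E$ extends to an isometry $\bar E \to \bar E'$; continuity of $\bar\omega$ and $\bar\omega'$ and agreement on $E$ force this isometry to intertwine the valuations, giving an isomorphism of valuated matroids. This yields the stated existence and uniqueness.
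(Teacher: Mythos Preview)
Your proposal follows the same construction the paper sketches immediately after the theorem statement: form the metric completion $\bar E$ of $(E, D_p)$ by Cauchy sequences and set $\bar\omega(B) := \lim_i \omega(B_i)$ for approximating $n$-sets $B_i \subseteq E$; the paper does not give a full proof here but attributes the result to Dress--Terhalle and records only this outline, so your verification plan (stabilization of the limit, (EXC) by approximation, uniqueness by density) is in fact more detailed than what the paper provides at this point. One minor overreach worth flagging: the assertion that $\omega(B)$ is ``recoverable from pairwise data via the ultrametric structure'' is too strong for rank $n>2$, but you do not actually use it---what your argument needs is only the stabilization of $\omega$ under sufficiently small $D$-perturbations of one coordinate, which is weaker and is what the ultrametric inequality together with Lemma~\ref{lem:parallel} can deliver. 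It is also worth noting that the paper's own machinery later yields an independent existence proof by a genuinely different route: Theorem~\ref{thm:main2}(4) together with Proposition~\ref{prop:ultrametric}(2) realizes the completion as the space of ends of the uniform semimodular lattice ${\cal L}(\omega)$, bypassing the direct Cauchy-sequence verification entirely.
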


The construction of a completion of valuated matroid $(E,\omega)$ 
is analogous to (and generalizes) 
that of $p$-adic numbers from rational numbers:
Consider the set $\bar E$ of 
all Cauchy sequences $(x_i)$, relative to $D_p$, 
modulo the equivalence relation $\sim$ defined by 
$(x_i) \sim (y_i) \Leftrightarrow \lim_{i \rightarrow \infty} D_p(x_i,y_i) = 0$.
Regard $E$ as a subset of $\bar E$ by associating $x \in E$ with 
a Cauchy sequence converging to $x$, and
extend $D_p$ to $\bar E \times \bar E \to \RR$ by
$D_p((x_i),(y_i)) := \lim _{i \rightarrow \infty} D_p(x_i,y_i)$. 
Then $\bar E$ is a complete metric space containing $E$ as a dense subset.
Accordingly, $\omega$ is extended to $\bar \omega$ by 
\begin{equation*}
\bar \omega(B) := \lim_{i \rightarrow \infty} \omega(B_i),
\end{equation*} 
where $B_i \subseteq E$ consists of $n$ elements each converging to an element of $B \subseteq \bar E$.
By a completion of nonsimple valuated matroid $(E, \omega)$ 
we mean a completion of a simplification of $(E, \omega)$. 
In Section~\ref{subsec:v->u}, 
we give a natural interpretation of 
this completion process via
a uniform semimodular lattice. 
 
The rest of this section is to 
give some basic properties of the tropical linear space
${\cal T}(\omega)$. Let $(E, \omega)$ be 
an integer-valued valuated matroid on 
underlying matroid ${\bf M} = (E,{\cal B})$ of rank $n$.
We suppose that ${\cal T}(\omega)$ is endowed with the vector order $\leq$.
\begin{Lem}\label{lem:projection}
	Let $(\tilde E, \tilde \omega)$ be a simplification of $(E, \omega)$.
	Then the projection $x \mapsto x|_{\tilde E}$ is 
	an order-preserving bijection from ${\cal T}(\omega)$ to ${\cal T}(\tilde \omega)$.
\end{Lem}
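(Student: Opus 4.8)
The map $\pi\colon x\mapsto x|_{\tilde E}$ is order-preserving by the very definition of the vector order, so the content of the statement is that $\pi$ restricts to a bijection from ${\cal T}(\omega)$ onto ${\cal T}(\tilde\omega)$ with order-preserving inverse. My plan is to show that membership in ${\cal T}(\omega)$ forces $x$ to be constant, in a suitable shifted sense, along each parallel class of ${\bf M}$, so that $x$ is uniquely reconstructible from $x|_{\tilde E}$, and then to transport the looplessness condition between ${\bf M}_{\omega+x}$ and ${\bf M}_{\tilde\omega+\pi(x)}$. Throughout I assume ${\bf M}$ has no loop; this is the only relevant case, since a loop of ${\bf M}$ is a loop of ${\bf M}_{\omega+x}$ for every $x$, so that ${\cal T}(\omega)=\emptyset$ otherwise.

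For $e\in E$ let $\tilde e\in\tilde E$ denote the chosen representative of the parallel class of $e$, and let $\alpha_e\in\RR$ be the scalar supplied by Lemma~\ref{lem:parallel}, i.e. $\omega(K+e)=\omega(K+\tilde e)+\alpha_e$ for every $(n-1)$-set $K\subseteq E\setminus\{e,\tilde e\}$; set $\alpha_{\tilde e}=0$. Using the elementary matroid fact that a parallel swap $B\mapsto B-e+e'$ (for parallel $e,e'$ with $e\in B\not\ni e'$) carries bases to bases, together with the evident additivity relation among the scalars $\alpha$ within a parallel class, one obtains the identity
\begin{equation*}
(\omega+x)(B-e+e')-(\omega+x)(B)=c_x(e')-c_x(e),\qquad\text{where } c_x(e):=x(e)+\alpha_e .
\end{equation*}
Hence if $c_x(e)<c_x(e')$ for some pair of parallel elements, every base through $e$ is strictly improved by such a swap, so $e$ lies in no $\omega+x$-maximizing base and is a loop of ${\bf M}_{\omega+x}$; therefore $x\notin{\cal T}(\omega)$. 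This establishes ${\cal T}(\omega)\subseteq{\cal X}$, where ${\cal X}:=\{x\in\RR^E:c_x\text{ is constant on each parallel class}\}$. On ${\cal X}$ we have $c_x(e)=x(\tilde e)$, so $x$ is recovered from $x|_{\tilde E}$ by $x(e)=x(\tilde e)-\alpha_e$; this reconstruction is manifestly order-preserving, so $\pi$ restricts to a bijection ${\cal X}\to\RR^{\tilde E}$ whose inverse is order-preserving.

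It remains to verify, for $x\in{\cal X}$, that $x\in{\cal T}(\omega)$ if and only if $\pi(x)\in{\cal T}(\tilde\omega)$. For this I would use the ``pass to representatives'' map $B\mapsto\tilde B:=\{\tilde e:e\in B\}$: iterating parallel swaps shows $\tilde B$ is again a base of ${\bf M}$, lying in $\tilde E$ and hence a base of the underlying matroid of $(\tilde E,\tilde\omega)$; and since $c_x$ is constant on parallel classes, each swap preserves the $(\omega+x)$-value by the identity above, so $(\omega+x)(\tilde B)=(\omega+x)(B)$. It follows that $\max_B(\omega+x)(B)$ over the bases of ${\bf M}$ equals $\max_{B'}(\tilde\omega+\pi(x))(B')$ over the bases of the simplification, both attained simultaneously, and that a non-loop element $e$ belongs to some $\omega+x$-maximizing base of ${\bf M}$ if and only if $\tilde e$ belongs to some $(\tilde\omega+\pi(x))$-maximizing base of the simplification. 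As ${\bf M}$ is loopless, this says exactly that ${\bf M}_{\omega+x}$ has no loop iff ${\bf M}_{\tilde\omega+\pi(x)}$ has no loop, i.e. $x\in{\cal T}(\omega)\Leftrightarrow\pi(x)\in{\cal T}(\tilde\omega)$. Combined with ${\cal T}(\omega)\subseteq{\cal X}$, the order-preserving bijection $\pi|_{\cal X}\colon{\cal X}\to\RR^{\tilde E}$ with order-preserving inverse, and the inclusion ${\cal T}(\tilde\omega)\subseteq\RR^{\tilde E}$, this yields that $\pi$ maps ${\cal T}(\omega)$ bijectively onto ${\cal T}(\tilde\omega)$ with order-preserving inverse, which is the assertion.

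The main obstacle is the routine but careful bookkeeping with parallel classes: proving the additivity of the scalars $\alpha_e$ and the swap identity, and the matroid lemma that parallel swaps — and their iterates realizing $B\mapsto\tilde B$ — remain inside the base family while behaving predictably on $\omega+x$-values. The infiniteness of $E$ causes no real trouble: it only forces us to keep track of whether $\max(\omega+x)$ is attained, and this is automatic here because $B\leftrightarrow\tilde B$ is value-preserving, so the maxima over ${\bf M}$ and over its simplification coincide and are attained together.
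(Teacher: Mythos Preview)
Your approach is correct and essentially the same as the paper's: both derive from Lemma~\ref{lem:parallel} the swap identity showing that $(\omega+x)$-values change by $c_x(e')-c_x(e)$ under a parallel swap, conclude that $x\in{\cal T}(\omega)$ forces $x(e)+\alpha_e$ to be constant on each parallel class, and use this constraint to reconstruct $x$ from $x|_{\tilde E}$. Your write-up is somewhat more explicit than the paper's---you spell out surjectivity via the reconstructed-$x$ argument and verify that the inverse is order-preserving, whereas the paper leaves these as a one-line ``from this, we see that the projection is a bijection''---but the underlying argument is the same.
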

\begin{proof}
	Let $e,f \in E$ be a parallel pair with $\omega (K+e) = \omega(K+f) + \alpha$ 
	for every $(n-1)$-element subset $K \subseteq E \setminus \{e,f\}$; see Lemma~\ref{lem:parallel}. 
	Let $x \in {\cal T}(\omega)$. 
	There is $B \in {\cal B}_{\omega+x}$ containing $e$.
	By $(\omega+x)(B) \geq (\omega + x)(B - e +f)$ and 
	$\omega (B) = \omega(B- e+ f) + \alpha$, 
	we have $x(e) \geq x(f) - \alpha$. Similarly, 
	by taking $B' \in {\cal B}_{\omega+x}$ with $f \in B'$, 
	we obtain $x(e) = x(f) - \alpha$; 
	then  $(\omega+x)(B) = (\omega + x)(B - e +f)$ holds for the above $B$.
	In particular, if 
	$B \in {\cal B}_{\omega+x}$ contains $e$, 
	then $B - e + f \in {\cal B}_{\omega+x}$.
	From this, we see that ${\cal B}_{\tilde \omega + x|_{\tilde E}}$ is a subset of 
	${\cal B}_{\omega + x}$, and ${\bf M}_{\tilde \omega + x|_{\tilde E}}$ has no loop.
	Thus the projection $x \mapsto x|_{\tilde E}$ is 
	an order-preserving map from ${\cal T}(\omega)$ to~${\cal T}(\tilde \omega)$.
	By $x(e) + \alpha = x(f)$, where $\alpha$ is independent of $x$, 
	we see that the projection is a bijection.
\end{proof}
For $x \in \RR^E$, let $\lfloor x \rfloor \in \ZZ^E$ 
denote the vector obtained from $x$ 
by rounding down each fractional component of $x$, i.e. 
$\lfloor x \rfloor (e) := \lfloor x(e) \rfloor$ for $e \in E$.

\begin{Lem}\label{lem:chain_of_flats}
	For $x \in {\cal T}(\omega)$, we have the following:
	\begin{itemize}
		\item[{\rm (1)}] 
		$\lfloor x \rfloor \in {\cal T}(\omega)$. 
		\item[{\rm (2)}] There are a chain 
		$\emptyset \neq F_1 \subset F_2 \subset \cdots \subset F_n = E$ of flats in ${\bf M}_{\omega + \lfloor x \rfloor}$ and coefficients $\lambda_i \geq 0$ such that  $\sum_{i=1}^n \lambda_i < 1$ and
		$x = \lfloor x \rfloor + \sum_{i=1}^{n} 
		\lambda_i {\bf 1}_{F_i}$.
	\end{itemize} 
\end{Lem}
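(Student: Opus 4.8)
Throughout, write $y:=\lfloor x\rfloor$ and $\lambda:=x-y$, so $0\le\lambda(e)<1$ for every $e\in E$, and put $\lambda(B):=\sum_{e\in B}\lambda(e)$ for $B\subseteq E$. The plan for part~(1) is to prove the stronger inclusion ${\cal B}_{\omega+x}\subseteq{\cal B}_{\omega+y}$. Granting it, since $x\in{\cal T}(\omega)$ the matroid ${\bf M}_{\omega+x}$ is loopless, so every $e\in E$ lies in some base $B\in{\cal B}_{\omega+x}\subseteq{\cal B}_{\omega+y}$; hence ${\bf M}_{\omega+y}={\bf M}_{\omega+\lfloor x\rfloor}$ is loopless, i.e.\ $\lfloor x\rfloor\in{\cal T}(\omega)$. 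To prove the inclusion, suppose $B\in{\cal B}_{\omega+x}\setminus{\cal B}_{\omega+y}$. As $\omega+y$ is a valuated matroid, Lemma~\ref{lem:opt} provides $e\in B$ and $f\in E\setminus B$ with $B-e+f\in{\cal B}$ and $(\omega+y)(B-e+f)>(\omega+y)(B)$; since $\omega+y$ is integer-valued this forces $(\omega+y)(B-e+f)\ge(\omega+y)(B)+1$. Applying Lemma~\ref{lem:opt} instead to $\omega+x$ and $B\in{\cal B}_{\omega+x}$ gives $(\omega+x)(B-e+f)\le(\omega+x)(B)$, which (using $e\in B$, $f\notin B$) rewrites as $(\omega+y)(B-e+f)-(\omega+y)(B)\le\lambda(e)-\lambda(f)$. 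Together these yield $1\le\lambda(e)-\lambda(f)$, contradicting $\lambda(e)<1$ and $\lambda(f)\ge0$.

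For part~(2) I would first reformulate the problem in matroid terms. Put $N:={\bf M}_{\omega+y}$ and $M:=\max_B(\omega+y)(B)$. By part~(1) we have ${\cal B}_{\omega+x}\subseteq{\cal B}_{\omega+y}={\cal B}(N)$, and on ${\cal B}(N)$ one has $(\omega+x)(B)=M+\lambda(B)$; hence
\begin{equation*}
{\cal B}_{\omega+x}=\{\,B\in{\cal B}(N)\mid\lambda(B)=\Lambda\,\},\qquad\Lambda:=\max_{B\in{\cal B}(N)}\lambda(B),
\end{equation*}
so ${\bf M}_{\omega+x}$ is precisely the matroid of maximum-$\lambda$-weight bases of $N$, and it is loopless (the maximum $\Lambda$ exists because $x\in{\cal T}(\omega)$ forces ${\cal B}_{\omega+x}\ne\emptyset$). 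The crux is then the purely matroid-theoretic claim that \emph{for every real $t>0$ the level set $G_t:=\{e\in E\mid\lambda(e)\ge t\}$ is a flat of $N$}. I would prove this by contradiction: if $G:=G_t$ is not a flat, pick $g\in{\rm cl}_N(G)\setminus G$ (so $\lambda(g)<t$) and show $g$ lies in no maximum-$\lambda$-weight base of $N$, contradicting looplessness of ${\bf M}_{\omega+x}$. Indeed, if $B\in{\cal B}(N)$ with $g\in B$ and $\lambda(B)=\Lambda$, then $(B\cap G)\cup\{g\}$ is independent in $N$ (being a subset of $B$) and contained in ${\rm cl}_N(G)$, so $|B\cap G|<\rho_N(G)$; thus $B\cap G$ is a non-maximal independent set of the restriction $N|_{G}$, and there is $h\in G\setminus B$ with $(B\cap G)+h$ independent. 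Letting $C\subseteq B+h$ be the fundamental circuit of $h$ with respect to $B$, some element $e$ of $C\setminus\{h\}$ must lie outside $G$ (otherwise $C\subseteq(B\cap G)+h$ would be independent); then $e\in B$, $B-e+h\in{\cal B}(N)$, and $\lambda(e)<t\le\lambda(h)$, so $\lambda(B-e+h)>\Lambda$, a contradiction.

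Finally I would assemble part~(2) from the claim. The sets $G_t$ with $t>0$ form a chain of flats of $N$, and a chain of flats in a rank-$n$ matroid has at most $n+1$ members, so $\lambda$ takes only finitely many distinct values $v_1>v_2>\cdots>v_r\ge0$, with $v_1=\max_e\lambda(e)<1$. Setting $G^{(j)}:=\{e\mid\lambda(e)\ge v_j\}$ and $v_{r+1}:=0$ we obtain flats $G^{(1)}\subsetneq\cdots\subsetneq G^{(r)}=E$ of $N$ and $\lambda=\sum_{j=1}^{r}(v_j-v_{j+1})\,{\bf 1}_{G^{(j)}}$. Since $N$ is loopless of rank $n$ (part~(1)), its lattice of flats is geometric by Theorem~\ref{thm:Birkhoff}, so the chain $G^{(1)}\subsetneq\cdots\subsetneq G^{(r)}$ extends to a maximal chain of flats $F_1\subsetneq F_2\subsetneq\cdots\subsetneq F_n=E$, of ranks $1,\dots,n$; defining $\lambda_i:=v_j-v_{j+1}$ when $F_i=G^{(j)}$ and $\lambda_i:=0$ otherwise gives $\lambda_i\ge0$, $\sum_i\lambda_i=v_1<1$, $\emptyset\ne F_1$ (a rank-$1$ flat of a loopless matroid), and $x=\lfloor x\rfloor+\sum_{i=1}^{n}\lambda_i{\bf 1}_{F_i}$, as required (when $\lambda\equiv0$ one simply takes any maximal flag of flats of $N$ with all $\lambda_i=0$). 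The main obstacle is the reformulation identifying ${\bf M}_{\omega+x}$ with the maximum-weight-base matroid of ${\bf M}_{\omega+\lfloor x\rfloor}$ together with the fundamental-circuit exchange in the previous paragraph; everything else is bookkeeping with Lemma~\ref{lem:opt}, integrality, and the geometric lattice of flats.
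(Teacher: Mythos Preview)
Your proof is correct and follows essentially the same approach as the paper. Part~(1) is identical in spirit: both prove the inclusion ${\cal B}_{\omega+x}\subseteq{\cal B}_{\omega+\lfloor x\rfloor}$ via Lemma~\ref{lem:opt} and integrality. Part~(2) also matches: both reformulate ${\cal B}_{\omega+x}$ as the family of $\lambda$-maximum bases of $N={\bf M}_{\omega+\lfloor x\rfloor}$ and reduce to showing each level set $\{e:\lambda(e)\ge t\}$ is a flat of $N$. The only real difference is the exchange step: the paper, given $g\in{\rm cl}(G)\setminus G$ and a base $B\ni g$, argues directly that ${\rm cl}(B-g)\not\supseteq G$ to find $h\in G$ with $B-g+h$ a base; you instead find $h\in G$ with $(B\cap G)+h$ independent and use the fundamental circuit of $h$ to locate an element of $B\setminus G$ to swap out. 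Both are standard matroid exchanges yielding the same strict increase in $\lambda$. Your write-up is in fact slightly more careful than the paper's, since you explicitly take $B$ to be a $\lambda$-maximizer (the paper writes ``a base $B\in{\cal B}_{\omega+\lfloor x\rfloor}$ containing $e$'' where a maximizer is intended).
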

\begin{proof}
	(1). Let $B \in {\cal B}_{\omega+x}$. 
	By Lemma~\ref{lem:opt}, 
	we have $(\omega + x) (B + e - f) \leq (\omega + x)(B)$ 
	for all $e \in E \setminus B$ and $f \in B$.
	From this, we have 
	\begin{equation*}
	(\omega + \lfloor x \rfloor) 
	(B + e - f) + \varDelta(e) - \varDelta(f) \leq (\omega + \lfloor x \rfloor)(B),
	\end{equation*}
	where $\varDelta(g) := x(g) - \lfloor x(g) \rfloor \in [0,1)$ for $g \in E$.
	Since $\varDelta(e) - \varDelta(f) > - 1$ and $\omega$ is integer-valued, 
	we have $(\omega + \lfloor x \rfloor) 
	(B + e - f)  \leq (\omega + \lfloor x \rfloor)(B)$. By Lemma~\ref{lem:opt} again, 
	we have $B \in {\cal B}_{\omega+ \lfloor x \rfloor}$.
	Hence ${\cal B}_{\omega + x} \subseteq {\cal B}_{\omega + \lfloor x \rfloor}$.
	Therefore ${\bf M}_{\omega + \lfloor x \rfloor}$ has no loop. 
	
	(2). It suffices to show: 
	If $x \in {\cal T}(\omega)$ and $\alpha \in [0,1)$, then
	$F_{\alpha} := \{e \in E \mid x(e) - \lfloor x(e) \rfloor \geq \alpha\}$ is a flat of matroid ${\bf M}_{\omega + \lfloor x \rfloor}$. 
	By ${\cal B}_{\omega + x} \subseteq {\cal B}_{\omega + \lfloor x \rfloor}$ shown above, one can see that ${\cal B}_{\omega+ x}$ 
	is the maximizer family of 
	linear objective function $B \mapsto (x -  \lfloor x \rfloor)(B)$ 
	over ${\cal B}_{\omega + \lfloor x \rfloor}$.
	Suppose to the contrary that $e \in {\rm cl} (F_{\alpha}) \setminus F_\alpha$ exists.
	Take a base $B \in {\cal B}_{\omega+ x}$ containing $e$.
	Then ${\rm cl} (B - e) \not \supseteq F_{\alpha}$ 
	since otherwise $e \not \in {\rm cl} (B - e) = {\rm cl} ({\rm cl} (B - e)) \supseteq {\rm cl} (F_{\alpha})\ni e$.
	Thus we can choose $f \in F_{\alpha}$ such that $B + f - e \in {\cal B}_{\omega + \lfloor x \rfloor}$.
	But the above linear objective function increases strictly.  
	This is a contradiction.
\end{proof}

 \begin{Lem}\label{lem:trop}
 	A vector $x \in \RR^E$ belongs to ${\cal T}(\omega)$ if and only if 
 	the maximum of $\omega+x$ over ${\cal B}$ is attained and $x$ satisfies {\rm (TW)}. 
 \end{Lem}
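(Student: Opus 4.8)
The plan is to fix an $(n+1)$-element set $C\subseteq E$ and reduce the condition (TW) at $C$ to a statement about the unique circuit $C^{\circ}$ of ${\bf M}$ contained in $C$. If $\rho(C)<n$ then no $n$-subset of $C$ lies in ${\cal B}$, so (TW) at $C$ is vacuous; hence one may assume $\rho(C)=n$, so $C$ has a unique circuit $C^{\circ}$ and $C-f\in{\cal B}$ holds precisely for $f\in C^{\circ}$. Since $\sum_{e\in C}x(e)$ does not depend on $f$, the maximizers of $f\mapsto\omega(C-f)-x(f)$ on $C^{\circ}$ coincide with those of $f\mapsto(\omega+x)(C-f)$, so (TW) at $C$ says exactly that $f\mapsto(\omega+x)(C-f)$ is not uniquely maximized over $C^{\circ}$.

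For the ``only if'' direction I would argue as follows. Let $x\in{\cal T}(\omega)$. Since ${\bf M}_{\omega+x}$ has no loop, ${\cal B}_{\omega+x}\neq\emptyset$, so $\mu:=\max_{B\in{\cal B}}(\omega+x)(B)$ is attained and finite. Fix $C$ with $\rho(C)=n$, let $f^{*}\in C^{\circ}$ attain $\nu:=\max_{f\in C^{\circ}}(\omega+x)(C-f)$, set $B_{0}:=C-f^{*}$, and suppose for contradiction $f^{*}$ is the unique maximizer. Using that $f^{*}$ is not a loop of ${\bf M}_{\omega+x}$, pick $B\in{\cal B}_{\omega+x}$ with $f^{*}\in B$, and apply (EXC) to $B,B_{0}\in{\cal B}$ and $f^{*}\in B\setminus B_{0}$: there is $e'\in B_{0}\setminus B$ with
\[
\omega(B)+\omega(B_{0})\le\omega(B-f^{*}+e')+\omega(B_{0}-e'+f^{*}).
\]
Here $B_{0}-e'+f^{*}=C-e'$, and finiteness of the right-hand side forces $e'\in C^{\circ}$ and $B-f^{*}+e'\in{\cal B}$. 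Adding $x(B)+x(B_{0})$ to both sides and using $x(B)+x(C-f^{*})=x(B-f^{*}+e')+x(C-e')$ yields
\[
\mu+\nu=(\omega+x)(B)+(\omega+x)(B_{0})\le(\omega+x)(B-f^{*}+e')+(\omega+x)(C-e')\le\mu+\nu,
\]
so both right-hand inequalities are equalities; in particular $(\omega+x)(C-e')=\nu$ with $e'\in C^{\circ}\setminus\{f^{*}\}$, contradicting uniqueness of $f^{*}$.

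For the ``if'' direction, assume $\mu:=\max_{B}(\omega+x)(B)$ is attained and (TW) holds; I would show every $e\in E$ lies in some member of ${\cal B}_{\omega+x}$. First, (TW) forces ${\bf M}$ to be loop-free: a base $B$ exists since $\mu$ is attained, and if $e$ were a loop then $C:=B+e$ has unique circuit $\{e\}$, so $f=e$ is the only $f\in C$ with $C-f\in{\cal B}$, violating (TW). Now fix $e\in E$ and $B\in{\cal B}_{\omega+x}$; if $e\in B$ we are done, so assume $e\notin B$. Then $C:=B+e$ has rank $n$ with unique circuit $C^{\circ}\ni e$ (since $C-e=B$ is independent), and $(\omega+x)(C-e)=(\omega+x)(B)=\mu$ is the global maximum, so $\mu=\max_{f\in C^{\circ}}(\omega+x)(C-f)$; by (TW) this value is attained also at some $f'\in C^{\circ}$ with $f'\neq e$, and then $B':=C-f'\in{\cal B}_{\omega+x}$ contains $e$.

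The one genuinely delicate point is the ``only if'' direction: choosing exactly the instance of (EXC) applied to $B$ and $B_{0}=C-f^{*}$ (exchanging $f^{*}$), and then noticing that the two a-priori bounds $(\omega+x)(B-f^{*}+e')\le\mu$ and $(\omega+x)(C-e')\le\nu$ are forced to equalities, which is what manufactures the required second maximizer. Everything else --- the reduction to $C^{\circ}$, bookkeeping with the value $-\infty$ off ${\cal B}$, and the deduction that (TW) implies ${\bf M}$ is loop-free --- is routine, but must be carried out with the care appropriate to infinite $E$; in particular one uses that loop-freeness of ${\bf M}_{\omega+x}$ already guarantees $\max_{B}(\omega+x)(B)$ is attained, which is precisely the extra clause distinguishing this statement from the finite case treated in \cite{Speyer08}.
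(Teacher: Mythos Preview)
Your proof is correct. The ``if'' direction is essentially the paper's argument, stated a bit more explicitly. The ``only if'' direction, however, takes a genuinely different and more elementary route than the paper.

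The paper's ``only if'' proof follows Speyer's original deformation argument: given $x\in{\cal T}(\omega)$ and an $(n+1)$-set $C$, it pushes $\alpha\to\infty$ along the family $\omega+x+\alpha{\bf 1}_C$, using Lemma~\ref{lem:chain_of_flats} to ensure the value set is discrete (so maxima are attained for all $\alpha$) and Lemma~\ref{lem:integer-valued} to propagate loop-freeness; for large $\alpha$ the maximizers lie inside $C$, which forces two of them and hence (TW). Your argument bypasses all of this: you pick a global maximizer $B\in{\cal B}_{\omega+x}$ containing the putative unique maximizer $f^{*}$, apply (EXC) once to $B$ and $B_0=C-f^{*}$, and squeeze the resulting inequality between $\mu+\nu$ on both sides to manufacture a second maximizer $e'\in C^{\circ}$. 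This is shorter and uses no auxiliary lemmas beyond the definition of ${\cal T}(\omega)$ and (EXC). The paper's approach, while heavier here, develops the discreteness-of-values machinery that it immediately reuses in the proof of tropical convexity (Lemma~\ref{lem:trop_convexity}); your approach gives nothing for free beyond the present lemma, but is cleaner as a standalone proof.
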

 \begin{proof}
 	(If part). Consider $B \in {\cal B}_{\omega + x}$ and $e \in E \setminus B$.
 	Then $\max_{f \in B+e: B + e - f \in {\cal B}} \omega(B + e - f) - x(f)$ is attained by $f = e$ (Lemma~\ref{lem:opt}), 
 	and $f  \neq e$ by (TW).
 	This means that $B + e- f \in {\cal B}_{\omega + x}$.
 	Thus ${\bf M}_{\omega+x}$ is loop-free. 
 	 
 	(Only if part). By Lemma~\ref{lem:chain_of_flats}~(2) and $|F_i \cap B| \in \{0,1,2,\ldots,n\}$, 
    it holds $\{ (\omega + x)(B) \mid B \in {\cal B}\} \subseteq \ZZ + U$
    for a finite set $U = \{\sum_{i=1}^{n} \lambda_i k_i \mid 0 \leq k_i \leq n\}$. 
    Consequently the maximum of $\omega + x + \alpha {\bf 1}_{F}$ is attained  
    for all $\alpha \geq 0$ and $F \subseteq E$.
    The rest is precisely the same as in the proof of \cite[Proposition 2.3]{Speyer08}.
    Consider an arbitrary $n+1$ element subset $C$.
    As $\alpha \geq 0$ increases,
    the maximizer family  
    ${\cal B}_{\omega + x + \alpha {\bf 1}_{C}}$ changes finitely many times.
    Also, for large $\alpha \geq 0$, ${\cal B}_{\omega + x + \alpha {\bf 1}_{C}}$
    consists only of bases $B \in {\cal B}$ with $B \subseteq C$.
    We show that each $e \in C$ is not a loop in ${\cal B}_{\omega + x + \alpha {\bf 1}_{C}}$ for $\alpha \geq 0$. 
    For small $\epsilon > 0$, any base $B$ 
    in ${\cal B}_{\omega + x + \alpha {\bf 1}_{C}}$ with maximal $C \cap B$ 
    is also a base in  
    ${\cal B}_{\omega + x + (\alpha+ \epsilon) {\bf 1}_{C}}$; see below.
    Since each $e \in C$ is not a loop in ${\bf M}_{\omega + x}$, 
    so is in ${\bf M}_{\omega + x + \alpha {\bf 1}_{C}}$.  
    Thus, for large $\alpha>0$, the maximum of $\omega + x + \alpha {\bf 1}_{C}$
    must be attained by at least two bases in $C$, which implies (TW).   
 \end{proof}
 In the last step of the proof, we use the following lemma:
 \begin{Lem}[\cite{Speyer08}]\label{lem:integer-valued}
 	Let $x \in {\cal T}(\omega)$ and $F \subseteq E$.
 	Any base $B \in {\cal B}_{\omega+x}$ 
 	with maximal $B \cap F$  
 	belongs to ${\cal B}_{\omega + x + \alpha {\bf 1}_F}$
 	for sufficiently small $\alpha > 0$.
 	If $\omega$ and $x$ are integer-valued, 
 	then we can take $\alpha = 1$.
 \end{Lem}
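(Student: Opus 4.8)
The plan is to deduce membership $B \in {\cal B}_{\omega + x + \alpha {\bf 1}_F}$ from the optimality criterion of Lemma~\ref{lem:opt}. First note that $\omega + x + \alpha {\bf 1}_F = \omega + (x + \alpha {\bf 1}_F)$ is a valuated matroid, since $\omega$ is one and $x + \alpha {\bf 1}_F \in \RR^E$. By Lemma~\ref{lem:opt} it then suffices to check that $(\omega + x + \alpha {\bf 1}_F)(B - e + f) \le (\omega + x + \alpha {\bf 1}_F)(B)$ for every $e \in B$ and $f \in E \setminus B$ with $B - e + f \in {\cal B}$. Setting $\delta(e,f) := (\omega + x)(B - e + f) - (\omega + x)(B)$, which is $\le 0$ because $B \in {\cal B}_{\omega+x}$ (Lemma~\ref{lem:opt} again), this inequality reads
\begin{equation*}
\delta(e,f) + \alpha\bigl({\bf 1}_F(f) - {\bf 1}_F(e)\bigr) \le 0, \qquad {\bf 1}_F(f) - {\bf 1}_F(e) \in \{-1, 0, 1\}.
\end{equation*}

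The delicate swaps are those with $\delta(e,f) = 0$, for which the perturbation must not strictly increase the objective regardless of how small $\alpha$ is. In that case $B - e + f$ also attains $\max(\omega + x)$, hence $B - e + f \in {\cal B}_{\omega+x}$; and if we had $f \in F$ and $e \notin F$, then $(B - e + f) \cap F = (B \cap F) \cup \{f\}$ would properly contain $B \cap F$, contradicting the maximality of $B \cap F$ among $\{\, B'' \cap F : B'' \in {\cal B}_{\omega+x} \,\}$. So ${\bf 1}_F(f) - {\bf 1}_F(e) \le 0$ here, and the required inequality holds for every $\alpha \ge 0$. For the remaining swaps, where $\delta(e,f) < 0$, it is enough to require $\alpha < \inf\{\, |\delta(e,f)| : \delta(e,f) < 0 \,\}$. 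When $\omega$ and $x$ are integer-valued, $\delta(e,f) \le -1$ for such swaps, so $\alpha = 1$ already works (together with the $\delta = 0$ case).

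It remains to show that for a general real $x$ this infimum is strictly positive, which is the one genuinely non-trivial point, since $E$ may be infinite and there could \emph{a priori} be infinitely many swaps. Here I would invoke Lemma~\ref{lem:chain_of_flats}(2): writing $x = \lfloor x \rfloor + \sum_{i=1}^{n} \lambda_i {\bf 1}_{F_i}$, every base $B''$ satisfies $(\omega + x)(B'') = (\omega + \lfloor x \rfloor)(B'') + \sum_{i=1}^n \lambda_i |F_i \cap B''|$, where the first term is an integer and the second lies in the finite set $U := \{\, \sum_{i=1}^n \lambda_i k_i : 0 \le k_i \le n \,\}$. Thus $\{\, (\omega+x)(B'') : B'' \in {\cal B} \,\} \subseteq \ZZ + U$, so each $\delta(e,f)$ lies in $\ZZ + (U - U)$ with $U - U$ finite; such a set meets the interval $(-1,0)$ in only finitely many points, whence $\inf\{\, |\delta(e,f)| : \delta(e,f) < 0 \,\} > 0$, and any smaller $\alpha > 0$ completes the argument.

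The main obstacle is precisely this last step — ruling out that $\delta(e,f)$ accumulates at $0$ from below along an infinite family of exchanges — which is why the general case must pass through the discreteness furnished by Lemma~\ref{lem:chain_of_flats}; for finite $E$, or integer-valued data, it is automatic. The rest — that $\omega + x + \alpha {\bf 1}_F$ is a valuated matroid, and the bookkeeping of $B \cap F$ under an exchange — is routine.
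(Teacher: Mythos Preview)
Your argument is correct and follows essentially the same route as the paper: apply the local optimality criterion (Lemma~\ref{lem:opt}) to $\omega+x+\alpha{\bf 1}_F$ and use maximality of $B\cap F$ to rule out the dangerous exchange $f\in F$, $e\notin F$ when $\delta(e,f)=0$. The paper proves only the integer case explicitly and dismisses the real case as ``essentially the same''; your extra step---using the decomposition of Lemma~\ref{lem:chain_of_flats}(2) to show the values $(\omega+x)(B'')$ lie in $\ZZ+U$ for a finite $U$, hence $\delta$ cannot accumulate at $0$---is exactly the discreteness argument the paper invokes elsewhere (e.g.\ in the proof of Lemma~\ref{lem:trop}) and is the natural way to fill that gap.
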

 \begin{proof}
 We only show the case where $\omega$ and $x$ are integer-valued;
 the proof of the non-integral case is essentially the same.
 We can assume that $x = {\bf 0}$.
 Consider a base $B \in {\cal B}_{\omega}$ 
 with maximal $B \cap F$.  
By Lemma~\ref{lem:opt}, 
it suffices to show that $\omega(B) + |B \cap F| \geq 
\omega (B - e + f) + |(B - e + f) \cap F|$ for $e \in B$ 
and $f \in E \setminus B$ with  $B - e + f \in {\cal B}$.
By $\omega(B) \geq \omega(B-e+f)$, 
if $f \not \in F$ or $e \in F$, 
then this obviously holds.
Suppose that $f \in F$ and $e \not \in F$.
Then  $|(B - e + f) \cap F| = 1 + |B \cap F|$.
By the maximality, $B-e+f$ does not belong to 
${\cal B}_{\omega}$, implying $\omega(B - e+f) \leq \omega(B) - 1$.
Thus  $\omega(B) + |B \cap F| \geq 
\omega (B - e + f) + |(B - e + f) \cap F|$.
 \end{proof}

The tropical linear space enjoys 
a tropical version of convexity
introduced by Develin-Sturmfels~\cite{DevelinSturmfels}. 
A subset $Q \subseteq \RR^E$ is said to 
be {\em tropically convex}~\cite{DevelinSturmfels}
if $\min (x + \alpha {\bf 1}, y + \beta {\bf 1}) \in Q$ for all $x,y \in Q$ and $\alpha,\beta \in \RR$. An equivalent condition for the tropical convexity 
consists of (TC$_\wedge$) and (TC$_{+{\bf 1}}$) below:
\begin{itemize}
	\item[(TC$_\wedge$)] $\min(x,y) \in Q$ for all $x,y \in Q$.
	\item[(TC$_{+{\bf 1}}$)] $x+ \alpha {\bf 1} \in Q$ for all $x \in Q$, $\alpha \in \RR$. 
\end{itemize}
These two properties of ${\cal T}(\omega)$ 
were recognized  
by Murota-Tamura~\cite{MurotaTamura01} (in finite case).
\begin{Lem}[{\cite[Theorem 3.4]{MurotaTamura01}; 
		see also \cite[Proposition 2.14]{Hampe15}}]\label{lem:trop_convexity}
	 The tropical linear space ${\cal T}(\omega)$ is tropically convex.
\end{Lem}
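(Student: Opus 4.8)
The plan is to verify the two conditions {\rm (TC$_\wedge$)} and {\rm (TC$_{+{\bf 1}}$)} that together characterize tropical convexity. The condition {\rm (TC$_{+{\bf 1}}$)} is immediate (and was essentially already observed above): for $x \in {\cal T}(\omega)$ and $\alpha \in \RR$ one has $(\omega + x + \alpha{\bf 1})(B) = (\omega + x)(B) + \alpha n$ for every base $B$, so ${\cal B}_{\omega + x + \alpha {\bf 1}} = {\cal B}_{\omega + x}$ and hence ${\bf M}_{\omega + x + \alpha {\bf 1}} = {\bf M}_{\omega + x}$ has no loop.

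For {\rm (TC$_\wedge$)}, take $x, y \in {\cal T}(\omega)$ and put $z := \min(x,y)$. I would prove $z \in {\cal T}(\omega)$ via Lemma~\ref{lem:trop}, i.e.\ by checking (i) that $z$ satisfies {\rm (TW)} and (ii) that $\omega + z$ attains its maximum over ${\cal B}$. For (i), fix an $(n+1)$-element set $C \subseteq E$ and, for $w \in \RR^E$, write $h_w(f) := \omega(C-f) - w(f)$ for $f \in C$ (with the value $-\infty$ when $C - f \notin {\cal B}$; note that the set of $f \in C$ with $C - f \in {\cal B}$ is the same for $x$, $y$, $z$, so there is nothing to prove when it is empty). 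From $-z = \max\{-x,-y\}$ we get $h_z = \max\{h_x, h_y\}$ on $C$, hence $\max_{f \in C} h_z(f) = \max\{\max_f h_x(f),\ \max_f h_y(f)\}$; assuming without loss of generality $\max_f h_x(f) \ge \max_f h_y(f)$, every $f \in C$ attaining $\max_f h_x(f)$ also attains $\max_f h_z(f)$, and there are at least two such $f$ by {\rm (TW)} for $x$. Thus $z$ satisfies {\rm (TW)}. This is the conceptual heart of the argument, but it is short.

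The step I expect to be the main obstacle is (ii): because $E$ may be infinite, the existence of a base maximizing $\omega + z$ requires an argument. Boundedness from above is clear, since $z \le x$ gives $\omega + z \le \omega + x$ on ${\cal B}$ and the finite maximum of $\omega + x$ is attained ($x \in {\cal T}(\omega)$). To obtain attainment I would invoke Lemma~\ref{lem:chain_of_flats}(2): it writes $x = \lfloor x \rfloor + \sum_{i=1}^n \lambda_i {\bf 1}_{F_i}$ for a chain of flats $F_1 \subset \cdots \subset F_n = E$ and $\lambda_i \ge 0$, so the fractional part $x(e) - \lfloor x(e)\rfloor = \sum_{i\,:\,e \in F_i}\lambda_i$ takes at most $n$ distinct values as $e$ ranges over $E$; likewise for $y$. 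Since $z(e) - \lfloor z(e)\rfloor$ equals whichever of $x(e) - \lfloor x(e)\rfloor$ and $y(e) - \lfloor y(e)\rfloor$ corresponds to the smaller of $x(e), y(e)$, the set of fractional parts of $z$ is finite; as $\omega$ is integer-valued, the set $\{(\omega + z)(B) \mid B \in {\cal B}\}$ therefore lies in $\ZZ + U$ for some fixed finite $U \subseteq \RR$. Such a set is discrete, so a bounded-above subset of it attains its supremum. Then Lemma~\ref{lem:trop} gives $z \in {\cal T}(\omega)$, establishing {\rm (TC$_\wedge$)} and completing the proof.
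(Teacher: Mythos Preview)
Your proof is correct and follows essentially the same route as the paper's: both verify (TC$_{+{\bf 1}}$) trivially and for (TC$_\wedge$) establish (TW) for $z=\min(x,y)$ via $h_z=\max\{h_x,h_y\}$, then handle attainment of the maximum by observing (through Lemma~\ref{lem:chain_of_flats}(2) applied to $x$ and $y$) that the image of $\omega+z$ lies in $\ZZ+U$ for a finite set $U$. You spell out the discreteness step more explicitly than the paper, which simply cites ``as in the proof of Lemma~\ref{lem:trop}''.
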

\begin{proof}
We show that ${\cal T}(\omega)$ satisfies (TC$_\wedge$), while 
(TC$_{+{\bf 1}}$) is obvious.
Let $x,y \in {\cal T}(\omega)$. As in the proof of Lemma~\ref{lem:trop},
we see from Lemma~\ref{lem:chain_of_flats}~(2) that the image 
$\{ (\omega + x \wedge y)(B) \mid B \in {\cal B} \}$ of $\omega + x \wedge y$ 
is discrete in $\RR$.
Hence the maximum of $\omega + x \wedge y$ is attained by some base.
Let $C$ be an $(n+1)$-element subset of $E$.
We may assume that $\max_f \omega(C - f ) - x(f) \geq \max_f \omega(C - f ) - y(f)$. Necessarily $\max_f \omega(C - f ) - (x \wedge y) (f) = \max_f \omega(C - f ) - x(f)$. By (TW) and Lemma~\ref{lem:trop}, we can choose distinct $e,e' \in C$ that attain  
$\max_f \omega(C - f ) - x(f)$. Necessarily $x(e) = (x \wedge y)(e)$ and 
$x(e') = (x \wedge y)(e')$. Thus $e,e'$ attain $\max_f \omega(C - f ) - (x \wedge y) (f)$. By Lemma~\ref{lem:trop}, we have $x \wedge y \in {\cal T}(\omega)$. 
\end{proof}
By this property, ${\cal T}(\omega) \cap \ZZ^E$ 
becomes a lattice with respect to the vector order $\leq$.  
In the next section, 
we characterize this lattice ${\cal T}(\omega) \cap \ZZ^E$.

\section{Uniform semimodular lattices}\label{sec:uniform}

The {\em ascending operator} of a lattice ${\cal L}$ is a map $(\cdot)^+:{\cal L} \to {\cal L}$ defined by
\begin{equation*}
(x)^+ := \bigvee \{y \in {\cal L} \mid \mbox{$y$ covers $x$}\}.
\end{equation*} 
A {\em uniform semimodular lattice} ${\cal L}$ is a semimodular lattice such that the ascending operator $(\cdot)^+$ is defined, 
and is an automorphism on ${\cal L}$. 
If, in addition, ${\cal L}$ is a modular lattice, then ${\cal L}$ 
is called a {\em uniform modular lattice}.
The condition for $(\cdot)^+$ is viewed as
a lattice-theoretic analogue of condition (TC$_{+{\bf 1}}$).
The simplest but important example of  
a uniform (semi)modular lattice is $\ZZ^m$:
\begin{Ex}
	View $\ZZ^m$ as a poset with vector order $\leq$.
	Then $\ZZ^m$ is a lattice, where the join $x \vee y$ and meet $x \wedge y$ 
	are $\max(x,y)$ and $\min(x,y)$, respectively.
	The component sum $x \mapsto \sum_{i=1}^m x_i$ is a height function 
	satisfying the semimodularity inequality (\ref{eqn:semimo}) (in equality).
	Therefore $\ZZ^m$ is a (semi)modular lattice.
	Observe that the ascending operator is equal to $x \mapsto x + {\bf 1}$, which is obviously an automorphism. 
	Thus $\ZZ^m$ is a uniform (semi)modular lattice (with uniform-rank $m$).
\end{Ex}
\begin{Ex}\label{ex:Z^E,n}
	Let $E$ be a finite set with $|E| = m$.
	Consider the poset $\ZZ^E (\simeq \ZZ^m)$ by vector order $\leq$.
	For positive integer $n$ with $n \leq m$, 
	let $\ZZ^{E,n}$ denote the subposet of  $\ZZ^E$ consisting of all $x$
	such that the minimum of $x(e)$ over $e \in E$ is attained 
	by at least $m - n + 1$ elements.
	If $n= m$, then $\ZZ^{E,n}$ is equal to 
	the above uniform modular lattice $\ZZ^E \simeq \ZZ^m$.
	By using notation $\Argmin x := \{e \in E \mid x(e) = \min_{f \in E} x(f)\}$,  
	$\ZZ^{E,n}$ is written as
	\[
	\ZZ^{E,n} = \{ x \in \ZZ^E \mid |\Argmin x| \geq m - n +1 \}.
	\]
	One can see from (TW) that $\ZZ^{E,n}$ is 
	the set of integer points of 
	the tropical linear space of the trivial valuation ($\omega = 0$) of the uniform matroid, or
	the {\em Bergman fan} of the uniform matroid (of rank $n$); 
	see  Section~\ref{sec:example}.
	
	It might be instructive to verify from the definition 
	that $\ZZ^{E,n}$ is a uniform semimodular lattice. 
	Since $x,y \in \ZZ^{E,n}$ 
	implies $\min (x,y) \in \ZZ^{E,n}$, 
	the meet $x \wedge y$ is equal to $\min (x,y)$.
	Then $\ZZ^{E,n}$ becomes a lattice in which the join $x \vee y$ 
	is given by $x \vee y = \bigwedge \{z \in \ZZ^{E,n} \mid z \geq \max (x,y)\}$.
	We next show the semimodularity. 
	For $x,y \in \ZZ^{E,n}$,
	if $|\Argmin x| = m - n +1$, 
	then $y$ covers $x$ if and only 
	if $y = x + {\bf 1}_{\Argmin x}$ 
	or $y = x + {\bf 1}_{e}$ for some $e \in E \setminus \Argmin x$.
	If $|\Argmin x| > m -n +1$, then $y$ covers $x$ if and only 
	if $y = x + {\bf 1}_{e}$ for some $e \in E$.
	From this, one can verify the condition of Lemma~\ref{lem:semimodular}~(2).
	For $x,y$ covering $z (= x \wedge y)$, if $|\Argmin z| = m -n + 2$, 
	$x = z + 1_{e}$ and $y = z + 1_{f}$ for distinct $e, f \in \Argmin z$, 
	then $x \vee y$ is equal to $z + {\bf 1}_{\Argmin z}$, which covers $x,y$.
	For other cases, $x \vee y$ is equal to $\max (x,y)$, 
	which obviously covers $x,y$.
	Hence $\ZZ^{E,n}$ is a semimodular lattice.
	Also $(x)^+$ is given by $x \mapsto x + {\bf 1}$,
	 which is obviously an automorphism.  
	The uniform-rank is equal to $n$, since 
	${\bf 0}, {\bf 1}_{\{e_1\}}, {\bf 1}_{\{e_1,e_2\}},\ldots, {\bf 1}_{\{e_1,\ldots, e_{n-1}\}}, {\bf 1}$ is a maximal chain of $[{\bf 0}, {\bf 1}]$.
	
\end{Ex}

\subsection{Basic concepts and properties}
In this section, we introduce basic concepts on uniform semimodular lattices
and prove some of basic properties, 
which will be a basis of our cryptomorphic equivalence to valuated matroids.
Some of them were introduced and proved in \cite{HH18a} for uniform modular lattices.

Let ${\cal L}$ be a uniform semimodular lattice, and let $r$ denote a height function of ${\cal L}$.
\begin{Lem}\label{lem:u-rank}
	For $x,y \in {\cal L}$, 
	the intervals $[x, (x)^+]$ and $[y, (y)^+]$ are 
	geometric lattices of the same rank.
\end{Lem}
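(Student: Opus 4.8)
The plan is to first establish that each interval $[x,(x)^+]$ is a geometric lattice, and then show rank-invariance by exploiting the fact that $(\cdot)^+$ is an automorphism together with a connectivity argument. For the first part, fix $x \in {\cal L}$ and consider the interval $[x,(x)^+]$. It is a sublattice, hence a lattice, and it inherits semimodularity (intervals of semimodular lattices are semimodular, which follows directly from the definition, or from the height-function characterization in Lemma~\ref{lem:semimodular}). It has a minimum $x$ and a maximum $(x)^+$. The point to verify is that $(x)^+$ is the join of atoms of this interval, i.e. that every element of $[x,(x)^+]$ is a join of elements covering $x$. By definition $(x)^+ = \bigvee\{y : y \succ_1 x\}$, so $(x)^+$ itself is such a join; for a general $z \in [x,(x)^+]$ I would use semimodularity to walk up a maximal chain from $x$ to $z$ and observe that, since semimodular lattices satisfy (JD) and the atoms below $z$ in the interval join to something that must equal $z$ (any element strictly above that join would, by semimodularity applied along a maximal chain, force a new atom). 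This makes $[x,(x)^+]$ a geometric lattice.

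Next, for the rank-equality: since $(\cdot)^+$ is an automorphism, it maps the interval $[x,(x)^+]$ isomorphically onto $[(x)^+,((x)^+)^+]$, so $r[x,(x)^+] = r[(x)^+,((x)^+)^+]$, and likewise for any iterate. This shows the rank is constant along each $(\cdot)^+$-orbit. To get equality for \emph{all} $x,y$, I would argue that the function $x \mapsto r[x,(x)^+]$ is ``locally constant'' in the sense that it takes the same value on $x$ and on any element covering $x$. Concretely: if $x \prec_1 w$, I would compare $r[x,(x)^+]$ with $r[w,(w)^+]$. Since $w \preceq (x)^+$ (as $w$ covers $x$, $w$ is one of the joined elements), semimodularity gives control on how $(w)^+$ sits relative to $(x)^+$; using that $(\cdot)^+$ is order-preserving one gets $(x)^+ \preceq (w)^+$, and a rank count on the interval $[w,(x)^+]$ versus $[x,(x)^+]$ (which differ by one step at the bottom) combined with the corresponding count at the top — comparing $(x)^+ \prec_1 \cdots$ inside $[(x)^+,(w)^+]$ — should yield $r[w,(w)^+] = r[x,(x)^+]$. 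Then, since any two elements of a lattice satisfying (F) are connected by a finite zigzag of covering relations (join them to $x\vee y$ and meet down, or use that $[x\wedge y, x\vee y]$ is a finite-length interval), the value is globally constant.

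I expect the main obstacle to be the ``local constancy'' step — showing $r[x,(x)^+] = r[w,(w)^+]$ when $x \prec_1 w$ — because it requires carefully tracking how the ascending operator interacts with a single covering step at the bottom of the interval, and in a merely semimodular (not modular) lattice one cannot freely use the exchange-type identities available in the modular case. The resolution should come from combining: (i) $(\cdot)^+$ order-preserving, so $(x)^+ \preceq (w)^+$; (ii) the fact that $(x)^+ \prec_1 ((x)^+)$ translates, under the automorphism, into a controlled structure; and (iii) a height-function argument via Lemma~\ref{lem:semimodular}(3), counting $r(w) - r(x) = 1$ and $r((w)^+) - r((x)^+) = 1$ (the latter because $(\cdot)^+$ is an automorphism and hence preserves the covering relation $x \prec_1 w$, giving $(x)^+ \prec_1 (w)^+$), so that $r[w,(w)^+] = r((w)^+) - r(w) = (r((x)^+)+1) - (r(x)+1) = r[x,(x)^+]$. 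This last computation is in fact clean once one notes the automorphism preserves covers, so the genuinely delicate point reduces to confirming that a height function can be chosen consistently — which is exactly Lemma~\ref{lem:semimodular}(3) — and the argument closes.
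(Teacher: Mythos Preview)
Your proposal is correct and takes essentially the same approach as the paper. The paper's proof is just terser: it reduces at once to the case where $y$ covers $x$, notes that the automorphism $(\cdot)^+$ preserves covers so $(x)^+ \prec_1 (y)^+$, and reads off $1 + r[y,(y)^+] = r[x,(y)^+] = r[x,(x)^+] + 1$ via (JD) --- which is exactly your height-function computation $r((w)^+)-r(w)=(r((x)^+)+1)-(r(x)+1)=r[x,(x)^+]$ in different notation.
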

\begin{proof}
	The semimodularity of $[x,(x)^+]$ is obvious.
	We show that every element in $[x,(x)^+]$ is 
	the join of a subset of atoms ($=$ elements covering $x$).
	Take arbitrary $y \in [x,(x)^+]$.
	Since $(\cdot)^+$ is an automorphism, 
	we can take $z \in {\cal L}$ with $(z)^+ = y$ and $z \preceq x$.
	By definition, $y$ is the join of atoms in $[z,y]$, i.e., 
	the join of elements $z_1,z_2,\ldots,z_k$ covering $z$. 
	Consider elements $x \vee z_1, x \vee z_2, \ldots, x \vee z_k$, by the semimodularity, each of which equals $x$ or covers $x$.
	Also their join is equal to $y$. 
	This means that $y$ is represented as the join of atoms in $[x,(x)^+]$.
	Hence $[x, (x)^+]$ is a geometric lattice.

	We show that $[x, (x)^+]$ and $[y, (y)^+]$ have the same rank.
	It suffices to consider the case where $y$ covers $x$.
	Since $(\cdot)^+$ is an automorphism, 
	$(y)^+$ covers $(x)^+$.
	Therefore we have $1 + r[y,(y)^+] = r[x, (y)^+] = r[x,(x)^+] + 1$ (by (JD)), which implies $r[x,(x)^+] = r[y, (y)^+]$.
\end{proof}
The {\em uniform-rank} of ${\cal L}$ is defined as the rank $r[x, (x)^+]$
of interval $[x,(x)^+]$ for $x \in {\cal L}$.
We next study the inverse $(\cdot)^-$ of the ascending operator $(\cdot)^+$.
\begin{Lem}\label{lem:inverse}
	The inverse $(\cdot)^-$ of $(\cdot)^+$ is given by
	\begin{equation}\label{eqn:x^-}
	(x)^- = \bigwedge \{ y \in {\cal L} \mid \mbox{$y$ is covered by $x$}\} \quad (x \in {\cal L}).
	\end{equation}
\end{Lem}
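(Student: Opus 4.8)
The plan is to show that the right-hand side of \eqref{eqn:x^-} is well-defined and that it is a genuine two-sided inverse of $(\cdot)^+$. Since $(\cdot)^+$ is an automorphism, it is in particular a bijection, so it has an inverse $(\cdot)^-$; the content of the lemma is the explicit formula. First I would argue that for any $x \in {\cal L}$ the set $\{y \in {\cal L} \mid y \prec_1 x\}$ is nonempty and its meet exists: by the finiteness assumption (F) applied to an interval $[z,x]$ with $z \prec x$, there is at least one element covered by $x$, and by (F) again the descending chains below $x$ are finite, so $\bigwedge$ of any subset of $[{\bar 0}\wedge\cdots, x]$ exists in the lattice (meets of arbitrarily many elements within a lattice satisfying the descending chain condition below $x$ are fine). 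Call this element $w := \bigwedge\{y \mid y \prec_1 x\}$.

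The key step is to show $(w)^+ = x$, i.e. that $w$ is the preimage of $x$ under the ascending operator. Apply the automorphism $(\cdot)^+$ to $w$: since $w \preceq y$ for every $y$ covered by $x$, and since $(\cdot)^+$ preserves the covering relation (an automorphism maps covers to covers), applying $(\cdot)^+$ to the relation $w \preceq y \prec_1 x$ we get $(w)^+ \preceq (y)^+$ and $(y)^+$ is related to $(x)^+$. The cleaner route: let $v := (\cdot)^-(x)$ be the abstract inverse, so $(v)^+ = x$ by definition, meaning $x$ is the join of all elements covering $v$. I want to show $v = w$. Every element covering $v$ is $\preceq x = (v)^+$; conversely, because $(\cdot)^+$ is an automorphism, the atoms of the geometric lattice $[v,(v)^+] = [v,x]$ are exactly the elements covering $v$, and applying the automorphism $(\cdot)^-$ to the covering relations $v \prec_1 (\text{atom}) $ shows — wait, I should instead directly characterize elements covered by $x$: since $(\cdot)^+$ is an automorphism sending $v$ to $x$, it maps the set of elements covered by $v$ bijectively onto the set of elements covered by $x$; but more usefully it maps elements covering $v$ to elements covering $x$. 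Hmm — the right move is: apply $(\cdot)^-$ to each $y$ with $y \prec_1 x$. Since $(\cdot)^-$ is an automorphism, $(y)^- \prec_1 (x)^- = v$, so $v$ covers each $(y)^-$; then $\bigwedge\{(y)^- \mid y \prec_1 x\} \preceq v$, and applying the automorphism $(\cdot)^+$ and using that automorphisms commute with arbitrary meets, $\bigwedge\{y \mid y \prec_1 x\} = w \preceq (v)^+ = x$ — this just recovers $w \preceq x$. To get $w = v$: apply $(\cdot)^+$ to $w$. For each $y \prec_1 x$ we have $w \preceq y$, hence $(w)^+ \preceq (y)^+$. Also $x \prec_1 (y)^+$ is false in general...

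Let me restructure: the efficient argument is to verify the formula defines a left inverse. Take any $x$ and set $w = \bigwedge\{y \mid y \prec_1 x\}$; I claim $(w)^+ = x$. Since $(\cdot)^+$ is an automorphism, it preserves meets: $(w)^+ = \bigwedge\{(y)^+ \mid y \prec_1 x\}$. Now for each $y \prec_1 x$, the automorphism $(\cdot)^+$ sends the cover $y \prec_1 x$ to a cover $(y)^+ \prec_1 (x)^+$, but also I claim $x \preceq (y)^+$: indeed $x$ covers $y$, so $x$ is one of the atoms of $[y,(y)^+]$, hence $x \preceq (y)^+$. Therefore $x$ is a lower bound of $\{(y)^+ \mid y \prec_1 x\}$, giving $x \preceq (w)^+$. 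For the reverse, $(w)^+ = \bigwedge\{(y)^+ : y\prec_1 x\}$ and I need this $\preceq x$; equivalently, applying the automorphism $(\cdot)^-$, I need $w \preceq (\cdot)^-\!\big(\bigwedge (y)^+\big) = \bigwedge\{y \mid y \prec_1 x\} = w$, which is an equality — so this direction is automatic once meets are preserved and we unwind. Hence $(w)^+ = x$, i.e. $(\cdot)^+$ applied to the right-hand side of \eqref{eqn:x^-} yields $x$, so the right-hand side equals $(x)^-$.

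**Main obstacle.** The delicate point is the existence of the meet $\bigwedge\{y \mid y \prec_1 x\}$ over a possibly infinite set, and knowing this set is nonempty; both follow from (F) (no interval contains an infinite chain, so $x$ covers something, and descending chains terminate so arbitrary meets below $x$ exist), but this should be stated carefully. The second subtlety is the clean justification that an automorphism commutes with arbitrary meets — true because automorphisms preserve the order in both directions, hence preserve greatest lower bounds — and that automorphisms send covers to covers, which is immediate from preservation of the covering relation $[y,x] = \{y,x\}$. With these in hand the identity $(w)^+ = x$ drops out as above.
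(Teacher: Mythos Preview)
Your argument has a genuine gap in the ``reverse direction'' $(w)^+ \preceq x$. You write: ``equivalently, applying the automorphism $(\cdot)^-$, I need $w \preceq (\cdot)^-\bigl(\bigwedge (y)^+\bigr) = \bigwedge\{y \mid y \prec_1 x\} = w$, which is an equality.'' But this is circular. What you actually need after applying $(\cdot)^-$ to $(w)^+ \preceq x$ is $w \preceq (x)^-$, not $w \preceq w$; you have substituted $(w)^+$ for $x$ on the right before proving they are equal. So only the inequality $x \preceq (w)^+$ is established, and the proof is incomplete.

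The missing idea is the one the paper uses: first show that every $y$ covered by $x$ lies in the interval $[(x)^-, x]$ (if $y \prec_1 x$ then $(y)^- \prec_1 (x)^-$, so $(x)^-$ covers $(y)^-$, hence $(x)^- \preceq ((y)^-)^+ = y$). Thus the set $\{y \mid y \prec_1 x\}$ is exactly the set of hyperplanes of the geometric lattice $[(x)^-, x]$, and by Lemma~\ref{lem:hyperplane}(1) the meet of all hyperplanes in a geometric lattice is its minimum, namely $(x)^-$. This simultaneously proves existence of the meet and identifies it. Your attempt to get by with only the automorphism property (preservation of meets and covers) cannot succeed without some input of this kind, because nothing in that argument rules out $(w)^+ \succ x$. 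Your side remark that (F) guarantees existence of the infinite meet is also not quite right: (F) bounds chain lengths in intervals but does not by itself force arbitrary meets to exist when infinitely many elements sit at the same height; the paper's route avoids this issue by working inside the interval $[(x)^-,x]$, which has a minimum.
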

\begin{proof}
	Suppose that $y \in {\cal L}$ is covered by $(x)^+$.
	Since $(\cdot)^+$ is an automorphism, 
	there is $y' \in {\cal L}$ such that $(y')^+ = y$.
	Also $x$ covers $y'$,  
	which implies $x \preceq (y')^+= y$ by the definition of $(\cdot)^+$.
	Namely $y$ belongs to $[x, (x)^+]$.
	Now $x$ is also the meet of all hyperplanes 
	of geometric lattice $[x, (x)^+]$ (Lemma~\ref{lem:hyperplane}~(1)).
	By the above argument, they are exactly elements covered by $(x)^+$ in ${\cal L}$. 
	This means that the right hand side of (\ref{eqn:x^-}) exists, 
	and equals $(x)^-$.  
\end{proof}

For $x \in {\cal L}$ and $k \in \ZZ$, define $(x)^{+k}$ by
\begin{equation*}
(x)^{+k} := \left\{
\begin{array}{ll}
x  & {\rm if}\ k= 0, \\
((x)^{+(k-1)})^{+} & {\rm if}\ k > 0, \\
((x)^{+(k+1)})^{-} & {\rm if}\ k < 0.  
\end{array}\right.
\end{equation*}
For $k > 0$, we denote $(x)^{+(-k)}$ by $(x)^{-k}$.
\begin{Lem}\label{lem:preceq}
	For $x,y \in {\cal L}$, 
	there is $k \geq 0$ 
	such that $x \preceq (y)^{+k}$.
\end{Lem}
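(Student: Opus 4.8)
The plan is to reduce everything to the single inequality $x \vee y \preceq (y)^{+k}$, which suffices since $x \preceq x \vee y$. Put $z := x \vee y$ and $m := r[y, z]$; this is a well-defined finite nonnegative integer, because semimodularity gives the Jordan--Dedekind condition (JD) and (F) guarantees finiteness, exactly as recorded in the excerpt. I would then prove, by induction on $m = r[y,z]$, the slightly more general assertion that $y \preceq z$ in ${\cal L}$ implies $z \preceq (y)^{+m}$.

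The base case $m = 0$ is trivial, since then $z = y = (y)^{+0}$. For $m \geq 1$ I would first produce an element $y'$ with $y \prec_1 y' \preceq z$. Such a $y'$ exists: if no element covered $y$ inside $[y,z]$, then starting from $z$ one could recursively choose strictly smaller elements still strictly above $y$, building an infinite chain in $[y,z]$ and contradicting (F). By (JD) we have $r[y',z] = m-1$, so the induction hypothesis applied to $y' \preceq z$ yields $z \preceq (y')^{+(m-1)}$. Since $y'$ covers $y$, the definition of the ascending operator gives $y' \preceq (y)^+$; and because $(\cdot)^+$ is an automorphism, in particular order-preserving, so is its $(m-1)$-fold iterate, whence $(y')^{+(m-1)} \preceq ((y)^+)^{+(m-1)} = (y)^{+m}$. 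Combining these, $z \preceq (y)^{+m}$, which closes the induction. Finally $x \preceq z \preceq (y)^{+m}$, so $k = m$ works.

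The only slightly delicate points, and they are mild, concern the rank quantity $r[y,z]$: that it is legitimate and finite (this is (F) together with (JD), both already established in the excerpt) and that removing one covering step at the bottom decreases it by exactly one (again (JD), via prepending $y \prec_1 y'$ to a maximal chain of $[y',z]$). Everything else is bookkeeping; note in particular that only order-preservation of $(\cdot)^+$ is invoked, the full automorphism hypothesis being stronger than needed here, so I anticipate no genuine obstacle.
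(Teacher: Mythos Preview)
Your proof is correct, but it takes a different route from the paper's. The paper works ``from below'': assuming $x \not\preceq y$, it picks an atom $a$ of $[x\wedge y,\,x]$, uses semimodularity to show $a \vee y$ covers $y$ and hence $a \preceq (y)^+$, so that $x \wedge (y)^+ \succ x \wedge y$ strictly; iterating, $x \wedge (y)^{+k}$ climbs inside $[x\wedge y,\,x]$ and reaches $x$ after at most $r[x\wedge y,\,x]$ steps. You instead work ``from above'': you pass to the join $z = x \vee y$ and induct on $r[y,z]$, peeling off one covering relation $y \prec_1 y' \preceq z$ at a time and using only that $(\cdot)^+$ is order-preserving. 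Your argument is a bit more elementary in that the inductive step never appeals to semimodularity directly (only indirectly, through (JD) to make $r[y,z]$ meaningful), and it yields the bound $k = r[y,\,x\vee y]$, which by the semimodular inequality is at most the paper's $r[x\wedge y,\,x]$. The paper's version, on the other hand, makes the geometric mechanism (each application of $(\cdot)^+$ absorbs one more atom of $[x\wedge y,\,x]$) more visible.
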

\begin{proof}
	We may assume that $x \not \preceq y$.
	Hence $x \succ x \wedge y$.
	Choose an atom $a$ in $[x \wedge y, x]$.
	Then $a \wedge y = x \wedge y$. By semimodularity,	 
	$a \vee y$ is an atom in $[y, (y)^+]$, and
	$x \wedge y \prec a \preceq x \wedge (y)^+$.
	Thus, for $k \geq r[x \wedge y,x]$, it holds $x \wedge (y)^{+k} = x$, 
	implying $x \preceq (y)^{+k}$.
\end{proof}

\subsubsection{Segments and rays}\label{subsec:segments}
A {\em segment} is a chain $e^0 \prec e^1 \prec \cdots \prec e^s$
such that $e^\ell$ covers $e^{\ell-1}$ for $\ell=1,2,\ldots,s$, 
and $e^{\ell+1} \not \in [e^{\ell-1}, (e^{\ell-1})^+] (\ni e^\ell)$ for $\ell=1,2,\ldots,s-1$.
A {\em ray} is an infinite chain $e^0 \prec e^1 \prec \cdots$
such that $e^0 \prec e^1 \prec \cdots \prec e^\ell$ is a segment for all $\ell$.
\begin{Ex}\label{ex:ray}
	Consider the case of ${\cal L} = \ZZ^n$.
	Then a ray is precisely a chain
	\begin{equation}\label{eqn:ray_Z^n}
	x \prec x+ {\bf 1}_{i} \prec x+ 2{\bf 1}_{i} \prec x+ 3{\bf 1}_{i} \prec \cdots 
	\end{equation}
	for some $x \in \ZZ^n$ and $i \in \{1,2,\ldots,n\}$, 
	where ${\bf 1}_i$ denote the $i$-th unit vector.
	
	The case of ${\cal L} = \ZZ^{E,n}$ is similar. 
	But, for $x \in \ZZ^{E,n}$ 
	with $|\Argmin x| = m - n +1$ and $e \in \Argmin x$,  the chain 
	\begin{equation}\label{eqn:ray_Z^E,n}
	x \prec x + {\bf 1}_{ \Argmin x} \prec x + 2 {\bf 1}_{ \Argmin x} \prec \cdots \prec y \prec  y + {\bf 1}_{e} \prec y + 2 {\bf 1}_{e} \prec \cdots
	\end{equation}
	is also a ray, where $y: = x + k {\bf 1}_{\Argmin x}$ for the difference $k$ between 
	the minimum of $x$ and the second minimum.
	Other rays in $\ZZ^{E,n}$ are of form (\ref{eqn:ray_Z^n}).
\end{Ex}

The following characterization of segments was suggested by K. Hayashi.
\begin{Lem}\label{lem:straight}
	A chain $x = e^0 \prec e^1 \prec \cdots \prec e^s = y$ is a segment if and only 
	if $[x,y] = \{ e^0, e^1, \ldots, e^s \}$.
\end{Lem}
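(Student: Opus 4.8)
The plan is to prove both directions by induction on $s$, using the semimodularity of $\cal L$ and the fact that $(\cdot)^+$ is an automorphism. For the ``only if'' direction, suppose $x = e^0 \prec_1 e^1 \prec_1 \cdots \prec_1 e^s = y$ is a segment. I would argue that $[x,y]$ is a chain, i.e.\ totally ordered. Suppose not: then there is $z \in [x,y]$ incomparable to some $e^\ell$. Pick the least index $\ell$ such that $z \not\preceq e^\ell$ (such $\ell$ exists and is $\geq 1$ since $z \succeq x = e^0$ and $z \preceq y = e^s$); then $z \wedge e^\ell = e^{\ell-1}$, because $e^{\ell-1} \preceq z \wedge e^\ell \prec_1 e^\ell$ forces equality as $e^{\ell-1} \prec_1 e^\ell$. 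By semimodularity, $z \vee e^\ell$ covers $e^\ell$, so $z \vee e^\ell \in [e^\ell, (e^\ell)^+]$; in particular $z \vee e^\ell \preceq (e^\ell)^+$, hence $z \preceq (e^{\ell-1})^+$ as well (since $z \preceq z\vee e^\ell$ and I can also bound $z$ by going through $e^{\ell-1}$). The key point: I want to show $e^{\ell+1} \in [e^{\ell-1}, (e^{\ell-1})^+]$, contradicting the segment condition. Here $z$ should give me enough room: $z \vee e^\ell$ is an atom of $[e^\ell,(e^\ell)^+]$ distinct from $e^{\ell+1}$ (since $z\not\preceq e^\ell$ but $z \preceq z\vee e^\ell$, while if $z \vee e^\ell = e^{\ell+1}$ then $z \preceq e^{\ell+1} \preceq y$, which is allowed — so I need to be more careful here). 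The cleaner route is: $z$ incomparable to $e^\ell$ forces $z \preceq (e^{\ell-1})^+$, and then $e^{\ell+1} \preceq y$ together with a counting/rank argument inside the geometric lattice $[e^{\ell-1},(e^{\ell-1})^+]$ will force $e^{\ell+1} \in [e^{\ell-1},(e^{\ell-1})^+]$.

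For the ``if'' direction, suppose $[x,y] = \{e^0,\dots,e^s\}$ is a chain with $e^\ell \prec_1 e^{\ell+1}$. I must check the segment condition: $e^{\ell+1} \notin [e^{\ell-1},(e^{\ell-1})^+]$ for each $\ell$. Suppose for contradiction that $e^{\ell+1} \in [e^{\ell-1},(e^{\ell-1})^+]$ for some $\ell$. Then $[e^{\ell-1}, e^{\ell+1}]$ sits inside the geometric lattice $[e^{\ell-1},(e^{\ell-1})^+]$ and has rank $2$, so by Lemma~\ref{lem:hyperplane}~(2) it is a geometric lattice of rank $2$; such a lattice has at least two atoms, hence there is an atom $a$ of $[e^{\ell-1},e^{\ell+1}]$ with $a \neq e^\ell$. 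Then $a \in [x,y]$ but $a \notin \{e^0,\dots,e^s\}$: indeed $a$ is incomparable to $e^\ell$ (both cover $e^{\ell-1}$ and are distinct), contradicting that $[x,y]$ is the chain $\{e^0,\dots,e^s\}$.

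The main obstacle I anticipate is the ``only if'' direction: showing that the segment condition (a local, three-consecutive-elements condition) globally forces $[x,y]$ to be a chain. The subtlety is ruling out elements $z \in [x,y]$ that are ``incomparable sideways'' without sitting in any single interval $[e^{\ell-1},(e^{\ell-1})^+]$. I expect the right tool is to run the argument by looking at the \emph{largest} index $\ell$ with $e^\ell \preceq z$ (rather than the least with $z \not\preceq e^\ell$), show $z \vee e^{\ell+1}$ covers $e^{\ell+1}$ by semimodularity while $z \preceq (e^\ell)^+$, and then transport this down by the automorphism $(\cdot)^+$ so that $z \vee e^{\ell+1}$ becomes an atom of $[e^\ell, (e^\ell)^+]$ witnessing $e^{\ell+2} \in [e^\ell, (e^\ell)^+]$ — unless $\ell+1 = s$, in which case $z \preceq e^s = y$ already gives $z$ in the chain. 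Making this telescoping precise, using that $(\cdot)^+$ is an automorphism to move between consecutive intervals, is where the real work lies; everything else is routine lattice bookkeeping.
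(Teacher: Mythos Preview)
Your ``if'' direction is correct and is essentially the paper's argument: once $e^{\ell+1} \in [e^{\ell-1},(e^{\ell-1})^+]$, the rank-$2$ geometric interval $[e^{\ell-1},e^{\ell+1}]$ furnishes an atom $a \neq e^\ell$, which lies in $[x,y]$ but is incomparable to $e^\ell$.

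Your ``only if'' direction, however, has a genuine gap. You pick an arbitrary extra element $z \in [x,y]$ and try to localize a violation of the segment condition near the index $\ell$ where $z$ ``leaves'' the chain. As you yourself note, this does not close: from $z \wedge e^{\ell+1} = e^\ell$ semimodularity only gives that $z \vee e^{\ell+1}$ covers $z$, not $e^{\ell+1}$, and there is no reason $z$ should lie in $[e^{\ell-1},(e^{\ell-1})^+]$ or force $e^{\ell+1}$ or $e^{\ell+2}$ to do so. The telescoping you allude to via the automorphism $(\cdot)^+$ is not made precise and I do not see how to make it work without essentially rediscovering the paper's argument.

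The paper's route is cleaner and avoids this difficulty by two moves you are missing. First, it runs induction on $s$: the hypothesis gives $[x,e^{s-1}] = \{e^0,\ldots,e^{s-1}\}$ (and, applied to the shifted segment $e^1\prec\cdots\prec e^s$, also $[e^1,e^s]=\{e^1,\ldots,e^s\}$). Second, it reduces from an arbitrary extra element to an extra \emph{atom}: if $[x,y]$ properly contains the chain, then there is an atom $a$ of $[x,y]$ with $a \neq e^1$ (otherwise $e^1$ is the unique atom and $[x,y]=\{x\}\cup[e^1,y]$ is the chain by induction). Now $a \not\preceq e^{s-1}$ by the inductive hypothesis, so semimodularity gives $a \vee e^{s-1} = e^s$; and $a \vee e^{s-2}$ is an atom of $[e^{s-2},(e^{s-2})^+]$ distinct from $e^{s-1}$, with $(a \vee e^{s-2}) \vee e^{s-1} = e^s$. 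Hence $e^s \in [e^{s-2},(e^{s-2})^+]$, contradicting the segment condition at the \emph{top} of the chain. The point is that an atom, unlike a general $z$, can be joined all the way up to $e^{s-1}$ while staying a cover, which is exactly what makes the contradiction land.
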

\begin{proof}
	(If part). 
	Suppose to the contrary that $e^{\ell+1} \in [e^{\ell-1},(e^{\ell-1})^+]$.
	Then there is an atom $a$ in $[e^{\ell-1},(e^{\ell-1})^+]$ 
	such that $e^{\ell+1} = a \vee e^{\ell}$ (by Lemma~\ref{lem:hyperplane}~(2)).
	This implies that $a \in [e^{\ell-1},e^{\ell+1}]$, 
	which contradicts $[e^{\ell-1},e^{\ell+1}] = \{e^{\ell-1},e^\ell,e^{\ell+1}\}$.
	
	(Only if part). We use the induction on the length $s$; 
	the case of $s=1$ is obvious.
	Suppose that $[x,e^{s-1}] = \{ e^0, e^1, \ldots, e^{s-1}\}$, and suppose to the contrary that $[x,y]$ properly contains $\{ e^0, e^1, \ldots, e^{s}\}$.
	Then (by induction applied to $\{e^1,e^2,\ldots,e^{s-1},e^s\}$) there 
	is an atom $a$ of $[x,y]$ not belonging to $\{ e^0, e^1, \ldots, e^{s}\}$.
	In particular, $a \not \preceq e^{s-1}$.
	By semimodularity,  $a \vee e^{s-1}$ covers $e^{s-1}$, and is equal to $e^s$. 
	Consider $e^{s-2} \vee a$, which covers $e^{s-2}$ and 
	is not equal to $e^{s-1}$ (by $a \not \preceq e^{s-1}$).
	The join $(e^{s-2} \vee a) \vee e^{s-1}$ is equal to $e^{s}$.
	However this contradicts $e^s \not \in [e^{s-2}, (e^{s-2})^+]$.
\end{proof}
A ray (or segment) $e^0 \prec e^1 \prec \cdots$ with $x = e^0$ 
is called an {\em $x$-ray} (or {\em $x$-segment}).
\begin{Lem}\label{lem:segment}
	Let $x = e^0 \prec e^1 \prec \cdots \prec e^s$ be an $x$-segment.
	For $p \succeq x$ with $p \wedge e^1 = x$, 
	 chain  $p = p \vee e^0 \prec p \vee e^1 \prec \cdots \prec p \vee e^{s}$
	is a $p$-segment.
\end{Lem}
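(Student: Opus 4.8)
The plan is to check directly the two defining conditions of a segment for the chain $p=p\vee e^{0}\prec p\vee e^{1}\prec\cdots\prec p\vee e^{s}$: that consecutive terms form a covering pair, and that $p\vee e^{\ell+1}\notin[p\vee e^{\ell-1},(p\vee e^{\ell-1})^{+}]$ for $\ell=1,\dots,s-1$. A preliminary observation used repeatedly is that $p\wedge e^{\ell}=e^{0}$ for every $\ell$: since $e^{0}=p\wedge e^{1}\preceq p\wedge e^{\ell}\preceq e^{\ell}$ and $[e^{0},e^{\ell}]=\{e^{0},\dots,e^{\ell}\}$ (which follows from Lemma~\ref{lem:straight}), we have $p\wedge e^{\ell}=e^{j}$ for some $j$, and $j\ge 1$ would give $e^{1}\preceq e^{j}\preceq p$, contradicting $p\wedge e^{1}=e^{0}$.

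The engine of the argument is the following consequence of uniformity, i.e.\ of the fact that $(\cdot)^{+}$ is an automorphism with inverse $(\cdot)^{-}$ (Lemma~\ref{lem:inverse}): \emph{if $c\preceq(d)^{+}$ then $c\preceq(c\wedge d)^{+}$.} Applying the (order-preserving) inverse automorphism $(\cdot)^{-}$ to $c\preceq(d)^{+}$ gives $(c)^{-}\preceq((d)^{+})^{-}=d$; since also $(c)^{-}\preceq c$ (note $(c)^{-}\prec c$), we get $(c)^{-}\preceq c\wedge d$, and applying $(\cdot)^{+}$ yields $c=((c)^{-})^{+}\preceq(c\wedge d)^{+}$. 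This replaces a rank count, which is too coarse here: a covering step never violates the semimodular inequality against a length-two chain, so the straightness of the original segment must enter through the ascending operator rather than through heights, and this is precisely where uniformity is used.

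Next I would prove by induction on $\ell=1,\dots,s$ that $(p\vee e^{\ell-1})\wedge e^{\ell}=e^{\ell-1}$ and $p\vee e^{\ell-1}\prec_{1}p\vee e^{\ell}$. For $\ell=1$ the meet is $p\wedge e^{1}=e^{0}$, and semimodularity applied to $e^{0}=p\wedge e^{1}\prec_{1}e^{1}$ gives $p\prec_{1}p\vee e^{1}$. For $\ell\ge 2$, suppose for contradiction $e^{\ell}\preceq p\vee e^{\ell-1}$. By the inductive cover $p\vee e^{\ell-2}\prec_{1}p\vee e^{\ell-1}$ we have $p\vee e^{\ell-1}\preceq(p\vee e^{\ell-2})^{+}$, hence $e^{\ell}\preceq(p\vee e^{\ell-2})^{+}$. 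Since $e^{\ell}\wedge(p\vee e^{\ell-2})$ lies in $[e^{\ell-2},e^{\ell}]=\{e^{\ell-2},e^{\ell-1},e^{\ell}\}$ (Lemma~\ref{lem:straight}), and the values $e^{\ell-1},e^{\ell}$ would each force $e^{\ell-1}\preceq p\vee e^{\ell-2}$, hence $p\vee e^{\ell-1}\preceq p\vee e^{\ell-2}$ against the inductive cover, we obtain $e^{\ell}\wedge(p\vee e^{\ell-2})=e^{\ell-2}$ (for $\ell=2$ this is just $e^{2}\wedge p=e^{0}$ by the preliminary observation). The engine now gives $e^{\ell}\preceq(e^{\ell-2})^{+}$, i.e.\ $e^{\ell}\in[e^{\ell-2},(e^{\ell-2})^{+}]$, contradicting the segment condition of $e^{0}\prec\cdots\prec e^{s}$. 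Hence $e^{\ell}\not\preceq p\vee e^{\ell-1}$, so $(p\vee e^{\ell-1})\wedge e^{\ell}=e^{\ell-1}$, and semimodularity gives $p\vee e^{\ell-1}\prec_{1}p\vee e^{\ell}$, closing the induction.

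Finally, the straightness of the new chain is handled by the same device: fix $m\in\{2,\dots,s\}$ and suppose $p\vee e^{m}\preceq(p\vee e^{m-2})^{+}$. From the induction above, $e^{m}\not\preceq p\vee e^{m-1}$, and since $p\vee e^{m-2}\prec_{1}p\vee e^{m-1}$ the same case analysis as before yields $e^{m}\wedge(p\vee e^{m-2})=e^{m-2}$; the engine then gives $e^{m}\preceq(e^{m-2})^{+}$, contradicting the segment condition. Therefore $p\vee e^{m}\notin[p\vee e^{m-2},(p\vee e^{m-2})^{+}]$, and the chain $p=p\vee e^{0}\prec_{1}\cdots\prec_{1}p\vee e^{s}$ satisfies both segment axioms, so it is a $p$-segment. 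I expect the only delicate point to be the ``engine'' above, together with the clean use of the fact that intervals inside a segment are trivial (Lemma~\ref{lem:straight}); the rest is routine bookkeeping.
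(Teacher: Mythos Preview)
Your proof is correct and takes a genuinely different route from the paper's.

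The paper first reduces to the case where $p$ covers $x$ (and then implicitly iterates along a maximal chain from $x$ to $p$). In that atomic case it exploits that $(\cdot)^+$ is a join-automorphism to derive the positive identity $(f^{\ell})^{+}=(f^{\ell-1})^{+}\vee f^{\ell+1}$ for $f^{\ell}:=p\vee e^{\ell}$: from $f^{\ell-1}\vee e^{\ell}=f^{\ell}$ one gets $(f^{\ell-1})^{+}\vee(e^{\ell})^{+}=(f^{\ell})^{+}$, and the segment condition for the $e$-chain is rewritten as $(e^{\ell})^{+}=(e^{\ell-1})^{+}\vee e^{\ell+1}$, which after substitution yields the identity. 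The straightness of the $f$-chain then drops out since $(f^{\ell})^{+}\succ(f^{\ell-1})^{+}$.

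You instead handle a general $p$ in one pass, and your key device is the implication $c\preceq(d)^{+}\Rightarrow c\preceq(c\wedge d)^{+}$, obtained by applying the inverse automorphism $(\cdot)^{-}$ and using $(c)^{-}\preceq c$. Combined with Lemma~\ref{lem:straight} (intervals inside a segment are trivial), this lets you pull any hypothetical failure of the $f$-chain back to a failure of the original $e$-chain. The trade-off: the paper's argument produces an explicit formula for $(f^{\ell})^{+}$ that is reused later, while your ``engine'' is a clean standalone fact about uniform semimodular lattices that avoids the reduction step and the attendant check that the hypothesis $p\wedge e^{1}=x$ propagates along the chain from $x$ to $p$. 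Both arguments lean on uniformity at exactly the same point---the paper via join-preservation of $(\cdot)^{+}$, you via order-preservation of $(\cdot)^{-}$.
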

\begin{proof}
	It suffices to consider the case where $p$ covers $x$.
	By $p \neq e^1$ and $[e^0,e^s] = \{e^0,e^1,\ldots,e^s\}$ by Lemma~\ref{lem:straight},
	it holds $p \not \preceq e^s$.
	Then, by semimodularity, $(p,e^s)$ is a modular pair.
	Consequently, $p \vee e^{\ell+1}$ covers $p \vee e^{\ell}$ and $e^{\ell+1}$.
	Let $f^{\ell} := p \vee e^{\ell}$. 
	We show $(f^{\ell})^+ = (f^{\ell-1})^+ \vee f^{\ell+1}$, 
	which implies $f^{\ell+1} \not \in [f^{\ell-1}, (f^{\ell-1})^+]$. 
	By $f^{\ell-1} \vee e^\ell = f^\ell$, we have $(f^{\ell-1})^+ \vee (e^\ell)^+ = (f^\ell)^+$.
	By $(e^\ell)^+ = (e^{\ell-1})^+ \vee e^{\ell+1}$, 
	we have $(f^\ell)^+ = (f^{\ell-1})^+ \vee (e^{\ell-1})^+ \vee e^{\ell+1} = (f^{\ell-1})^+ \vee e^{\ell+1} = (f^{\ell-1})^+ \vee f^\ell \vee e^{\ell+1} = (f^{\ell-1})^+  \vee f^{\ell+1}$, as required.
\end{proof}

For $x \in {\cal L}$, let $r_x$ be a height function (on $[x,(x)^+]$) 
defined by $r_x(y) = r(y) - r(x)$.
A set of $x$-rays $(e_i^\ell)$ $(i=1,2,\ldots,k)$
is said to be {\em independent} if
$r_x(e_1^1 \vee e_2^1 \vee \cdots \vee e_k^1) = k$,  
or equivalently if $e_i^1 \wedge (\bigvee_{j\neq i} e_j^1) = x$ for each $i$.
\begin{Prop}\label{prop:generated}
	The sublattice generated by an independent set of $k$ $x$-rays
	is isomorphic to $\ZZ^k_+$, where the isomorphism is given by
	\begin{equation*}
	\ZZ^k_+ \ni (z_1,z_2,\ldots,z_k) \mapsto e_1^{z_1} \vee e_2^{z_2} \vee \cdots \vee e_k^{z_k}. 
	\end{equation*}
\end{Prop}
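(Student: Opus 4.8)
The plan is to establish the claimed isomorphism in three stages: first show the map $\Phi\colon(z_1,\dots,z_k)\mapsto e_1^{z_1}\vee\cdots\vee e_k^{z_k}$ is well-behaved on rank, then show it is order-preserving with order-preserving inverse on its image, and finally identify the image with the sublattice generated by the $k$ rays. I would argue by induction on $k$, the case $k=1$ being immediate since a single $x$-ray is a chain on which $e^\ell$ has height $r(x)+\ell$. For the inductive step, the key observation is that the sublattice $\mathcal{L}'$ generated by the first $k-1$ rays is, by induction, isomorphic to $\ZZ_+^{k-1}$ via $\Phi'$, and I must understand how $e_k^{z_k}$ interacts with $\mathcal{L}'$.

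The technical heart is a rank computation: I would prove that
\begin{equation*}
r_x\bigl(e_1^{z_1}\vee\cdots\vee e_k^{z_k}\bigr)=z_1+z_2+\cdots+z_k\qquad(z_i\in\ZZ_+).
\end{equation*}
Granting this, the map $\Phi$ is injective (distinct exponent vectors that agree in the join would force a rank collision after pairing with a suitable further join, using that $r_x$ is modular-additive along the generating chains), and it is order-preserving because increasing any $z_i$ enlarges the join. The additivity formula itself I would get by repeatedly applying semimodularity and Lemma~\ref{lem:segment}: by independence, $e_k^1\wedge(e_1^{z_1}\vee\cdots\vee e_{k-1}^{z_{k-1}})=x$ whenever the latter join lies ``generically'' with respect to the $k$-th ray, so Lemma~\ref{lem:segment} promotes the $x$-segment $x\prec e_k^1\prec\cdots\prec e_k^{z_k}$ to a $p$-segment with $p=e_1^{z_1}\vee\cdots\vee e_{k-1}^{z_{k-1}}$, giving $r_x(p\vee e_k^{z_k})=r_x(p)+z_k$, and then induction on $k$ closes the computation. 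The semimodular inequality (\ref{eqn:semimo}) supplies the needed $\le$ direction throughout, while Lemma~\ref{lem:segment} supplies the matching $\ge$.

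To see that the image of $\Phi$ is exactly the generated sublattice, I would check it is closed under $\vee$ and $\wedge$. Closure under join is clear: $\bigl(\bigvee_i e_i^{z_i}\bigr)\vee\bigl(\bigvee_i e_i^{w_i}\bigr)=\bigvee_i e_i^{\max(z_i,w_i)}$, since each $e_i^{z_i}\vee e_i^{w_i}=e_i^{\max(z_i,w_i)}$ as $e_i^\bullet$ is a chain. Closure under meet is the delicate point and is where I expect the main obstacle: I must show $\bigl(\bigvee_i e_i^{z_i}\bigr)\wedge\bigl(\bigvee_i e_i^{w_i}\bigr)=\bigvee_i e_i^{\min(z_i,w_i)}$. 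The inclusion $\supseteq$ is trivial; for $\subseteq$ I would use the rank formula together with semimodular inequality (\ref{eqn:semimo}): writing $a=\bigvee e_i^{z_i}$, $b=\bigvee e_i^{w_i}$, $c=\bigvee e_i^{\min(z_i,w_i)}\preceq a\wedge b$, the identity $r_x(a)+r_x(b)=\sum z_i+\sum w_i=\sum\max(z_i,w_i)+\sum\min(z_i,w_i)=r_x(a\vee b)+r_x(c)$ forces $r_x(a\wedge b)\le r_x(c)$ by (\ref{eqn:semimo}), hence $a\wedge b=c$. This last step also re-certifies that every pair in the image is a modular pair, so the image is a sublattice isomorphic to $\ZZ_+^k$ via $\Phi$, completing the proof.
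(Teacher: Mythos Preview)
Your overall architecture---rank formula, then meet-closure via the semimodular inequality and the identity $\sum z_i+\sum w_i=\sum\max+\sum\min$, then injectivity---is exactly right and matches the paper's strategy; your meet-closure argument is in fact the paper's Claim~(2) rephrased. The difficulty is in your proof of the rank formula itself.

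You want to apply Lemma~\ref{lem:segment} to lift the $k$-th ray to $p=e_1^{z_1}\vee\cdots\vee e_{k-1}^{z_{k-1}}$, which requires $e_k^1\wedge p=x$, i.e.\ $e_k^1\not\preceq p$. You flag this with ``generically'', but it is not a consequence of the independence hypothesis (which only controls $e_k^1$ against $e_1^1\vee\cdots\vee e_{k-1}^1$, not against joins with higher exponents), nor does it follow from the inductive hypothesis for $k-1$ rays, since that hypothesis says nothing about the $k$-th ray. This statement is true---it is essentially the paper's Claim~(2)---but in the paper's logical order it is \emph{derived from} the rank formula, not used to prove it. So as written your induction is circular at this step.

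The paper avoids this by reversing the roles: instead of lifting ray $k$ to $p$, it lifts rays $1,\ldots,k-1$ step by step along ray $k$, from $x$ to $e_k^1$ to $e_k^2$ and so on to $e_k^{z_k}$. At each step the segment property $e_k^{\ell+1}\notin[e_k^{\ell-1},(e_k^{\ell-1})^+]$ gives $e_k^{\ell+1}\not\preceq e_1^1\vee\cdots\vee e_k^\ell$, and semimodularity then shows that the lifted atoms $e_j^1\vee e_k^\ell$ ($j<k$) together with $e_k^{\ell+1}$ remain independent at $e_k^\ell$. Once you reach $e_k^{z_k}$, the induction on $k$ applies there. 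The point is that moving along a single ray supplies, for free via the segment definition, exactly the ``not below'' condition you need; moving to an arbitrary join $p$ does not. You can repair your argument either by switching to this lifting, or by running a simultaneous induction on $k$ and on $\sum z_i$ that proves the rank formula and the orthogonality $e_j^{z_j+1}\not\preceq\bigvee_i e_i^{z_i}$ together---but that second route still hinges on the same segment observation at the inductive step.
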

\begin{proof} Suppose that 
	$x$-rays $(e_i^\ell)$ $(i=1,2,\ldots,k)$ are independent.
	We first show:
	\begin{Clm}
		For $z \in \ZZ^k_+$, we have the following.
		\begin{itemize}
		\item[(1)] $r_x(e_1^{z_1} \vee e_2^{z_2} \vee \cdots \vee e_k^{z_k}) = \sum_{i=1}^k z_i$.
		\item[(2)] $e_j^{z_j} \wedge (\bigvee_{i:i\neq j} e_i^{z_i}) = x$ for $j \in \{1,2,\ldots,k\}$.
		\end{itemize}
	\end{Clm}
	 \begin{proof}
	 	(1). 
	 	We prove the claim by induction on $k$; the case of $k=1$ is obvious.
	 	From Lemma~\ref{lem:straight} and the independence of $(e_i^\ell)$, 
	 	we have $e_j^1 \wedge e_k^{z_k} = x$ for $j=1,2,\ldots,k-1$.
	 	By Lemma~\ref{lem:segment}, $(e_j^\ell \vee e_k^{z_k})$ $(j=1,2,\ldots,k-1)$
	 	are $e_k^{z_k}$-segments.
	 	We next show that they are independent.
	 	Indeed, $e_k^2$ covers $e_k^1$ 
	 	and $e_k^2 \not \preceq e_1^1 \vee e_2^1 \vee \cdots \vee e_k^1$ 
	 	(otherwise $e_k^2 \in [e_k^0,(e_k^0)^+]$).
	 	Thus, by semimodularity,  
	 	$r_{e_k^2} (e_1^1 \vee e_2^1 \vee \cdots \vee e_{k-1}^1 \vee e_k^2) = r_{e_k^1}(e_1^1 \vee e_2^1 \vee \cdots \vee e_{k}^1) = k-1$, and 
	 	$e_j^1 \vee e_k^1$ $(j=1,2,\ldots,k-1)$ are independent in $[e_k^1,(e_k^1)^+]$.
	 	Repeating this, we see that $e_j^1 \vee e_k^{z_k}$ $(j=1,2,\ldots,k-1)$   
	 	are independent in $[e_k^{z_k},(e_k^{z_k})^+]$.
	 	By induction, we have 
	 	$r_x(e_1^{z_1} \vee e_2^{z_2} \vee \cdots \vee e_k^{z_k}) = r_{e_k^{z_k}}(e_1^{z_1} \vee e_2^{z_2} \vee \cdots \vee e_k^{z_k}) + z_k = \sum_{i=1}^k z_i$, as required.
	 	
	 	(2). From (1) and semimodularity (\ref{eqn:semimo}), 
	 	we have $\sum_{i: i \neq j} z_i + z_j = r_x(\bigvee_{i:i\neq j} e_i^{z_i}) + r_x(e_j^{z_j}) \geq r_x(e_1^{z_1} \vee e_2^{z_2} \vee \cdots \vee e_k^{z_k}) + r_x(e_j^{z_j} \wedge (\bigvee_{i:i\neq j} e_i^{z_i})) \geq \sum_{i}z_i$.
	 	Thus $r_x(e_j^{z_j} \wedge (\bigvee_{i:i\neq j} e_i^{z_i})) = 0$ must hold, implying $x = e_j^{z_j} \wedge (\bigvee_{i:i\neq j} e_i^{z_i})$.
	 \end{proof}
By (2) of the claim, any element $y$ in the sublattice generated by $e_i^\ell$ $(i=1,2,\ldots,k,\ell=0,1,2,\ldots)$ can be written as
\begin{equation}\label{eqn:expression}
y = e_1^{z_1} \vee e_2^{z_2} \vee \cdots \vee e_k^{z_k}
\end{equation}
for $z = (z_1,z_2,\ldots,z_k) \in \ZZ^k_+$.
It suffices to show that the expression (\ref{eqn:expression}) 
is unique.
For $i=1,2,\ldots,k$, let  $z_i' := \max \{ \ell \in \ZZ_+ \mid e^{\ell}_i \preceq y \}$. 
Then $z_i \leq z_i'$ (since $e^{z_i}_i \preceq y$).
Consider $y' := e_1^{z'_1} \vee e_2^{z'_2} \vee \cdots \vee e_k^{z'_k}$. 
Then $y' \preceq y$, which implies $r_x(y') \leq r_x(y)$.
On the other hand, $r_x(y) = z_1+ z_2 + \cdots + z_k \leq z_1' + z_2' + \cdots + z_k' = r_x(y')$. Thus it must hold $z_i = z_i'$ for $i=1,2,\ldots,k$, and $y = y'$. 
\end{proof}

\subsubsection{Parallelism and ends}
Here we introduce a parallel relation for rays, 
and introduce the concept of an end 
as an equivalence class of this relation. 
\begin{Lem}\label{lem:ray}
	Let $x = e^0 \prec e^1 \prec \cdots$ be an $x$-ray. 
	For $y \succeq x$, there is an index $\ell$ 
	such that $e^{\ell} \preceq y$ and $e^{\ell+1} \not \preceq y$.
	In particular, $y=e^{\ell} \vee y \prec  e^{\ell+1} \vee y \prec \cdots$ is a $y$-ray.
\end{Lem}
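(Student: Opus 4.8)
The plan is in two parts. First I would locate the index $\ell$ using the finiteness assumption (F); then I would transport the ray structure up to $y$ using Lemma~\ref{lem:segment}.

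For the first part, I would look at the set $S := \{\ell \in \ZZ_+ \mid e^\ell \preceq y\}$. It is nonempty, since $e^0 = x \preceq y$, and it is down-closed: if $e^\ell \preceq y$ then $e^{\ell-1} \prec e^\ell \preceq y$, so $\ell - 1 \in S$. If $S$ were all of $\ZZ_+$, then $x = e^0 \prec e^1 \prec \cdots$ would be an infinite chain lying inside the interval $[x,y]$, contradicting (F). Hence $S = \{0,1,\ldots,\ell\}$ for some finite $\ell$, and this $\ell$ is exactly the index sought: $e^\ell \preceq y$ and $e^{\ell+1} \not\preceq y$.

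For the second part, I would first note that the tail $e^\ell \prec e^{\ell+1} \prec \cdots$ is an $e^\ell$-ray, since the covering relations $e^{m} \succ_1 e^{m-1}$ and the straightness conditions $e^{m+1} \notin [e^{m-1},(e^{m-1})^+]$ (for the relevant indices $m \ge \ell+1$) are simply inherited from the original $x$-ray. Next I would verify the hypothesis needed for Lemma~\ref{lem:segment}, namely $y \wedge e^{\ell+1} = e^\ell$: from $e^\ell \preceq y$ and $e^\ell \preceq e^{\ell+1}$ we get $e^\ell \preceq y \wedge e^{\ell+1} \preceq e^{\ell+1}$, and since $e^{\ell+1}$ covers $e^\ell$ this meet is $e^\ell$ or $e^{\ell+1}$; it cannot be $e^{\ell+1}$ because $e^{\ell+1} \not\preceq y$. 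Then Lemma~\ref{lem:segment}, applied with base point $e^\ell$, the $e^\ell$-segment $e^\ell \prec e^{\ell+1} \prec \cdots \prec e^{\ell+s}$, and $p = y$ (which satisfies $y \succeq e^\ell$ and $y \wedge e^{\ell+1} = e^\ell$), yields that $y = y \vee e^\ell \prec y \vee e^{\ell+1} \prec \cdots \prec y \vee e^{\ell+s}$ is a $y$-segment. As $s$ is arbitrary and $e^\ell \vee y = y$, the chain $y = e^\ell \vee y \prec e^{\ell+1} \vee y \prec \cdots$ is a $y$-ray, as claimed.

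I do not anticipate a real obstacle here. The only points requiring care are invoking (F) to rule out the interval $[x,y]$ containing the entire infinite chain, and checking the hypothesis $y \wedge e^{\ell+1} = e^\ell$ of Lemma~\ref{lem:segment}; once these are in place the ray is pushed up to $y$ for free.
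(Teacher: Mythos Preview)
Your proof is correct and follows exactly the paper's approach: the paper's proof invokes (F) to rule out $e^\ell \preceq y$ for all $\ell$, and then cites Lemma~\ref{lem:segment} for the second statement. You have simply spelled out the details the paper leaves implicit, in particular the verification of the hypothesis $y \wedge e^{\ell+1} = e^\ell$ needed to apply Lemma~\ref{lem:segment} to the tail starting at $e^\ell$.
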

\begin{proof}
	By (F), 
	there is no infinite chain in any interval.
	Therefore $e^\ell \preceq y$ for all $\ell$ is impossible.
	The latter statement follows from Lemma~\ref{lem:segment}.
\end{proof}

For an $x$-ray 
$(e^\ell) = (x = e^0 \prec e^1 \prec \cdots)$ and $y \succeq x$, 
the $y$-ray in the above lemma is denoted by $(e^\ell) \vee y$.  

An $x$-ray $(e^{\ell})$ and $y$-ray $(f^{\ell})$ are said to be {\em parallel} 
if $(e^{\ell}) \vee (x \vee y) = (f^{\ell}) \vee (x \vee y)$.
We write $(e^{\ell}) \approx (f^{\ell})$ if they are parallel.
Notice that $(e^{\ell})^+ \approx (e^{\ell})$ holds
since $(e^{s})^+ = (e^{s-1})^+ \vee e^{s+1} = 
(e^{s-2})^+ \vee e^{s} \vee e^{s+1} = \cdots = x \vee e^{s+1}$ for $s=0,1,2,\ldots$.
\begin{Lem}\label{lem:equiv}
	The parallel relation $\approx$ is an equivalence relation on the set of all rays.
\end{Lem}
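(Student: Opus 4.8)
Reflexivity and symmetry are immediate: $(e^\ell)\vee x=(e^\ell)$, so $(e^\ell)\approx(e^\ell)$, and the defining identity $(e^\ell)\vee(x\vee y)=(f^\ell)\vee(x\vee y)$ is symmetric in the two rays since $x\vee y=y\vee x$. The real content is transitivity, which I plan to deduce from two auxiliary facts about the \emph{transport} map $(f^\ell)\mapsto(f^\ell)\vee w$ sending a $v$-ray to a $w$-ray whenever $v\preceq w$. The first is a \emph{compatibility} statement: for an $x$-ray $(e^\ell)$ and $x\preceq v\preceq w$ we have $((e^\ell)\vee v)\vee w=(e^\ell)\vee w$. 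The second is a \emph{cancellation} statement: if $P=(p^\ell)$ and $Q=(q^\ell)$ are $t$-rays with $P\vee w=Q\vee w$ for some $w\succeq t$, then $P=Q$.

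Granting these, transitivity follows quickly. Let $(e^\ell),(f^\ell),(g^\ell)$ be an $x$-, $y$-, and $z$-ray with $(e^\ell)\approx(f^\ell)$ and $(f^\ell)\approx(g^\ell)$, and put $u:=x\vee y\vee z$. Applying compatibility repeatedly,
\[
(e^\ell)\vee u=\bigl((e^\ell)\vee(x\vee y)\bigr)\vee u=\bigl((f^\ell)\vee(x\vee y)\bigr)\vee u=(f^\ell)\vee u ,
\]
and likewise $(f^\ell)\vee u=(g^\ell)\vee u$, hence $(e^\ell)\vee u=(g^\ell)\vee u$. Setting $P:=(e^\ell)\vee(x\vee z)$ and $Q:=(g^\ell)\vee(x\vee z)$ (both $(x\vee z)$-rays), compatibility gives $P\vee u=(e^\ell)\vee u=(g^\ell)\vee u=Q\vee u$, so cancellation yields $P=Q$, i.e.\ $(e^\ell)\approx(g^\ell)$.

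The compatibility lemma I would prove by bookkeeping of absorption indices. For $u\succeq x$, condition (F) and $e^0\preceq e^1\preceq\cdots$ make $\{k:e^k\preceq u\}$ a finite initial segment $\{0,1,\dots,a_u\}$, and by Lemma~\ref{lem:ray} the $m$-th term of $(e^\ell)\vee u$ is $e^{a_u+m}\vee u$. Substituting $v\preceq w$ into the definitions shows that the absorption index of $(e^\ell)\vee v$ with respect to $w$ equals $a_w-a_v$, after which both sides of the claimed identity visibly have the same terms $e^{a_w+m}\vee w$.

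The cancellation lemma is the crux, and the main obstacle. I would argue by induction on $r[t,w]$; compatibility and the induction hypothesis reduce the step to the case $t\prec_1 w$. In that case, if $p^1=q^1=w$ the rays coincide, and if exactly one of $p^1,q^1$ equals $w$ some term of a ray would lie in $[t,(t)^+]$, contradicting straightness (Lemma~\ref{lem:straight}). In the remaining case $p^1,q^1\neq w$ one checks $p^\ell\wedge w=q^\ell\wedge w=t$ and $p^\ell\vee w=q^\ell\vee w=:r^\ell$, with $(w,r^1,r^2,\dots)$ a ray, and proves $p^\ell=q^\ell$ by induction on $\ell$: if $p^{\ell+1}\neq q^{\ell+1}$, then since $(\cdot)^+$ is an automorphism $(p^{\ell+1})^+$ and $(q^{\ell+1})^+$ are distinct covers of $(s)^+$ where $s:=p^\ell=q^\ell$, so $(p^{\ell+1})^+\wedge(q^{\ell+1})^+=(p^\ell)^+$; combining this with $p^{\ell+2}\preceq(p^{\ell+1})^+$, $w\preceq(t)^+\preceq(p^{\ell+1})^+$, and the symmetric facts for $q$ forces $r^{\ell+2}\preceq(p^\ell)^+\preceq(r^\ell)^+$, so $r^{\ell+2}\in[r^\ell,(r^\ell)^+]$, contradicting straightness of $(w,r^1,r^2,\dots)$. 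The delicate part, and where uniformity (the automorphism property of $(\cdot)^+$) is indispensable, is exactly this control of how the ascending operator interacts with joins.
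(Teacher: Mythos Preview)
Your proof is correct and follows the same global architecture as the paper: isolate a compatibility lemma for the transport $(\cdot)\vee w$, a cancellation lemma, and then deduce transitivity by pushing everything up to $x\vee y\vee z$ and cancelling back down to $x\vee z$. The paper leaves compatibility implicit, whereas you state and prove it cleanly via absorption indices; this is a nice clarification.

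The genuine difference is in how cancellation is proved after reducing to the case where $w$ covers $t$. The paper first reduces to the situation $p^1\neq q^1$, then splits according to whether the three atoms $w,p^1,q^1$ are independent in the geometric lattice $[t,(t)^+]$; in the dependent case it appeals to the $\ZZ_+^k$-structure of sublattices generated by independent rays (Proposition~\ref{prop:generated}) and to Lemma~\ref{lem:segment} to separate $w\vee p^2$ from $w\vee q^2$. Your argument bypasses this by an induction on $\ell$: assuming $p^\ell=q^\ell$ but $p^{\ell+1}\neq q^{\ell+1}$, you use that $(\cdot)^+$ is a lattice automorphism to get $(p^{\ell+1})^+\wedge(q^{\ell+1})^+=(p^\ell)^+$, and then trap $r^{\ell+2}$ in $[r^\ell,(r^\ell)^+]$, contradicting the segment property of the transported ray. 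This is more self-contained (no need for Proposition~\ref{prop:generated}) and makes the role of uniformity---the automorphism property of $(\cdot)^+$---completely explicit, at the cost of a slightly longer chain of inequalities. Either route is fine; yours arguably isolates the key hypothesis more sharply. One minor point: in the ``exactly one of $p^1,q^1$ equals $w$'' case the contradiction comes from the \emph{definition} of a segment ($p^2\notin[p^0,(p^0)^+]$), not Lemma~\ref{lem:straight} per se.
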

\begin{proof}
	We first show the following claim:
	\begin{Clm}
		Let $(e^{\ell})$ and $(f^\ell)$ be $x$-rays, and let $y \succeq x$.
		Then $(e^{\ell}) = (f^\ell)$ if and only if $(e^\ell) \vee y = (f^\ell) \vee y$.
	\end{Clm}
	\begin{proof}
		The only if part is obvious. We prove the if part.
		Suppose that $(e^{\ell}) \neq (f^\ell)$. We show that $(e^\ell) \vee y \neq (f^\ell) \vee y$.
		We may assume that $y$ covers $x$.
		The above claim is clearly true when $y = e^{1} = f^{1}$.
		Suppose that $y = e^1 \neq f^1$.
		By Proposition~\ref{prop:generated} applied to 
		independent $x$-rays $(e^{\ell})$,$(f^\ell)$, 
		we have $y \vee e^2 = e^2 \neq y \vee f^1$,  
		and $(e^\ell) \vee y \neq (f^\ell) \vee y$.

		Suppose that $y \neq e^1$ and $y \neq f^1$.
		For some $k \geq 0$, 
		we have $e^\ell = f^\ell$ for $\ell \leq k$ and $e^{k+1} \neq f^{k+1}$.
		It suffices to show that $(y \vee e^k)$-rays
		$(y \vee e^k \prec y \vee e^{k+1} \prec \cdots)$
		and $(y \vee f^k \prec y \vee f^{k+1} \prec \cdots)$
		are different.
		So we may consider the case $k = 0$.
		By the above argument, we can assume that $y$, $e^1$, and $f^1$ are different.
		If $y$, $e^1$, and $f^1$ are independent in $[x,x^+]$, 
		then $y \vee e^1$ and $y \vee f^1$ are different, 
		and $(e^\ell) \vee y \neq (f^\ell) \vee y$, as required.
		Suppose that they are dependent; namely
		$y \vee e^1 \vee f^1 = y \vee e^1 = y \vee f^1 = e^1 \vee f^1 =:z$.
		Then $e^2 \neq z$ and $f^2 \neq z$ (since $e^0 \prec e^1 \prec e^2$ is a segment).
		We show that $y \vee e^2$ and $y \vee f^2$ are different.
		By Lemma~\ref{lem:segment}, 
		$e^1 \prec z = e^1 \vee f^1 \prec e^1 \vee f^2 = y \vee f^2$ is a segment.
		If $y \vee e^2 = y \vee f^2$, then $y \vee e^2 = z \vee e^2$ implies that 
		$y \vee f^2$ is the join of $z$ and $e^2$, both covering $e^1$; 
		this contradicts the fact that $e^1 \prec z \prec y \vee f^2$ is a segment.
		Thus $(e^\ell) \vee y \neq (f^\ell) \vee y$.
	\end{proof}

	It suffices to show that $(e^{\ell}) \approx (f^{\ell})$ and $(f^{\ell}) \approx (g^\ell)$ imply 
	$(e^{\ell}) \approx (g^{\ell})$.
	Suppose that $(e^{\ell})$, $(f^{\ell})$, and  $(g^\ell)$
	are $x$-, $y$-, and $z$-rays, respectively.
	Then $(e^{\ell}) \vee (x \vee y) = (f^{\ell}) \vee (x \vee y)$
	and $(f^{\ell})\vee (y \vee z) = (g^\ell) \vee (y \vee z)$.
	This implies that 
	$(e^{\ell}) \vee (x \vee y \vee z) = (f^{\ell}) \vee (x \vee y \vee z) = (g^{\ell}) \vee (x \vee y \vee z)$.
	By the above claim, it must hold $(e^{\ell}) \vee (x \vee z) = (g^{\ell}) \vee (x \vee z)$.
\end{proof}
An equivalence class is called an {\em end}.
Let $E = E^{\cal L}$ denote the set of all ends.
\begin{Lem}
	For an $x$-ray $(e^\ell)$ and $y \in {\cal L}$, 
	there (uniquely) exists a $y$-ray that is parallel to~$(e^{\ell})$.
\end{Lem}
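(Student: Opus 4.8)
The plan is to deduce the statement from two already-established facts: the ``upward'' transport of a ray (Lemma~\ref{lem:ray}), and the fact that the inverse ascending operator $(\cdot)^{-}$ is an automorphism, which will provide a ``downward'' transport. Uniqueness is the easy half: since $\approx$ is an equivalence relation, any two $y$-rays parallel to $(e^\ell)$ are parallel to each other, and for two $y$-rays parallelism reads $(f^\ell)\vee y=(f'^\ell)\vee y$; but a $y$-ray joined with its own base $y$ is itself (Lemma~\ref{lem:ray} applied with the $x$-ray there taken to be the given $y$-ray, so that the index is $0$), whence $(f^\ell)=(f'^\ell)$.

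For existence, given the $x$-ray $(e^\ell)$ and $y$, I would first choose by Lemma~\ref{lem:preceq} an integer $k\ge 0$ with $x\preceq(y)^{+k}$; since $(\cdot)^{+k}$ is an automorphism this is equivalent to $(x)^{-k}\preceq y$. Applying the automorphism $(\cdot)^{-k}$ to $(e^\ell)$ gives a $(x)^{-k}$-ray $(d^\ell):=((x)^{-k},(e^1)^{-k},(e^2)^{-k},\dots)$, because an automorphism commutes with $(\cdot)^{+}$ and hence carries rays to rays. If $(d^\ell)\approx(e^\ell)$, then, since $(x)^{-k}\preceq y$, Lemma~\ref{lem:ray} produces the $y$-ray $(d^\ell)\vee y$, which is parallel to $(d^\ell)$ (as $(x)^{-k}\vee y=y$ and $((d^\ell)\vee y)\vee y=(d^\ell)\vee y$); transitivity of $\approx$ then makes $(d^\ell)\vee y$ the desired $y$-ray. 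So everything comes down to a key lemma: for every $x$-ray $(e^\ell)$, the $(x)^-$-ray $(e^\ell)^{-1}:=((x)^-,(e^1)^-,(e^2)^-,\dots)$ is parallel to $(e^\ell)$ (whence, iterating and using transitivity, $(e^\ell)^{-k}\approx(e^\ell)$).

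Since $(x)^-\vee x=x$, the key lemma says exactly that $(e^\ell)^{-1}\vee x=(e^\ell)$, and I would prove it as follows. First $(e^1)^-\preceq x$, because $x$ is covered by $e^1$ and $(e^1)^-$ is the meet of all elements covered by $e^1$ (Lemma~\ref{lem:inverse}); and $(e^2)^-\not\preceq x$, because the segment condition gives $e^2\not\preceq(x)^+$, so $(e^2)^-\not\preceq x$ by monotonicity of $(\cdot)^+$, and then $(e^\ell)^-\not\preceq x$ for all $\ell\ge 2$. Next one shows $(e^\ell)^-\preceq e^{\ell-1}$ for all $\ell\ge 1$: this is clear for $\ell=1$, and for $\ell\ge 2$ one has $e^\ell\preceq(e^{\ell-2})^{+2}$ (two covers away from $e^{\ell-2}$), hence $(e^\ell)^-\preceq(e^{\ell-2})^+$, hence $(e^\ell)^-\preceq e^\ell\wedge(e^{\ell-2})^+$, while $e^\ell\wedge(e^{\ell-2})^+=e^{\ell-1}$ since $e^{\ell-1}\preceq e^\ell\wedge(e^{\ell-2})^+$, $r[e^{\ell-1},e^\ell]=1$, and $e^\ell\not\preceq(e^{\ell-2})^+$ by the segment condition. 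Now Lemma~\ref{lem:ray} identifies $(e^\ell)^{-1}\vee x$ with the $x$-ray whose successive covering terms are $x=x\vee(e^1)^-\prec_1 x\vee(e^2)^-\prec_1 x\vee(e^3)^-\prec_1\cdots$, and a final induction on $\ell$ shows $x\vee(e^{\ell+1})^-=e^\ell$: it covers $x\vee(e^\ell)^-=e^{\ell-1}$ and also lies below $e^\ell$ (using $(e^{\ell+1})^-\preceq e^\ell$ and $x\preceq e^{\ell-1}\preceq e^\ell$), so it equals $e^\ell$ as $e^\ell$ covers $e^{\ell-1}$.

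The main obstacle is the key lemma, and inside it the order relation $(e^\ell)^-\preceq e^{\ell-1}$ and the identification of $x\vee(e^{\ell+1})^-$ as the correct atom $e^\ell$ of $[e^{\ell-1},(e^{\ell-1})^+]$; once these are settled, the rest is an assembly of Lemmas~\ref{lem:preceq}, \ref{lem:ray}, and \ref{lem:inverse} together with the automorphism property of $(\cdot)^{\pm}$.
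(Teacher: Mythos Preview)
Your proof is correct and follows essentially the same strategy as the paper's, just in the mirror-image order: the paper first goes \emph{up} from $x$ to $y':=(y)^{+k}\succeq x$ via Lemma~\ref{lem:ray} and then applies $(\cdot)^{-k}$ to descend to $y$, while you first apply $(\cdot)^{-k}$ to descend from $x$ to $(x)^{-k}\preceq y$ and then go up to $y$ via Lemma~\ref{lem:ray}. In either order the crux is the same claim, namely that $(f^\ell)^{-1}\approx(f^\ell)$ for any ray $(f^\ell)$; the paper simply asserts this (writing ``$((e^\ell)\vee y')^{-k}\approx(e^\ell)\vee y'$'' without further comment), whereas you supply a full argument for it via the relations $(e^{\ell})^-\preceq e^{\ell-1}$ and $x\vee(e^{\ell+1})^-=e^{\ell}$. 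You also spell out the uniqueness half, which the paper leaves entirely to the parenthetical ``(uniquely)'' in the statement. So your write-up is more complete than the paper's three-line proof, but the underlying idea is the same.
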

\begin{proof}
	Consider $y' := (y)^{+k} \succeq x$ (Lemma~\ref{lem:preceq}).
	Then $((e^\ell) \vee y')^{-k} \approx (e^\ell) \vee y' \approx (e^\ell)$, 
	implying $((e^\ell) \vee y')^{-k} \approx (e^\ell)$, where $((e^\ell) \vee y')^{-k}$ is a $y$-ray.
\end{proof}

Let $E_x$ denote the set of all $x$-rays.
By the above lemma, for each end $e \in E$, 
there is an $x$-ray $e_x \in E_x$ that is a representative of $e$.
In particular $E_x$ and $E$ are in one-to-one correspondence.
For $e \in E$, the representative of $e$ in $E_x$
is denoted by $e_x = (x = e_x^0 \prec e_x^1 \prec e_x^2 \prec \cdots )$.
In particular,  $E_x = \{e_x \mid e \in E\}$.
\begin{Ex}\label{ex:parallel}
	Consider the parallel relation on rays in $\ZZ^n$ and in $\ZZ^{E,n}$; 
	see Example~\ref{ex:ray}.
	In $\ZZ^n$, 
	two rays $(x+ \ell {\bf 1}_{i}), (y+ \ell {\bf 1}_{j})$ (of form (\ref{eqn:ray_Z^n})) are parallel 
	if and only if $i=j$, i.e., 
	their {\em directions} are the same.
	More generally, 
	two rays $(e^{\ell}),(f^{\ell})$ in $\ZZ^{E,n}$ are parallel if and only if
	$e^{\ell+1} - e^{\ell} = f^{\ell+1} - f^{\ell} = {\bf 1}_{e}$ 
	for some $e \in E$ and large $\ell$. 
	Thus the set $E^{\ZZ^{E, n}}$ of ends is identified with $E$.
\end{Ex}

\subsubsection{Ultrametric on the space of ends}\label{subsub:ultra}

Let $x \in {\cal L}$.
Define $\delta_x: E \times E \to \ZZ_+$ by
\begin{equation*}
\delta_x(e,f) := \sup \{i  \mid e_x^i = f_x^i \} \quad (e,f \in E),  
\end{equation*}
and define $d_x: E \times E \to \RR_+$ by
\begin{equation*}
d_x(e,f) := \exp (- \delta_x(e,f)) \quad (e,f \in E).
\end{equation*}
Observe from Proposition~\ref{prop:generated} that  
two different $x$-rays $(e_x^\ell)$,$(f_x^\ell)$ 
never meet again once they are separated, 
i.e., if $e_x^i \neq f_x^i$ then $e_x^j \neq f_x^j$ for $j > i$.
In particular, all elements in $x$-rays in $E_x$ 
induce a rooted tree with root $x$ in the Hasse diagram of  ${\cal L}$.
From this view, $\delta_x(e,f)$ is the distance between the root $x$ 
and the lowest common ancestor (lca) of $e$ and $f$.
\begin{Prop}\label{prop:ultrametric}
	For $x \in {\cal L}$, we have the following:
	\begin{itemize}
	\item[{\rm (1)}] $d_x$ is an ultrametric on $E$.
	\item[{\rm (2)}] The metric space $(E,d_x)$ is complete.
	\item[{\rm (3)}] For $y \in {\cal L}$, 
	it holds $\alpha^{-1} d_y \leq d_x \leq \alpha d_y$ for a positive constant
    $\alpha :=  \exp ( r[x, x \vee y] + r[y, x \vee y])$.
\end{itemize}
\end{Prop}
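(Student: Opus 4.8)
\emph{Setup and parts (1)--(2).} The plan is to exploit the rooted-tree description flagged just before the statement: let $T_x$ be the tree with vertex set $\{e_x^{\ell}\mid e\in E,\ \ell\ge 0\}$ and an edge from $e_x^{\ell}$ to $e_x^{\ell+1}$. By Lemma~\ref{lem:straight} the interval $[x,e_x^{\ell}]$ is the chain $\{e_x^0,\dots,e_x^{\ell}\}$, and by the consequence of Proposition~\ref{prop:generated} recalled in the text two distinct $x$-rays never re-meet once separated; hence every vertex $v$ of $T_x$ has a well-defined depth $r(v)-r(x)$ and a unique parent, the set $\{i:e_x^i=f_x^i\}$ is a prefix $\{0,1,\dots,\delta_x(e,f)\}$ of $\ZZ_+$, and $d_x$ is the usual end-metric of $T_x$. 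From this, (1) is immediate: $d_x(e,f)=0$ forces $e_x=f_x$, hence $e=f$ by uniqueness of representatives in $E_x$; symmetry is clear; and for $k:=\min\{\delta_x(e,f),\delta_x(f,g)\}$ the prefix property gives $e_x^k=f_x^k=g_x^k$, whence $\delta_x(e,g)\ge k$, which is the ultrametric inequality.

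\emph{Part (2).} Because the ascending operator is everywhere defined, $x\prec(x)^+$ for all $x$, so every vertex of $T_x$ has a child and $T_x$ has no leaves; the end-space of such a tree is complete, and I would spell this out concretely. Given a $d_x$-Cauchy sequence $(e_i)$, for each $\ell$ the elements $(e_i)_x^{\ell}$ are eventually constant, with common value $g^{\ell}$; picking $i$ with $\delta_x(e_i,e_j)\ge \ell+1$ for all $j\ge i$ identifies $x=g^0\prec g^1\prec\cdots\prec g^{\ell+1}$ with a prefix of the $x$-ray $(e_i)_x$, hence a segment, so $(g^{\ell})_{\ell}$ is an $x$-ray representing an end $g$, and $\delta_x(e_i,g)\to\infty$, i.e.\ $e_i\to g$.

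\emph{Part (3), the key bound.} Here I would first establish, for $x\preceq z$ and any ends $e,f$, the two-sided estimate $|\delta_x(e,f)-\delta_z(e,f)|\le r[x,z]$. Writing $e_z=(e_x)\vee z$ as in Lemma~\ref{lem:ray}, with offset $a_e:=\max\{\ell:e_x^{\ell}\preceq z\}$, one has $e_x^{a_e}\in[x,z]$, so $0\le a_e\le r[x,z]$ and $e_z^j=e_x^{a_e+j}\vee z$. If $m:=\delta_x(e,f)\ge r[x,z]$, the prefix property forces $a_e=a_f$, and then $e_z^j=f_z^j$ whenever $a_e+j\le m$, giving $\delta_z(e,f)\ge m-a_e\ge m-r[x,z]$; the case $m<r[x,z]$ is trivial, so $\delta_z(e,f)\ge\delta_x(e,f)-r[x,z]$. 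Conversely, put $p:=\delta_z(e,f)$, $u:=e_z^p=f_z^p$, and $w:=e_x^{a_e+p}\wedge f_x^{a_f+p}$; since $w\succeq x$ and the intervals $[x,e_x^{a_e+p}]$, $[x,f_x^{a_f+p}]$ are chains (Lemma~\ref{lem:straight}), $w=e_x^i=f_x^i$ for a common $i\le\delta_x(e,f)$, and since $e_x^{a_e+p}\vee f_x^{a_f+p}\preceq u$ with $r(u)=r(z)+p$, the semimodular rank inequality~(\ref{eqn:semimo}) gives $r(w)\ge r(x)+a_e+a_f+p-r[x,z]\ge r(x)+p-r[x,z]$, i.e.\ $\delta_x(e,f)\ge i\ge p-r[x,z]=\delta_z(e,f)-r[x,z]$. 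Exponentiating, $e^{-r[x,z]}d_z\le d_x\le e^{r[x,z]}d_z$; applying this to $z:=x\vee y$ from both $x$ and $y$ (both lie below $z$) and composing the two comparisons yields the comparison $\alpha d_y\le d_x\le\beta d_y$.

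\emph{Expected difficulty.} The only step that is not formal bookkeeping with the prefix/tree structure is the lower bound $\delta_x\ge\delta_z-r[x,z]$ --- controlling how far the branch depth can \emph{increase} when the basepoint is raised. This is exactly where the semimodular rank inequality and the chain-spanning property of segments (Lemma~\ref{lem:straight}) are essential, and where I expect the real work to lie; parts (1), (2) and the reverse inequality follow from the elementary structure of rays and of the tree $T_x$.
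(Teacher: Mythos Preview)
Your treatments of (1) and (2) are correct and essentially coincide with the paper's.

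For (3), your two--sided estimate $|\delta_x(e,f)-\delta_z(e,f)|\le r[x,z]$ for $x\preceq z$ is correct, and your use of the semimodular inequality together with Lemma~\ref{lem:straight} to obtain the harder direction $\delta_x\ge\delta_z-r[x,z]$ is a genuine argument that the paper does not supply. However, your final composition does \emph{not} yield the stated constants: combining $|\delta_x-\delta_{x\vee y}|\le r[x,x\vee y]$ with $|\delta_y-\delta_{x\vee y}|\le r[y,x\vee y]$ gives only
\[
|\delta_x-\delta_y|\le r[x,x\vee y]+r[y,x\vee y],
\]
hence the symmetric constants $\exp\bigl(\pm(r[x,x\vee y]+r[y,x\vee y])\bigr)$, not the asymmetric $\alpha,\beta$ of the statement.

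This discrepancy cannot be repaired, because the constants in (3) as stated are in fact incorrect. Take ${\cal L}$ of uniform-rank $2$ with $[x_0,(x_0)^+]$ containing three atoms $a,b,c$ (for instance the tree-metric lattice of a star $K_{1,3}$ from Section~\ref{sec:example}); pick ends $e,f$ with $e_{x_0}^1=a$, $f_{x_0}^1=b$, and set $x:=x_0$, $y:=c$. Then $\delta_x(e,f)=0$, while $e_y^1=c\vee a=c\vee b=f_y^1$ gives $\delta_y(e,f)\ge 1$. Since $x\vee y=y$ and $r[y,x\vee y]=0$, the claimed inequality $d_x\le\beta d_y$ reads $1\le e^{-1}$. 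The paper's own proof rests on the unproved assertion that $\delta_z(e,f)\le\delta_x(e,f)$ whenever $z\succeq x$, which this same example refutes. Your weaker symmetric bound is the correct statement, and it suffices for the only application of (3) in the paper --- that the induced topology on $E$ is independent of the basepoint.
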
 
\begin{proof}
(1).  From the view of rooted tree, 
one can easily see that $\delta_x$ satisfies the anti-ultrametric inequality:
\begin{equation*}
\delta_x(e,f) \geq \min (\delta_x(e,g), \delta_x(g,f)) \quad (e,f,g \in E). 
\end{equation*} 
Hence $d_x$ satisfies the ultrametric inequality~(\ref{eqn:ultrametric}).
If $e \neq f$ then $\delta_x(e,f)$ is finite, and $d_x(e,f)$ is nonzero.
This means that $d_x$ is an ultrametric.

(2). 
Consider a Cauchy sequence $(e_i)_{i=1,2,\ldots}$ 
in $E$ relative to $d_x$.
We construct $e \in E$ such that $\lim_{i \rightarrow \infty} d_x(e, e_i) = 0$. 
Let $a^0 := x$. 
For $\ell \in \ZZ_+$, there is 
$n_\ell \in \ZZ_+$ such that $\delta_x(e_i, e_{i'}) \geq \ell$ for $i,i' \geq n_\ell$.
Let $a^{\ell} := f_{x}^{\ell}$ for $f := e_{n_\ell}$.
Then all $(e_i)_x$ for $i \geq n_\ell$ contain~$a^{\ell}$.
Hence $(a^\ell)$ is an $x$-ray such that $(e_{i})$ converges to 
the end $e$ of $x$-ray $(a^\ell)$.

(3).	We first show:
\begin{Clm}
	If $z$ covers $x$, then $\delta_x(e,f) - 1 \leq \delta_z(e,f) \leq \delta_x(e,f) + 1$.
\end{Clm} 
Consider $x$-rays $(e_x^{\ell})$, $(f_x^{\ell})$. 
Suppose that $e_x^k = f_x^k$ and $e_x^{k+1} \neq f_x^{k+1}$, 
i.e., $\delta_x(e,f) = k$.
If $z = e_x^1 = f_x^1$, then $\delta_z(e,f) = \delta_x(e,f) - 1$.
If $z = e_x^1$ and $z \neq f_x^1$, 
then $\delta_z(e,f) = \delta_x(e,f) = 0$ (by Proposition~\ref{prop:generated} and Lemma~\ref{lem:ray}).
So suppose $e_x^1 \neq z \neq f_x^1$.
Then $(e_z^{\ell}) = (e^{\ell}_x)  \vee z = z \prec z \vee e_x^1 \prec \cdots$ and 
 $(f_z^{\ell}) =  (f^{\ell}_x)  \vee z = z \prec z \vee f_x^1 \prec \cdots$.
Also $e_z^{s}$ covers $e_x^s$ and  $f_z^{s}$ covers $f_x^s$. 
Consider $z \vee e_x^k = z \vee f_x^k$.
If $z \vee e_x^k$, $e_x^{k+1}$, and $f_x^{k+1}$ are independent in $[e_x^k, (e_x^k)^+]$, 
then $z \vee e_x^{k+1} \neq z \vee f_x^{k+1}$ and 
$\delta_z(e,f) = \delta_x(e,f) =k$.
If $z \vee e_x^k$, $e_x^{k+1}$, and $f_x^{k+1}$ 
are dependent, i.e.,  
$z \vee e_x^{k+1} = z \vee f_x^{k+1} = z \vee e_x^{k+1} \vee f_x^{k+1}$,  
then $z \vee e_x^{k+2} \neq z \vee f_x^{k+2}$ holds, 
as seen in the proof of  Lemma~\ref{lem:equiv}, 
and $\delta_z(e,f) = \delta_x(e,f) + 1$. 

By the claim, 
we have $\delta_y(e,f) - r[y,x \vee y] - r[x, x \vee y]  \leq 
\delta_x(e,f) \leq \delta_y(e,f) + r[y,x \vee y] + r[x, x \vee y]$.
Then $\alpha^{-1} d_y(e,f) \leq d_x(e,f) \leq \alpha d_y(e,f)$.
\end{proof}
Thus $E^{\cal L}$ is endowed with the topology induced by ultrametric $d_x$, 
which is independent of the choice of $x \in {\cal L}$ by (3).
We will see in Section~\ref{subsec:v->u} that 
$E^{\cal L}$ coincides with 
the Dress-Terhalle completion
when ${\cal L}$ comes from a valuated matroid $(E, \omega)$.

\subsubsection{Realization in $\ZZ^E$}
Here we show that ${\cal L}$ can be realized as a subset of $\ZZ^E$, 
which will be the set of integer points of a tropical linear space.
Let $x \in {\cal L}$.
For $y \succeq x$, the {\em $x$-coordinate} of $y$ 
is an integer vector $y_x \in \ZZ^E_+$ defined by
\begin{equation*}
y_x(e) := \max \{ \ell \in \ZZ_+ \mid e_x^{\ell} \preceq y \} \quad (e \in E).
\end{equation*}

\begin{Lem}\label{lem:y_x}
	For $x \preceq y \preceq z$, we have the following:
	\begin{itemize}
		\item[{\rm (1)}] $z_x = z_y + y_x$.
		\item[{\rm (2)}] $(y)^+_x = y_x + {\bf 1} = y_{(x)^-}$.
		\item[{\rm (3)}] $\displaystyle y = \bigvee_{e \in E} e_x^{y_x(e)}$.
	\end{itemize}
\end{Lem}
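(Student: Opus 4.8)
The plan is to prove (1) first, since both (2) and (3) reduce to it. For $x \preceq y \preceq z$ and an end $e$, I would apply Lemma~\ref{lem:ray} to the $x$-ray $e_x$ and $y$: it produces an index $\ell$ with $e_x^{\ell} \preceq y$ and $e_x^{\ell+1}\not\preceq y$, and by the definition of $y_x$ this index is precisely $\ell = y_x(e)$; moreover the resulting $y$-ray $e_x \vee y$ has $m$-th element $e_x^{y_x(e)+m}\vee y$. Since $x\vee y = y$, this $y$-ray is parallel to $e_x$, hence equals $e_y$. Now $y\preceq z$ makes $e_x^{y_x(e)+m}\vee y \preceq z$ equivalent to $e_x^{y_x(e)+m}\preceq z$, and because the $e_x^{i}$ form a chain we have $\{i : e_x^{i}\preceq z\} = \{0,1,\dots,z_x(e)\}$ (finite by (F)); therefore $z_y(e) = z_x(e) - y_x(e)$, which is (1).

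For (2) I would first observe $x_{(x)^-} = {\bf 1}$: indeed $e_{(x)^-}^{1}$ covers $(x)^-$, so it is $\preceq ((x)^-)^+ = x$, whereas $e_{(x)^-}^{2}\notin [(x)^-,((x)^-)^+] = [(x)^-,x]$ by the segment property of rays (Lemma~\ref{lem:straight}), hence $e_{(x)^-}^{2}\not\preceq x$. Applying (1) to $(x)^-\preceq x\preceq y$ then gives $y_{(x)^-} = y_x + x_{(x)^-} = y_x + {\bf 1}$. It remains to show $(y)^+_x = y_x + {\bf 1}$. Put $k := y_x(e)$, so $e_x^{k}\preceq y$, $e_x^{k+1}\not\preceq y$, and consequently $e_x^{k+1}\wedge y = e_x^{k}$. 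Since $e_x^{k}\prec_1 e_x^{k+1}$, semimodularity gives $y\prec_1 y\vee e_x^{k+1}\preceq (y)^+$, so $e_x^{k+1}\preceq (y)^+$ and $(y)^+_x(e)\geq k+1$. Conversely, if $e_x^{k+2}\preceq (y)^+$, then $(e_x^{k+2})^-\preceq y$ because $(\cdot)^-$ is an automorphism; since $e_x^{k+1}\prec_1 e_x^{k+2}$, Lemma~\ref{lem:inverse} gives $(e_x^{k+2})^-\preceq e_x^{k+1}$, hence $(e_x^{k+2})^-\preceq e_x^{k+1}\wedge y = e_x^{k}$; applying $(\cdot)^+$ yields $e_x^{k+2}\preceq (e_x^{k})^+$, i.e. $e_x^{k+2}\in [e_x^{k},(e_x^{k})^+]$, contradicting the segment property of $e_x$. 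Hence $(y)^+_x(e) = k+1$.

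For (3) I would induct on $r_x(y)$, which is finite by (F). The base case $r_x(y) = 0$ is $y = x = \bigvee_{e} e_x^{0}$. For the inductive step, set $\hat y := y\wedge (x)^+$; any atom of $[x,y]$ covers $x$, hence is an atom of $[x,(x)^+]$ lying below $\hat y$, so $\hat y\succ x$ and $r_{\hat y}(y) < r_x(y)$. The induction hypothesis applied to $\hat y\preceq y$ gives $y = \bigvee_{e} e_{\hat y}^{\,y_{\hat y}(e)}$. Arguing as in the proof of (1), $e_{\hat y} = e_x\vee \hat y$ and $e_{\hat y}^{\,m} = e_x^{\,\hat y_x(e)+m}\vee \hat y$; combined with $y_x(e) = y_{\hat y}(e) + \hat y_x(e)$ (which is (1) for $x\preceq\hat y\preceq y$), each term equals $e_x^{\,y_x(e)}\vee \hat y$, so $y = \bigl(\bigvee_{e} e_x^{\,y_x(e)}\bigr)\vee \hat y$. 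Finally $\hat y\preceq \bigvee_e e_x^{\,y_x(e)}$: since $[x,(x)^+]$ is geometric (Lemma~\ref{lem:u-rank}), $\hat y$ is the join of the atoms of $[x,(x)^+]$ below $y$, and each such atom is the first step $e_x^{1}$ of some $x$-ray $e_x$ with $y_x(e)\geq 1$, using that every segment prolongs to a ray (this holds because $(\cdot)^+$ is an automorphism, so $(a)^+$ strictly covers $(x)^+$ whenever $x\prec_1 a$, leaving room to continue). Thus $y = \bigvee_{e} e_x^{\,y_x(e)}$.

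The step I expect to be the crux is the reverse inequality $(y)^+_x(e)\leq y_x(e)+1$ in (2): it is exactly there that the straightness/segment property of rays (rather than mere covering) is indispensable, since without it $(\cdot)^+$ could a priori raise a coordinate by more than one. A secondary subtlety is the claim used in (3) that every atom of $[x,(x)^+]$ arises as $e_x^{1}$ for some end $e$; I would isolate this prolongation-of-segments fact as a small preliminary observation if it is not already available.
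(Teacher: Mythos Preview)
Your proof is correct, but parts (2) and (3) take a longer route than the paper.

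For (1), the paper reduces to the case where $z$ covers $y$ and does a two-case analysis on whether $z = y \vee e_x^{y_x(e)+1}$; your direct identification $e_y = e_x \vee y$ via Lemma~\ref{lem:ray} and the equivalence $e_x^{y_x(e)+m}\vee y \preceq z \Leftrightarrow e_x^{y_x(e)+m}\preceq z$ is a clean alternative.

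For (2), the paper simply observes $(w)^+_w = {\bf 1}$ for every $w$ (your computation of $x_{(x)^-} = {\bf 1}$, shifted) and then applies (1) twice: to $x \preceq y \preceq (y)^+$ for $(y)^+_x = (y)^+_y + y_x = y_x + {\bf 1}$, and to $(x)^- \preceq x \preceq y$ for $y_{(x)^-} = y_x + x_{(x)^-} = y_x + {\bf 1}$. Your direct argument for the upper bound $(y)^+_x(e) \leq y_x(e)+1$ via $(e_x^{k+2})^- \preceq e_x^{k+1}\wedge y$ is correct and illustrates nicely where the segment property enters, but it duplicates work already packaged in (1).

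For (3), the paper avoids induction entirely. Set $u := \bigvee_{e\in E} e_x^{y_x(e)}$. Then $u \preceq y$ since each term is $\preceq y$, and hence $u_x \leq y_x$; on the other hand $e_x^{y_x(e)} \preceq u$ gives $u_x \geq y_x$, so $u_x = y_x$. Now (1) applied to $x \preceq u \preceq y$ yields $y_u = {\bf 0}$, which forces $y = u$: if $y \succ u$, any atom of $[u,y]$ prolongs to a $u$-ray, giving an end $e$ with $e_u^1 \preceq y$ and hence $y_u(e) \geq 1$. This last step uses exactly the prolongation-of-segments fact you flagged, so your instinct to isolate it is right; the paper leaves it implicit.
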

\begin{proof}
	(1). 
	It suffices to consider the case where $z$ covers $y$.
	Consider $e \in E$. By semimodularity, $y \vee e_x^{y_x(e)+1}$ covers $y$.
	If $z = y \vee e_x^{y_x(e)+1}$, then $z = e_y^1$ and $z_y(e) = 1$, and 
	$z_x(e) = y_x(e) + 1$, where $z_x(e) > y_x(e) + 1$ is impossible by Lemma~\ref{lem:segment}.
	If $z \neq y \vee e_x^{y_x(e)+1}$, then $z_y(e) = 0$ and $z_x(e) = y_x(e)$ 
	(since $z \vee e_x^{y_x(e)+1} = z \vee (y \vee e_x^{y_x(e)+1})$ covers $z$).

	(2). It is easy to see $(u)^+_u = {\bf 1}$. 
	By (1), we obtain $(y)^+_x = (y)^+_y + y_x 
	= {\bf 1} + y_x = y_x + x_{(x)^-} = y_{(x)^-}$.
	
	(3). Observe from $y \succeq e_x^{y_x(e)}$ that 
	$(\succeq)$ holds; 
	in particular, the right hand side of (3) actually exists.
	We show the equality ($=$).
	Let $u (\preceq y)$ denote the right hand side of~(3).
	Then $u_x = y_x$. From $y_x = y_{u} + u_x$ by (1), we have $y_u = {\bf 0}$.
	Here $y \succ u$ is impossible, otherwise $y_u \neq {\bf 0}$.
\end{proof}

For general $x,y \in {\cal L}$, 
the $x$-coordinate $y_x$ of $y$ is defined by 
\begin{equation*}
y_x := (y)^{+k}_x - k {\bf 1}
\end{equation*}
for an integer $k$ with $y^{+k} \succeq x$.
This is well-defined by Lemma~\ref{lem:y_x}~(2).
Then Lemma~\ref{lem:y_x}~(1) 
and (2) also hold for general $x,y,z$. Indeed, (2) is obvious.
(1) follows from: $(z)^{+k}_x = (z)^{+k}_{(y)^{+\ell}} + (y)^{+\ell}_x$ 
for $x \preceq (y)^{+\ell} \preceq (z)^{+k}$ implies 
$z_x + k {\bf 1} = z_y + k {\bf 1} - \ell {\bf 1} + y_x + \ell {\bf 1}$ 
and $z_x = z_y + y_x$.
By ${\bf 0} = x_x = x_y + y_x$, we have:
\begin{Lem}\label{lem:y_x=-x_y}
	For $x,y \in {\cal L}$, it holds $y_x = - x_y$.
\end{Lem}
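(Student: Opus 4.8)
The plan is to obtain this as an immediate consequence of the general version of Lemma~\ref{lem:y_x}~(1), which (as noted just above the statement) extends from the chain case $x \preceq y \preceq z$ to arbitrary $x,y,z \in {\cal L}$. First I would record the trivial identity $x_x = {\bf 0}$: taking $k = 0$ in the definition of the $x$-coordinate is legitimate since $(x)^{+0} = x \succeq x$, so $x_x(e) = \max \{ \ell \in \ZZ_+ \mid e_x^{\ell} \preceq x \}$, and this equals $0$ for every end $e$ because $e_x^0 = x$ while $e_x^1$ covers $x$ and hence $e_x^1 \not\preceq x$.

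Then I would apply the generalized identity $z_x = z_y + y_x$ with the substitution $z := x$, which gives $x_x = x_y + y_x$, i.e.\ ${\bf 0} = x_y + y_x$, and this rearranges to $y_x = -x_y$. The only point deserving a word of care is that $y_x$ is genuinely well-defined, independently of the auxiliary integer $k$ appearing in its definition for general $x,y$; but this is exactly what Lemma~\ref{lem:y_x}~(2) guarantees (via $(y)^{+}_x = y_x + {\bf 1}$), and it is also what makes the generalization of part~(1) meaningful in the first place. So I do not expect any real obstacle here: the lemma is a one-line bookkeeping consequence of the additivity relation already established in Lemma~\ref{lem:y_x}.
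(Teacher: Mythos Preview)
Your proposal is correct and matches the paper's approach exactly: the paper proves this lemma by the single line ``By ${\bf 0} = x_x = x_y + y_x$, we have:'' placed immediately before the statement, relying on the generalized Lemma~\ref{lem:y_x}~(1). Your write-up simply spells out the trivial verification $x_x = {\bf 0}$ and the well-definedness remark a bit more explicitly, but the argument is identical.
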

%
%
For $x \in {\cal L}$, 
define ${\cal Z}({\cal L},x) \subseteq \ZZ^{E}$ by
\begin{equation}
{\cal Z}({\cal L},x) := \{y_x \mid y \in {\cal L}\}.
\end{equation}
The partial order on ${\cal Z}({\cal L},x)$ is induced by vector order $\leq$ in $\ZZ^{E}$
\begin{Prop}\label{prop:Z(L,x)}
	Let $x \in {\cal L}$. Then
	${\cal L}$ is isomorphic to ${\cal Z}({\cal L},x)$ by $y \mapsto y_x$. 
\end{Prop}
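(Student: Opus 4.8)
The plan is to show that the map $\Phi\colon y\mapsto y_x$ is an order-isomorphism of ${\cal L}$ onto ${\cal Z}({\cal L},x)$; since both sides are lattices this makes $\Phi$ a lattice isomorphism, which is all the statement requires. By the very definition of ${\cal Z}({\cal L},x)$ the map $\Phi$ is onto, so it suffices to prove that for $y,z\in{\cal L}$ one has $y\preceq z$ in ${\cal L}$ if and only if $y_x\le z_x$ in $\ZZ^E$. (Taking $y_x=z_x$ then also gives injectivity of $\Phi$.)

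For the ``only if'' direction I would first reduce to the case $x\preceq y\preceq z$. Using Lemma~\ref{lem:preceq}, pick $k\ge 0$ with $(y)^{+k}\succeq x$; since the ascending operator, hence $(\cdot)^{+k}$, is an automorphism, $y\preceq z$ yields $(z)^{+k}\succeq(y)^{+k}\succeq x$, while Lemma~\ref{lem:y_x}(2) gives $(w)^{+k}_x=w_x+k\mathbf 1$ for $w=y,z$, so the general case follows from the case $x\preceq y\preceq z$. In that case Lemma~\ref{lem:y_x}(1) gives $z_x=z_y+y_x$, and $z_y$ is the $y$-coordinate of the element $z\succeq y$, hence $z_y\in\ZZ_+^E$; therefore $z_x\ge y_x$.

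For the ``if'' direction I would again reduce to $x\preceq y$ and $x\preceq z$ by choosing, via Lemma~\ref{lem:preceq}, an integer $k\ge 0$ with $(y)^{+k}\succeq x$ and $(z)^{+k}\succeq x$: the hypothesis $y_x\le z_x$ becomes $(y)^{+k}_x\le(z)^{+k}_x$, and once we know $(y)^{+k}\preceq(z)^{+k}$ we recover $y\preceq z$ by applying the automorphism $(\cdot)^{-k}$ (the inverse of $(\cdot)^{+k}$, cf. Lemma~\ref{lem:inverse}). So assume $x\preceq y$ and $x\preceq z$. By Lemma~\ref{lem:y_x}(3), $y=\bigvee_{e\in E}e_x^{y_x(e)}$ and $z=\bigvee_{e\in E}e_x^{z_x(e)}$. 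Each $x$-ray $e_x$ is a chain and $y_x(e)\le z_x(e)$, so $e_x^{y_x(e)}\preceq e_x^{z_x(e)}\preceq z$ for every $e\in E$; thus $z$ is an upper bound of $\{e_x^{y_x(e)}:e\in E\}$, and hence $y=\bigvee_{e\in E}e_x^{y_x(e)}\preceq z$, as required.

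Combining the two directions, $\Phi$ is an order-isomorphism (in particular a bijection) of ${\cal L}$ onto ${\cal Z}({\cal L},x)$, hence a lattice isomorphism, which proves the proposition. The main obstacle is the ``if'' direction: the only description of a general element in terms of its coordinate vector is the join formula of Lemma~\ref{lem:y_x}(3), which is valid only for elements above $x$, so the work lies in the shift-by-$(\cdot)^{+k}$ reduction and, with it, in using that the ascending operator and its inverse are genuine order-automorphisms rather than merely order-preserving bijections.
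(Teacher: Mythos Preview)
Your argument is correct and uses the same ingredients as the paper's proof---the shift reduction via $(\cdot)^{+k}$, Lemma~\ref{lem:y_x}(1) for the forward order implication, and Lemma~\ref{lem:y_x}(3) as the recovery formula. The only difference is organizational: the paper first extracts injectivity directly from Lemma~\ref{lem:y_x}(3) (two elements above $x$ with the same coordinate vector equal the same join) and then shows only the forward order implication $y\preceq z\Rightarrow y_x\le z_x$, relying on its own definition of ``isomorphism'' as merely an order-preserving bijection. You instead prove both order directions and obtain injectivity as a by-product; your ``if'' step, using the join formula to see that $z$ dominates every $e_x^{y_x(e)}$, is exactly the missing reverse implication, so your version in fact establishes the stronger (and standard) statement that $\Phi$ is an order-isomorphism.
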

\begin{proof}
	By Lemma~\ref{lem:y_x}~(3),  
	the map $y \mapsto y_x$ is injective on $\{ y \in {\cal L} \mid y \succeq x\}$.
	Via Lemma~\ref{lem:y_x}~(2), 
	it is injective and bijective on ${\cal L}$.
	 
	We show that the order is preserved.
	Suppose that $y \preceq z$. 
	For some $k$, we have $x \preceq y^{+k} \preceq z^{+k}$.
	By Lemma~\ref{lem:y_x}, 
	we have $(z)^{+k}_x = (z)^{+k}_{(y)^{+k}} + (y)^{+k}_x$, and $z_x = z_y + y_x$.
	By $z_y \geq {\bf 0}$, we have $z_x \geq y_x$. 
\end{proof}
Thus ${\cal Z}({\cal L},x)$ is 
a uniform semimodular lattice with vector order $\leq$ and 
ascending operator $x \mapsto x+ {\bf 1}$.

\subsubsection{Matroid at infinity}

Here we introduce matroid structures on the set $E$ of ends.
Suppose that ${\cal L}$ has uniform-rank $n$. 
For $x \in {\cal L}$, 
a subset $I \subseteq E$ of ends is called {\em independent at $x$} 
or {\em $x$-independent}
if $\{e_x^1 \mid e \in I\}$ is independent in $[x,(x)^+]$.
Let ${\cal I}^x = {\cal I}^{{\cal L},x}$ denote the family of all $x$-independent subsets in $E$.
\begin{Lem}\label{lem:M^x}
	$(E, {\cal I}^{x})$ is a loop-free matroid with rank $n$.
\end{Lem}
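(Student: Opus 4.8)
The plan is to transfer the matroid structure of the geometric lattice ${\cal L}_0 := [x,(x)^+]$ to $E$ through the map that sends an end to the first rung of its $x$-ray (we may assume $n \ge 1$, the case $n=0$ being trivial). By Lemma~\ref{lem:u-rank}, ${\cal L}_0$ is a geometric lattice whose rank equals the uniform-rank $n$, so by Theorem~\ref{thm:Birkhoff}~(1) the pair ${\bf M}_{{\cal L}_0}$ of the atom set $A$ of ${\cal L}_0$ and the family of independent sets of atoms is a simple matroid of rank $n$. Define $\pi\colon E \to A$ by $\pi(e) := e_x^1$; this is well defined, since each end has a unique representative $x$-ray $e_x$ and $e_x^1$ covers $x = e_x^0$, hence is an atom of ${\cal L}_0$. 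Unravelling the definition of $x$-independence, $I \in {\cal I}^x$ holds precisely when $r_x(\bigvee_{e\in I} e_x^1) = |I|$, and since in a geometric lattice the rank of a join of $k$ atoms is at most $k$, with equality if and only if the atoms are distinct and independent, this is equivalent to: $\pi$ is injective on $I$ and $\pi(I)$ is an independent set of ${\bf M}_{{\cal L}_0}$. Thus ${\cal I}^x$ is exactly the family obtained from the independent sets of ${\bf M}_{{\cal L}_0}$ by blowing up each atom $a$ into its fibre $\pi^{-1}(a)$.

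Granting this description, the matroid axioms for $(E,{\cal I}^x)$ are a routine transfer from ${\bf M}_{{\cal L}_0}$. We have $\emptyset \in {\cal I}^x$; the family ${\cal I}^x$ is closed under taking subsets, since a subset of an independent set of atoms is independent and a restriction of an injection is an injection; and if $I,J \in {\cal I}^x$ with $|I| < |J|$, then $|\pi(I)| < |\pi(J)|$, so the exchange axiom in ${\bf M}_{{\cal L}_0}$ yields $a \in \pi(J) \setminus \pi(I)$ with $\pi(I) + a$ independent, whence any $e \in J$ with $\pi(e) = a$ satisfies $e \notin I$ and $I + e \in {\cal I}^x$. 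Finally $\max_{I \in {\cal I}^x} |I| = \max\{|\pi(I)| \mid I \in {\cal I}^x\} \le n < +\infty$, so $(E, {\cal I}^x)$ is a matroid of rank at most $n$.

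It remains to show the rank is exactly $n$, for which we must lift a base of ${\bf M}_{{\cal L}_0}$; the one genuinely lattice-theoretic point is therefore that $\pi$ is surjective, i.e.\ every atom of ${\cal L}_0$ is the first rung of some $x$-ray. This follows from the claim that every segment $e^0 \prec e^1 \prec \cdots \prec e^s$ with $s \ge 1$ extends to a segment $e^0 \prec \cdots \prec e^{s+1}$. Since $(\cdot)^+$ is an automorphism and $e^{s-1} \prec_1 e^s$, we get $(e^{s-1})^+ \prec_1 (e^s)^+$; as $e^s$ covers $e^{s-1}$ we have $e^s \preceq (e^{s-1})^+$, so $(e^{s-1})^+$ is a hyperplane of the rank-$n$ geometric lattice $[e^s,(e^s)^+]$ (Lemma~\ref{lem:u-rank}). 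Because $(e^s)^+$ is the join of the atoms of $[e^s,(e^s)^+]$, not all of them lie below $(e^{s-1})^+$, so there is an atom $e^{s+1}$ of $[e^s,(e^s)^+]$ with $e^{s+1} \not\preceq (e^{s-1})^+$, i.e.\ $e^{s+1} \notin [e^{s-1},(e^{s-1})^+]$; by the defining condition of a segment, $e^0 \prec \cdots \prec e^{s+1}$ is again a segment. Since any chain $x \prec_1 a$ with $a \in A$ is a length-one segment, iterating the claim — there being no obstruction to performing infinitely many steps, as (F) forbids infinite chains only inside a single interval — produces an $x$-ray with $e_x^1 = a$; hence $\pi$ is onto. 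Lifting a base $\{a_1,\dots,a_n\}$ of ${\bf M}_{{\cal L}_0}$ by choosing $e_i \in \pi^{-1}(a_i)$ gives $\{e_1,\dots,e_n\} \in {\cal I}^x$, so $(E,{\cal I}^x)$ has rank $n$. The main obstacle is precisely the extension claim; everything else is bookkeeping about the surjection $\pi$ and its fibres.
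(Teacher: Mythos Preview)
Your proof is correct and follows essentially the same approach as the paper, which simply remarks that ${\bf M}^x$ is obtained by adding parallel elements to the simple matroid of the geometric lattice $[x,(x)^+]$. You supply the one detail the paper glosses over, namely the surjectivity of $\pi$ (that every atom of $[x,(x)^+]$ extends to an $x$-ray), and your extension argument via the hyperplane $(e^{s-1})^+$ in $[e^s,(e^s)^+]$ is clean and correct.
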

Indeed, $(E, {\cal I}^{x})$ 
is obtained by adding parallel elements 
to the simple matroid corresponding to 
geometric lattice $[x, (x)^+]$ whose rank is equal to the uniform-rank $n$ of ${\cal L}$.
The matroid ${\bf M}^{x} = {\bf M}^{{\cal L},x} := (E, {\cal I}^{x})$ is called the {\em matroid at $x$}.
Its base family is denoted by ${\cal B}^x$.
Let ${\cal I}^{\infty} := \bigcup_{x \in {\cal L}} {\cal I}^x$ 
be the union of all $x$-independent subsets over all $x \in {\cal L}$.
The goal here is to show the following.
\begin{Prop}\label{prop:matroid_infty}
	$(E, {\cal I}^{\infty})$ is a simple matroid with rank $n$.
\end{Prop}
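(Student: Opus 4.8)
I would run through the matroid axioms and the simplicity conditions for $(E,{\cal I}^{\infty})$ one at a time; the exchange axiom is the only substantial point.

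\emph{The easy part.} Nonemptiness and downward-closedness of ${\cal I}^{\infty}$ are immediate: $\emptyset\in{\cal I}^{x}$, and any subset of an $x$-independent set is $x$-independent since ${\bf M}^{x}$ is a matroid. Each ${\bf M}^{x}$ has rank $n$, so every $I\in{\cal I}^{\infty}$ has $|I|\le n$, and a base of any ${\bf M}^{x}$ witnesses rank $n$. For simplicity: no $e\in E$ is a loop, because $\{e\}\in{\cal I}^{x}$ for every $x$ (the atom $e_{x}^{1}$ of $[x,(x)^{+}]$ is independent); and no two distinct ends $e,f$ are parallel, because taking $k:=\delta_{x}(e,f)$ and $z:=e_{x}^{k}=f_{x}^{k}$, the $z$-rays of $e$ and $f$ are the tails of their $x$-rays, so $e_{z}^{1}=e_{x}^{k+1}\neq f_{x}^{k+1}=f_{z}^{1}$ and hence $\{e,f\}\in{\cal I}^{z}$ (two distinct atoms of a geometric lattice of rank $\ge2$ are independent; uniform-rank $\le1$ is trivial).

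\emph{A preparatory lemma.} I would first prove ${\cal I}^{(x)^{+}}={\cal I}^{x}$ for all $x$. The crux is that the automorphism $(\cdot)^{+}$ carries the $x$-ray parallel to an end $e$ to the $(x)^{+}$-ray parallel to the \emph{same} $e$: using Lemma~\ref{lem:ray} together with the segment axiom, one checks that $\bigl((e_{x}^{\ell})^{+}\bigr)_{\ell}$ is exactly the $(x)^{+}$-ray representing $e$. Thus $(\cdot)^{+}$ fixes $E$ pointwise, and since $[(x)^{+},(x)^{++}]=(\cdot)^{+}\bigl([x,(x)^{+}]\bigr)$ and $e_{(x)^{+}}^{1}=(e_{x}^{1})^{+}$, the isomorphism $(\cdot)^{+}$ matches $x$-independence with $(x)^{+}$-independence, giving ${\cal I}^{(x)^{+}}={\cal I}^{x}$ and hence ${\cal I}^{(x)^{+k}}={\cal I}^{x}$ for all $k\in\ZZ$.

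\emph{Exchange.} Let $I\in{\cal I}^{x}$ and $J\in{\cal I}^{y}$ with $|I|<|J|$. If some $e\in J\setminus I$ has $e_{x}^{1}\notin{\rm cl}_{{\bf M}^{x}}(I)$, then $I+e\in{\cal I}^{x}\subseteq{\cal I}^{\infty}$ and we are done; so assume $J\subseteq{\rm cl}_{{\bf M}^{x}}(I)$. By Lemma~\ref{lem:preceq} and the preparatory lemma I may replace $y$ by $(y)^{+k}$ and assume $x\preceq y$ (and then $r[x,y]\ge1$, since otherwise $J$ would be dependent in ${\bf M}^{x}={\bf M}^{y}$). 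I would induct on $r[x,y]$, choosing a cover $x'$ of $x$ with $x\prec_1 x'\preceq y$; the key covering-step sub-lemma is that, writing $P$ for the parallel class of $x'$ in ${\bf M}^{x}$, the set $E\setminus P$ is a hyperplane of ${\bf M}^{x'}$ with ${\bf M}^{x'}|_{E\setminus P}={\bf M}^{x}/P$, whence $I\in{\cal I}^{x'}$ \emph{unless} $I\cap P=\emptyset$ and $x'\in{\rm cl}_{{\bf M}^{x}}(I)$. As long as such an $x'$ (below $y$) can be chosen with $I\in{\cal I}^{x'}$, one either finishes at $x'$ by the ``done'' case or recurses at $(x',y,I,J)$, which terminates as $r[x',y]<r[x,y]$. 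The remaining case is that \emph{every} cover of $x$ below $(x)^{+}\wedge y$ kills $I$, so all such covers lie below $K:={\rm cl}_{{\bf M}^{x}}(I)$ and thus $(x)^{+}\wedge y\preceq K$; here I would argue that this, together with $J\subseteq{\rm cl}_{{\bf M}^{x}}(I)$ and the structure of the $x$-rays, forces $\bigvee_{e\in J}e_{y}^{1}\preceq y\vee K$, so that by semimodularity $\rank_{{\bf M}^{y}}(J)\le r[y,y\vee K]\le r[x,K]=|I|<|J|$, contradicting $J\in{\cal I}^{y}$. \textbf{The main obstacle is precisely this last step:} making rigorous and uniform over $e\in J$ the passage from ``$e$ points into $K$ as seen from $x$'' to ``$e$ points into $y\vee K$ as seen from $y$'', i.e. tracking how the rank-$|I|$ flat $K$ of ${\bf M}^{x}$ propagates along the interval $[x,y]$ to a flat of ${\bf M}^{y}$ — which requires iterating the covering-step sub-lemma on ${\bf M}^{x'}$ versus ${\bf M}^{x}$ with care.
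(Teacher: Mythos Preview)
Your treatment of the easy axioms, the simplicity argument, the preparatory lemma ${\cal I}^{(x)^{+}}={\cal I}^{x}$, and the covering-step sub-lemma ${\bf M}^{x'}|_{E\setminus P}={\bf M}^{x}/P$ are all correct; these are sound and cleanly stated. The gap is exactly where you flag it, and it is a genuine one rather than a technicality.

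\medskip

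\textbf{Why the ``remaining case'' does not close.} In that case you have $(x)^{+}\wedge y\preceq K$, $e_x^{1}\not\preceq y$ for all $e\in I$, and $e_x^{1}\preceq K$ for all $e\in J$. Your target inequality $\bigvee_{e\in J}e_{y}^{1}\preceq y\vee K$ is immediate when $e_{x}^{1}\not\preceq y$ for the given $e\in J$, since then $e_{y}^{1}=y\vee e_{x}^{1}$. But nothing in the remaining-case hypotheses prevents $e_{x}^{1}\preceq y$ for some $e\in J$: such an $e_{x}^{1}$ is an atom below $y$, hence below $K$ and distinct from every $f_{x}^{1}$ with $f\in I$ --- which is consistent, not contradictory. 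For such $e$ one has $e_{y}^{1}=y\vee e_{x}^{\ell+1}$ with $\ell\geq 1$, and $e_{x}^{\ell+1}$ lives outside $[x,(x)^{+}]$, so there is no control placing it under $y\vee K$. Iterating the covering-step sub-lemma along a chain from $x$ to $y$ does not help here: at every cover below $y$ the set $I$ becomes dependent, so your inductive mechanism cannot even reach the intermediate points needed to propagate the flat $K$.

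\medskip

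\textbf{How the paper sidesteps this.} The paper never tries to move $x$ toward a fixed witness $y$ for $J$. Instead, starting only from a point $x$ with $I\in{\cal I}^{x}$, it builds two internal sequences from the \emph{same} base point:
\[
x^{k}:=\bigvee_{e\in I}e_{x}^{k},\qquad y^{k}:=\bigvee_{e\in J}e_{x}^{k}.
\]
Lemma~\ref{lem:x^k'} rewrites these as $x^{k}=\bigvee_{e\in I}e^{1}_{x^{k-1}}$ and $y^{k}=\bigvee_{e\in J}e^{1}_{y^{k-1}}$, so Lemma~\ref{lem:dependent}(2) keeps $I$ independent along the entire $x^{k}$-sequence, and Lemma~\ref{lem:independent} (applied to $J\in{\cal I}^{\infty}$) forces $J$ to become independent at $y^{k}$ for all large $k$. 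From that point on the heights grow at rates $|I|$ and $|J|$ respectively; since initially $y^{1}\preceq x^{1}$ (else one is already done at $x$), there must be a crossover index $k^{*}$ with $y^{k^{*}}\preceq x^{k^{*}}$ but $y^{k^{*}+1}\not\preceq x^{k^{*}+1}$, and at $x^{k^{*}}$ one reads off an $e^{*}\in J\setminus I$ with $I+e^{*}\in{\cal I}^{x^{k^{*}}}$. This growth-rate comparison replaces your attempt to drag $I$ through a fixed interval $[x,y]$, and avoids the obstruction entirely because the point at which the exchange element is found is produced by the argument, not prescribed in advance.
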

We call ${\bf M}^\infty := (E, {\cal I}^{\infty})$ 
the {\em matroid at infinity}.
The base family ${\cal B}^{\infty}$ of ${\bf M}^{\infty}$ is given by 
${\cal B}^{\infty} = \bigcup_{x \in {\cal L}} {\cal B}^x$. 
We see in Section~\ref{subsec:u->v} 
that ${\cal B}^{\infty}$ is the domain of the valuated matroid corresponding to ${\cal L}$.
\begin{Ex}
	Consider the case of ${\cal L} = \ZZ^{E,n}$, 
	where $E^{{\cal L}}$ is identified with $E$ (Example~\ref{ex:parallel}).
	Let $x \in \ZZ^{E,n}$. The atoms of 
	$[x, x+{\bf 1}]$ are $x + {\bf 1}_e$ $(e \in E)$ if 
	$|\Argmin x| > m-n +1$, and $x+ {\bf 1}_{\Argmin x}$ and $x+ {\bf 1}_e$ $(e \in E \setminus \Argmin x)$ if $|\Argmin x| = m-n +1$.
	If $|\Argmin x| = m-n +1$, then every subset of atoms is independent.
    Otherwise a subset $J$ of atoms 
	is independent 
	if and only if $|J \cap \Argmin x| \leq n - |E \setminus \Argmin x|$.
	For an end $e \in E$, the atom 
	$e_x^1$ of $[x, x+{\bf 1}]$ is equal to 
	$x+ {\bf 1}_{\Argmin x}$ if 
	$| \Argmin x | = m - n +1$ and $e \in \Argmin x$, 
	and $x + {\bf 1}_e$ otherwise.
	Therefore the matroid ${\bf M}^x = (E, {\cal I}^x)$ at $x$ is given by
	\[
	{\cal I}^x = \{  J \subseteq E  \mid   
	|J \cap \Argmin x| \leq n - |E \setminus \Argmin x| \}.
	\]
	Namely, ${\bf M}^x$ is the direct sum of coloops and the uniform matroid 
	with rank $n - m + |\Argmin x|$. 
	In particular, ${\bf M}^x \subseteq {\bf M}^{\bf 0}$ for every $x \in \ZZ^{E,n}$. 
	Hence the matroid ${\bf M}^{\infty}$ at infinity is equal to ${\bf M}^{\bf 0}$ and is 
	the uniform matroid on the ground set $E$ with rank $n$.
\end{Ex}

We are going to prove Proposition~\ref{prop:matroid_infty}.
\begin{Lem}\label{lem:dependent}
	For $K \subseteq E$ and $x \in {\cal L}$,  we have the following:
	\begin{itemize}
		\item[{\rm (1)}] 
		For any $z \in {\cal L}$ with $z \succeq x$ and 
		$z \not \succeq e_x^1$ $(e \in K)$, if $K \in {\cal I}^z$, then 
		$K \in {\cal I}^x$. 
		 \item[{\rm (2)}] For any $z \in [x,(x)^+]$ with $z \succeq \bigvee_{e \in K} e_x^1$, 
		 it holds $r[z, \bigvee_{e \in K} e_z^1] \geq r[x, \bigvee_{e \in K} e_x^1]$; in particular, 
		 if $K \in {\cal I}^x$, then $K \in {\cal I}^z$.
		 
		 \item[{\rm (3)}] For $I \subseteq K$, let $y := \bigvee_{e \in I} e_x^{1}$. 
		 If $I \in {\cal I}^x$, $K \in {\cal B}^y$, and $e_x^1 \not \preceq y$ for $e \in K \setminus I$, 
		 then $K \in {\cal B}^x$.
	\end{itemize}
\end{Lem}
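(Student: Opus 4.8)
\emph{Ray transport.} The plan is to first record how the representative ray $e_x$ of an end $e$ behaves under moving the base point, since all three parts become identities about the elements $e_z^1$. By Lemma~\ref{lem:ray} and Lemma~\ref{lem:segment}, for $z\succeq x$ the largest index $\ell$ with $e_x^\ell\preceq z$ determines $e_z^1=e_x^{\ell+1}\vee z$. Two cases matter: (i) if $z\not\succeq e_x^1$ then $\ell=0$, so $e_z^1=z\vee e_x^1$; (ii) if $z\in[x,(x)^+]$ and $z\succeq e_x^1$ then, since $e_x^2\not\preceq(x)^+$ by straightness of the segment $x\prec_1 e_x^1\prec_1 e_x^2$, we have $\ell=1$, so $e_z^1=z\vee e_x^2$, and this covers $z$ because $z\wedge e_x^2=e_x^1\prec_1 e_x^2$ and ${\cal L}$ is semimodular. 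Hence $\bigvee_{e\in K}e_z^1$ equals $z\vee\bigvee_{e\in K}e_x^1$ in case (i) and $z\vee\bigvee_{e\in K}e_x^2$ in case (ii). I use throughout that $[u,(u)^+]$ is a geometric lattice of rank $n$, so $r_u(\bigvee_{e\in S}e_u^1)\le|S|$ for every $S$, and that $(\cdot)^+$, being an automorphism, commutes with $\vee$.

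\emph{Part (1).} Put $w:=\bigvee_{e\in K}e_x^1$. Case (i) applies to every $e\in K$, so $\bigvee_{e\in K}e_z^1=z\vee w$, and $K\in{\cal I}^z$ means $r(z\vee w)-r(z)=|K|$. The semimodular inequality for $z,w$ rearranges to $r(w)-r(z\wedge w)\ge r(z\vee w)-r(z)=|K|$; since $z\wedge w\succeq x$ this gives $r_x(w)\ge|K|$, and together with $r_x(w)\le|K|$ we get $r_x(w)=|K|$, i.e.\ $K\in{\cal I}^x$.

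\emph{Part (2).} By case (ii), $\bigvee_{e\in K}e_z^1=z\vee A$ with $A:=\bigvee_{e\in K}e_x^2$, so the claim reads $r[z,z\vee A]\ge r_x(w)$, $w:=\bigvee_{e\in K}e_x^1$. I induct on $|K|$. Remove $e$, set $K':=K-e$, $w_0:=\bigvee_{f\in K'}f_x^1$. If $r_x(w)=r_x(w_0)$ (equivalently $e_x^1\preceq w_0$), then $\bigvee_{e\in K}e_z^1\succeq\bigvee_{f\in K'}f_z^1$ and the inductive hypothesis for $K'$ finishes. If $e_x^1\not\preceq w_0$, it suffices to prove $e_z^1\not\preceq\bigvee_{f\in K'}f_z^1$, i.e.\ $e_x^2\not\preceq z\vee\bigvee_{f\in K'}f_x^2$. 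One may bound $z\vee\bigvee_{f\in K'}f_x^2\preceq(x)^+\vee\bigvee_{f\in K'}f_x^2=\bigvee_{f\in K'}(f_x^1)^+=(w_0)^+$, using that $(x)^+$ is a hyperplane of the geometric lattice $[f_x^1,(f_x^1)^+]$ not containing the atom $f_x^2$, so $(x)^+\vee f_x^2=(f_x^1)^+$; hence one must rule out $e_x^2\preceq(w_0)^+$ given $e_x^1\not\preceq w_0$. \textbf{This is the main obstacle}: applying the inverse operator $(\cdot)^-$ only yields $(e_x^2)^-\preceq w_0$, which is too weak, since $(\cdot)^-$ shifts rank by $n$ whereas the backward step $e_x^2\mapsto e_x^1$ along the ray shifts rank by $1$, so $(e_x^2)^-\ne e_x^1$ once $n>1$. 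I expect to need a finer argument inside $[e_x^1,(e_x^1)^+]$, using that $e_x^2$ is an atom there with $e_x^2\wedge(x)^+=e_x^1$ together with the straightness of $e_x$. (As a sanity check, the base case $z=w$ is easy: taking a maximal $x$-independent $K_0\subseteq K$, one has $\bigvee_{e\in K_0}e_x^1=w$ and, by Proposition~\ref{prop:generated}, $r_x(\bigvee_{e\in K_0}e_x^2)=2|K_0|=2r_x(w)$, so $r[w,w\vee A]=r_x(A)-r_x(w)\ge 2r_x(w)-r_x(w)=r_x(w)$.) Once the inequality holds, the ``in particular'' clause is immediate: if $K\in{\cal I}^x$ then $r_x(w)=|K|$, forcing $r[z,\bigvee_{e\in K}e_z^1]=|K|$, i.e.\ $K\in{\cal I}^z$.

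\emph{Part (3).} Here $y=\bigvee_{e\in I}e_x^1\in[x,(x)^+]$ with $r_x(y)=|I|$ (as $I\in{\cal I}^x$), and $|K|=n$ since $K$ is a base of the rank-$n$ matroid ${\bf M}^y$; it remains to show $K\in{\cal I}^x$. Case (ii) gives $e_y^1=y\vee e_x^2$ for $e\in I$, and case (i) gives $e_y^1=y\vee e_x^1$ for $e\in K\setminus I$ (using the hypothesis $e_x^1\not\preceq y$). With $A:=\bigvee_{e\in I}e_x^2$ and $B:=\bigvee_{e\in K\setminus I}e_x^1$ we get $\bigvee_{e\in K}e_y^1=y\vee A\vee B=A\vee B$ (as $y\preceq A$), and $K\in{\cal B}^y$ forces $A\vee B=(y)^+$, so $r_x(A\vee B)=r((y)^+)-r(x)=|I|+n$. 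Since $I$ is independent, Proposition~\ref{prop:generated} gives $r_x(A)=2|I|$; feeding $r_x(A)=2|I|$ and $r_x(B)\le|K\setminus I|=n-|I|$ into $r_x(A)+r_x(B)\ge r_x(A\wedge B)+r_x(A\vee B)$ forces $r_x(B)=n-|I|$ and $A\wedge B=x$, hence $y\wedge B=x$. Finally apply the semimodular inequality to $A$ and $y\vee B$: since $A\vee(y\vee B)=A\vee B=(y)^+$ and $A\wedge(y\vee B)\succeq y$, we get $r_x(A\vee B)-r_x(y\vee B)\le r_x(A)-r_x(A\wedge(y\vee B))\le 2|I|-|I|=|I|$, so $r_x(y\vee B)\ge(|I|+n)-|I|=n$. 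As $\bigvee_{e\in K}e_x^1=y\vee B$ lies in $[x,(x)^+]$ we also have $r_x(y\vee B)\le n$, so $r_x(\bigvee_{e\in K}e_x^1)=n=|K|$, i.e.\ $K\in{\cal B}^x$.
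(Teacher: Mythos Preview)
Your proof of (1) is correct and essentially the same as the paper's (the paper phrases it as a contrapositive, but the inequality chain is identical).

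Your proof of (2) is incomplete --- you flag the obstacle yourself. The implication ``$e_x^2\preceq(w_0)^+\Rightarrow e_x^1\preceq w_0$'' that your induction hinges on is indeed not immediate in a merely semimodular lattice: applying $(\cdot)^-$ overshoots, and a meet argument such as $e_x^1=e_x^2\wedge(x)^+\preceq(w_0)^+\wedge(x)^+$ stalls because one cannot identify $(w_0)^+\wedge(x)^+$ with $w_0$ without modularity. The paper avoids this entirely with a different decomposition: pick a maximal $x$-independent subset $K'\subseteq K$ so that $\bigvee_{e\in K'}e_x^1=\bigvee_{e\in K}e_x^1$, and then (since $[x,(x)^+]$ is geometric) extend by $J\subseteq E\setminus K$ with $\bigvee_{e\in K'\cup J}e_x^1=z$. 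Now $z$ lies in the $\ZZ_+^{|K'\cup J|}$-sublattice of Proposition~\ref{prop:generated}, where the $z$-rays of the elements of $K'$ are explicit and visibly independent; hence $r[z,\bigvee_{e\in K'}e_z^1]=|K'|=r_x\!\left(\bigvee_{e\in K}e_x^1\right)$, and the inequality follows. The key idea you are missing is to \emph{place $z$ inside an apartment} (a $\ZZ_+^k$-skeleton containing both the relevant rays and $z$) rather than to track one ray at a time through semimodular inequalities.

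Your proof of (3) is correct but considerably longer than the paper's. The paper simply observes
\[
\bigvee_{e\in K}e_x^1 \;=\; y\vee\bigvee_{e\in K\setminus I}e_x^1 \;=\; \bigvee_{e\in K\setminus I}e_y^1,
\]
the last equality using only case~(i) for $e\in K\setminus I$; since $K\setminus I$ is independent at $y$ (being a subset of $K\in{\cal B}^y$), this join has height $n-|I|$ over $y$, and adding $r[x,y]=|I|$ gives $r_x\!\left(\bigvee_{e\in K}e_x^1\right)=n$. Your detour through $A=\bigvee_{e\in I}e_x^2$, Proposition~\ref{prop:generated}, and two applications of the semimodular inequality is unnecessary: the elements $e_y^1$ for $e\in I$ never need to enter the computation.
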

\begin{proof}
	(1). We show the contrapositive;  
	suppose $|K| > r_x(\bigvee_{e \in K} e_x^1)$, i.e., $K \not \in {\cal I}^x$.
	By $z \not \succeq e_x^1$,  
	it holds $e_z^1 = z \vee e_x^1$ for $e \in K$.
	Then $r_x(z) + |K| >  r_x(z) + r_x(\bigvee_{e \in K} e_x^1) \geq r_x(\bigvee_{e \in K} e_z^1) + r_x(z \wedge \bigvee_{e \in K} e_x^1) =  r_x(z) + r_z(\bigvee_{e \in K} e_z^1) + r_x(z \wedge \bigvee_{e \in K} e_x^1) \geq  r_x(z) + r_z(\bigvee_{e \in K} e_z^1)$.
	Thus $|K| > r_z(\bigvee_{e \in K} e_z^1)$, and $K \not \in {\cal I}^z$.

	(2). Let $y:=  \bigvee_{e \in K} e_x^1$. 
	We can choose an $x$-independent subset $K' \subseteq K$ such that
	$y = \bigvee_{e \in K'} e_x^1$.
	Also we can choose an $x$-independent subset $J \subseteq E \setminus K$ 
	such that $y \vee (\bigvee_{e \in J} e_x^1) = z$.
	Then $K' \cup J$ is $x$-independent. 
	Now $z$ belongs to the sublattice generated 
	by independent $x$-rays $e_x$ $(e \in K' \cup J)$.
	From Proposition~\ref{prop:generated}, we conclude that 
	$K'$ is independent at $z$.
	Hence $r_x[x, \bigvee_{e \in K}e_x^1] = |K'| 
	= r_z[z,\bigvee_{e \in K'}e_z^1] \leq  r_z[z,\bigvee_{e \in K}e_z^1]$.
%

	(3). By $\bigvee_{e \in K}e_x^1 = y \vee \bigvee_{e \in K \setminus I} e_x^1 
	= \bigvee_{e \in K\setminus I} e_y^1$ (since $e_x^1 \not \preceq y$ for $e \in K \setminus I$), 
	we have $r[y, \bigvee_{e \in K} e_x^1] = r[y, \bigvee_{e \in K \setminus I} e_y^1] 
	= |K| - |I| = n - |I|$ (by $K \in {\cal B}^y$).
	Thus $r[x, \bigvee_{e \in K} e_x^1] = r[x,y] + r[y, \bigvee_{e \in K} e_x^1] = n$.
	This implies that $K$ is a base at $x$.  	
\end{proof}

\begin{Lem}\label{lem:independent}
	For $I \subseteq E$ and $x \in {\cal L}$, 
	define $x = x^0,x^1,\ldots$ by
	\begin{equation}\label{eqn:x^k}
	x^{k} := \bigvee_{e \in I} e_{x}^k \quad (k=0,1,2,\ldots).
	\end{equation}
	If $I \in {\cal I}^{\infty}$, 
	then there is $m \geq 0$ such that $I$ is independent at $x^m$.
\end{Lem}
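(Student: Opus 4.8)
The plan is to exploit the definition of ${\cal I}^{\infty}$ together with the two monotonicity statements already established in Lemma~\ref{lem:dependent}. Since $I \in {\cal I}^{\infty}$, by definition there is some $z \in {\cal L}$ at which $I$ is independent, i.e.\ $I \in {\cal I}^z$. The idea is to travel from $z$ (or rather from $x \vee z$) back down to the root $x$ along the tree of $x$-rays, controlling how independence behaves at each step. First I would pass to $w := x \vee z$: by Lemma~\ref{lem:dependent}~(2) applied in the interval $[z,(z)^+]$ (after noting $w \succeq \bigvee_{e \in I} e_z^1$ because $I$ is $z$-independent and $w \succeq z$), independence is preserved going up, so $I \in {\cal I}^w$ as well, and now $w \succeq x$. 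So without loss of generality we may assume the witness $z$ satisfies $z \succeq x$.

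Next I would show that $z \preceq x^m$ for some $m$, where $x^m = \bigvee_{e \in I} e_x^m$ is the chain defined in the statement. Indeed, each $x$-ray $e_x$ is a ray, and $z \succeq x$, so by Lemma~\ref{lem:ray} there is an index beyond which $e_x^\ell \preceq z$ fails; but more to the point, by Lemma~\ref{lem:preceq}-type reasoning (or directly: $z$ lies in the sublattice picture of Proposition~\ref{prop:generated} once we adjoin finitely many rays spanning $z$), we have $z \preceq (x)^{+k}$ for some $k$, hence $z \preceq \bigvee_{e \in E} e_x^{k} $, and restricting attention to the coordinates indexed by $I$ should not quite suffice — so the cleaner route is: by Lemma~\ref{lem:y_x}~(3), $z = \bigvee_{e \in E} e_x^{z_x(e)}$, and I want to locate the relevant finite part. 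The key point is that $z \wedge x^m$ increases to $z$ as $m \to \infty$: since $z \succeq x$ and $z \wedge \bigvee_{e \in I} e_x^1 = \bigvee_{e \in I} e_x^1$ when $I \in {\cal I}^z$ (this is where $z$-independence is used), an inductive argument using semimodularity and Proposition~\ref{prop:generated} shows $x^k \wedge z = x^k$ as long as $x^k \preceq z$, and since $[x,z]$ has no infinite chain by (F), we must have $x^m \succeq z$ for some finite $m$. Let me take such an $m$.

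Finally, with $x \preceq z \preceq x^m$, I would apply Lemma~\ref{lem:dependent}~(2) one more time in the interval $[z,(z)^+]$ — but the cleanest finish is to observe that $x^m$ lies in the sublattice generated by the $x$-rays $\{e_x : e \in I\}$ together with finitely many additional $x$-rays completing a basis of $z$ relative to the geometric-lattice structure, and then invoke Proposition~\ref{prop:generated}: in that $\ZZ^k_+$-sublattice the elements $e_{x^m}^1$ for $e \in I$ are exactly the "$+{\bf 1}$ in coordinate $e$" moves from $x^m$, which are independent in $[x^m,(x^m)^+]$ precisely because the $e_x$ were independent $x$-rays. Concretely: from $I \in {\cal I}^z$ and $z \preceq x^m$, Lemma~\ref{lem:dependent}~(2) gives $I \in {\cal I}^{x^m}$ provided $x^m$ lies in $[z,(z)^+]$, which need not hold; so instead I iterate Lemma~\ref{lem:dependent}~(2) step by step along a maximal chain from $z$ up to $x^m$, at each covering step using that the next element dominates $\bigvee_{e \in I}$ of the current ray-joins (true because the chain $x^k$ is built exactly from these rays). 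This yields $I \in {\cal I}^{x^m}$, as required.

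The main obstacle I anticipate is the middle step: showing that the fixed witness $z$ is eventually dominated by $x^m$, and that independence is not lost while moving from $z$ up to $x^m$. The subtlety is that moving "up" in ${\cal L}$ can in principle destroy independence (Lemma~\ref{lem:dependent}~(2) only guarantees preservation under the hypothesis $z \succeq \bigvee_{e\in K} e_x^1$, i.e.\ one stays within the span of the $K$-rays). So the argument must keep the ascent confined to the sublattice generated by the $I$-rays (plus whatever is needed), which is where Proposition~\ref{prop:generated}'s explicit $\ZZ^k_+$-description, and the finiteness axiom (F) bounding the number of steps, do the real work.
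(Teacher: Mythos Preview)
Your proposal has a genuine gap in the middle step. The claim that the witness $z$ (with $z \succeq x$ and $I \in {\cal I}^z$) is eventually dominated by $x^m = \bigvee_{e\in I} e_x^m$ is simply false. Take ${\cal L} = \ZZ^2$, $x = (0,0)$, $I = \{e_1\}$ (the end for the first coordinate), and $z = (0,5)$: then $I$ is independent at $z$, but $x^m = (m,0)$ never dominates $z$. Your tentative arguments for this step (e.g.\ ``$z \wedge x^m$ increases to $z$'', or the appeal to (F)) all presuppose that $z$ lies in the sublattice generated by the $I$-rays from $x$, which it need not. The first step is also shaky: $w = x \vee z$ need not lie in $[z,(z)^+]$ nor dominate $\bigvee_{e\in I} e_z^1$, so Lemma~\ref{lem:dependent}~(2) does not apply there; the reduction to a witness above $x$ is instead obtained by replacing $z$ with $(z)^{+k}$ via Lemma~\ref{lem:preceq} and the fact that $(\cdot)^{+k}$ is an automorphism.

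The missing idea, which the paper supplies, is to \emph{project the witness into the $I$-sublattice} rather than hope it is already there. Starting from a witness $y \succeq x$ with $I \in {\cal I}^y$, one defines $z := \bigvee_{e\in I} e_x^{\,y_x(e)} \preceq y$. By construction $y_z(e)=0$ for $e\in I$, i.e.\ $e_z^1 \not\preceq y$; then Lemma~\ref{lem:dependent}~(1) (which transfers independence \emph{downward}, not upward) gives $I \in {\cal I}^z$. Now $z$ genuinely lies in the sublattice generated by the $I$-rays from $x$, and for $m := \max_{e\in I} y_x(e)$ one checks $x^m = \bigvee_{e\in I} e_z^{\,m - y_x(e)}$, so $x^m$ lies in the $\ZZ_+^{|I|}$-sublattice of independent $z$-rays and Proposition~\ref{prop:generated} yields $I \in {\cal I}^{x^m}$. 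The decisive tool you did not use is Lemma~\ref{lem:dependent}~(1).
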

\begin{proof}
	By the definition of ${\cal I}^{\infty}$, there is $y \in {\cal L}$ 
	such that $I$ is independent at $y$.
	We can assume that $y \succeq x$ (Lemma~\ref{lem:preceq}).
	Consider the $x$-coordinate $y_x \in \ZZ^E$ of $y$, and 
	let $z := \bigvee_{e \in I} e_x^{y_x(e)} (\preceq y)$.
	By Lemma~\ref{lem:y_x}~(1), 
	it holds $y_z(e) = 0$ for all $e \in I$.
	This means that $e^1_z \not \preceq y$ for all $e \in I$.
	Therefore, by Lemma~\ref{lem:dependent}~(1) and $I \in {\cal I}_y$, 
	$I$ is independent at $z$.
	By $z \not \succeq e_x^{y_x(e) + 1}$ and Lemma~\ref{lem:ray}, 
    it holds $e_z^{l} = z \vee e_x^{y_x(e) + l}$ for $e \in I$ and $l \geq 0$. 
	
	Let $m := \max_{e \in I} y_x(e)$.
	Then $x^m = \bigvee_{e \in I} e_x^{y_x(e)} \vee e_x^{m}
	=  \bigvee_{e \in I} z \vee e_x^{m} = \bigvee_{e \in I} e_{z}^{m - y_z(e)}$.
	Thus $x^m$ belongs to the sublattice generated 
	by independent $z$-rays, which implies that $I$ is independent at $x^m$.	
\end{proof}

\begin{Lem}\label{lem:x^k'}
	For $I \subseteq E$ and $x \in {\cal L}$, define $x = x^0,x^1,\ldots$ by $(\ref{eqn:x^k})$.
	Then we have 
	\begin{equation}\label{eqn:x^k'}
	x^k = \bigvee_{e \in I} e_{x^{k-1}}^1 \quad (k=1,2,\ldots).
	\end{equation}
\end{Lem}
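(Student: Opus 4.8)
The plan is to reduce the stated identity to a single local fact: for every $e\in I$, the first element of the $x^{k-1}$-ray representing $e$ equals $e_{x^{k-1}}^1 = e_x^k\vee x^{k-1}$. Granting this, and noting $e_x^{k-1}\preceq x^{k-1}\preceq x^k$ (the second inequality since $(e')_x^{k-1}\preceq (e')_x^k$ for all $e'\in I$), the identity is immediate:
\[
\bigvee_{e\in I}e_{x^{k-1}}^1=\bigvee_{e\in I}\bigl(e_x^k\vee x^{k-1}\bigr)=x^{k-1}\vee\bigvee_{e\in I}e_x^k=x^{k-1}\vee x^k=x^k .
\]
(We may assume $I\neq\emptyset$, the empty case being trivial.)

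To prove the local fact I would apply Lemma~\ref{lem:ray} to the $x$-ray $e_x$ and to $x^{k-1}\succeq x$: there is an index $\ell$ with $e_x^\ell\preceq x^{k-1}$, $e_x^{\ell+1}\not\preceq x^{k-1}$, and $e_{x^{k-1}}^1=e_x^{\ell+1}\vee x^{k-1}$ (this is exactly the identification of $e_{x^{k-1}}$ with $(e_x)\vee x^{k-1}$ already used in the proof of Lemma~\ref{lem:independent}). Since $e\in I$ we have $e_x^{k-1}\preceq\bigvee_{e'\in I}(e')_x^{k-1}=x^{k-1}$, so $\ell\geq k-1$; hence everything comes down to the inequality $e_x^k\not\preceq x^{k-1}$, which forces $\ell=k-1$ and thus $e_{x^{k-1}}^1=e_x^k\vee x^{k-1}$. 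This is the crux.

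The way I would establish $e_x^k\not\preceq x^{k-1}$ is to interpose the element $(x)^{+(k-1)}$. On one hand $x^{k-1}\preceq (x)^{+(k-1)}$: by induction on $j$ one checks $(e')_x^j\preceq (x)^{+j}$ for each $e'\in I$, because $(e')_x^{j+1}$ covers $(e')_x^j$ gives $(e')_x^{j+1}\preceq ((e')_x^j)^+$, and the ascending operator, being an automorphism, is order-preserving, so $((e')_x^j)^+\preceq ((x)^{+j})^+=(x)^{+(j+1)}$; taking the join over $e'\in I$ at level $k-1$ yields the claim. On the other hand $e_x^k\not\preceq (x)^{+(k-1)}$: passing to the $x$-coordinate realization of ${\cal L}$ (Proposition~\ref{prop:Z(L,x)}, under which $y\mapsto y_x$ is order-preserving), we have $((x)^{+(k-1)})_x=(k-1){\bf 1}$ by iterating Lemma~\ref{lem:y_x}~(2), whereas $(e_x^k)_x(e)=k$ directly from the definition of the $x$-coordinate; since $k>k-1$, these vectors are incomparable in the $e$-coordinate. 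Combining the two facts, $e_x^k\preceq x^{k-1}\preceq (x)^{+(k-1)}$ is impossible, so $e_x^k\not\preceq x^{k-1}$.

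The delicate point is precisely this last inequality. The natural temptation is to induct on $k$ by splitting $I$ according to the level-$1$ elements $(e')_x^1$ and recursing into the interval above each of them; but joining the contributions of the different ``branches'' at $x$ could a priori climb up to $e_x^k$, and one keeps being thrown back onto the very statement to be proved. Factoring through $(x)^{+(k-1)}$, whose $x$-coordinate is the constant vector $(k-1){\bf 1}$, decouples the vertical growth of the single ray $e_x$ from the join and so breaks the circularity; no other step should require more than the cited lemmas.
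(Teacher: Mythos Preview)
Your proof is correct. Both your argument and the paper's share the same reduction: once one shows $e_x^k\not\preceq x^{k-1}$ for each $e\in I$, Lemma~\ref{lem:ray} gives $e_{x^{k-1}}^1=e_x^k\vee x^{k-1}$ and the identity follows by the chain of joins you wrote. The two proofs diverge only at this key inequality.

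The paper proves $e_x^k\not\preceq x^{k-1}$ by induction on $k$, staying inside the ray/segment formalism. Using the inductive hypothesis (which gives both $e_x^{k-1}\not\preceq x^{k-2}$ and the formula $x^{k-1}=\bigvee_{e'\in I}(e')_{x^{k-2}}^1\preceq (x^{k-2})^+$), it observes that $e_x^k\vee x^{k-2}=e_{x^{k-2}}^2$; if $e_x^k\preceq x^{k-1}$ held, then $e_{x^{k-2}}^2\preceq x^{k-1}\preceq (x^{k-2})^+$, contradicting the defining property of a segment that $e_{x^{k-2}}^2\notin[x^{k-2},(x^{k-2})^+]$.

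Your route instead interposes $(x)^{+(k-1)}$: you bound $x^{k-1}\preceq (x)^{+(k-1)}$ by the easy induction $(e')_x^j\preceq (x)^{+j}$, and then read off $e_x^k\not\preceq (x)^{+(k-1)}$ from the $x$-coordinate realization (Lemma~\ref{lem:y_x}(2) and Proposition~\ref{prop:Z(L,x)}), since $(e_x^k)_x(e)=k>k-1$. This is a genuinely different device: it trades the local segment argument for the global coordinate picture developed in Section~4.1.4, which sits earlier than Lemma~\ref{lem:x^k'} in the logical order, so there is no circularity. The payoff is that your argument does not need the recursive expression for $x^{k-1}$ at all and so avoids the coupled induction; the paper's argument, by contrast, is more self-contained in that it uses nothing beyond the definition of a segment and the already-established recursion.
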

\begin{proof}
	We show by induction on $k$ that $e_x^{k} \not \preceq x^{k-1}$ for $e \in I$.
	This implies $e_{x^{k-1}}^{1} = x^{k-1} \vee e_x^{k}$ 
	by Lemma~\ref{lem:ray}, and implies
	(\ref{eqn:x^k'}):  
	$x^k := \bigvee_{e \in I} e_x^k 
	= \bigvee_{e \in I} e_x^{k-1} \vee e_x^k 
	= \bigvee_{e \in I} x^{k-1} \vee e_x^k
	= \bigvee_{e \in I} e_{x^{k-1}}^1$. 
	
	For $e \in I$, by induction, $e_x^{k-1} \not \preceq x^{k-2}$.
	Then $e_x^{k} \vee x^{k-2} = e_{x^{k-2}}^{2}$ (by Lemma~\ref{lem:ray}).
	If $e^k_x \preceq x^{k-1}$, then 
	$e_{x^{k-2}}^2 = e_x^k \vee x^{k-2} \preceq x^{k-1}$, and
	$x^{k-2} = e_{x^{k-2}}^0 \prec e_{x^{k-2}}^1 \prec e_{x^{k-2}}^2 
	\preceq x^{k-1} = \bigvee_{e \in I} e_{x^{k-2}}^1 \preceq (x^{k-2})^+$, contradicting $e_{x^{k-2}}^2 \not \in [x^{k-2}, (x^{k-2})^+]$. 
	Thus $e_x^{k} \not \preceq x^{k-1}$, as required.	
\end{proof}

\begin{proof}[Proof of Proposition~\ref{prop:matroid_infty}]
We verify the axiom of independent sets.
Choose $I,J \in {\cal I}^{\infty}$ with $|I| < |J|$.
By the definition of ${\cal I}^{\infty}$,
there is $x \in {\cal L}$ with $I \in {\cal I}^x$.
Consider $x^1 := \bigvee_{e \in I} e_x^1$ and $y^1 := \bigvee_{e \in J} e_x^1$.
If $y^1 \not \preceq x^1$, 
then we can choose $e^* \in J \setminus I$ 
with $(e^*)^1_{x} \not \preceq x^1$, and $I + e^*$ is independent at $x$; 
$I + e^* \in {\cal I}_x \subseteq {\cal I}_{\infty}$, as required.

So suppose $y^1 \preceq x^1$. For $k=1,2,\ldots$, 
let $x^k := \bigvee_{e \in I} e_x^k$, and let $y^k :=  \bigvee_{e \in J} e_x^k$.
By Lemma~\ref{lem:dependent}~(2) and Lemma~\ref{lem:x^k'}, 
$I$ is independent at all $x^k$.
By Lemma~\ref{lem:independent} and $J \in {\cal I}^{\infty}$, 
there is $\ell$ such that $J$ is independent at all $y^k$ for $k \geq \ell$. 
With Lemma~\ref{lem:x^k'}, it holds
$r[x^{k},x^{k+1}] = r[x^k, \bigvee_{e \in I} e_{x^k}^1]= |I| < |J| 
= r[y^k, \bigvee_{e \in J} e_{y^k}^1] = r[y^k,y^{k+1}]$ for $k \geq \ell$. 
For large $k$, the increase of the height of $y^k$ is greater than that of $x^k$.
Therefore there is $k^*$ such that $y^{k^*} \preceq x^{k^*}$ 
and $y^{k^*+1} \not \preceq x^{k^*+1}$.
This implies that $x^{k^*+1} \not \succeq x^{k^*} \vee y^{k^*+1} 
= x^{k^*} \vee \bigvee_{e \in J} e_{y^{k^*}}^{1} \preceq \bigvee_{e \in J} e_{x^{k^*}}^1$. 
Thus $\bigvee_{e \in J} e_{x^{k^*}}^1 \not \preceq \bigvee_{e \in I}e_{x^{k^*}}^1 (= x^{k^*+1})$, and there is $e^* \in J \setminus I$ with $I + e^* \in {\cal I}_{x^{k^*}}$, as above.

For distinct $e,f \in E$ and $x \in {\cal L}$, 
let $y := e_x^{\delta_x(e,f)} = f_x^{\delta_x(e,f)}$. 
Then $e_{y}^1 \neq f_{y}^1$; see Section~\ref{subsub:ultra},
This means that $\{e,f\}$ is independent on ${\bf M}^{\infty}$.  
Thus ${\bf M}^{\infty}$ is a simple matroid.
\end{proof}

\begin{Lem}\label{lem:key}
	Let $x \in {\cal L}$. 
	For a bounded vector $c \in \ZZ^E_+$, let $y := \bigvee_{e \in E} e_x^{c(e)}$.
	Then there is $B \in {\cal B}^y$ such that $y_x(e) = c(e)$ for $e \in B$, and
	\begin{equation}
	y = \bigvee_{e \in B} e_x^{c(e)}.
	\end{equation}
\end{Lem}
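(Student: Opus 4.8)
The plan is to produce the desired base $B$ by starting from an arbitrary maximal $x$-independent subset among the "active" ends and repeatedly exchanging elements so that all coordinates $c(e)$ for $e$ in the current set are correctly realized, i.e.\ $y_x(e)=c(e)$. First I would record the easy direction: since $e_x^{c(e)}\preceq y$ for every $e$, by definition of the $x$-coordinate we always have $y_x(e)\ge c(e)$, and we trivially have $y=\bigvee_{e\in E}e_x^{c(e)}$; so the content is to find $B\in{\cal B}^y$ on which $y_x$ and $c$ agree and which still joins up to $y$. Because $c$ is bounded, only finitely many distinct "heights" occur, so I may argue by induction on $\max_e c(e)$, or equivalently peel off the construction level by level along the chain $x=x^0\prec\cdots$ built from the ends as in Lemma~\ref{lem:independent} and Lemma~\ref{lem:x^k'}.

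The core step is the following exchange argument. Pick any base $B$ of ${\bf M}^y$; such a base exists because ${\bf M}^\infty$ (hence ${\bf M}^y$, which has rank $n$) is a matroid of rank $n$ by Proposition~\ref{prop:matroid_infty}. Suppose $y_x(e)>c(e)$ for some $e\in B$; then $e_x^{c(e)+1}\preceq y$, and I want to show some $f\notin B$ with $e_x^1\neq$ the relevant ray can replace $e$ so that the offending coordinate drops. The mechanism is Lemma~\ref{lem:ray} together with the structure of the rooted tree of $x$-rays from Section~\ref{subsub:ultra}: the join $y=\bigvee_{e\in E}e_x^{c(e)}$ decomposes level by level, and at the first level $\ell$ where the element $e_x^{\ell}$ of $B$'s contribution "should" have stopped (namely $\ell=c(e)+1$) but $y$ still lies above $e_x^{\ell}$, there must be another end $f$ with $f_x^{\ell}\preceq y$ that branches away from $e$ at a lower level; using Lemma~\ref{lem:dependent}~(1)--(3) and Lemma~\ref{lem:segment}, $B-e+f$ is again a base of ${\bf M}^y$ and $\sum_{g\in B-e+f}(y_x(g)-c(g))$ has strictly decreased. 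Since this sum is a nonnegative integer, the process terminates at a base $B$ with $y_x(e)=c(e)$ for all $e\in B$.

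Finally I would verify $y=\bigvee_{e\in B}e_x^{c(e)}$ for the resulting base $B$. Let $u:=\bigvee_{e\in B}e_x^{c(e)}\preceq y$; it suffices to show $u^1_y=\bigvee_{e\in B}e_u^{1}$ already reaches $(u)^+$ relative to the part of $y$ above $u$, i.e.\ $y_u={\bf 0}$. This follows from Lemma~\ref{lem:y_x}~(1): $y_x=y_u+u_x$, and on $B$ we have $u_x(e)=c(e)=y_x(e)$, so $y_u(e)=0$ for all $e\in B$; since $B$ is a base of ${\bf M}^{u}$ (this is the statement $B\in{\cal B}^y$ translated to the apex $u$, using Lemma~\ref{lem:dependent}~(2) to move independence from $y$ down to $u$), the rank of $[u,(u)^+]$ equals $n$ and is achieved by the $e^1_u$ for $e\in B$; hence any $z$ with $u\preceq z\preceq y$ and $z\succ u$ would force $y_u\neq{\bf 0}$, a contradiction, so $y=u$.

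The main obstacle I anticipate is making the exchange step genuinely decrease the defect $\sum_{e\in B}(y_x(e)-c(e))$ while staying inside ${\cal B}^y$: one must locate the replacing end $f$ at exactly the right level of the rooted tree of $x$-rays and check, via Lemmas~\ref{lem:dependent} and \ref{lem:segment}, that independence is preserved at $y$ (equivalently at the appropriate $x^k$). Handling the bookkeeping of which level each coordinate "freezes" at — and ensuring termination rather than cycling — is where the real care is needed; everything else is a direct application of the coordinate identities in Lemma~\ref{lem:y_x} and the matroid axiom from Proposition~\ref{prop:matroid_infty}.
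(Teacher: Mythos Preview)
Your proposal takes a genuinely different route from the paper (an exchange/defect-decreasing argument versus the paper's clean induction on $\max_e c(e)$), but as written it has real gaps.

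\textbf{The exchange step is not justified.} You assert that when $y_x(e)>c(e)$ for some $e\in B$, ``there must be another end $f$'' with $f_x^{\ell}\preceq y$ branching away from $e$, so that $B-e+f\in{\cal B}^y$ and the defect $\sum_{g\in B}(y_x(g)-c(g))$ strictly drops. None of this is established. The rooted-tree structure of $x$-rays does not by itself locate such an $f$: the element $y=\bigvee_g g_x^{c(g)}$ is a join in a merely semimodular lattice, and $e_x^{c(e)+1}\preceq y$ does not force $e_x^{c(e)+1}\preceq g_x^{c(g)}$ for any single $g$. Even if you produce a candidate $f$, you still need to argue that $B-e+f\in{\cal B}^y$ (either $e,f$ are parallel at $y$, or you invoke a matroid exchange at $y$), and that $y_x(f)-c(f)<y_x(e)-c(e)$; neither is shown. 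Lemmas~\ref{lem:dependent} and~\ref{lem:segment}, which you cite, do not deliver this on their own. This is precisely the point you flag as ``the main obstacle,'' and the proposal does not overcome it.

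\textbf{The final verification is also incomplete.} You correctly derive $y_u(e)=0$ for all $e\in B$, but then claim that $u\prec z\preceq y$ would force $y_u\neq{\bf 0}$, ``a contradiction.'' There is no contradiction: you have only shown $y_u$ vanishes on $B$, not on all of $E$, so an atom $a\preceq y$ in $[u,(u)^+]$ with $a\neq e_u^1$ for every $e\in B$ is not yet excluded. The missing ingredient is exactly the argument behind Lemma~\ref{lem:S(B)}: such an atom $a$ would make $B$ dependent at $a$, and then Lemma~\ref{lem:dependent}~(1) (not~(2), which moves independence \emph{up}, not down) forces $B$ dependent at $y$, contradicting $B\in{\cal B}^y$. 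Once you plug this in, the conclusion $y=u$ follows.

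By contrast, the paper avoids both difficulties by inducting on $\max_e c(e)$: it applies the lemma to the truncated vector $c'(e):=\max\{c(e)-1,0\}$ to get a base $B'$ at $z:=\bigvee_e e_x^{c'(e)}$, then extends the ``active'' part $I=\{e\in B'\mid c(e)>0\}$ to a $z$-independent set $J$ with $\bigvee_{e\in J}e_z^1=y$, and if necessary pads $J$ with elements of $B'$ having $c(e)=0$. This constructs $B$ directly at the right level, so no exchange argument is needed and membership in ${\cal B}^y$ comes for free from Proposition~\ref{prop:generated}.
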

Notice that $\bigvee_{e \in E} e_x^{c(e)}$ 
exists by $\bigvee_{e \in E} e_x^{c(e)} \preceq (x)^{+\max_{e \in E}c(e)}$.
\begin{proof}
	We use the induction on $\max_{e \in E} c(e)$. 
	Define $c' \in \ZZ_{+}^{E}$ by $c'(e) := \max \{ c(e) - 1,0\}$.
    Let $z := \bigvee_{e \in E} e_x^{c'(e)}$.
    Let $Z := \{ e \in E \mid e_x^{c(e)} \not \preceq z \}$.
    Then, for $e \in Z$, it holds $c(e) > 0$ 
    and $e_x^{c(e)-1} \preceq z \not \preceq e_x^{c(e)}$.
    This implies 
    \begin{equation*}
    z_x(e) = c(e) - 1 \quad (e \in Z).
    \end{equation*}
    Now $y = \bigvee_{e \in E} z \vee e_{x}^{c(e)} = \bigvee_{e \in Z} e_z^1$.
    Consider the matroid ${\bf M}^z$ at $z$. Then 
    \begin{equation}\label{eqn:y_z(e)}
    y_z(e) = 
    \left\{ \begin{array}{ll}
    1 & {\rm if}\ e \in {\rm cl} (Z) (\Leftrightarrow e_z^1 \preceq y), \\
    0 & {\rm otherwise},
    \end{array}\right. \quad (e \in E).
    \end{equation}
    Therefore, 
    by Lemma~\ref{lem:y_x}~(1) we have 
    \begin{equation}\label{eqn:y_x(e)}
    y_x(e) = y_z(e) + z_x(e) = c(e) \quad (e \in Z).
    \end{equation}

    By induction, there is $B' \in {\cal B}^z$ 
    such that $z_x(e) = c'(e)$ for $e \in B'$ and $z = \bigvee_{e \in B'} e_x^{c'(e)}$.
    Let $I := \{ e \in B' \mid  c(e) > 0 \}$.
    By $c(e) - 1 = c'(e)= z_x(e)$ for $e \in I$, 
    it holds $I \subseteq Z$.    
    By $I \in {\cal I}^z$ (from $B' \in {\cal B}^z$), 
    there is $J \in {\cal I}^z$ such that $I \subseteq J \subseteq Z$ 
    and $y = \bigvee_{e \in J} e_z^1$ (i.e., ${\rm cl}(J) = {\rm cl} (Z)$).
    By $z =  \bigvee_{e \in I} e_x^{c(e)-1}$ and $I \subseteq J \subseteq Z$, 
    it holds $y = \bigvee_{e \in J} e_z^1 = \bigvee_{e \in J} z \vee e_x^{c(e)} 
    = \bigvee_{e \in I} e_x^{c(e)-1} \vee \bigvee_{e \in J} e_x^{c(e)} = \bigvee_{e \in J} e_x^{c(e)}$.
    Therefore, if $J \in {\cal B}^z$, 
    then $J \in {\cal B}^y$ (by Proposition~\ref{prop:generated}), 
    and by (\ref{eqn:y_x(e)}) $J$ is a desired subset.
    Suppose not. 
    By the independence axiom for $B',J \in {\cal I}^z$ with $|B'| > |J|$ 
    we can choose a subset $K \subseteq B' \setminus J$ 
    with $J \cup K \in {\cal B}^z$. 
    Necessarily $K$ is disjoint with ${\rm cl}(Z)$.
    Then $B := J \cup K$ is a desired base in ${\cal B}^y$.
    Indeed, 
    by $K \subseteq B' \setminus I$, we have $0 = c(e) = c'(e) = z_x(e)$ for $e \in K$.
    By $K \cap {\rm cl}(Z) = \emptyset$ and (\ref{eqn:y_z(e)}), we have $y_z(e) = 0$ for $e \in K$. 
    Thus $y_x(e) = y_z(e) + z_x(e) = 0 = c(e)$ for $e \in K$; then $y_z(e) = c(e)$ for $e \in B = J \cup K$.
    Also $y = \bigvee_{e \in J} e_x^{c(e)} = \bigvee_{e \in B} e_x^{c(e)}$. 
\end{proof}

\subsubsection{$\ZZ^n$-skeletons}
Let $x \in {\cal L}$, and $B \in {\cal B}^x$.
By Proposition~\ref{prop:generated}, 
the
sublattice ${\cal S}^x(B)$ generated by elements in $x$-rays $e_x \in B$ is 
isomorphic to  $\ZZ_+^n$, where $n$ is the uniform rank of ${\cal L}$.
This sublattice is closed under the ascending operation. 
Define sublattice ${\cal S}(B)$ by
\begin{equation*}
{\cal S}(B) := \bigcup_{k \in \ZZ} ({\cal S}^x(B))^k.
\end{equation*}
Then ${\cal S}(B)$ is isomorphic to $\ZZ^n$ 
with $(y)^+ = y + {\bf 1}$ for $y \in {\cal S}(B)$ (identified with $\ZZ^n$).
We call ${\cal S}(B)$ the {\em $\ZZ^n$-skeleton} generated by $B$.
The next lemma shows that ${\cal S}(B)$ 
is independent of the choice of $x$, 
and is well-defined for $B \in {\cal B}^{\infty}$.
\begin{Lem}\label{lem:S(B)}
	For $B \in {\cal B}^x$, it holds
	${\cal S}(B) = \{ y \in {\cal L} \mid B \in {\cal B}^y  \}$.
\end{Lem}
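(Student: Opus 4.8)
The plan is to prove the two inclusions separately, after a reduction. First I would record the routine fact that the automorphism $(\cdot)^+$ fixes every end, so that it induces the identity isomorphism ${\bf M}^w \cong {\bf M}^{(w)^+}$ of matroids on $E$; in particular ${\cal B}^w = {\cal B}^{(w)^+}$ for every $w \in {\cal L}$. Thus the set $\{w \mid B \in {\cal B}^w\}$ is stable under $(\cdot)^{\pm 1}$, and ${\cal S}(B) = \bigcup_{k \in \ZZ}({\cal S}^x(B))^{+k}$ is stable by construction. Since by Lemma~\ref{lem:preceq} every $w$ satisfies $(w)^{+k} \succeq x$ for some $k \geq 0$, it suffices to prove ${\cal S}^x(B) \subseteq \{w \mid B \in {\cal B}^w\}$ and, conversely, that $w \in {\cal S}^x(B)$ whenever $w \succeq x$ and $B \in {\cal B}^w$.

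For the first inclusion, take $w = \bigvee_{e \in B} e_x^{z_e} \in {\cal S}^x(B)$ with $z \in \ZZ_+^B$ (Proposition~\ref{prop:generated}). By uniqueness of this expression $w_x(e) = z_e$ for $e \in B$, so $e_x^{z_e+1} \not\preceq w$, and Lemma~\ref{lem:ray} gives $e_w^1 = w \vee e_x^{z_e+1}$. Hence $\bigvee_{e \in B} e_w^1 = \bigvee_{e \in B} e_x^{z_e+1}$, which by Proposition~\ref{prop:generated} has height $r_x(w) + n$ over $x$, i.e.\ height $n$ over $w$. Therefore $\{e_w^1 \mid e \in B\}$ is an independent $n$-element subset of the geometric lattice $[w,(w)^+]$, and since $|B| = n$ equals the uniform rank, $B \in {\cal B}^w$.

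For the reverse inclusion, let $w \succeq x$ with $B \in {\cal B}^w$, and set $v := \bigvee_{e \in B} e_x^{w_x(e)}$ (well-defined as $w_x(e) \geq 0$); clearly $v \in {\cal S}^x(B)$ and $v \preceq w$, and I claim $v = w$. By the first inclusion $B \in {\cal B}^v$, and Proposition~\ref{prop:generated} gives $v_x(e) = w_x(e)$ for $e \in B$, so Lemma~\ref{lem:y_x}(1) yields $w_v(e) = w_x(e) - v_x(e) = 0$ for all $e \in B$. Then $e_v^1 \not\preceq w$, so $e_w^1 = w \vee e_v^1$ by Lemma~\ref{lem:ray}, and taking the join over $e \in B$ (using $B \in {\cal B}^w$ and $B \in {\cal B}^v$) gives $(w)^+ = \bigvee_{e \in B} e_w^1 = w \vee \bigvee_{e \in B} e_v^1 = w \vee (v)^+$.

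Now semimodularity of the height function (Lemma~\ref{lem:semimodular}) gives $r(w) + r((v)^+) \geq r(w \wedge (v)^+) + r(w \vee (v)^+) = r(w \wedge (v)^+) + r((w)^+)$; since $v \preceq w \wedge (v)^+ \preceq (v)^+$, this rearranges to $n = r[w,(w)^+] \leq r[w \wedge (v)^+, (v)^+] = n - r[v, w \wedge (v)^+]$, forcing $w \wedge (v)^+ = v$. If $v \neq w$, pick $v'$ with $v \prec_1 v' \preceq w$ (possible by (F)); then $v' \preceq (v)^+$ by the definition of $(\cdot)^+$, hence $v' \preceq w \wedge (v)^+ = v$, contradicting $v' \succ v$. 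Thus $v = w \in {\cal S}^x(B)$, which completes the proof. The crux of the argument — and the step I expect to require the most care — is the identity $(w)^+ = w \vee (v)^+$ together with the observation that the semimodular inequality then squeezes $w \wedge (v)^+$ down to $v$; the preliminary reduction via $(\cdot)^{\pm 1}$-stability, resting on the fact that $(\cdot)^+$ fixes ends, should be routine given the earlier development.
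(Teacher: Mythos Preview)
Your proof is correct. The reduction to $w \succeq x$ via $(\cdot)^{\pm 1}$-stability, the first inclusion from Proposition~\ref{prop:generated}, and the introduction of $v = \bigvee_{e \in B} e_x^{w_x(e)}$ all match the paper's argument; the fact that $(\cdot)^+$ fixes ends is indeed routine and is already used, unremarked, in the earlier development (in the construction of a parallel $y$-ray for arbitrary $y$).

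Where you diverge from the paper is in deriving the contradiction when $v \prec w$. The paper obtains $B \in {\cal B}^{v}$ from Lemma~\ref{lem:dependent}(1) (rather than from the first inclusion as you do), then picks an atom $a \in [v,(v)^+]$ with $a \preceq w$ and $a \neq e_v^1$ for every $e \in B$, computes $r_a\bigl(\bigvee_{e \in B} e_a^1\bigr) = n-1$ to see that $B$ is dependent at $a$, and invokes Lemma~\ref{lem:dependent}(1) a second time to transfer this dependence from $a$ up to $w$, contradicting $B \in {\cal B}^w$. You instead derive $(w)^+ = w \vee (v)^+$ directly from $B \in {\cal B}^v \cap {\cal B}^w$, apply the semimodular inequality to force $w \wedge (v)^+ = v$, and observe that this rules out any cover of $v$ lying below $w$. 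Your route is more self-contained in that it bypasses Lemma~\ref{lem:dependent} entirely and uses only Proposition~\ref{prop:generated} and bare semimodularity; the paper's route, by contrast, isolates the ``dependence propagates upward along $\preceq$'' mechanism of Lemma~\ref{lem:dependent}(1), which it reuses elsewhere.
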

\begin{proof}
	From Proposition~\ref{prop:generated},
	the inclusion $(\subseteq)$ is obvious. We show the converse. 
	Let $y \in {\cal L}$ with $B \in {\cal B}^y$.
	We may assume that $y \succeq x$ 
	by considering $(y)^{+k}$ and by $({\cal S}(B))^{+k} = {\cal S}(B)$.
	Let $y' := \bigvee_{e \in B} e_x^{y_x(e)}$. 
	Then $y' \preceq y$.
	We show $y' = y$. 
	Suppose not:  $y' \prec y$. 
	There is an atom $a$ of $[y',(y')^+]$ with $a \preceq y$; necessarily $a \neq e_{y'}^1$ for $e \in B$.
	By Lemma~\ref{lem:dependent}~(1), $B$ is also a maximal independent set at $y'$.
	Hence $r_{y'}(a \vee \bigvee_{e \in B}  e_{y'}^1 ) = r_{y'}((y')^+)= n$  
	and $n - 1 = r_{a} (\bigvee_{e \in B} (a \vee e_{y'}^1)) = r_a(\bigvee_{e \in B} e_{a}^1)$.
	Namely $B$ is dependent at $a$ with $a \preceq y \not \succeq e_a^1$ for $e \in B$.
	By Lemma~\ref{lem:dependent}~(1), 
	$B$ is dependent at $y$, contradicting $B \in {\cal B}^y$.
\end{proof}

\subsection{Valuated matroids from uniform semimodular lattices}\label{subsec:u->v}
Let ${\cal L}$ be a uniform semimodular lattice with uniform-rank $n$.
For $x \in {\cal L}$ and $B \in {\cal B}^{\infty}$, 
define $x_B \in {\cal L}$ as the maximum element $y \in {\cal S}(B)$ with $y \preceq x$:
\begin{equation*}
x_B := \bigvee \{y \in {\cal S}(B) \mid y \preceq x\}.
\end{equation*}
The maximum element $x_B$ indeed exists by (F) and the fact that 
${\cal S}(B)$ is a sublattice. 
Now define $\omega = \omega^{{\cal L},x}: {\cal B}^{\infty} \to \ZZ$ by 
\begin{equation}\label{eqn:omega}
\omega(B) := - r[x_B,x] \quad (B \in  {\cal B}^{\infty}).
\end{equation}
This quantity $\omega(B)$ is the negative of a ``distance" 
between $x$ and ${\cal S}(B)$. 
%
%
One of the main theorems is as follows: 

\begin{Thm}\label{thm:main1}
Let ${\cal L}$ be a uniform semimodular lattice with uniform-rank $n$, 
and let $x \in {\cal L}$.
Then $\omega = \omega^{{\cal L}, x}$ is 
a complete valuated matroid with rank $n$, 
where 
\begin{itemize}
	\item[{\rm (1)}] ${\cal T}(\omega) \cap \ZZ^E$ is isomorphic to ${\cal L}$, and
	\item[{\rm (2)}] ${\cal T}(\omega)$ is a geometric realization of simplicial complex ${\cal C}({\cal L})$ consisting of all chains 
	$x^0 \prec x^1 \prec \cdots \prec x^m$ with $x^m \preceq (x^0)^+$.
\end{itemize}
\end{Thm}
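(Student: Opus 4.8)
The plan is to first show $\omega = \omega^{{\cal L},x}$ is an integer-valued valuated matroid on ${\bf M}^\infty = (E,{\cal B}^\infty)$, then establish (1), and finally deduce (2) and completeness. For the valuated matroid property, my intention is to invoke Murota's criterion (Lemma~\ref{lem:murota}): it suffices to check that $\omega$ is upper-bounded (indeed $\omega \le 0$, with $\omega(B) = 0$ iff $x_B = x$, and such $B$ exists since $x \in {\cal S}(B)$ for any $B \in {\cal B}^x$) and that ${\cal B}_{\omega+c}$ is the base family of a matroid for every bounded $c \in \ZZ^E$. For the latter, the key computation is to identify $\omega+c$ up to a constant with a ``potential'' coming from the realization of Section~4.1.4. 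Specifically, I would prove that for $y := \bigvee_{e\in E} e_x^{c(e)}$ (finite by boundedness, using $\bigvee e_x^{c(e)} \preceq (x)^{+\max c}$), the maximizers of $\omega + c$ over ${\cal B}^\infty$ are exactly the bases $B$ furnished by Lemma~\ref{lem:key}, i.e.\ those $B \in {\cal B}^y$ with $y = \bigvee_{e\in B} e_x^{c(e)}$ and $y_x(e) = c(e)$ on $B$; these form the base family of the matroid at $y$ (possibly after translating $c$ into $\ZZ_+^E$, which only shifts $\omega+c$ by a constant via Lemma~\ref{lem:y_x}). The arithmetic identity to verify is $\omega(B) + c(B) = $ (something depending only on $c$) precisely when $B$ is such a ``tight'' base, and $< $ that value otherwise; this reduces to comparing $r[x_B,x]$ with $\sum_{e\in B}(c(e) - y_x(e))$ using Lemma~\ref{lem:y_x}(1) inside the apartment ${\cal S}(B) \cong \ZZ^n$.

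For part (1), the isomorphism ${\cal L} \to {\cal Z}({\cal L},x) \subseteq \ZZ^E$ of Proposition~\ref{prop:Z(L,x)} is already in hand, so the task is to show ${\cal Z}({\cal L},x) = {\cal T}(\omega) \cap \ZZ^E$ as subposets of $\ZZ^E$. For $\subseteq$: given $y \in {\cal L}$, I would show $y_x \in {\cal T}(\omega)$ by exhibiting, for each $e \in E$, a base $B \in {\cal B}_{\omega + y_x}$ with $e \in B$ — the point is that ${\bf M}_{\omega + y_x}$ has no loop. Using the analysis above with $c = y_x$ (so the associated element is $y$ itself since $y = \bigvee_{e} e_x^{y_x(e)}$ by Lemma~\ref{lem:y_x}(3)), the maximizers of $\omega + y_x$ are the bases of ${\bf M}^y$; every end $e$ lies in some base at $y$ because ${\bf M}^y$ has rank $n$ and no loop, giving loop-freeness. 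For $\supseteq$: given $x^* \in {\cal T}(\omega)\cap\ZZ^E$, I would use Lemma~\ref{lem:chain_of_flats} — $x^*$ lies in ${\cal T}(\omega)$, hence decomposes via a chain of flats $F_1 \subset \cdots \subset F_n = E$ of ${\bf M}_{\omega + x^*}$ — together with the fact that ${\cal B}_{\omega+x^*}$ is a matroid on $E$ with a distinguished element of ${\cal B}^\infty$, to reconstruct an element $y \in {\cal L}$ with $y_x = x^*$; the flats of ${\bf M}_{\omega + \lfloor x^*\rfloor}$ should correspond to the intervals $[z,(z)^+]$ along a chain, matching the combinatorics of segments.

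Once (1) is established, (2) follows by the standard dictionary between a tropically convex polyhedral complex and the order complex of its lattice of integer points: ${\cal L}(\omega) := {\cal T}(\omega)\cap\ZZ^E$ is a uniform semimodular lattice isomorphic to ${\cal L}$ (using Lemma~\ref{lem:trop_convexity} and the translation-invariance (TC$_{+{\bf 1}}$)), its ascending operator is $y \mapsto y+{\bf 1}$, and a point of ${\cal T}(\omega)$ lies in the relative interior of a cell spanned by a chain $x^0 \prec \cdots \prec x^m$; Lemma~\ref{lem:chain_of_flats}(2) says the chain has $x^m \preceq (x^0)^+$, which is exactly the condition defining ${\cal C}({\cal L})$. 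Completeness of $\omega$ is Proposition~\ref{prop:ultrametric}(2): $(E, d_x)$ is complete, and in Section~\ref{subsec:v->u} the metric $d_x$ is identified with the Dress--Terhalle metric $D_p$ of $\omega^{{\cal L},x}$, so $(E,\omega)$ is complete by definition. \textbf{The main obstacle} I anticipate is the $\supseteq$ direction of (1): going from an abstract integer point of ${\cal T}(\omega)$ back to a lattice element requires showing that the matroid-with-basepoint data $({\bf M}_{\omega+x^*}, B)$ together with the flag from Lemma~\ref{lem:chain_of_flats} assembles consistently — in effect, that the apartments ${\cal S}(B)$ glue correctly — and I expect this to lean on Lemma~\ref{lem:key} and Lemma~\ref{lem:S(B)} and to require a careful induction on $\max_e(x^*(e) - \min_f x^*(f))$.
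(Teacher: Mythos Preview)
Your overall strategy matches the paper's: Murota's criterion (Lemma~\ref{lem:murota}) for the valuated-matroid property, the element $y=\bigvee_{e}e_x^{c(e)}$ together with Lemma~\ref{lem:key} to analyze ${\cal B}_{\omega+c}$, Proposition~\ref{prop:Z(L,x)} for the isomorphism in (1), Lemma~\ref{lem:chain_of_flats}(2) for (2), and Proposition~\ref{prop:ultrametric}(2) plus Lemma~\ref{lem:d=D} for completeness. The ``arithmetic identity'' you describe is exactly the content of Lemma~\ref{lem:x_B2}: for $y\preceq x$ one has $r(x_B)+\sum_{e\in B}y_x(e)=r(y)$ iff $B\in{\cal B}^y$, with strict inequality otherwise; this gives ${\cal B}_{\omega+y_x}={\cal B}^y$ directly.

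One imprecision: for \emph{general} $c$ (not of the form $y_x$), the maximizers of $\omega+c$ do \emph{not} form all of ${\cal B}^y$. The paper's argument is that ${\cal B}_{\omega+c}\subseteq{\cal B}_{\omega+\tilde c}={\cal B}^y$ (where $\tilde c:=y_x\geq c$), and then ${\cal B}_{\omega+c}$ is the maximizer family of the linear objective $B\mapsto\sum_{e\in B}(c-\tilde c)(e)$ over ${\cal B}^y$, hence a matroid base family --- but possibly a proper one.

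This distinction is precisely what dissolves your ``main obstacle.'' The inclusion ${\cal T}(\omega)\cap\ZZ^E\subseteq{\cal Z}({\cal L},x)$ is done in the paper by contrapositive, reusing the analysis above: if $c\in\ZZ^E$ is not of the form $y_x$, form $\tilde c=y_x$ as before; then $\tilde c\geq c$ with $\tilde c(e)>c(e)$ for some $e$, and since $\max_B(\omega+c)(B)=\max_B(\omega+\tilde c)(B)$ (equality witnessed by the $B$ from Lemma~\ref{lem:key}), any such $e$ cannot lie in a maximizer of $\omega+c$. Thus $e$ is a loop in ${\bf M}_{\omega+c}$ and $c\notin{\cal T}(\omega)$. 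No reconstruction of a lattice element from a flag of flats, and no induction, is needed.
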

\begin{Ex}
	We consider the case of ${\cal L} = \ZZ^{E,n}$.
	For $B \in {\cal B}^{\infty}$ (an arbitrary $n$-element subset of $E$), 
	a point $y \in \ZZ^{E,n}$ belongs to ${\cal S}(B)$
	if and only if $E \setminus \Argmin y \subseteq B$.
	Then the $\ZZ^n$-skeleton ${\cal S}(B)$ is actually isomorphic to 
	$\ZZ^B \simeq \ZZ^n$; 
	indeed consider the map $\ZZ^{B} \ni x \mapsto \bar x \in {\cal S}(B)$, 
	where $\bar x(e) := x(e)$ for $e \in B$ and 
	$\bar x(e) := \min_{e \in B} x(e)$ for $e \in E \setminus B$.
	Let $x \in \ZZ^{E,n}$. Then one can observe that
	$x_B$ is given by 
	\begin{equation*}
	x_B(e) := \left\{  \begin{array}{ll}
	\min_{f \in E} x(f) & {\rm if}\ e \in (E \setminus \Argmin x) \setminus B, \\
	x(e) & {\rm otherwise}.
	\end{array} \right.
	\end{equation*} 
	Observe from the covering relation in $\ZZ^{E,n}$ (Example~\ref{ex:Z^E,n}) that $r[x_B,x]$ is equal to $\sum_{e \in (E \setminus \Argmin x) \setminus B} (x(e) - \min_{f \in E} x(f))$.  Observe further that this quantity is also written 
	as $\max_{B \in {\cal B}^{\infty}} x(B') - x(B)$.
	Thus $\omega(B) =  x (B) - \max_{B \in {\cal B}^{\infty}} x(B')$, and
    $\omega$ is projectively equivalent to 
	the trivial valuation on the uniform matroid.
\end{Ex}

To prove Theorem~\ref{thm:main1}, we show several properties of $x_B$.
\begin{Lem}\label{lem:x_B1} 
	Let $x,y \in {\cal L}$ with $y \preceq x$, and $B \in {\cal B}^{\infty}$.
\begin{itemize}
	\item[{\rm (1)}]  $y=x_B$ if and only if $B \in {\cal B}^y$ and $x_y(e) = 0$ for all $e \in B$.
	\item[{\rm (2)}] $x_B \preceq y$ if and only if
	$x_y(e) = 0$ for all $e \in B$.
\end{itemize}	
\end{Lem}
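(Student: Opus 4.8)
The plan is to reduce both statements to a single ``coordinate formula'' for $x_B$ and then read everything off from the additivity $z_x = z_y + y_x$ of coordinates (Lemma~\ref{lem:y_x}) together with the fact that coordinates between comparable elements are nonnegative.

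First I would prove the following claim: \emph{if $w \in {\cal S}(B)$ and $w \preceq x$, then $x_B = \bigvee_{e \in B} e_w^{x_w(e)}$}, where $x_w(e) = \max\{\ell : e_w^\ell \preceq x\}$ is the $w$-coordinate of $x$ (well defined since $w \preceq x$, with $B \in {\cal B}^w$ by Lemma~\ref{lem:S(B)}). The argument is a two-sided comparison carried out inside the $\ZZ^n$-skeleton ${\cal S}(B)$, which by Proposition~\ref{prop:generated} is order-isomorphic to $\ZZ^n$ with $\{v \in {\cal S}(B) : v \succeq w\} = {\cal S}^w(B) \cong \ZZ^n_+$. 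Put $z := \bigvee_{e \in B} e_w^{x_w(e)}$. On one hand $z \in {\cal S}^w(B) \subseteq {\cal S}(B)$ and each $e_w^{x_w(e)} \preceq x$, so $z \preceq x$, hence $z \preceq x_B$ by maximality of $x_B$. On the other hand $w \preceq x$ forces $w \preceq x_B$, so $x_B \in {\cal S}^w(B)$ and thus $x_B = \bigvee_{e \in B} e_w^{(x_B)_w(e)}$; since $x_B \preceq x$ gives $(x_B)_w(e) \leq x_w(e)$ for every $e \in B$, we get $x_B \preceq z$. Hence $x_B = z$. Applying the claim with $w = x_B$ and comparing the two representations $x_B = \bigvee_{e \in B} e_{x_B}^{x_{x_B}(e)}$ and $x_B = \bigvee_{e \in B} e_{x_B}^0$ via the uniqueness part of Proposition~\ref{prop:generated} would then yield $x_{x_B}(e) = 0$ for all $e \in B$.

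Granting the claim, (1) follows immediately: if $y = x_B$ then $y \in {\cal S}(B)$, so $B \in {\cal B}^y$ by Lemma~\ref{lem:S(B)}, and $x_y(e) = x_{x_B}(e) = 0$ for $e \in B$ by the observation above; conversely, if $B \in {\cal B}^y$ and $x_y(e) = 0$ for all $e \in B$, then $y \in {\cal S}(B)$ with $y \preceq x$, so the claim applies with $w = y$ and gives $x_B = \bigvee_{e \in B} e_y^{x_y(e)} = \bigvee_{e \in B} e_y^0 = y$. For the ``only if'' direction of (2), from $x_B \preceq y \preceq x$ and Lemma~\ref{lem:y_x}(1) I get $x_{x_B} = x_y + y_{x_B}$ with both summands nonnegative, so $x_{x_B}(e) = 0$ forces $x_y(e) = 0$ for $e \in B$. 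For the ``if'' direction of (2), using Lemma~\ref{lem:preceq} I pick $w_0 \in {\cal S}(B)$ and $k \geq 0$ with $w_0 \preceq (y)^{+k}$ and set $w := (w_0)^{-k} \in {\cal S}(B)$, so $w \preceq y \preceq x$; the claim applied once to $x$ and once to $y$ gives $x_B = \bigvee_{e \in B} e_w^{x_w(e)}$ and $y_B = \bigvee_{e \in B} e_w^{y_w(e)}$ (with $y_B$ the analogous element for $y$), while Lemma~\ref{lem:y_x}(1) gives $x_w(e) = x_y(e) + y_w(e) = y_w(e)$ for $e \in B$ by hypothesis, whence $x_B = y_B \preceq y$.

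The step requiring the most care will be the coordinate formula claim itself: one must make sure the comparison is genuinely performed inside ${\cal S}(B) \cong \ZZ^n$, so that the order $\preceq$ there agrees with the componentwise order and the relation $x_B \succeq w$ really places $x_B$ in the ``nonnegative cone'' ${\cal S}^w(B)$ where Proposition~\ref{prop:generated} applies, and the uniqueness clause of Proposition~\ref{prop:generated} must be invoked for a representation supported on the $n$-element base $B$. Once the claim is set up correctly, the remaining arguments are routine bookkeeping with the cocycle identity of Lemma~\ref{lem:y_x} and the sign of coordinates.
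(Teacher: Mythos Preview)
Your proof is correct. The coordinate-formula claim is sound: since $x_B$ is defined as the maximum of $\{v \in {\cal S}(B) : v \preceq x\}$, any $w \in {\cal S}(B)$ with $w \preceq x$ satisfies $w \preceq x_B$, so $x_B$ lies in the ``nonnegative orthant'' ${\cal S}^w(B)$ where Proposition~\ref{prop:generated} applies, and the two-sided comparison goes through. Part~(1) and the only-if direction of~(2) then follow exactly as in the paper.

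Where your argument genuinely diverges from the paper is in the if-direction of~(2). The paper does not locate an auxiliary $w \in {\cal S}(B)$ below $y$; instead it builds a descending sequence $y = y^0 \succeq y^1 \succeq \cdots$ by $y^k := (\bigvee_{e \in B} e_y^k)^{-k}$, verifies via Lemma~\ref{lem:x^k'} the recursion $y^k = (\bigvee_{e \in B} e_{y^{k-1}}^1)^{-1}$, checks that $x_{y^k}(e) = 0$ persists along the sequence, and then invokes Lemma~\ref{lem:independent} to find an index $\ell$ with $B \in {\cal B}^{y^\ell}$, at which point~(1) gives $y^\ell = x_B \preceq y$. Your route is shorter and avoids both Lemma~\ref{lem:independent} and Lemma~\ref{lem:x^k'}: you exploit directly that ${\cal S}(B)$ is closed under $(\cdot)^{-}$ to produce $w \preceq y$, and then the coordinate formula plus the cocycle identity $x_w = x_y + y_w$ does the work. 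The trade-off is that the paper's descending sequence $(y^k)$ is not wasted machinery: essentially the same construction reappears later (in the proof of the claim inside Theorem~\ref{thm:main2}(4)), so the paper's choice has some economy across the whole development, whereas your argument is locally more transparent.
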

\begin{proof}
	(1). Suppose that $y \in {\cal S}(B)$ ($\Leftrightarrow$ $B \in {\cal B}^y$).
	Then $y \preceq x_B$ and
	$x_B = \bigvee_{e \in B} e_y^{(x_B)_y(e)}$ (by Proposition~\ref{prop:generated}).
	By Lemma~\ref{lem:y_x}~(1), it holds $x_y = x_{x_B} + (x_B)_y$.
	Therefore, if $x_y(e) = 0$ for $e \in B$, 
	then $(x_B)_y(e) = 0$ for $e \in B$ and 
	$x_B = \bigvee_{e \in B} e_y^{(x_B)_y(e)} = y$.
	If $x_y(e) > 0$ for some $e \in B$, then 
	then $\bigvee_{e \in B} e_y^{x_y(e)}$ belongs to ${\cal S}(B)$, is greater than $y$, 
	and is not greater than $x$, i.e., $y \neq x_B$. 	
	In particular, $x_{x_B}(e) = 0$ for $e \in B$.
	
	(2). The only-if part follows from $x_{x_B} = x_{y} + y_{x_B}$ and 
	$x_{x_B}(e) = 0$ of all $e \in B$.
	We show the if part.
	Suppose that $B$ is dependent at $y$ (otherwise $y = x_B$ by (1)).
	Define the sequence $y = y^0, y^1, y^2,\ldots$ by
	\begin{equation}\label{eqn:y^k}
	y^k := (\bigvee_{e \in B} e_y^{k})^{-k} \quad (k=0,1,2,\ldots).
	\end{equation}
	Then it holds that
	\begin{equation}\label{eqn:y^k'}
	y^k = (\bigvee_{e \in B} e^1_{y^{k-1}})^{-1} \quad (k=1,2,\ldots).
	\end{equation}
	Indeed, let $z^{k} := \bigvee_{e \in B} e_y^{k}$. 
	Then $y^k = (z^{k})^{-k} = (\bigvee e_{z^{k-1}}^1)^{-k} 
	= (\bigvee (e_{z^{k-1}}^1)^{-k+1})^{-1} 
	= (\bigvee e_{(z^{k-1})^{-k+1}}^1)^{-1} = (\bigvee e_{y^{k-1}}^1)^{-1}$, 
	where the second equality follows from Lemma~\ref{lem:x^k'} 
	and the forth one follows from 
	the observation $(e_u^1)^{-1} = e_{u^{-1}}^1$.
	Since $\bigvee_{e \in B} e^1_{y^{k-1}} \in [y^{k-1},(y^{k-1})^+]$, 
	we have $y^k \preceq y^{k-1}$.
	In particular, $x \succeq y \succeq y^1 \succeq y^2 \succeq \cdots$ holds.
	By Lemma~\ref{lem:y_x}~(1) and (2), 
	it holds $x_{y^k}(e) = x_{y^{k-1}}(e) + (y^{k-1})_{y^k}(e) = 
	x_{y^{k-1}}(e) + (1 - 1) = x_{y^{k-1}}(e)$ for $e \in B$. 
	This implies $x_{y^k}(e) = x_y(e) = 0$ for all $e \in B$.  
	By Lemma~\ref{lem:independent} and (\ref{eqn:y^k}), there is $\ell$ 
	such that $B \in {\cal B}^{y^\ell}$. 
	By (1), we have $y^\ell = x_B$, and $x_B \preceq y$, as required.
\end{proof}

\begin{Lem}\label{lem:x_B2}
	For $x,y \in {\cal L}$ with $y \preceq x$, we have the following:
	\begin{equation*}
	r(x_B) + \sum_{e \in B} y_x(e) 
	\left\{
	\begin{array}{ll}
	= r(y)  & {\rm if}\  y \in {\cal S}(B) ( \Leftrightarrow B \in {\cal B}^y), \\
	< r(y)  & {\rm otherwise}.
	\end{array}
	\right. \quad (B \in {\cal B}^{\infty}).
	\end{equation*}
\end{Lem}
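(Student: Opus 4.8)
The plan is to prove that the left-hand side of the displayed formula equals $r(y_B)$, where $y_B := \bigvee\{z \in {\cal S}(B) \mid z \preceq y\}$ (this maximum exists by (F), exactly as for $x_B$, and lies in ${\cal S}(B)$). Granting this, the conclusion is immediate: $y_B \preceq y$ always holds, with $r(y_B) = r(y)$ if and only if $y_B = y$, i.e.\ if and only if $y \in {\cal S}(B)$, which by Lemma~\ref{lem:S(B)} is equivalent to $B \in {\cal B}^y$; and if $B \notin {\cal B}^y$ then $y_B \prec y$, so $r(y_B) < r(y)$ by (JD).

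Two ingredients go into the identity. First, for every $w \in {\cal L}$ one has $w_{w_B}(e) = 0$ for all $e \in B$ (equivalently, by Lemma~\ref{lem:y_x=-x_y}, $(w_B)_w(e) = 0$ for all $e \in B$): indeed, if $w_{w_B}(e) \geq 1$ for some $e \in B$ then $e_{w_B}^1$ belongs to ${\cal S}(B)$ and satisfies $w_B \prec e_{w_B}^1 \preceq w$, contradicting the maximality of $w_B$. Second, for any $z, z' \in {\cal S}(B)$ one has $\sum_{e \in B} z'_z(e) = r(z') - r(z)$: since ${\cal S}(B) \simeq \ZZ^n$ with ascending operator $u \mapsto u + {\bf 1}$, the height function $r$ restricts on ${\cal S}(B)$ to a coordinate-sum plus a constant, while $z'_z$ records precisely the $\ZZ^n$-coordinates of $z'$ (indexed by $B$) relative to the base point $z$ (Proposition~\ref{prop:generated}).

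To finish, I would apply the additivity of coordinates (Lemma~\ref{lem:y_x}~(1), in the general form noted after it) to write $y_x = y_{y_B} + (y_B)_{x_B} + (x_B)_x$. Summing over $e \in B$, the first and third terms vanish by the first ingredient (taken with $w = y$ and $w = x$ respectively), while the middle term contributes $r(y_B) - r(x_B)$ by the second ingredient. Hence $\sum_{e \in B} y_x(e) = r(y_B) - r(x_B)$, i.e.\ $r(x_B) + \sum_{e \in B} y_x(e) = r(y_B)$, as wanted. The coordinate bookkeeping is routine; the one point that genuinely uses the setup is that $x_B$ being the \emph{maximal} element of ${\cal S}(B)$ below $x$ forces $(x_B)_x$ to vanish on $B$ (rather than merely to be nonpositive), and, secondarily, the compatibility of the globally defined coordinates $z'_z$ with the intrinsic $\ZZ^n$-coordinates of the skeleton.
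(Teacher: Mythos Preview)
Your proof is correct and takes a genuinely different route from the paper's. The paper treats the two cases separately: for $y \in {\cal S}(B)$ it computes directly that $x_B = \bigvee_{e \in B} e_y^{x_y(e)}$ and reads off $r[y,x_B] = \sum_{e \in B} x_y(e)$ from Proposition~\ref{prop:generated}; for $y \notin {\cal S}(B)$ it introduces $y' := \bigvee_{e \in B} e_y^{x_y(e)}$, bounds $r[y,y'] \leq \sum_{e \in B} x_y(e)$ and $r(x_B) \leq r(y')$, and then runs an inductive descent (peeling off one layer via $y'' := \bigvee_{e \in I} e_y^{x_y(e)-1}$ and invoking Lemma~\ref{lem:dependent}~(3)) to force a contradiction if both bounds are tight. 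Your argument instead introduces the symmetric object $y_B$ and proves the single identity $r(x_B) + \sum_{e \in B} y_x(e) = r(y_B)$ via the three-term decomposition $y_x = y_{y_B} + (y_B)_{x_B} + (x_B)_x$; two terms vanish on $B$ by Lemma~\ref{lem:x_B1}~(1), and the middle term is handled by the $\ZZ^n$-structure of ${\cal S}(B)$. This unifies both cases and bypasses the descent entirely. What the paper's approach buys is that it never leaves the interval $[y,x]$ and uses only the ordered form of Lemma~\ref{lem:y_x}~(1); your approach is shorter and more structural but leans on the extension of Lemma~\ref{lem:y_x}~(1) to arbitrary triples (which the paper asserts just after defining general coordinates) and on the compatibility of the global coordinates with the intrinsic $\ZZ^n$-coordinates of ${\cal S}(B)$, both of which are straightforward but worth stating explicitly.
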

\begin{proof}
	Suppose that $y \in {\cal S}(B)$.
	By Lemmas~\ref{lem:y_x}~(1) and~\ref{lem:x_B1}~(1), 
	 $\bigvee_{e \in B} e_y^{x_y(e)}$ is equal to $x_B$.
	By Proposition~\ref{prop:generated}, 
	 $r[y,x_B] = \sum_{e \in B} x_y(e)$.
	Therefore $r(y) + \sum_{e \in B} x_y(e) = r(x_B)$ holds, 
	which implies $r(y) = r(x_B) + \sum_{e \in B} y_x(e)$ 
	by $y_x = - x_y$; see Lemma~\ref{lem:y_x=-x_y}.

	Suppose that $y \not \in {\cal S}(B)$.
	Let $y' := \bigvee_{e \in B} e_y^{x_y(e)}$.
	Then $x_y = x_{y'} + y'_y$ and $y'_y(e) = x_y(e)$ for $e \in B$ imply
	$y'_x(e) = 0$ for $e \in B$.
	By Lemma~\ref{lem:x_B1}~(2), 
	we have $x_{B} \preceq y' \preceq x$, and 
	\begin{eqnarray*}
	&& r[y,y'] \leq \sum_{e \in B} x_y(e), \\
	&& r(x_B) \leq r(y'). 
	\end{eqnarray*}
	It suffices to show that one of the inequalities is strict.
	If $y' \succ x_B$, then $(<)$ holds in the second inequality.
	Suppose that $y' = x_B$, and suppose to the contrary that  
	equality holds in the first inequality.
	Let $I:= \{e \in B \mid x_y(e) > 0 \} (\neq \emptyset)$, and 
	let $y'' := \bigvee_{e \in I} e_y^{x_y(e) - 1}$.
	Then $y' = x_B = \bigvee_{e \in I}e_{y''}^{1}$.
	By the equality in the first inequality and Lemma~\ref{lem:y_x}(1), 
	$I$ must be independent at $y''$, 
	and $e_{y''}^1 \not \preceq y'$ for $e \in B \setminus I$ 
	(otherwise $x_y(e) > 0$ for $e \in B \setminus I$ ). 
	By Lemma~\ref{lem:dependent}~(3), $B$ is independent at~$y''$.
	Also $r[y,y''] = \sum_{e \in B} \max \{ x_y(e) - 1,0\}$ holds. 
	By repeating this argument (to $y''$), we eventually obtain a contradiction that 
	$B$ is independent at~$y \not \in {\cal S}(B)$.
\end{proof}

\begin{proof}[Proof of Theorem~\ref{thm:main1}]
	Observe that $\omega$ is upper-bounded.
	By Lemma~\ref{lem:murota}, 
	we show that for any bounded vector $c \in \ZZ^E$ 
	the maximizer family ${\cal B}_{\omega + c}$
	is a matroid base family.
	
	Suppose that $c = y_x$ for some $y \preceq x$.
	By Lemma~\ref{lem:x_B2}, 
	the maximizer family ${\cal B}_{\omega + c}$ 
	is nothing but ${\cal B}^{y}$.
	
	Suppose that $c$ is general. 
	From ${\cal B}_{\omega + c}  = {\cal B}_{\omega + c + k {\bf 1}}$, 
	we can assume that $c \geq 0$. 
	Let $y := \bigvee_{e \in E} e_x^{c(e)}$.
	By Lemma~\ref{lem:key}, there is $B \in {\cal B}^y$ 
	such that $y = \bigvee_{e \in B} e_x^{c(e)}$ and $c(e) = y_x(e)$ for $e \in B$.
	Let $\tilde c := y_x$.
	Then $\tilde c \geq c$. Thus 
	$- r[x_{B'},x] + \sum_{e \in B'} c(e) \leq - r[x_{B'},x] + \sum_{e \in {B'}} \tilde c(e)$
	for arbitrary $B' \in {\cal B}^{\infty}$, and 
	the equality holds for $B$ by $c(e) = y_x(e) = \tilde c(e)$ $(e \in B)$.
	Since $B \in {\cal B}^y = {\cal B}_{\omega + \tilde c}$ (by above), 
	the maximum of $\omega + c$ 
	is the same as that of $\omega + \tilde c$. 
	This implies that ${\cal B}_{\omega +c} \subseteq {\cal B}_{\omega + \tilde c}$.
	Now ${\cal B}_{\omega +c}$ is viewed as 
	the maximizer family of a linear function 
	$B \mapsto \sum_{e \in B} (c - \tilde c)(e)$
	over the matroid base family ${\cal B}_{\omega + \tilde c}$, and 
	is a matroid base family, as required.
	
	(1) follows from Proposition~\ref{prop:Z(L,x)} and the next claim.
	\begin{Clm}
		${\cal T}(\omega) \cap \ZZ^E = {\cal Z}({\cal L}, x)$.
	\end{Clm}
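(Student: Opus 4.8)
The plan is to prove the two inclusions ${\cal Z}({\cal L},x)\subseteq {\cal T}(\omega)\cap\ZZ^E$ and ${\cal T}(\omega)\cap\ZZ^E\subseteq{\cal Z}({\cal L},x)$ separately, the second carrying almost all of the work. A structural fact I would use throughout is that both sides are stable under translation by $\ZZ{\bf 1}$: for ${\cal Z}({\cal L},x)$ this follows from Lemma~\ref{lem:y_x}~(2) (in its general form $(y)^{+k}_x=y_x+k{\bf 1}$) together with the bijectivity of $(\cdot)^{+k}$, and for ${\cal T}(\omega)$ it was noted right after the definition of the tropical linear space. In particular $M{\bf 1}=(x^{+M})_x$ lies in ${\cal Z}({\cal L},x)$ for every $M\in\ZZ$.

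For ${\cal Z}({\cal L},x)\subseteq{\cal T}(\omega)\cap\ZZ^E$: fix $y\in{\cal L}$. Applying Lemma~\ref{lem:preceq} with $x$ and $y$ interchanged gives $k\ge 0$ with $(y)^{-k}\preceq x$, and since $y_x$ and $((y)^{-k})_x$ differ by $k{\bf 1}$ I may assume $y\preceq x$. Then Lemma~\ref{lem:x_B2} shows that for every $B\in{\cal B}^{\infty}$ the value $(\omega+y_x)(B)=r(x_B)-r(x)+\sum_{e\in B}y_x(e)$ equals $r(y)-r(x)$ when $B\in{\cal B}^y$ and is strictly smaller otherwise; hence ${\cal B}_{\omega+y_x}={\cal B}^y$, the base family of the loop-free matroid ${\bf M}^y$, and therefore $y_x\in{\cal T}(\omega)$.

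For the reverse inclusion I would first treat $c\in{\cal T}(\omega)\cap\ZZ^E$ that are bounded above. Translating, assume $c\le{\bf 0}$, so that $y:=\bigvee_{e\in E}e_x^{c(e)}$ exists and $y\preceq x$. Such a $c$ is automatically bounded below, since loop-freeness of ${\bf M}_{\omega+c}$ places every end in a base attaining the finite value $\max(\omega+c)$ and $\omega\le 0$; hence Lemma~\ref{lem:key}, applied after an auxiliary $\ZZ{\bf 1}$-translation to nonnegative exponents (and using that ${\cal S}(B)$ is closed under $(\cdot)^{\pm}$ to transport the resulting base back, via Lemma~\ref{lem:S(B)}), yields $B\in{\cal B}^y$ with $y_x(e)=c(e)$ for $e\in B$ and $y=\bigvee_{e\in B}e_x^{c(e)}$. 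Put $\tilde c:=y_x\ge c$. Exactly as in the proof of Theorem~\ref{thm:main1}, ${\cal B}_{\omega+c}\subseteq{\cal B}_{\omega+\tilde c}={\cal B}^y$, and since $\omega+\tilde c$ is constant on ${\cal B}^y$, the family ${\cal B}_{\omega+c}$ is the set of maximizers of $B'\mapsto\sum_{e\in B'}(c-\tilde c)(e)$ over ${\cal B}^y$; as $c-\tilde c\le{\bf 0}$ and this sum vanishes at $B$, the maximum is $0$ and ${\cal B}_{\omega+c}=\{B'\in{\cal B}^y: c|_{B'}=\tilde c|_{B'}\}$. Now loop-freeness of ${\bf M}_{\omega+c}$ forces every end to lie in such a $B'$, hence $c=\tilde c=y_x\in{\cal Z}({\cal L},x)$.

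Finally I would exclude unbounded $c$. If $c\in{\cal T}(\omega)\cap\ZZ^E$ is unbounded above, the truncations $c_M:=c\wedge M{\bf 1}$ lie in ${\cal T}(\omega)$ by tropical convexity (Lemma~\ref{lem:trop_convexity}), since $M{\bf 1}\in{\cal Z}({\cal L},x)\subseteq{\cal T}(\omega)$ by the first inclusion; being bounded above, $c_M=(y_M)_x$ for some $y_M\in{\cal L}$ by the previous step, with $\max(\omega+c_M)=r(y_M)-r(x)$ (the $\ZZ{\bf 1}$-translated form of Lemma~\ref{lem:x_B2}). As $c$ is unbounded above, $(c_M)_M$ is strictly increasing, so $r(y_M)\to\infty$, whence $\max(\omega+c)\ge\max(\omega+c_M)\to\infty$, contradicting that $\max(\omega+c)$ is attained. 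Hence every $c\in{\cal T}(\omega)\cap\ZZ^E$ is bounded above and the previous step applies. The delicate point I expect is the exact identification of ${\cal B}_{\omega+c}$ and the use of loop-freeness — the defining property of ${\cal T}(\omega)$, rather than the weaker matroid property of ${\cal B}_{\omega+c}$ used in Theorem~\ref{thm:main1} — to promote $c\le y_x$ to $c=y_x$; all ingredients (Lemmas~\ref{lem:key}, \ref{lem:x_B2}, tropical convexity, and the definition of ${\cal T}(\omega)$) are available, but ordering them correctly is where care is needed.
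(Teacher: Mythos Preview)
Your proof is correct and follows the same approach as the paper (Lemma~\ref{lem:x_B2} for $\supseteq$; Lemma~\ref{lem:key} producing $\tilde c=y_x\ge c$ with the same maximum, then loop-freeness forcing $c=\tilde c$ for $\subseteq$); you are in fact more careful than the paper about boundedness of $c$ when $E$ is infinite, which the paper glosses over by simply writing ``Let $c\in\ZZ^E_+$''. One small expositional fix: the line ``$y:=\bigvee_{e\in E}e_x^{c(e)}$'' written under the assumption $c\le{\bf 0}$ is premature, since $e_x^{\ell}$ is only defined for $\ell\ge 0$ --- lead instead with the auxiliary $\ZZ{\bf 1}$-translation to nonnegative exponents that you describe immediately afterward, and only then define $y$.
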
 
	\begin{proof}
	For $c = y_x \in {\cal Z}({\cal L}, x)$, the maximizer family 
	${\cal B}_{\omega + c}$ is equal to ${\cal B}^y$, as seen above.
	The matroid ${\bf M}^y = (E, {\cal B}^y)$ is loop-free (Lemma~\ref{lem:M^x}).
	Hence $(\supseteq)$.
	
	Let $c \in \ZZ^E_{+}$ with $c \not \in {\cal Z}({\cal L}, x)$.
	Consider $\tilde c$ as above. Then $\tilde c \geq c$, and $\tilde c \neq c$.
	As seen above, 
	$\max_{B} -r[x_{B},x] + \sum_{e \in B} c(e) = \max_{B} - r[x_{B},x] + \sum_{e \in B} \tilde c(e)$. 
	This means that an element $e \in E$ with $\tilde c(e) > c(e)$
	cannot belong to any maximizer in ${\cal B}_{\omega +c}$.
	Namely $e$ is a loop in ${\cal B}_{\omega + c}$. 
	Thus $c \not \in {\cal T}(\omega) \cap \ZZ^n$, 
	implying  $(\subseteq)$.	
	\end{proof}
	(2) is a corollary of this claim and Lemma~\ref{lem:chain_of_flats}~(2).
	By Proposition~\ref{prop:matroid_infty}, $(E,\omega)$ is a simple valuated matroid.
	Lemma~\ref{lem:d=D} in the next section shows that 
	topologies on $E$ induced by $d_x$ and by $D_p$ from $\omega$ coincide.
	By Proposition~\ref{prop:ultrametric}~(2), 
	$\omega$ is complete.
\end{proof}

\subsection{Uniform semimodular lattices from valuated matroids}\label{subsec:v->u}

The main statement for the uniform 
semimodular lattice of a valuated matroid 
is as follows.
\begin{Thm}\label{thm:main2}
Let $(E,\omega)$ be
an integer-valued valuated matroid with rank $n$.
Then ${\cal L}(\omega):= {\cal T}(\omega) \cap \ZZ^E$ is a uniform semimodular lattice with 
uniform-rank $n$, in which the following hold:
\begin{itemize}
	\item[{\rm (1)}] The ascending operator is equal to $x \mapsto x + {\bf 1}$.
	\item[{\rm (2)}] A height function $r$ is given by 
	\[
	x \mapsto \max_{B \in {\cal B}} (\omega + x)(B).
	\]
	\item[{\rm (3)}] The meet $\wedge$ and the join $\vee$ are given by
	\begin{eqnarray*}
	x \wedge y  &= & \min (x,y), \\
	x \vee y  &= & \bigwedge \{ z \in {\cal L}(\omega) \mid x \leq z \geq y \} \quad (x,y \in {\cal L}(\omega)).
	\end{eqnarray*}
	\item[{\rm (4)}] For $x \in {\cal L}(\omega)$, the valuated matroid 
	$(E^{{\cal L}(\omega)}, \omega^{{\cal L}(\omega),x})$
	is a completion of a valuated matroid projectively equivalent to $(E, \omega)$.
\end{itemize}
\end{Thm}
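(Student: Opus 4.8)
The plan is to pin down, for each $x\in{\cal L}(\omega)$, the interval $[x,x+{\bf 1}]$ as a copy of the geometric lattice of flats of the maximizer matroid ${\bf M}_{\omega+x}$, and then to read off all the assertions from this local picture. By Lemma~\ref{lem:projection} the ordered set ${\cal L}(\omega)={\cal T}(\omega)\cap\ZZ^E$ is unchanged under passing to a simplification, so I would assume the underlying matroid ${\bf M}=(E,{\cal B})$ loopless; then ${\cal T}(\omega)\neq\emptyset$ (its negative contains the nonempty tight span ${\cal TS}(\omega)$), hence ${\cal L}(\omega)\neq\emptyset$ by Lemma~\ref{lem:chain_of_flats}(1). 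Put $r(x):=\max_{B\in{\cal B}}(\omega+x)(B)$, which is finite on ${\cal T}(\omega)$ since the maximum is attained. If $x\le y$ in ${\cal L}(\omega)$ with $r(x)=r(y)$, then for each $B\in{\cal B}_{\omega+x}$ one has $r(x)=(\omega+x)(B)\le(\omega+y)(B)\le r(y)=r(x)$, so $x$ and $y$ agree on $B$; as ${\bf M}_{\omega+x}$ is loopless, $\bigcup{\cal B}_{\omega+x}=E$ and $x=y$. Thus $r$ is strictly monotone, so every chain in $[x,y]$ has length $\le r(y)-r(x)$, giving (F). Now ${\cal L}(\omega)$ is closed under $\min$ (Lemma~\ref{lem:trop_convexity}) and under $x\mapsto x\pm{\bf 1}$ (a constant shift does not change ${\cal B}_{\omega+x}$); given $x,y$, the monotone family $\min(x,y+k{\bf 1})$ of elements of $[\min(x,y),x]$ must be eventually constant by (F), which forces $\|x-y\|_\infty<\infty$ and hence $y\le x+M{\bf 1}\in{\cal L}(\omega)$ for $M$ large, so ${\cal L}(\omega)$ is up-directed. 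With $\min$-closure and (F) this makes ${\cal L}(\omega)$ a lattice with $x\wedge y=\min(x,y)$ and $x\vee y=\bigwedge\{z\in{\cal L}(\omega)\mid x\le z\ge y\}$, which is (3).

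\emph{The local geometric lattice.} The heart of the argument is the claim that for $x\in{\cal L}(\omega)$ one has $[x,x+{\bf 1}]=\{x+{\bf 1}_F\mid F\text{ a flat of }{\bf M}_{\omega+x}\}$, with $F\mapsto x+{\bf 1}_F$ an isomorphism onto the lattice of flats. For ``$\supseteq$'', given a flat $F$, the integral case of Lemma~\ref{lem:integer-valued} says every basis $B\in{\cal B}_{\omega+x}$ with $|B\cap F|$ maximal lies in ${\cal B}_{\omega+x+{\bf 1}_F}$; for any $e\in E$, using $e\notin{\rm cl}(F)$ (resp.\ $e\in F$) and the closure operator of ${\bf M}_{\omega+x}$ one finds such a $B$ through $e$, so $e$ is not a loop of ${\bf M}_{\omega+x+{\bf 1}_F}$ and $x+{\bf 1}_F\in{\cal T}(\omega)$. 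For ``$\subseteq$'', an integer point of $[x,x+{\bf 1}]$ is $x+{\bf 1}_F$ with $F$ the set where it exceeds $x$; if $F$ were not a flat, take $e\in{\rm cl}(F)\setminus F$, and then any $B\in{\cal B}_{\omega+x+{\bf 1}_F}$ through $e$ would need $(\omega+x)(B)=r(x)$ and $|B\cap F|=\rho_{{\bf M}_{\omega+x}}(F)$, so $B\cap F$ would be a basis of $F$ and $(B\cap F)+e\subseteq{\rm cl}(F)$ would be independent in ${\bf M}_{\omega+x}$ — impossible; hence $e$ is a loop and $x+{\bf 1}_F\notin{\cal T}(\omega)$. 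Since $F\mapsto x+{\bf 1}_F$ is an order-isomorphism onto an interval (hence a sublattice) of ${\cal L}(\omega)$, Theorem~\ref{thm:Birkhoff}(2) shows $[x,x+{\bf 1}]$ is a geometric lattice of rank $n$. I expect this step, bridging the maximizer matroids ${\bf M}_{\omega+x}$ and the interval structure of ${\cal L}(\omega)$, to be the main obstacle.

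\emph{Ascending operator, semimodularity, height function.} Any cover $x\prec_1 y$ of ${\cal L}(\omega)$ lies in $[x,x+{\bf 1}]$: otherwise $\min(y,x+{\bf 1})\in{\cal L}(\omega)$ would strictly separate $x$ and $y$. Hence the covers of $x$ are precisely the atoms of the geometric lattice $[x,x+{\bf 1}]$, i.e.\ the $x+{\bf 1}_P$ with $P$ a rank-$1$ flat of ${\bf M}_{\omega+x}$; and any common upper bound $z$ of them has $\min(z,x+{\bf 1})\in[x,x+{\bf 1}]$ dominating all atoms, so $\min(z,x+{\bf 1})$ equals the top $x+{\bf 1}$ (looplessness), giving $z\ge x+{\bf 1}$, so $\bigvee\{\text{covers of }x\}=x+{\bf 1}$. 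Thus the ascending operator equals $x\mapsto x+{\bf 1}$, visibly an automorphism of ${\cal L}(\omega)$, and the uniform-rank is $r[x,x+{\bf 1}]=n$: this is (1). For semimodularity I would use Lemma~\ref{lem:semimodular}(2): if $a,b$ cover $c=\min(a,b)$, then $a=c+{\bf 1}_P$, $b=c+{\bf 1}_Q$ for distinct rank-$1$ flats $P,Q$ of ${\bf M}_{\omega+c}$; the element $c+{\bf 1}_{{\rm cl}(P\cup Q)}\in[c,c+{\bf 1}]$ is a common upper bound of $a,b$, and any common upper bound $z$ has $\min(z,c+{\bf 1})\in[c,c+{\bf 1}]$ a common upper bound there, hence $\ge c+{\bf 1}_{{\rm cl}(P\cup Q)}$; so $a\vee b=c+{\bf 1}_{{\rm cl}(P\cup Q)}$, which in the geometric lattice $[c,c+{\bf 1}]$ — hence in ${\cal L}(\omega)$ — covers $a$ and $b$. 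Thus ${\cal L}(\omega)$ is a uniform semimodular lattice of uniform-rank $n$. Finally $r(x+{\bf 1})=r(x)+n$ and $r$ is strictly increasing along any maximal chain of $[x,x+{\bf 1}]$ (of length $n$), so $r$ increases by exactly $1$ at each cover inside such an interval, i.e.\ at every cover; so $r$ is a height function, giving (2) and (JD).

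\emph{The completion, (4).} Applying Theorem~\ref{thm:main1} to ${\cal L}:={\cal L}(\omega)$ yields a complete valuated matroid $\omega^{{\cal L}(\omega),x}$ of rank $n$ on the space $E^{{\cal L}(\omega)}$ of ends, with ${\cal T}(\omega^{{\cal L}(\omega),x})\cap\ZZ^{E^{{\cal L}(\omega)}}\cong{\cal L}(\omega)$. It remains to identify (the simplification of) $E$ with a dense subspace of $E^{{\cal L}(\omega)}$ compatibly with the valuations: each $e\in E$ non-loop in ${\bf M}_{\omega+x}$ gives, via the rank-$1$ flat through it, a cover $x+{\bf 1}_{P_x(e)}$, and iterating (the previous paragraph at each new vertex) builds an $x$-ray $\epsilon(e)$ whose end $\hat e$ depends only on the parallel class of $e$; I would check $e\mapsto\hat e$ is injective, that $\{\hat e\}$ is dense in $(E^{{\cal L}(\omega)},d_x)$ (every end of ${\cal L}(\omega)$ is a $d_x$-limit of such $\hat e$'s, using the completeness in Proposition~\ref{prop:ultrametric}), and that $\omega^{{\cal L}(\omega),x}(B)=-r[x_B,x]$ on bases inside the image agrees with $\omega$ up to an additive term $\sum_{e\in B}c(e)$, pinned down by $x_B=x$ (so $\omega^{{\cal L}(\omega),x}(B)=0$) for $B\in{\cal B}_{\omega+x}$. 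Uniqueness of the completion (Dress--Terhalle) then gives (4). Besides the structural claim above, this last matching of the abstract ends $E^{{\cal L}(\omega)}$ with the original ground set is the part I expect to require the most care.
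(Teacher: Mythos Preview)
Your approach is essentially the paper's: first establish that $[x,x+{\bf 1}]$ is the geometric lattice of flats of ${\bf M}_{\omega+x}$ (the paper's Lemma~\ref{lem:[x,x+1]}), then read off (1)--(3); for (4), embed $E$ into $E^{{\cal L}(\omega)}$ via ``normal'' rays, show density, and match the valuations. The order of steps differs slightly (you prove (F) and the lattice axioms before the local geometric lattice, the paper after), but the substance is the same.

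There is one genuine gap in your ``$\subseteq$'' argument for the local geometric lattice. You assert that any $B\in{\cal B}_{\omega+x+{\bf 1}_F}$ through $e\in{\rm cl}(F)\setminus F$ must satisfy $(\omega+x)(B)=r(x)$ and $|B\cap F|=\rho_{{\bf M}_{\omega+x}}(F)$. All you actually know is that the \emph{sum} $(\omega+x)(B)+|B\cap F|$ equals $r(x)+\rho(F)$; splitting this requires $|B\cap F|\le\rho(F)$, which in turn needs $B\in{\cal B}_{\omega+x}$ so that $B\cap F$ is independent in ${\bf M}_{\omega+x}$. Nothing rules out $B\notin{\cal B}_{\omega+x}$ with $(\omega+x)(B)=r(x)-k$ and $|B\cap F|=\rho(F)+k$. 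The paper sidesteps this by a cleaner comparison: since $e\in{\rm cl}(F)$, one has $\max_B(\omega+x+{\bf 1}_{F+e})(B)=r(x)+\rho(F+e)=r(x)+\rho(F)=\max_B(\omega+x+{\bf 1}_F)(B)$; but if $e\in B\in{\cal B}_{\omega+x+{\bf 1}_F}$ then $(\omega+x+{\bf 1}_{F+e})(B)=(\omega+x+{\bf 1}_F)(B)+1$ exceeds this common maximum, a contradiction.

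For (4) your sketch is correct but omits one ingredient the paper makes explicit (Lemma~\ref{lem:d=D}): the ultrametric $d_x$ on $E^{{\cal L}(\omega)}$ restricts, on the image of $E$, to the Dress--Terhalle metric $D_{-x}$. Completeness of $(E^{{\cal L}(\omega)},d_x)$ alone does not identify it with the Dress--Terhalle completion of $E$; you also need the two metrics to agree on $E$, and this is where the formula $\delta_x(e,f)=-\max\{(\omega+x)(B):B\ni e,f\}$ has to be checked.
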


The rest of this section is to devoted to the proof.
Let ${\bf M} = (E, {\cal B})$ be the underlying matroid of $\omega$.
By (TC$_{+{\bf 1}}$), 
if $x\in {\cal L}(\omega)$ then $x+{\bf 1} \in {\cal L}(\omega)$. 
We first show that the interval $[x,x+{\bf 1}]$ in ${\cal L}(\omega)$ 
is a geometric lattice 
corresponding to ${\bf M}_{\omega + x}$.

\begin{Lem}\label{lem:[x,x+1]} Let $x \in {\cal L}(\omega)$.
	\begin{itemize}
		\item[{\rm (1)}] $[x,x+{\bf 1}]$ 
		is isomorphic to the lattice of flats of ${\bf M}_{\omega + x}$, 
		where the isomorphism is given by the map $x + {\bf 1}_{F} \mapsto F$.
		\item[{\rm (2)}] $y \in {\cal L}(\omega)$ covers $x$ if and only if $y = x + {\bf 1}_F$
		for a parallel class $F$ in ${\bf M}_{\omega + x}$.
	\end{itemize}
\end{Lem}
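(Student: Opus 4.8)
The plan is to reduce to $x = {\bf 0}$ and realize $[{\bf 0},{\bf 1}]$ as the lattice of flats of ${\bf M}_{\omega}:={\bf M}_{\omega+{\bf 0}}$. Since $x\in{\cal L}(\omega)\subseteq{\cal T}(\omega)$, the matroid ${\bf M}_{\omega+x}$ is loop-free; replacing $\omega$ by $\omega+x$ (again an integer-valued valuated matroid, with ${\bf 0}\in{\cal L}(\omega+x)$, ${\cal L}(\omega+x)={\cal L}(\omega)-x$ and ${\bf M}_{(\omega+x)+{\bf 0}}={\bf M}_{\omega+x}$) I may assume $x={\bf 0}$ and ${\bf M}_{\omega}$ loop-free. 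An element $y$ of $[{\bf 0},{\bf 1}]$ in ${\cal L}(\omega)={\cal T}(\omega)\cap\ZZ^{E}$ is then a $0/1$-vector, so $y={\bf 1}_{F}$ with $F=\{e\in E:y(e)=1\}$; thus all of (1) amounts to showing that $\{F\subseteq E:{\bf 1}_{F}\in{\cal T}(\omega)\}$ is exactly the family of flats of ${\bf M}_{\omega}$. Granting this, $F\mapsto{\bf 1}_{F}$ is a bijection onto $[{\bf 0},{\bf 1}]$ which, together with its inverse, is order-preserving because ${\bf 1}_{F}\leq{\bf 1}_{F'}\Leftrightarrow F\subseteq F'$, hence an isomorphism onto the lattice of flats of ${\bf M}_{\omega}$ (a geometric lattice by Theorem~\ref{thm:Birkhoff}).

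For the inclusion $\supseteq$, let $F$ be a flat of ${\bf M}_{\omega}$; I want ${\bf M}_{\omega+{\bf 1}_{F}}$ loop-free. Here $\max\{|B\cap F|:B\in{\cal B}_{\omega}\}$ equals the rank $\rho(F)$ of $F$ in ${\bf M}_{\omega}$, attained precisely by those bases whose trace on $F$ is a basis of the restriction ${\bf M}_{\omega}|F$; by Lemma~\ref{lem:integer-valued} (with $\alpha=1$) every such base lies in ${\cal B}_{\omega+{\bf 1}_{F}}$. Now fix $e\in E$. If $e\in F$, then ${\bf M}_{\omega}|F$ is loop-free, so $e$ lies in a basis of ${\bf M}_{\omega}|F$, which extends to a base $B$ of ${\bf M}_{\omega}$ of the above kind with $e\in B$. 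If $e\notin F={\rm cl}(F)$, then $\rho(F+e)=\rho(F)+1$, so a basis of ${\bf M}_{\omega}|F$ together with $e$ is independent and extends to such a base $B$ with $e\in B$. In either case $e$ is not a loop of ${\bf M}_{\omega+{\bf 1}_{F}}$.

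The reverse inclusion $\subseteq$ is where I expect the real work. Suppose ${\bf 1}_{F}\in{\cal T}(\omega)$. Since ${\bf 0},{\bf 1}_{F}\in{\cal T}(\omega)$, tropical convexity (Lemma~\ref{lem:trop_convexity}) gives
\[
z:=\min\!\left(\tfrac12{\bf 1},\ {\bf 1}_{F}\right)=\tfrac12{\bf 1}_{F}\ \in\ {\cal T}(\omega),
\]
and $\lfloor z\rfloor={\bf 0}$. By Lemma~\ref{lem:chain_of_flats}~(2) there are a chain $\emptyset\neq G_{1}\subset G_{2}\subset\cdots\subset G_{n}=E$ of flats of ${\bf M}_{\omega}$ and $\lambda_{i}\geq 0$ with $z=\sum_{i=1}^{n}\lambda_{i}{\bf 1}_{G_{i}}$. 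On each nonempty step $G_{j}\setminus G_{j-1}$ (set $G_{0}:=\emptyset$) the vector $z$ is constant with value $s_{j}:=\sum_{i\geq j}\lambda_{i}$; since $z$ takes only the values $\tfrac12$ (on $F$) and $0$ (off $F$), each step lies entirely inside $F$ (then $s_{j}=\tfrac12$) or entirely outside (then $s_{j}=0$), and as $s_{j}$ is nonincreasing in $j$ the steps inside $F$ form an initial segment $\{1,\dots,k\}$. Hence $F=G_{k}$ is a flat of ${\bf M}_{\omega}$ (with the cases $F=\emptyset$ and $F=E$ being the extremes $k=0$ and $k=n$). This completes (1).

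For (2), after the same reduction it suffices to note that every $y$ covering ${\bf 0}$ in ${\cal L}(\omega)$ lies in $[{\bf 0},{\bf 1}]$: if $y\not\leq{\bf 1}$, then $y\wedge{\bf 1}=\min(y,{\bf 1})\in{\cal L}(\omega)$ satisfies ${\bf 0}\prec y\wedge{\bf 1}\prec y$ (it is nonzero since $y\neq{\bf 0}$ and $y\geq{\bf 0}$), contradicting the covering. Since $[{\bf 0},{\bf 1}]$ is an interval, the covers of ${\bf 0}$ in ${\cal L}(\omega)$ are exactly its covers inside $[{\bf 0},{\bf 1}]$, and under the isomorphism of (1) these correspond to the atoms of the lattice of flats of ${\bf M}_{\omega}$, i.e.\ the flats of rank $1$, i.e.\ the parallel classes of ${\bf M}_{\omega}$; this is precisely the claim $y=x+{\bf 1}_{F}$ with $F$ a parallel class of ${\bf M}_{\omega+x}$. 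The one genuinely delicate step is the inclusion $\subseteq$ above: one has to nudge ${\bf 1}_{F}$ into the half-open box $[{\bf 0},{\bf 1})$ by tropical convexity so that the chain-of-flats description of Lemma~\ref{lem:chain_of_flats} becomes applicable.
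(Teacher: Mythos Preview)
Your proof is correct, and the overall structure (reduce to $x={\bf 0}$; prove $\supseteq$ in (1) via Lemma~\ref{lem:integer-valued}) matches the paper. The two substantive steps, however, are handled differently.

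For the reverse inclusion in (1), the paper argues directly: if $F$ is not a flat, pick $e\in{\rm cl}(F)\setminus F$ and observe that $\max\{|B\cap(F+e)|:B\in{\cal B}_\omega\}=\max\{|B\cap F|:B\in{\cal B}_\omega\}$, whence $\max_B(\omega+{\bf 1}_F)(B)=\max_B(\omega+{\bf 1}_{F+e})(B)$ and $e$ is a loop in ${\bf M}_{\omega+{\bf 1}_F}$. Your route instead pushes ${\bf 1}_F$ into the open cell via tropical convexity to obtain $\tfrac12{\bf 1}_F\in{\cal T}(\omega)$ and then reads off $F$ as one of the flats in the chain supplied by Lemma~\ref{lem:chain_of_flats}~(2). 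This is a neat reuse of existing machinery (Lemmas~\ref{lem:chain_of_flats} and~\ref{lem:trop_convexity}), at the cost of being less self-contained than the paper's elementary closure computation.

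For (2), the paper first proves a loop-propagation sublemma (a loop $e\notin F$ of ${\bf M}_\omega$ stays a loop in ${\bf M}_{\omega+{\bf 1}_F}$), writes $y-x$ as a finite sum $\sum_i{\bf 1}_{F_i}$ with $F_1\supseteq F_2\supseteq\cdots$, and iterates to force $F_1$ to be a flat. Your argument is shorter: if $y$ covers ${\bf 0}$ but $y\not\leq{\bf 1}$, then $\min(y,{\bf 1})\in{\cal L}(\omega)$ lies strictly between ${\bf 0}$ and $y$, a contradiction; hence $y\in[{\bf 0},{\bf 1}]$ and (1) finishes the job. This bypasses the loop-propagation step entirely and avoids any boundedness consideration for $y-x$.
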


\begin{proof}
	(1). By replacing $\omega$ by $\omega+x$,
	we can assume $x = {\bf 0}$.
	By Lemma~\ref{lem:integer-valued},
	for a flat $F$ of ${\cal B}_{\omega}$, and any $e \in F$ and $f \not \in F$ 
	we can choose $B \in  {\cal B}_{\omega} \cap {\cal B}_{\omega + {\bf 1}_{F}}$ 
	containing $e,f$. This implies $x + {\bf 1}_{F} \in {\cal L}(\omega)$.
	Suppose that $F$ is not a flat of  ${\cal B}_{\omega}$.
	Consider $e \in {\rm cl}(F) \setminus F$.
	Then $\max \{ |B \cap (F + e)| \mid B \in {\cal B}_{\omega}\} 
	= \max \{ |B \cap F| \mid B \in {\cal B}_{\omega}\}$.
	This implies that $\max_B (\omega + {\bf 1}_{F})(B) 
	= \max_B (\omega + {\bf 1}_{F + e})(B)$.
	Thus no base in ${\cal B}_{\omega+ {\bf 1}_F}$ contains $e$, implying $x + {\bf 1}_{F} \not \in {\cal L}(\omega)$.

	(2). By (1), it suffices to the only-if part.
	We first show that for $F \subseteq E$ and $e \in E \setminus F$,
	if $e$ is a loop in ${\bf M}_{\omega}$ then so is ${\bf M}_{\omega + {\bf 1}_F}$.
	Choose $B \in {\cal B}_{\omega}$ with maximal $B \cap F$. 
    By Lemma~\ref{lem:integer-valued} 
    it holds $B \in {\cal B}_{\omega+ {\bf 1}_F}$.
	Suppose (to the contrary) that there is a base in ${\cal B}_{\omega + {\bf 1}_F}$
	containing $e$. 
	By the exchange axiom there is $f \in B$ such that
	$B + e - f \in {\cal B}_{\omega + {\bf 1}_F}$.
	Then $B + e - f \not \in {\cal B}_{\omega}$, and 
	$\omega(B+e-f) \leq \omega(B) - 1$.
	By $e \not \in F$, 
	it holds $|(B + e- f) \cap F| \leq |B \cap F|$.
	Therefore $(\omega+{\bf 1}_{F})(B+e-f) < (\omega+{\bf 1}_{F})(B)$, 
	contradicting $B + e - f \in {\cal B}_{\omega + {\bf 1}_F}$.
	Thus no base in ${\cal B}_{\omega + {\bf 1}_F}$ contains $e$.
	
	Let $y = x + \sum_{i} {\bf 1}_{F_i}$ for $F_1 \supseteq F_2 \supseteq \cdots \supseteq F_m$.
	By repeated uses of the above property, one can see that
	$F_1$ must be a flat in ${\cal B}_{\omega + x}$;
	otherwise $e \in {\rm cl}(F_1) \setminus F_1$ 
	is a loop in ${\cal B}_{\omega + y}$.
	Consider the parallel class $F$ of $e \in F_1$ in ${\bf M}_{\omega+x}$.
	By (1), 
	$x+ {\bf 1}_F$ belongs to ${\cal L}(\omega)$.
	Therefore $x \leq x+ {\bf 1}_F \leq y$, implying $y = x+ {\bf 1}_F$.	 
\end{proof}

\begin{proof}[Proof of Theorem~\ref{thm:main2} (1-3)]
	First we show (2) that
	a height function $r$ of ${\cal L}(\omega)$ is given by  $x \mapsto \max_{B \in {\cal B}} (\omega + x)(B)$.
	Consider $x,y \in {\cal L}(\omega)$ such that $y$ covers $x$.
	By Lemma~\ref{lem:[x,x+1]}~(2),  $y= x+ {\bf 1}_F$ 
	for a parallel class $F$.  
	Then ${\cal B}_{\omega+y} \supseteq \{ B \in {\cal B}_{\omega + x} \mid |B \cap F| = 1\} (\neq \emptyset)$ by Lemma~\ref{lem:integer-valued}.  
	Therefore $r(y) = r(x)+1$.
	
	Next we show that ${\cal L}(\omega)$ is a lattice with property (3).
	Let $x,y \in {\cal L}(\omega)$, and let $z := \min (x,y)$.
	By the tropical convexity (Lemma~\ref{lem:trop_convexity}),
	$z$ belongs to ${\cal L}(\omega)$, and necessarily 
	$x \wedge y = z$.
	 By Lemma~\ref{lem:[x,x+1]}~(2) and (2) shown above,
	 $x - z$ and $y - z$  are upper-bounded.
	 This implies that $\max(x,y) - x$ and $\max(x,y) - y$ are upper-bounded.
	 Thus $\{z \in {\cal L}(\omega) \mid z \geq \max(x,y)\}$ is nonempty; 
	 for example, consider $x+ \alpha {\bf 1}$ for large $\alpha$.
     By this fact and the existence of a height function,  
	 	$\bigwedge \{z \in {\cal L}(\omega) \mid z \geq \max(x,y)\}$ exists, and is the join of $x,y$.

	 	By Lemma~\ref{lem:[x,x+1]}, 
	 	if $a,b$ cover $a \wedge b$, then $a \vee b$ covers $a,b$.
	 	Hence ${\cal L}(\omega)$ is semimodular (Lemma~\ref{lem:semimodular}).
	 	The property (1) is also an immediate corollary of the same lemma.
	 	The map $x \mapsto x + {\bf 1}$ is obviously an automorphism. 
	 	Thus ${\cal L}(\omega)$ 
	 	is a uniform semimodular lattice. 
	 	The uniform-rank is equal to the rank of 
	 	$[x, x+{\bf 1}]$ that is equal to the rank of~${\bf M}$.
\end{proof}

To show the property (4), 
we have to study the relationship between $E$ 
and the space $E^{{\cal L}(\omega)}$ of ends 
in ${\cal L}(\omega)$.
\begin{Lem}\label{lem:normal_ray}
	Let $(a^\ell)$ be a ray in ${\cal L}(\omega)$.
	\begin{itemize}
	\item[{\rm (1)}] There is a decreasing sequence $F_0 \supseteq F_1 \supseteq \cdots$
	of nonempty subsets in $E$ such that 
	\begin{equation*}
	a^{\ell+1} = a^{\ell} + {\bf 1}_{F_{\ell}} \quad (\ell = 0,1,\ldots),
	\end{equation*}
	where $F_{\ell}$ is a parallel class of ${\bf M}_{\omega + a^\ell}$.
	\item[{\rm (2)}] If $\bigcap_{\ell} F_{\ell}$ is nonempty, 
	then $\bigcap_{\ell} F_{\ell}$ is a parallel class of ${\bf M}$.
	\end{itemize}
\end{Lem}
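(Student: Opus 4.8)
The plan is to treat parts (1) and (2) in turn, relying on the cover description in ${\cal L}(\omega)$ from Lemma~\ref{lem:[x,x+1]}, on the facts (Theorem~\ref{thm:main2}) that the ascending operator is $x\mapsto x+{\bf 1}$ and that $r(x)=\max_{B\in{\cal B}}(\omega+x)(B)$ is a height function, and on the ``$\alpha=1$'' case of Lemma~\ref{lem:integer-valued}.

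For (1): since $(a^\ell)$ is a ray, each $a^{\ell+1}$ covers $a^\ell$, so Lemma~\ref{lem:[x,x+1]}(2) gives at once $a^{\ell+1}=a^\ell+{\bf 1}_{F_\ell}$ with $F_\ell$ a parallel class of ${\bf M}_{\omega+a^\ell}$; the real content is the nesting $F_0\supseteq F_1\supseteq\cdots$. First I would note that $a^{\ell+1}=a^{\ell-1}+{\bf 1}_{F_{\ell-1}}+{\bf 1}_{F_\ell}$ lies in $[a^{\ell-1},(a^{\ell-1})^+]=[a^{\ell-1},a^{\ell-1}+{\bf 1}]$ if and only if $F_{\ell-1}\cap F_\ell=\emptyset$, so the straightness condition in the definition of a ray is precisely $F_{\ell-1}\cap F_\ell\neq\emptyset$ for every $\ell\geq 1$. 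Fixing $e\in F_{\ell-1}\cap F_\ell$, I would then take an arbitrary $f\in F_\ell$ with $f\neq e$ and prove $f\in F_{\ell-1}$: if not, some base $B_0\in{\cal B}_{\omega+a^{\ell-1}}$ contains $\{e,f\}$; as $F_{\ell-1}$ is a parallel class of ${\bf M}_{\omega+a^{\ell-1}}$ we have $B_0\cap F_{\ell-1}=\{e\}$, which is maximal, so Lemma~\ref{lem:integer-valued} (applied with $x=a^{\ell-1}$, $F=F_{\ell-1}$, $\alpha=1$) puts $B_0$ in ${\cal B}_{\omega+a^{\ell-1}+{\bf 1}_{F_{\ell-1}}}={\cal B}_{\omega+a^\ell}$; but then $B_0$ contains the two distinct elements $e,f$ of the parallel class $F_\ell$ of ${\bf M}_{\omega+a^\ell}$, which is impossible. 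Hence $F_\ell\subseteq F_{\ell-1}$, and (1) follows.

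For (2): set $F_\infty:=\bigcap_\ell F_\ell$, assume $F_\infty\neq\emptyset$, and fix $e\in F_\infty$. Every element of $F_\infty$ is a non-loop of ${\bf M}$, being a non-loop of some ${\bf M}_{\omega+a^\ell}$ whose bases lie in ${\cal B}$. To see that distinct $e,f\in F_\infty$ are parallel in ${\bf M}$, I would argue by contradiction: a base $B\in{\cal B}$ with $\{e,f\}\subseteq B$ lies in no ${\cal B}_{\omega+a^\ell}$ (it contains the parallel pair $e,f$ of ${\bf M}_{\omega+a^\ell}$), so $(\omega+a^\ell)(B)\leq r(a^\ell)-1$; since $(a^\ell)$ is a chain of covers we have $r(a^\ell)=r(a^0)+\ell$, while $a^\ell(e)=a^0(e)+\ell$ and $a^\ell(f)=a^0(f)+\ell$ (both $e,f$ lie in every $F_j$) and the other coordinates of $a^\ell$ dominate those of $a^0$, so $(\omega+a^0)(B)+2\ell\leq r(a^0)+\ell-1$ for all $\ell$, which is impossible for the finite quantity $(\omega+a^0)(B)$. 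Thus $F_\infty$ is contained in the parallel class of $e$ in ${\bf M}$. For the reverse inclusion, suppose $f$ is parallel to $e$ in ${\bf M}$ but $f\notin F_\infty$, and choose $\ell_0$ with $f\notin F_{\ell_0}$; no base of ${\bf M}$, hence none of ${\bf M}_{\omega+a^{\ell_0}}$, contains $\{e,f\}$, and $f$ is not a loop of ${\bf M}_{\omega+a^{\ell_0}}$ because $a^{\ell_0}\in{\cal L}(\omega)={\cal T}(\omega)\cap\ZZ^E$; therefore $f$ lies in the parallel class of $e$ in ${\bf M}_{\omega+a^{\ell_0}}$, which is $F_{\ell_0}$ — a contradiction. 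Hence $F_\infty$ is exactly the parallel class of $e$, i.e. a parallel class of ${\bf M}$.

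I expect the containment step in (1) to be the main obstacle: the straightness hypothesis only yields $F_{\ell-1}\cap F_\ell\neq\emptyset$, and upgrading this to $F_\ell\subseteq F_{\ell-1}$ depends on the fact that a base of ${\bf M}_{\omega+a^{\ell-1}}$ passing through an element of the rank-$1$ flat $F_{\ell-1}$ remains a base of ${\bf M}_{\omega+a^\ell}$ (Lemma~\ref{lem:integer-valued}), so that parallelism in the larger matroid propagates to the smaller one. The remainder — part (2) and the bookkeeping in part (1) — is routine, using the height-function formula and the loop-freeness built into membership in ${\cal L}(\omega)$.
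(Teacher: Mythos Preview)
Your proof is correct and follows essentially the same route as the paper's: part (1) is identical (the straightness condition forces $F_{\ell-1}\cap F_\ell\neq\emptyset$, and Lemma~\ref{lem:integer-valued} upgrades this to $F_\ell\subseteq F_{\ell-1}$), and your growth-rate argument for part (2) via the height function matches the paper's. In fact your treatment of (2) is more complete: the paper only checks that distinct elements of $\bigcap_\ell F_\ell$ are parallel in ${\bf M}$ (so containment in a parallel class), whereas you also supply the reverse inclusion, so that $\bigcap_\ell F_\ell$ is literally a full parallel class as the statement asserts.
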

\begin{proof}
	(1). By Lemma~\ref{lem:[x,x+1]} (2), 
	$F_\ell$ is a parallel class of ${\bf M}_{\omega + a^\ell}$.
	It suffices to show $F_0 \supseteq F_1$.
	Here $F_0 \cap F_1 = \emptyset$ is impossible, since
	otherwise $a^2 \in [a^0, a^0+{\bf 1}]$ contradicting the fact that $(a^\ell)$ is a ray.
	Suppose $F_1 \setminus F_0 \neq \emptyset$.
	Choose $e \in F_0 \cap F_1$ and $f \in F_1 \setminus F_0$.
	Then there is a base $B \in {\cal B}_{\omega + a^0}$ containing $e,f$.
	By Lemma~\ref{lem:integer-valued}, $B$ is also a base in $B_{\omega + a^1}$.
	Namely $e,f$ are independent in ${\bf M}_{\omega + a^1}$.
	However this is a contradiction to the fact that $F_1$ is a parallel class 
	of  ${\bf M}_{\omega + a^1}$.
	
	(2). 
	Suppose that there are distinct non-parallel elements 
	$e,f \in \bigcap_{\ell} F_{\ell}$.
	There is $B \in {\cal B}$ containing $e,f$.
	Then $(\omega + a^{\ell+1})(B) 
	- (\omega + a^{\ell})(B) \geq 2$.
	On the other hand, $r(a^{\ell+1}) - r(a^{\ell}) = 1$ for all $\ell$.
	Recall Theorem~\ref{thm:main2}~(2) that a hight function $r$ is given by $x \mapsto \max_{B} (w+x)(B)$.  
	Then $B$ must be in ${\cal B}_{\omega + a^{\ell'}}$ for some $\ell'$; 
	this is a contradiction to the fact that 
	$e,f$ are parallel in ${\bf M}_{\omega + a^{\ell}}$ for all $\ell$. 
\end{proof}

In the case of (2), ray $(a^\ell)$ is said to be {\em normal} 
and {\em have $\infty$-direction $F = \bigcap_{\ell} F_{\ell}$}.
\begin{Lem}\label{lem:normal_ray2}
	\begin{itemize}
		\item[{\rm (1)}] Two normal rays are parallel if and only if they have the same $\infty$-direction.
		\item[{\rm (2)}] For $x \in {\cal L}(\omega)$ and a parallel class $F$ of ${\bf M}$, 
		there is a normal $x$-ray having $\infty$-direction $F$.
	\end{itemize}
\end{Lem}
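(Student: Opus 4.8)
\emph{Plan.} I would establish (2) first, then bootstrap (1) from it, the key new ingredient being that ``pushing a normal ray up'' preserves its $\infty$-direction.

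\emph{Part (2).} Fix $x\in{\cal L}(\omega)$ and a parallel class $F$ of ${\bf M}$, and pick $e\in F$. Since $x\in{\cal T}(\omega)$, the matroid ${\bf M}_{\omega+x}$ has no loop, and any two elements of $F$ are parallel in ${\bf M}_{\omega+x}$ (no base of ${\bf M}_{\omega+x}\subseteq{\bf M}$ contains a parallel pair of ${\bf M}$); hence $F$ lies in a single parallel class $F_0$ of ${\bf M}_{\omega+x}$. Set $a^0:=x$ and, inductively, $a^{\ell+1}:=a^\ell+{\bf 1}_{F_\ell}$ where $F_\ell$ is the unique parallel class of ${\bf M}_{\omega+a^\ell}$ containing $F$ (it exists by the same reasoning applied to $a^\ell\in{\cal T}(\omega)$). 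By Lemma~\ref{lem:[x,x+1]}~(2) each $a^{\ell+1}$ covers $a^\ell$; and since $e\in F_{\ell-1}\cap F_\ell$ we get $a^{\ell+1}(e)=a^{\ell-1}(e)+2$, so $a^{\ell+1}\not\preceq(a^{\ell-1})^+$ and $(a^\ell)$ is a ray. All its parallel classes $F_\ell$ contain $e$, so $\bigcap_\ell F_\ell\neq\emptyset$, the ray is normal (Lemma~\ref{lem:normal_ray}~(2)), and its $\infty$-direction is a parallel class of ${\bf M}$ containing $e$, hence equals $F$.

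\emph{Uniqueness and push-up.} The construction above is in fact forced: if $(a^\ell)$ is \emph{any} normal ray with $\infty$-direction $F$ starting at $z$, then by Lemma~\ref{lem:normal_ray}~(1) its parallel classes decrease to $F$, so each one contains $F$ and therefore equals the unique parallel class of the relevant matroid containing $F$; thus $a^{\ell+1}$ is determined by $a^\ell$ and the ray is unique. Write $R(z,F)$ for it. The crux of (1) is:
\begin{equation*}
\text{if } (a^\ell)=R(x,F)\text{ and }z\succeq x,\text{ then }(a^\ell)\vee z\text{ is normal with }\infty\text{-direction }F.
\end{equation*}
By uniqueness of the $y$-ray parallel to a given ray, pushing up composes correctly, so it suffices to prove this when $z$ covers $x$ and iterate along a maximal chain from $x$ to $z$. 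If $z=a^1$ the pushed ray is the tail of $(a^\ell)$. Otherwise $z=x+{\bf 1}_P$ for a parallel class $P\neq F_0$ of ${\bf M}_{\omega+x}$, so $P\cap F_0=\emptyset$; by Lemma~\ref{lem:segment} the chain $c^\ell:=z\vee a^\ell$ is a $z$-ray, and (as in that proof) $c^\ell$ covers $a^\ell$, say $c^\ell=a^\ell+{\bf 1}_{H_\ell}$. The plan is to show $H_\ell\cap F=\emptyset$: then $c^\ell(e)=a^\ell(e)=x(e)+\ell$ for $e\in F$, so $c^{\ell+1}(e)=c^\ell(e)+1$ for all $e\in F$, which forces the parallel class used at the $\ell$-th step of $(c^\ell)$ to contain $F$; by Lemma~\ref{lem:normal_ray} the ray $(c^\ell)$ is then normal with $\infty$-direction $F$.

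\emph{The merging obstruction.} To get $H_\ell\cap F=\emptyset$ it is enough that a fixed $g\in P$ and $e\in F$ never become parallel in ${\bf M}_{\omega+a^\ell}$. Since $P\neq F_0$ and $e\in F_0$, some base $B\in{\cal B}_{\omega+x}$ contains both $g$ and $e$; as $e\in F\subseteq F_j$ for all $j$, we have $|B\cap F_j|=1$, which is maximal because $F_j$ is a parallel class, so by Lemma~\ref{lem:integer-valued} $B$ survives to ${\cal B}_{\omega+a^{j+1}}$ at each step, giving $B\in{\cal B}_{\omega+a^\ell}$ and keeping $g,e$ non-parallel. (One also notes $c^\ell\preceq(a^\ell)^+$, since $\min(c^\ell,(a^\ell)^+)$ lies above both $a^\ell$ and $z$, hence above $c^\ell$, which keeps the coordinates in range.) This proves the push-up fact, and it is the only delicate point; everything else is bookkeeping. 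From it, ``$\Leftarrow$'' of (1) follows by taking $z:=x\vee y$: both $R(x,F)\vee z$ and $R(y,F)\vee z$ are normal $z$-rays with $\infty$-direction $F$, hence equal by uniqueness, and each is parallel to the ray it came from. For ``$\Rightarrow$'', if normal rays $(a^\ell),(b^\ell)$ with base points $x,y$ and $\infty$-directions $F,G$ are parallel, then by ``$\Leftarrow$'' the ray $R(y,F)$ is parallel to $(a^\ell)$, hence to $(b^\ell)$; two parallel rays with the same base point $y$ are equal, so $R(y,F)=(b^\ell)$ and therefore $F=G$.
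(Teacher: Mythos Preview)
Your proof is correct. Part~(2) matches the paper exactly (your ``parallel class containing $F$'' is the paper's ${\rm cl}(F)$ in ${\bf M}_{\omega+a^\ell}$). For part~(1) you take a genuinely different route to the push-up fact. The paper does not reduce to the cover case; it takes arbitrary $y$ with $a^{\ell'}\preceq y$ and $a^{\ell'+1}\not\preceq y$, writes the pushed ray as $y\vee a^{\ell'+k+1}=(y\vee a^{\ell'+k})+{\bf 1}_{G_k}$, and reads off $G_k\supseteq F_{\ell'+k+1}$ directly from the coordinate inequality $y\vee a^{\ell'+k+1}\geq a^{\ell'+k+1}$ in $\ZZ^E$ together with $y(f)=a^{\ell'}(f)$ for $f\in F_{\ell'}$ (forced by $\min(y,a^{\ell'+1})=a^{\ell'}$). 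No appeal to Lemma~\ref{lem:integer-valued} is needed. Your approach instead tracks the sideways increments $H_\ell$ (with $c^\ell=a^\ell+{\bf 1}_{H_\ell}$) and keeps a base $B\ni g,e$ alive in every ${\bf M}_{\omega+a^\ell}$ via Lemma~\ref{lem:integer-valued}; this works, but one link is only hinted at by your parenthetical: to pass from ``$g,e$ non-parallel in ${\bf M}_{\omega+a^\ell}$'' to ``$H_\ell\cap F=\emptyset$'' you need $g\in H_\ell$, which follows because your non-parallel claim gives $g\notin F_j$ and hence $a^\ell(g)=x(g)$, while $c^\ell(g)\geq z(g)=x(g)+1$ and $c^\ell=a^\ell+{\bf 1}_{H_\ell}$. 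In fact $H_\ell\cap F=\emptyset$ also drops out immediately from $c^\ell\neq a^{\ell+1}$ (which you already know, since otherwise $c^{\ell+1}=c^\ell$), as then the parallel classes $H_\ell$ and $F_\ell\supseteq F$ in ${\bf M}_{\omega+a^\ell}$ are distinct and hence disjoint --- so the base-persistence detour could be avoided. The paper's coordinate argument is shorter; your route is more hands-on and makes the role of the underlying matroid structure at each level explicit.
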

\begin{proof}
	(1). Let $(a^\ell)$ be a normal ray having $\infty$-direction $F$, 
	and let $y \in {\cal L}(\omega)$ with $a^{\ell'+1} \not \succeq y \succeq a^{\ell'}$.
	We show that ray $(a^\ell) \vee y = 
	(y = a^{\ell'} \vee y \prec a^{\ell'+1} \vee y \prec \cdots)$ 
	is a normal ray having $\infty$-direction $F$.
	By Lemma~\ref{lem:normal_ray}~(1),  
	we can suppose that $y \vee a^{\ell'+k+1} = y \vee a^{\ell' + k} + {\bf 1}_{G_k}$ for $G_k \subseteq E$.
	By $\min (y,a^{\ell'+1}) = y \wedge a^{\ell'+1} = a^{\ell'}$ and 
	$a^{\ell'+1} = a^{\ell'} + {\bf 1}_{F_{\ell'}}$, 
	it holds $y \vee a^{\ell'+1} - y \geq \max (y, a^{\ell'+1}) - y = {\bf 1}_{F_{\ell'}}$.
	Necessarily $G_0 \supseteq F_{\ell' + 1}$.
	Consequently $G_k \supseteq F_{\ell'+k+1}$ for all $k$. 
	Therefore $\bigcap_k G_k$ contains $F$, and must be equal to $F$, 
	since  $\bigcap_k G_k$ is also a parallel class of ${\bf M}$ (Lemma~\ref{lem:normal_ray}~(2)). 
	Thus $(a^\ell) \vee y$ has $\infty$-direction~$F$.
	The only-if part is immediate from this property.
	The if-part also follows from this property and the observation that 
	if two normal rays at the same starting point
	have the same $\infty$-direction, then the two rays must be equal.
	
	(2). Note that 
	$F$ is a rank-$1$ subset in ${\bf M}_{\omega + y}$ for every $y \in {\cal L}(\omega)$. 
	Let $a^0 := x$. For $\ell=0,1,2,\ldots$, 
	define $F^{\ell}$ as ${\rm cl}\, (F)$ in ${\bf M}_{\omega + a^{\ell}}$, 
	and $a^{\ell+1} := a^\ell + {\bf 1}_{F^{\ell}}$.
	Then $(a^{\ell})$ is a ray, 
	since $a^{\ell + 1} \geq a^{\ell-1} + 2 {\bf 1}_F$ and $a^{\ell + 1} \not \in [a^{\ell-1}, a^{\ell-1}+{\bf 1}]$.
	Also $(a^{\ell})$ is normal with $\infty$-direction $F$ 
	(since parallel class $\bigcap_{\ell} F^{\ell}$ contains $F$ and equals $F$). 
\end{proof}
In the case where $\omega$ is simple, 
by associating $e \in E$ with the end having $\infty$-direction $\{e\}$, 
we can regard $E$ as a subset of $E^{{\cal L}(\omega)}$.
Then each local matroid ${\bf M}_{\omega + x}$ 
is the restriction of ${\bf M}^{{\cal L},x}$ to $E$:
\begin{Lem}\label{lem:restriction} 
	For $x \in {\cal L}(\omega)$, it holds
	${\cal B}_{\omega+x} = \{ B \in {\cal B}^{{\cal L},x} \mid B \subseteq E \}$.
\end{Lem}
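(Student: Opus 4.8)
The plan is to show that, for each $x\in{\cal L}(\omega)$, the matroid ${\bf M}^{{\cal L},x}={\bf M}^{{\cal L}(\omega),x}$ restricted to $E\subseteq E^{{\cal L}(\omega)}$ coincides with the local matroid ${\bf M}_{\omega+x}$; the asserted equality of base families is then immediate, since both matroids have rank $n$. The bridge is the interval $[x,(x)^+]$: by Theorem~\ref{thm:main2}~(1) we have $(x)^+=x+{\bf 1}$, and by Lemma~\ref{lem:[x,x+1]}~(1) the interval $[x,x+{\bf 1}]$ is isomorphic, via $x+{\bf 1}_F\mapsto F$, to the geometric lattice of flats of ${\bf M}_{\omega+x}$. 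Under this isomorphism the atoms of $[x,(x)^+]$ are exactly the elements $x+{\bf 1}_F$ with $F$ a parallel class of ${\bf M}_{\omega+x}$, and the height function $r_x$ on $[x,(x)^+]$ corresponds to the matroid rank $\rho_{\omega+x}$ of the associated flat (an isomorphism of semimodular lattices preserves heights by (JD)).

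First I would identify the first step of the canonical $x$-ray $e_x$ representing an end $e\in E$. Since $\omega$ is simple, $e$ is the end whose $\infty$-direction is the parallel class $\{e\}$ of ${\bf M}$; by Lemma~\ref{lem:normal_ray2}~(1) the normal rays with $\infty$-direction $\{e\}$ constitute exactly this end, and by Lemma~\ref{lem:normal_ray2}~(2) there is a normal $x$-ray with $\infty$-direction $\{e\}$ whose first step is $x+{\bf 1}_{F_e}$, where $F_e:={\rm cl}_{\omega+x}(\{e\})$ is the parallel class of $e$ in ${\bf M}_{\omega+x}$. Two parallel rays with a common starting point coincide (immediate from the definition of $\approx$), so this normal ray is $e_x$ itself, giving $e_x^1=x+{\bf 1}_{F_e}$.

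Next, for $I\subseteq E$ I would compute $\bigvee_{e\in I}e_x^1$ in ${\cal L}(\omega)$. Because intervals are sublattices, this join lives in $[x,x+{\bf 1}]$ and, transported through the isomorphism of Lemma~\ref{lem:[x,x+1]}~(1), becomes the join $\bigvee_{e\in I}F_e$ of rank-$1$ flats of ${\bf M}_{\omega+x}$, i.e. the flat ${\rm cl}_{\omega+x}(I)$. Hence $r_x\bigl(\bigvee_{e\in I}e_x^1\bigr)={\rho_{\omega+x}}({\rm cl}_{\omega+x}(I))=\rho_{\omega+x}(I)$, so $I$ is $x$-independent if and only if $\rho_{\omega+x}(I)=|I|$, i.e. if and only if $I$ is independent in ${\bf M}_{\omega+x}$. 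Therefore the independent sets of ${\bf M}^{{\cal L},x}$ contained in $E$ are exactly those of ${\bf M}_{\omega+x}$; as ${\bf M}^{{\cal L},x}$ has rank $n$ (its uniform rank) and ${\bf M}_{\omega+x}$ also has rank $n$ (its bases are $n$-element sets), a set $B\subseteq E$ is a base of ${\bf M}^{{\cal L},x}$ iff it is a base of ${\bf M}_{\omega+x}$, which is the claim.

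The step I expect to require the most care is the first one: checking that the identification $e\leftrightarrow(\text{end with }\infty\text{-direction }\{e\})$ is legitimate and that the representative of this end in $E_x$ is precisely the normal ray of Lemma~\ref{lem:normal_ray2}~(2), rather than merely a parallel one. This rests on the fact that parallel rays sharing a starting point are equal, together with Lemma~\ref{lem:normal_ray2}; once $e_x^1=x+{\bf 1}_{F_e}$ is in hand, the remainder is a routine passage through the geometric-lattice/matroid dictionary (Theorem~\ref{thm:Birkhoff}, Lemma~\ref{lem:[x,x+1]}).
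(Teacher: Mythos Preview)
Your proposal is correct and follows essentially the same approach as the paper: identify $e_x^1 = x + {\bf 1}_{F_e}$ for the parallel class $F_e$ of $e$ in ${\bf M}_{\omega+x}$, then use the isomorphism of Lemma~\ref{lem:[x,x+1]}~(1) between $[x,x+{\bf 1}]$ and the lattice of flats of ${\bf M}_{\omega+x}$ to translate $x$-independence into independence in ${\bf M}_{\omega+x}$. The paper's proof is terser---it simply asserts $e_x^1 = x+{\bf 1}_{F_e}$ without spelling out the justification via Lemma~\ref{lem:normal_ray2} and the uniqueness of the $x$-representative of an end---but the substance is the same.
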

\begin{proof}
	By Lemma~\ref{lem:[x,x+1]},
	$B \in {\cal B}_{\omega + x}$ if and only if $x+ {\bf 1}_{F_e}$ $(e \in B)$
	are independent atoms in geometric lattice $[x,x+{\bf 1}]$, 
	where $F_e$ is the parallel class of $e$ in ${\bf M}_{\omega + x}$.
	If $e \in E$ is regarded as a normal ray, then $e_x^1 = x + {\bf 1}_{F_e}$.
	From this, we see the equality to hold.
\end{proof}
We verify that $d_x$ and $D_p$ induce the same topology on the set $E$ of normal rays.
\begin{Lem}\label{lem:d=D}
	Suppose that $\omega$ is simple.
	For $x \in {\cal L}(\omega)$, if $r(x) = 0$, 
	then $-x \in {\cal TS}(\omega)$, and $D_{-x}(e,f) = d_x(e,f)$ for $e,f \in E$.	
\end{Lem}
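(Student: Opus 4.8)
The plan is to prove the two assertions separately. The inclusion $-x\in{\cal TS}(\omega)$ is essentially free: by Theorem~\ref{thm:main2}~(2) the height of $x$ in ${\cal L}(\omega)$ equals $\max_{B\in{\cal B}}(\omega+x)(B)$, which is $r(x)=0$; since moreover $x\in{\cal L}(\omega)\subseteq{\cal T}(\omega)$, the characterization (\ref{eqn:TS}) of the tight span gives $-x\in{\cal TS}(\omega)$. For the metric identity the case $e=f$ is trivial (both $D_{-x}(e,e)$ and $d_x(e,e)$ vanish), so fix distinct $e,f\in E$. Unwinding the definitions, $D_{-x}(e,f)=\exp(\max\{(\omega+x)(B)\mid B\in{\cal B},\ \{e,f\}\subseteq B\})$ and $d_x(e,f)=\exp(-\delta_x(e,f))$, so everything reduces to showing
\[
\max\{(\omega+x)(B)\mid B\in{\cal B},\ \{e,f\}\subseteq B\}=-\delta_x(e,f).
\]

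To set up, put $k:=\delta_x(e,f)$ and $z:=e_x^k=f_x^k$ (these agree since $e_x^i=f_x^i$ for $i\le k$). As $\omega$ is simple, the $e$-ray from $x$ is the normal ray with $\infty$-direction $\{e\}$, so by Lemma~\ref{lem:normal_ray}~(1) the parallel classes $F_i:={\rm cl}_{{\bf M}_{\omega+e_x^i}}(\{e\})$ along it form a decreasing chain $F_0\supseteq F_1\supseteq\cdots$ with $e_x^{i+1}=e_x^i+{\bf 1}_{F_i}$; moreover $\{e,f\}\subseteq F_i$ for $0\le i<k$ (because $e_x^{i+1}=f_x^{i+1}$ forces $e,f$ parallel in ${\bf M}_{\omega+e_x^i}$), while $e,f$ are non-parallel non-loops in ${\bf M}_{\omega+z}$. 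From $z=x+\sum_{i=0}^{k-1}{\bf 1}_{F_i}$ we get $(\omega+z)(B)=(\omega+x)(B)+\sum_{i=0}^{k-1}|B\cap F_i|$, and $r(z)=r(x)+k=k$ by Theorem~\ref{thm:main2}~(2); hence any base $B\supseteq\{e,f\}$ satisfies $k\ge(\omega+z)(B)=(\omega+x)(B)+\sum_i|B\cap F_i|\ge(\omega+x)(B)+2k$, i.e.\ $(\omega+x)(B)\le-k$. This gives the easy inequality, and it also shows that \emph{any} base attaining equality must lie in ${\cal B}_{\omega+z}$ and satisfy $B\cap F_0=\{e,f\}$.

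It remains to produce one such base. If $k=0$, then $\{e,f\}$ is independent in ${\bf M}_{\omega+x}$ and extends to an $(\omega+x)$-maximal base, which works. For $k\ge1$ we must, by the remark above, extend $\{e,f\}$ to a base $B\in{\cal B}_{\omega+z}$ with $B\cap F_0=\{e,f\}$; since $\{e,f\}$ is independent in ${\bf M}_{\omega+z}$ (cf.\ Lemma~\ref{lem:restriction}), this comes down to arranging that the extension avoids $F_0\setminus\{e,f\}$. For $k=1$ this is a clean exchange argument: extend $\{e,f\}$ to $B_0\in{\cal B}_{\omega+z}$ (here $z=x+{\bf 1}_{F_0}$); if $|B_0\cap F_0|>2$, pick $g\in(B_0\cap F_0)\setminus\{e,f\}$, and by Lemma~\ref{lem:integer-valued} applied to the rank-one set $F_0$ of ${\bf M}_{\omega+x}$ pick an $(\omega+x)$-maximal base $B'$ with $B'\cap F_0=\{e\}$, so that $B'\in{\cal B}_{\omega+z}$; applying (EXC) to $\omega+z$ with $B_0,B'$ and $g\in B_0\setminus B'$ yields $h\in B'\setminus B_0$ with $(\omega+z)(B_0+h-g)=r(z)$, where $h\notin F_0$ and $h\notin\{e,f\}$, so $B_0+h-g\in{\cal B}_{\omega+z}$ still contains $\{e,f\}$ but meets $F_0$ in one fewer element; iterate down to $|B\cap F_0|=2$.

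The general case $k\ge2$ is the heart of the matter and, I expect, the main obstacle: a naive induction on $k$ via $e_x^1$ controls only the intersection with the \emph{current} parallel class $F_k$ of ${\bf M}_{\omega+z}$, whereas one needs control of the intersection with the full ancestor class $F_0$. The intended resolution is to iterate the one-step exchange of the previous paragraph directly at $z$: given $B_0\in{\cal B}_{\omega+z}$ with $\{e,f\}\subseteq B_0$ and $g\in(B_0\cap F_0)\setminus\{e,f\}$, one produces a comparison base $B'\in{\cal B}_{\omega+z}$ whose intersection with $F_0$ is contained in $(B_0\cap F_0)\setminus\{g\}$ --- this is where Lemma~\ref{lem:integer-valued}, applied along the chain $x\preceq e_x^1\preceq\cdots\preceq z$, together with the standard concavity of $j\mapsto\max\{(\omega+z)(B)\mid|B\cap F_0|=j\}$, come in --- so that (EXC) is again forced to push the exchanged-in element out of $F_0$. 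Once $\max\{(\omega+x)(B)\mid\{e,f\}\subseteq B\}=-k=-\delta_x(e,f)$ is established, the identity $D_{-x}=d_x$ follows, completing the proof.
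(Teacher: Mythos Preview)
Your argument for $-x\in{\cal TS}(\omega)$ and the easy inequality $\max_{B\ni e,f}(\omega+x)(B)\le -k$ are correct, and your exchange argument for $k=1$ is also correct.  The gap you flag at $k\ge 2$, however, is illusory: your $k=1$ argument works verbatim for every $k$.  The only place $k=1$ enters is the assertion that the comparison base $B'\in{\cal B}_{\omega+x}$ with $B'\cap F_0=\{e\}$ lies in ${\cal B}_{\omega+z}$.  For general $k$ this still holds, by iterating Lemma~\ref{lem:integer-valued} along the chain $x=x^0\prec x^1\prec\cdots\prec x^k=z$: since $F_j\subseteq F_0$ and $e\in F_j$ for every $j<k$, one has $B'\cap F_j=\{e\}$, which is the maximum possible intersection of a base of ${\bf M}_{\omega+x^j}$ with the parallel class $F_j$; hence Lemma~\ref{lem:integer-valued} gives $B'\in{\cal B}_{\omega+x^{j+1}}$, and inductively $B'\in{\cal B}_{\omega+z}$.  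The exchange step then goes through exactly as you wrote it, with no need for the concavity argument you sketched.

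The paper's route is different in presentation: it tracks the sequence $x^i=e_x^i\vee f_x^i$ and compares the growth of the height $r(x^i)$ (rate $1$ per step for $i\le k$) with the growth of $(\omega+x^i)(B)$ for $B\ni e,f$ (rate at least $2$ per step), concluding that the first index at which some $B\ni e,f$ becomes $(\omega+x^i)$-maximal is simultaneously $\delta_x(e,f)$ and $-\max_{B\ni e,f}(\omega+x)(B)$.  Your approach is more explicit at the key step (producing a specific optimal $B$ with $B\cap F_0=\{e,f\}$), whereas the paper's is more streamlined; the underlying content is the same.
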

\begin{proof}
	The fact $-x \in {\cal TS}(\omega)$ follows from (\ref{eqn:TS}) 
	and $r(x) = \max_{B} (\omega + x)(B)$.
	It suffices to show that 
	for two normal rays $e,f \in E$, it holds
	\begin{equation}\label{eqn:h(x)-max}
	\delta_x(e,f) = - \max \{ (\omega+x)(B) \mid  B \in {\cal B}: \{e,f\} \subseteq B \}\  (\geq 0).
	\end{equation}
	Consider 
	the sequence $x = x^0,x^1,\ldots$ defined by 
	$x^{i+1} := e_{x^i}^1 \vee f_{x^i}^1 = e_{x}^{i+1} \vee f_x^{i+1}$; recall Lemmas~\ref{lem:independent} and \ref{lem:x^k'}.
	Then $\delta_x(e,f)$ is the minimum index $i^*$ 
	such that $r(x^{i^*+1}) = r(x^{i^*}) + 2$ 
	or equivalently that there is 
	$B \in {\cal B}_{\omega + x^{i^*}}$ with $e,f \in B$, i.e., $r(x^{i^*}) = (\omega+x^{i^*})(B)$.
	For $i \leq i^*$, it holds
	$r(x^i) = r(x^{i-1}) + 1$, and $(\omega+x^i)(B) = (\omega+x^{i-1})(B) + 2$
	for base $B \in {\cal B}$ with $e,f \in B$.
	Therefore the index $i^*$ must be the right hand side of (\ref{eqn:h(x)-max}).
\end{proof}
\begin{Lem}\label{lem:dense}
	The set $E$ of normal rays is dense in $E^{{\cal L}(\omega)}$.
\end{Lem}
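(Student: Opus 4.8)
The plan is to exploit the ultrametric description of the topology on $E^{{\cal L}(\omega)}$. By Proposition~\ref{prop:ultrametric} this topology is induced by $d_x = \exp(-\delta_x)$ for any fixed basepoint $x \in {\cal L}(\omega)$, so it suffices to show: for every end $g \in E^{{\cal L}(\omega)}$ and every $\ell \ge 0$ there is an end $e \in E$ — that is, one whose representing ray is normal with a singleton $\infty$-direction, using that $\omega$ is simple (as in Lemma~\ref{lem:d=D}) — with $\delta_x(g,e) \ge \ell$. Letting $\ell \to \infty$ then places $g$ in the closure of $E$, which is what must be proved.

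To produce such an $e$ I would truncate $g$ and graft on a normal ray. Represent $g$ by its $x$-ray $(x = g^0 \prec g^1 \prec \cdots)$; by Lemma~\ref{lem:normal_ray}~(1) it has the form $g^{k+1} = g^k + {\bf 1}_{F_k}$ with $F_0 \supseteq F_1 \supseteq \cdots$ nonempty, each $F_k$ a parallel class of ${\bf M}_{\omega + g^k}$. Set $y := g^\ell$, pick any $e \in F_\ell$ (so $\{e\}$ is a parallel class of ${\bf M}$ by simplicity), and use Lemma~\ref{lem:normal_ray2}~(2) to obtain a normal $y$-ray $(y = b^0 \prec b^1 \prec \cdots)$ with $\infty$-direction $\{e\}$. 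A short computation identifies $b^1$: the first step in the construction of that normal ray is ${\rm cl}(\{e\})$ computed in ${\bf M}_{\omega+y}$, which in a loop-free matroid is exactly the parallel class of $e$, namely $F_\ell$; hence $b^1 = y + {\bf 1}_{F_\ell} = g^{\ell+1}$. I would then claim that the concatenation $g^0 \prec \cdots \prec g^\ell = b^0 \prec b^1 \prec b^2 \prec \cdots$ is again a ray. Every term covers the preceding one, and the only non-obvious instances of the segment condition occur at the splice: at index $\ell$ it reads $g^{\ell+1} \notin [g^{\ell-1},(g^{\ell-1})^+]$, which holds since $(g^k)$ is a ray (here $b^1 = g^{\ell+1}$ is used); at index $\ell+1$ it reads $b^2 \notin [b^0,(b^0)^+]$, which holds since $(b^k)$ is a ray. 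Being a ray, its direction sequence is automatically decreasing (Lemma~\ref{lem:normal_ray}~(1)); it is $F_0,\dots,F_\ell$ followed by the directions of $(b^k)$, all of which contain $e$ and intersect to $\{e\}$, so the concatenated ray is normal with $\infty$-direction $\{e\}$. Therefore it represents the element $e \in E$ under the embedding $E \hookrightarrow E^{{\cal L}(\omega)}$, it is in fact the $x$-representative $e_x$, and it coincides with $(g^k)$ at indices $0,1,\dots,\ell$; thus $\delta_x(g,e) \ge \ell$, as desired.

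The one delicate point I expect is the verification that grafting a normal ray onto a truncation of $g$ yields a genuine ray, i.e. the segment condition at the junction. This is exactly what forces the choice $e \in F_\ell$ together with the identification $b^1 = g^{\ell+1}$; once those are in place the junction conditions collapse onto conditions already known for $(g^k)$ and for $(b^k)$, and everything else — decreasing directions, normality, and recognizing the limit as the element $e$ of $E$ — is routine bookkeeping with Lemmas~\ref{lem:normal_ray} and~\ref{lem:normal_ray2}. I would also stress at the outset that the simplicity of $\omega$ is used essentially here: it is what makes the $\infty$-direction of each approximant a singleton, hence a genuine point of $E$.
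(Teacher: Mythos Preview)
Your proof is correct and rests on the same idea as the paper's: given the end $g$ with $x$-ray $(g^k)$ and direction sequence $F_0 \supseteq F_1 \supseteq \cdots$, pick $e \in F_\ell$ and use the corresponding normal end as the $\ell$-th approximant. The paper's execution, however, is shorter and avoids the grafting step entirely. Instead of truncating at $y=g^\ell$ and splicing on a normal $y$-ray (which forces you to check the segment condition at the junction), one can simply build the normal $x$-ray for $e$ directly from $x$ via the recipe of Lemma~\ref{lem:normal_ray2}(2): at each step $k\le \ell$ the closure of $\{e\}$ in ${\bf M}_{\omega+g^k}$ is the parallel class of $e$ there, namely $F_k$ (since $e\in F_\ell\subseteq F_k$), so this $x$-ray is $g^0,g^1,\dots,g^{\ell+1},\dots$ automatically. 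Hence $\delta_x(g,e)\ge \ell$ with no splice to verify. Your route buys a more hands-on picture of the approximating ray; the paper's buys brevity.
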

\begin{proof}
	Consider a ray $e \in E^{{\cal L}(\omega)}$. Let $x \in {\cal L}$.
	Then $x$-ray
	$e_x$ is represented as in Lemma~\ref{lem:normal_ray} for 
	some decreasing sequence $F_1 \supseteq F_2 \supseteq \cdots$
	of nonempty subsets in $E$.
	For each $i$, choose $e_i \in F_i$.
	Then the sequence $(e_i)$ of normal rays 
	satisfies $\lim_{i \rightarrow \infty} d_x(e,e_i) = 0$.
\end{proof}

\begin{proof}[Proof of Theorem~\ref{thm:main2}(4)]
We can assume that $\omega$ is simple.
Let $x \in {\cal L}(\omega)$.
By Lemmas~\ref{lem:d=D} and \ref{lem:dense}, 
$E^{{\cal L}(\omega)}$ coincides with the Dress-Terhalle completion of $E$.	
Finally we verify the linear equivalence between $\omega$ and $\omega^{{\cal L},x}$ (restricted to $E$).
\begin{Clm}
	For $B \in {\cal B}$, it holds 
	$(\omega+x)(B) = r(x_B) = \omega^{{\cal L},x}(B) + r(x)$. 
\end{Clm}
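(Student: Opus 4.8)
The statement consists of two equalities. The second, $r(x_B)=\omega^{{\cal L},x}(B)+r(x)$, is immediate: by definition $\omega^{{\cal L},x}(B)=-r[x_B,x]$, and since $x_B\preceq x$ while $r$ is the height function of ${\cal L}(\omega)$ exhibited in Theorem~\ref{thm:main2}(2), we have $r[x_B,x]=r(x)-r(x_B)$. So the whole claim reduces to $(\omega+x)(B)=r(x_B)$ for $B\in{\cal B}$, working under the standing assumption that $\omega$ is simple. (Here $x_B$ is well-defined because $B\in{\cal B}^{\infty}$: by Lemma~\ref{lem:restriction} it suffices that the base $B\subseteq E$ of ${\bf M}$ be a base of ${\bf M}_{\omega+z}$ for some $z\in{\cal L}(\omega)$, which holds for every base of the underlying matroid of $\omega$.)

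I would prove $(\omega+x)(B)=r(x_B)$ by establishing $r(x_B)=(\omega+x_B)(B)$ and $(\omega+x_B)(B)=(\omega+x)(B)$ in turn. For the first: since $x_B\in{\cal S}(B)$, Lemma~\ref{lem:S(B)} (equivalently Lemma~\ref{lem:x_B1}(1)) gives $B\in{\cal B}^{x_B}$, and then Lemma~\ref{lem:restriction} together with $B\subseteq E$ yields $B\in{\cal B}_{\omega+x_B}$; hence $B$ attains $\max_{B'\in{\cal B}}(\omega+x_B)(B')$, which by the explicit height function of Theorem~\ref{thm:main2}(2) equals $r(x_B)$, so $(\omega+x_B)(B)=r(x_B)$.

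For the second equality, as $x_B\preceq x$ componentwise in $\ZZ^E$ it is enough to show $x(e)=x_B(e)$ for every $e\in B$. For this I would use the identification of abstract ${\cal L}(\omega)$-coordinates with ambient differences: for $z\preceq y$ in ${\cal L}(\omega)$ and any $e\in E$ (viewed as the normal ray with $\infty$-direction $\{e\}$), one has $y_z(e)=y(e)-z(e)$. Indeed, along a maximal chain $z=w^0\prec_1 w^1\prec_1\cdots\prec_1 w^m=y$, Lemma~\ref{lem:y_x}(1) gives $y_z=\sum_{i=0}^{m-1}(w^{i+1})_{w^i}$, so it suffices to settle the case $z\prec_1 y$ and then telescope in $\ZZ^E$. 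In that case $y=z+{\bf 1}_F$ for a parallel class $F$ of ${\bf M}_{\omega+z}$ by Lemma~\ref{lem:[x,x+1]}(2), while $e_z^1=z+{\bf 1}_{F_e}$ with $F_e$ the parallel class of $e$ in ${\bf M}_{\omega+z}$ (see the proof of Lemma~\ref{lem:restriction}); hence $e_z^1\preceq y$ iff $F_e=F$ iff $e\in F$, and if $e\in F$ then $e_z^1=y$, so $e_z^2\not\preceq y$. In every case $y_z(e)={\bf 1}_F(e)=y(e)-z(e)$. Applying this with $z=x_B$, $y=x$, and combining with $x_{x_B}(e)=0$ for all $e\in B$ (Lemma~\ref{lem:x_B1}(1)), gives $x(e)-x_B(e)=x_{x_B}(e)=0$ for $e\in B$; hence $(\omega+x)(B)=\omega(B)+\sum_{e\in B}x(e)=\omega(B)+\sum_{e\in B}x_B(e)=(\omega+x_B)(B)=r(x_B)$.

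The main obstacle is the coordinate identity $y_z(e)=y(e)-z(e)$: the normal ray $e_z$ does not run along a coordinate axis but tracks the closures of the parallel class of $e$ in the matroids ${\bf M}_{\omega+w}$ as $w$ ascends, so its description in $\ZZ^E$ genuinely relies on Lemmas~\ref{lem:[x,x+1]} and~\ref{lem:restriction}; once the covering case is established, the telescoping extension is routine. A secondary point is the well-definedness of $x_B$, namely that every base of the underlying matroid of $\omega$ occurs as a base of some local matroid ${\bf M}_{\omega+z}$ with $z\in{\cal L}(\omega)$.
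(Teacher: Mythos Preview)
Your proof is correct. The paper's argument is organized differently: it runs the descent $x=x^0\succeq x^1\succeq\cdots$ with $x^{i}=(\bigvee_{e\in B}e_{x^{i-1}}^1)-{\bf 1}$, observes that $x^{i}(e)=x^{i-1}(e)$ for $e\in B$ (so $(\omega+x^i)(B)$ stays constant), and applies induction on the number of steps until $x^k=x_B$. Your route instead isolates the general coordinate identity $y_z(e)=y(e)-z(e)$ for normal rays $e\in E$ and applies it once with Lemma~\ref{lem:x_B1}(1). Both rest on the same elementary fact that $e_z^1=z+{\bf 1}_{F_e}$ with $e\in F_e$; the paper uses it step by step, you package it as a lemma and telescope. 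Your formulation has the advantage of yielding a clean standalone statement (the abstract $z$-coordinate restricted to $E$ equals the ambient difference), which is useful beyond this claim; the paper's inductive version stays closer to the descent machinery already set up in Lemma~\ref{lem:x_B1}.

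On your ``secondary point'': the paper also leaves the well-definedness of $x_B$ for $B\in{\cal B}$ implicit, but the descent itself supplies it. With $x^i$ as above and $G_i:={\rm cl}_{{\bf M}_{\omega+x^{i-1}}}(B)$, one has $x^i=x^{i-1}-{\bf 1}_{E\setminus G_i}$, so $r(x^i)=r(x^{i-1})-(n-\rho_{{\bf M}_{\omega+x^{i-1}}}(B))$, which strictly decreases while $B$ is dependent at $x^{i-1}$, whereas $(\omega+x^i)(B)$ is constant and bounds $r(x^i)$ from below. Hence the sequence must stabilize at some $x^k$ with $B\in{\cal B}_{\omega+x^k}={\cal B}^{x^k}\cap 2^E$, giving $B\in{\cal B}^\infty$.
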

\begin{proof}
	It suffices to show the first equality; the second follows from the definition~(\ref{eqn:omega}) 
	of $\omega^{{\cal L},x}$.
	Consider the sequence $x = x^0 \succeq x^1 \succeq \cdots$ defined by
	$x^{i} := (\bigvee_{e \in B} e^1_{x^{i-1}})^{-1} = \bigvee_{e \in B} e^1_{x^{i-1}} - {\bf 1}$.
	As seen in the proof of Lemma~\ref{lem:x_B1} 
	(see~(\ref{eqn:y^k}) and (\ref{eqn:y^k'})), for some $k$ it holds $x^k = x_B$.
	We prove the statement by induction on $k$.
	In the case of $k = 0$, $x = x_B$, 
	$B \in {\cal B}^{{\cal L},x}$, and
	$B \in {\cal B}_{\omega +x}$ by Lemma~\ref{lem:restriction}. 
	Then $r(x_B) = r(x) = (\omega+x)(B)$, 
	implying the base case.
	
	Suppose $k > 0$. Notice $(x^1)_B = x_B$.
	By induction, $(\omega+x^1)(B) = r(x_B)$.
	By definition of $x^k$, it holds $x^1(e) = x(e)$ for $e \in B$.
	Therefore, $(\omega+x)(B) = (\omega + x^1)(B) + (x - x^1)(B) = r(x_B)$, as required.  
\end{proof}
Note the constant term $r(x)$ is represented as 
linear term $(r(x)/n){\bf 1}$.
Thus $\omega$ is projectively equivalent to the restriction of $\omega^{{\cal L}(\omega),x}$ to $E$. 
This completes the proof of Theorem~\ref{thm:main2}.
\end{proof}

\section{Examples}\label{sec:example}

\paragraph{Tree metrics.} 
{\em Tree metrics} may be viewed as valuated matroids of rank $2$; see e.g.,~\cite{DressTerhalle98}. 
We here study 
tree metrics from our framework of uniform semimodular lattice.
Let $T = (V,E)$ be a tree, and let $X$ be a subset of vertices of $T$.
Let ${\cal B} := \{ \{u,v\} \subseteq X \mid u \neq v \}$. 
Then ${\bf M} = (X, {\cal B})$ is a uniform matroid of rank $2$. 
Define $d: {\cal B} \to \ZZ$ by
\begin{equation*}
d(u,v) := \mbox{the number of edges in the unique path in $T$ connecting $u$ and $v$},
\end{equation*}
where $d(\{u,v\})$ is written as $d(u,v)$. 
Then the classical four-point condition of tree-metrics says 
\begin{equation*}
d(u,v) + d (u',v') \leq 
\max \{ d(u,v') + d (u',v), d(u',v) + d(u,v') \}
\end{equation*}
for distinct $u,v,u',v' \in X$.
This is nothing but the exchange axiom (EXC).
Thus $d$ is a valuated matroid on ${\bf M}$.
 
Let us construct the corresponding uniform semimodular lattice in a combinatorial way.
First delete all redundant vertices not belonging to 
the (shortest) path between any pair of $X$.
Fix a vertex $z \in V$ (as a root).
Next, for each $u \in X$, 
consider an infinite path $P_u$ (with $V(P_u) \cap V(T) = \emptyset$) having a vertex $u'$ of degree one.
Glue $T$ and $P_u$ by identifying $u$ and $u'$.
Let ${\cal L}$ denote the union of 
$V \times 2 \ZZ$ and $E \times (2 \ZZ+1)$.
For each $(uv,k) \in E \times (2 \ZZ+1)$, 
consider binary relations (directed edges) 
$(uv,k) \leftarrow (u,k+1)$, $(uv,k) \leftarrow (v,k+1)$, 
$(u,k-1) \leftarrow (uv,k)$, and $(v,k-1) \leftarrow (uv,k)$.
The partial order $\preceq$ on ${\cal L}$ is induced by 
the transitive closure of $\leftarrow$.
Then ${\cal L}$ is a uniform (semi)modular lattice of uniform-rank $2$, where  
the ascending operator is given by
$(x,k) \mapsto (x,k+2)$; see~\cite[Example~3.2]{HH18a}.
See also Figure~\ref{tree} for this construction.
	\begin{figure}[t]
		\begin{center}
			\includegraphics[scale=0.6]{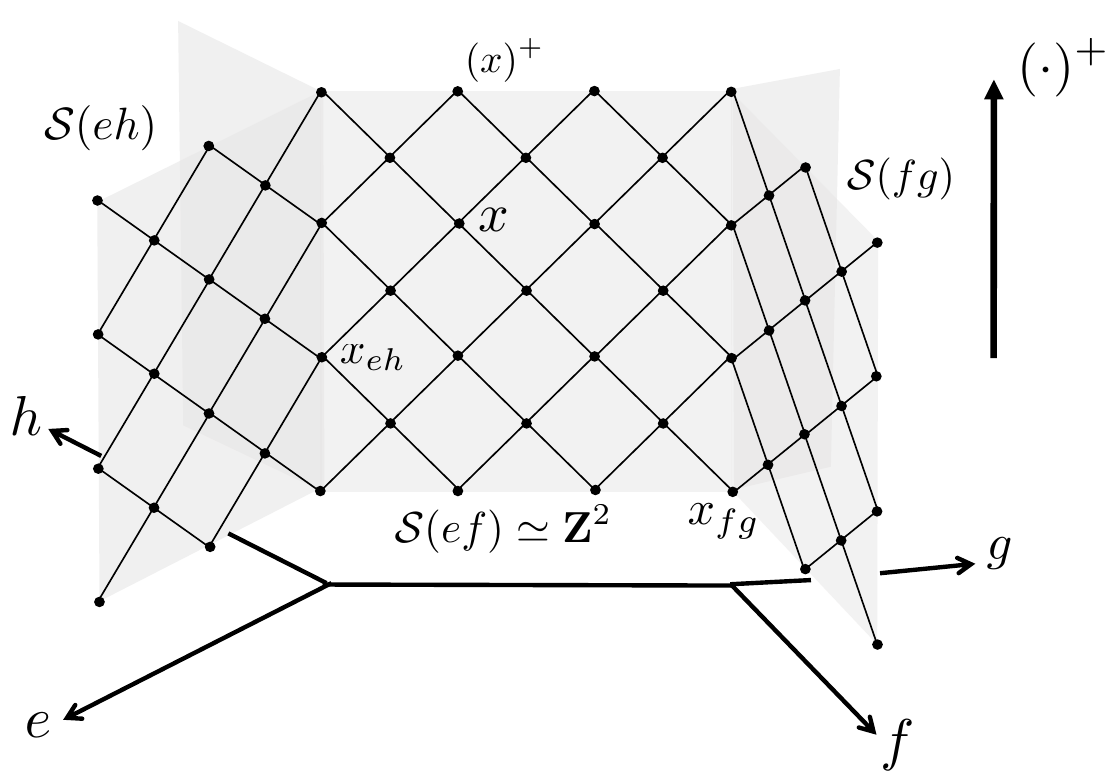}
			\caption{The uniform semimodular lattice for a tree}
			\label{tree}
		\end{center}
	\end{figure}\noindent

Ends are naturally identified with $P_u$ $(u \in X)$. 
In particular ${\cal B} = {\cal B}_{\infty}$.
For two ends $P_u, P_v$, there is a simple path $P$ of $T$ 
containing $P_u,P_v$.
The $\ZZ^2$-skeleton ${\cal S}(\{u,v\})$
is the sublattice of ${\cal L}$ induced by the union of $V(P) \times \ZZ$ and $E(P) \times (2\ZZ+1)$. In the figure, base $\{u,v\}$ is abbreviated as $uv$.  
Let $x := (z,0)$.
For the lowest common ancestor $z_{u,v}$ of $u, v$ in $T$, 
$x_{\{u,v\}}$ is given by $(z_{u,v}, - 2 d(z,z_{u,v}))$. 
Thus the valuated matroid $\omega = \omega^{{\cal L},x}$ is given by 
\begin{equation*}
\omega (u,v) = - 2 d(z, z_{u,v}) \quad (\{u,v\} \in {\cal B}).
\end{equation*}
From the relation $- 2 d(z, z_{u,v}) = d(u,v) - d(z,u) - d(z,v)$,
we see the projective-equivalence between $\omega$ and $d$.

\paragraph{The Bergman fan of a matroid.}
A matroid ${\bf M} = (E,{\cal B})$ is 
naturally viewed as a $\{0, -\infty\}$-valued valuated matroid $\omega$ by
\begin{equation*}
\omega(B) := 0 \Leftrightarrow B \in {\cal B}.
\end{equation*}
In this case, the tropical linear space ${\cal T}(\omega)$ 
is a polyhedral fan in $\RR^E$, 
which is called the {\em Bergman fan} of ${\bf M}$~\cite{ArdilaKlivans06}.
Suppose that ${\bf M}$ is simple.
Ardila and Klivans~\cite{ArdilaKlivans06} showed 
that ${\cal T}(\omega)$ admits a simplicial cone decomposition 
isomorphic to the order complex of the lattice of flats of ${\bf M}$.
Indeed, ${\cal T}(\omega)$ is explicitly written as:
\begin{equation}\label{eqn:Bergman}
{\cal T}(\omega) = \RR{\bf 1} + \bigcup_{F_0,F_1,\ldots, F_k} \mbox{the conical hull of } 
	{\bf 1}_{F_1}, {\bf 1}_{F_2}, \ldots {\bf 1}_{F_k}, 
\end{equation}
where the union is taken over all chains 
$\emptyset \neq F_1 \subset F_2 \subset \cdots \subset F_k \neq E$ $(k \geq 1)$
of (nontrivial) flats.
Notice that every $x \in {\cal T}(\omega)$
has a unique expression 
$x = \mu {\bf 1} + \sum_{i} \lambda_i {\bf 1}_{F_i}$ 
for $\mu \in \RR$, a chain of flats 
$\emptyset \neq F_1 \subset F_2 \subset \cdots \subset F_k \neq E$, 
and positive coefficients $\lambda_1,\lambda_2,\ldots,\lambda_k$.
Therefore ${\cal T}(\omega)$ is viewed as 
a conical geometric realization of the order complex of 
the geometric lattice of flats.
This actually holds for our infinite setting. 
Indeed, a point $x$ in ${\cal T}(\omega)$
is precisely a linear objective vector 
for which the maximizer family over bases of ${\bf M}$
has no loop. (In \cite{ArdilaKlivans06},  
the Bergman fan is defined by the minimizer family and hence
is the negative of (\ref{eqn:Bergman}).)
From this, one can verify by the same argument 
of the proof of Lemma~\ref{lem:chain_of_flats}~(2)
that if $x \in {\cal T}(\omega)$ and $\alpha \in \RR$
then $F = \{ e \in E \mid x(e) \geq \alpha\}$ is a flat of ${\bf M}$, 
which implies (\ref{eqn:Bergman}).

The family ${\cal L}(\omega)$ of integer points of ${\cal T}(\omega)$, 
the uniform semimodular lattice of~$\omega$, is given by
\begin{equation}\label{eqn:BergmanZ}
{\cal L}(\omega) = \ZZ{\bf 1} + \bigcup_{F_0,F_1,\ldots, F_k} \mbox{the integer conical hull of } 
{\bf 1}_{F_1}, {\bf 1}_{F_2}, \ldots {\bf 1}_{F_k}, 
\end{equation}
where the union is taken over chains of flats, as above, and the integer conical hull means the set of all nonnegative integer combinations.
The matroid at the origin ${\bf 0} \in {\cal L}(\omega)$ is equal to ${\bf M}$, 
and the matroid at $x = \mu {\bf 1} + \sum_{i=1}^k \lambda_i {\bf 1}_{F_i} \in {\cal L}(\omega)$ 
is a submatroid of ${\bf M}$ that is the direct product of $({\bf M}|F_{i+1})/F_{i}$ 
for $i=0,1,\ldots,k$ (with $F_0 = \emptyset$ and $F_{k+1} = E$), where  
$|$ and $/$ mean the restriction and contraction, respectively. See \cite{ArdilaKlivans06}.  
The ${\bf 0}$-rays for 
are given by $(k{\bf 1}_{e})_{k \in \ZZ_+}$ $(e \in E)$.
So $E$ is naturally identified with the space $E^{{\cal L}(\omega)}$ of ends.
In particular, $\omega$ is a complete valuated matroid.
The matroid at infinity is also equal to~${\bf M}$.

This construction of the Bergman fan 
gives rise to a general construction of a uniform semimodular lattice 
from a geometric lattice.
Indeed, the right hand side of (\ref{eqn:BergmanZ}) is definable 
for an arbitrary geometric lattice ${\cal L}$.
In this way, every geometric lattice is 
extended canonically to a uniform semimodular lattice.

\paragraph{Representable valuated matroids.} 

Let $K$ be a field, and $K(t)$ the field of rational functions with an indeterminate $t$.
The {\em degree} $\deg (p/q)$ of $p/q \in K(t)$ 
with polynomials $p,q$ is defined by $\deg (p) - \deg (q)$.
Consider the vector space $K(t)^n$ over $K(t)$.
Let $E$ be a subset of $K(t)^n$, and let ${\cal B}$ be 
the family of $K(t)$-bases $B \subseteq E$ of $K(t)^n$. 
Then ${\bf M} = (E, {\cal B})$ is a matroid.
Define $\omega = \omega^E:{\cal B} \to \ZZ$ by
\begin{equation*}
\omega^E(B) := \deg \det (B) \quad (B \in {\cal B}),
\end{equation*}
where $B \in {\cal B}$ is regarded as 
a nonsingular $n \times n$ matrix consisting of vectors in $B$. 
Then $\omega^E$ is a valuated matroid. 
Such a valuated matroid is called {\em representable} (over $K(t)$).
In fact, this construction of valuated matroids is possible 
even if $K$ is a skew field; see~\cite{HH18c}.

A tropical interpretation \cite{MurotaTamura01,Speyer08,SpeyerSturmfels04} of 
${\cal L}(\omega) = {\cal T}(\omega) \cap \ZZ^E$  
is the set of degree vectors $(\deg (q^{\top} e): e \in E)$ for all $q \in K(t)^n$, 
where we need to add $-\infty$ to $\ZZ$ for $\deg (0) := - \infty$.
We here consider a different algebraic interpretation, 
which is essentially the same as the concept of 
the {\em membrane} due to Keel and Tevelev~\cite{KeelTevelev06}
and
is viewed as an analogue of: 
The lattice of flats of the matroid represented by a matrix $M$
is the lattice of vector spaces spanned by  columns of $M$.

Let $K(t)^-$ denote the ring of elements $p/q$ in $K(t)$ with $\deg (p/q) \leq 0$.
Then $K(t)^n$ is also viewed as a $K(t)^-$-module. 
For a subset $F \subseteq K(t)^n$, 
let $\langle F \rangle$ denote the $K(t)^-$-module generated by $F$, 
i.e., $\langle F \rangle = \{ \sum_{u \in F'} \lambda_u u \mid \lambda_u \in K(t)^-, F' \subseteq F: |F'| < \infty \}$. 
Also, for $z \in \ZZ^F$, let $F^z := \{t^{z(u)} u \mid u \in  F\}$. 

Suppose that $E \subseteq K(t)^n$ contains a $K(t)$-basis of $K(t)^n$.
Let ${\cal B} \subseteq 2^{E}$ be the family of $K(t)$-bases, 
which is the underlying matroid of $(E,\omega)$.
Define the family ${\cal L}(E)$ of $K(t)^-$-submodules of $K(t)^n$ by 
\begin{equation*}
{\cal L}(E) := \{ \langle E^z \rangle \mid z \in \ZZ^E \}.
\end{equation*}
The {\em membrane} of $E$~\cite{KeelTevelev06} in 
is the projection of ${\cal L}(E)$
by the equivalence relation $\simeq$ defined by 
$L \simeq L' \Leftrightarrow L = t^k L'$ $(\exists t \in \ZZ)$; see also \cite{JoswigSturmfelsYu07,Zhang2018}.
The partial order on ${\cal L}(E)$ is defined as the inclusion relation.
For $L \in {\cal L}(E)$, define $z^L \in \ZZ^E$ by
\begin{equation*}
	z^L(p) :=\max \{\alpha \in \ZZ \mid t^\alpha p \in L \} \quad (p \in E).
\end{equation*}
\begin{Prop}\label{prop:L(E)}
${\cal L}(E)$ is a uniform semimodular lattice that is isomorphic to ${\cal L}(\omega^E)$
by the maps $L \mapsto z^L$ and $z \mapsto \langle E^z \rangle$, 
where the following hold:
\begin{itemize}
	\item[{\rm (1)}] The ascending operator is given by $L \mapsto t L$.
	\item[{\rm (2)}]
	The $\ZZ^n$-skeleton ${\cal S}(B)$ of $B \in {\cal B}$ is 
	equal to ${\cal L}(B) (:= \{\langle B^z \rangle \mid z \in \ZZ^B \})$.
	\item[{\rm (3)}] A height function~$r$ of ${\cal L}(E)$ is given by
	\begin{equation*}
	r(L) = \deg \det (Q) \quad (L \in {\cal L}(E)),  
	\end{equation*}
	where $Q$ is a $K(t)^-$-basis of $L$.
	\item[{\rm (4)}] For $x \in {\cal L}(\omega)$, 
    it holds
	\begin{eqnarray*}
	\langle E^x \rangle_B & = & \langle B^x \rangle, \\
	\omega^{{\cal L}(\omega),x}(B) & = & (\omega^E + x)(B) - r(\langle E^x \rangle) 
	\quad (B \in {\cal B}).
	\end{eqnarray*}
\end{itemize}
\end{Prop}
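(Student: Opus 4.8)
The plan is to deduce everything by transporting structure from ${\cal L}(\omega^E)={\cal T}(\omega^E)\cap\ZZ^E$, which by Theorem~\ref{thm:main2} is already a uniform semimodular lattice of uniform-rank $n$ once we recall the classical fact that $\omega^E$ is a valuated matroid (the exchange axiom (EXC) is the Grassmann--Pl\"ucker relation for the $n\times n$ minors). I would prove that the maps $\Phi\colon L\mapsto z^L$ and $\Psi\colon z\mapsto\langle E^z\rangle$ restrict to mutually inverse order-isomorphisms ${\cal L}(E)\cong{\cal L}(\omega^E)$; then (1)--(4) follow by reading off the corresponding facts on ${\cal L}(\omega^E)$.

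The heart of the argument is a dictionary between $K^-(t)$-modules and the valuated matroid $\omega^E+z=\omega^{E^z}$. Note $K^-(t)$ is a discrete valuation ring (the valuation ring of $p/q\mapsto\deg q-\deg p$, with uniformizer $1/t$ and residue field $K$); hence a finitely generated torsion-free $K^-(t)$-submodule of $K(t)^n$ is free, and if it has rank $n$ then $\vol(L):=\deg\det Q$ is the same for every $K^-(t)$-basis $Q$ (a change of basis lies in ${\rm GL}_n(K^-(t))$, whose determinant has degree $0$); moreover $L\subseteq L'$ with both full-rank forces $\vol(L)\le\vol(L')$, with equality iff $L=L'$. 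The key identity is $\vol(\langle F^z\rangle)=\max\{(\omega^E+z)(B)\mid B\in{\cal B},\ B\subseteq F\}$ for finite spanning $F\subseteq E$, which I would get from the Smith normal form over $K^-(t)$ of the matrix whose columns are $\{t^{z(e)}e\}_{e\in F}$ together with the Cauchy--Binet expansion of its maximal minors. Writing $\langle E^z\rangle=\bigcup_F\langle F^z\rangle$ as a directed union over finite spanning $F$, one concludes that $\langle E^z\rangle$ is a full-rank $K^-(t)$-lattice with $\vol(\langle E^z\rangle)=\max_{B\in{\cal B}}(\omega^E+z)(B)$ precisely when this maximum is finite and attained --- which by Lemma~\ref{lem:trop} is the case whenever $z\in{\cal T}(\omega^E)$, and in particular for every $z\in{\cal L}(\omega^E)$.

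Granting this dictionary, the bijection is bookkeeping. From $t^{w(e)}e\in\langle E^w\rangle$ for all $e$ one checks $\langle E^{z^L}\rangle=L$ (so $\Psi\circ\Phi={\rm id}$ on ${\cal L}(E)$) and $L\subseteq L'\iff z^L\le z^{L'}$, so $\Phi$ is order-preserving with order-preserving one-sided inverse. For $x\in{\cal L}(\omega^E)$ I claim $z^{\langle E^x\rangle}=x$: if $t^{x(e)+1}e\in\langle E^x\rangle$ then $\langle E^{x+{\bf 1}_{\{e\}}}\rangle=\langle E^x\rangle$, so the volume identity gives $\max_B(\omega^E+x)(B)=\max_B\big((\omega^E+x)(B)+|B\cap\{e\}|\big)$, contradicting that $e$ is not a loop of ${\bf M}_{\omega^E+x}$; hence $\Phi\circ\Psi={\rm id}$ on ${\cal L}(\omega^E)$ and $\Psi$ is injective there. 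Conversely, for $L=\langle E^z\rangle\in{\cal L}(E)$ the matroid ${\bf M}_{\omega^E+z^L}$ is loop-free: a loop $e$ would lie in no base attaining $\max_B(\omega^E+z^L)(B)=\vol(L)$, so taking a base $B\ni e$ and a strictly improving exchange $B-e+f$ and applying Cramer's rule inside the invertible matrix $B^{z^L|_B}$ exhibits $t^{z^L(e)+1}e\in L$, against the definition of $z^L(e)$; thus $z^L\in{\cal T}(\omega^E)\cap\ZZ^E={\cal L}(\omega^E)$. So $\Phi\colon{\cal L}(E)\to{\cal L}(\omega^E)$ is an order-isomorphism with inverse $\Psi$; hence ${\cal L}(E)$ is a uniform semimodular lattice of uniform-rank $n$, its ascending operator is $L\mapsto\Psi\big((\Phi L)+{\bf 1}\big)=\langle E^{z^L+{\bf 1}}\rangle=tL$ (giving (1)), and the height function $r(L)=\max_B(\omega^E+z^L)(B)=\vol(L)=\deg\det Q$ is (3), by Theorem~\ref{thm:main2}(2).

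For (2), given $B\in{\cal B}$ and $w\in\ZZ^B$, extend $w$ to $E$ by $z(e):=\max\{\alpha\mid t^\alpha e\in\langle B^w\rangle\}$; then $\langle E^z\rangle=\langle B^w\rangle$ with $z|_B=w$, so ${\cal L}(B)\subseteq{\cal L}(E)$, and $\vol(\langle B^w\rangle)=\deg\det(B^w)=(\omega^E+z)(B)=\max_C(\omega^E+z)(C)$ shows $B\in{\cal B}_{\omega^E+z}$, hence $\Phi(\langle B^w\rangle)$ represents an element of ${\cal S}(B)$ by Lemmas~\ref{lem:restriction} and \ref{lem:S(B)}. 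Conversely, if $y$ represents an element of ${\cal S}(B)$, i.e.\ $B\in{\cal B}_{\omega^E+y}$, then $\langle B^{y|_B}\rangle\subseteq\langle E^y\rangle$ have the same volume $(\omega^E+y)(B)=\max_C(\omega^E+y)(C)$ and therefore coincide, so $\langle E^y\rangle\in{\cal L}(B)$; thus ${\cal S}(B)={\cal L}(B)$. For (4): since $z^{\langle E^x\rangle}=x$, the inclusion $\langle B^w\rangle\subseteq\langle E^x\rangle$ means $w(e)\le x(e)$ for $e\in B$, so the largest element of ${\cal S}(B)={\cal L}(B)$ contained in $\langle E^x\rangle$ is $\langle B^{x|_B}\rangle=\langle B^x\rangle$, i.e.\ $\langle E^x\rangle_B=\langle B^x\rangle$; and then $\omega^{{\cal L}(\omega),x}(B)=-r[\langle E^x\rangle_B,\langle E^x\rangle]=\vol(\langle B^x\rangle)-\vol(\langle E^x\rangle)=(\omega^E+x)(B)-r(\langle E^x\rangle)$. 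The main obstacle is the commutative-algebra dictionary of the second paragraph --- the volume/maximal-minor identity, the control of the directed union when $E$ is infinite (ensuring $\langle E^z\rangle$ is a full-rank lattice and $z^L$ is finite-valued), and the Cramer's-rule argument for loop-freeness; once that is in place, the remainder is routine transport through Theorem~\ref{thm:main2} and Lemmas~\ref{lem:S(B)}, \ref{lem:restriction}.
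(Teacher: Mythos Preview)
Your approach coincides with the paper's: set up mutually inverse order-isomorphisms $L\mapsto z^L$, $z\mapsto\langle E^z\rangle$ between ${\cal L}(E)$ and ${\cal L}(\omega^E)$, then transport (1)--(4) from Theorem~\ref{thm:main2}. Your volume identity $\vol(\langle E^z\rangle)=\max_B(\omega^E+z)(B)$ is the paper's Lemma~\ref{lem:basis} (any maximizer $B^z$ is a $K^-(t)$-basis of $\langle E^z\rangle$) in heavier clothing; the paper obtains it by a single Cramer computation rather than Smith normal form and Cauchy--Binet. Your treatments of (2)--(4) are minor variants of the paper's---you invoke the volume identity directly where the paper argues by covering relations or a short contradiction.

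There is one genuine slip, in the loop-freeness step showing $z^L\in{\cal L}(\omega^E)$. You take a base $B\ni e$ and an improving exchange $B-e+f$, then apply Cramer inside $B^{z^L|_B}$. But expanding $t^{z^L(f)}f$ in the basis $B^{z^L}$ gives $\deg\lambda_e\ge1$ while leaving $\deg\lambda_g=(\omega+z^L)(B-g+f)-(\omega+z^L)(B)$ uncontrolled for $g\ne e$; solving for $t^{z^L(e)+1}e$ therefore need not produce $K^-(t)$-coefficients, and the conclusion does not follow. The fix---immediate from your own volume identity---is to take instead a \emph{maximizer} $B\in{\cal B}_{\omega+z^L}$ (so $e\notin B$ and $B^{z^L}$ is a $K^-(t)$-basis of $L$) and expand $t^{z^L(e)}e$ in that basis: now every coefficient satisfies $\deg\lambda_g=(\omega+z^L)(B-g+e)-(\omega+z^L)(B)\le-1$, since $B-g+e$ contains the loop $e$ and hence is not a maximizer; thus $t^{z^L(e)+1}e\in L$. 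This is exactly the paper's argument.
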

Note that a part of the claim, e.g., 
the equivalence between ${\cal L}(E)$ and ${\cal L}(\omega^E)$,  
follows from results in \cite{KeelTevelev06}. 
Here we prove Proposition~\ref{prop:L(E)} in a self-contained way. 
For $F \subseteq E$ and $x \in \ZZ^E$, 
we denote $F^{x|_F}$ by $F^x$.
The proof uses the following basic lemma.
\begin{Lem}\label{lem:basis}
	$\langle E \rangle$ is a free $K^-(t)$-module having any 
	$B \in {\cal B}_{\omega}$ as a basis.  
\end{Lem}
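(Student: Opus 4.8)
The plan is to identify $R := K^-(t)$ as the valuation ring of $K(t)$ attached to the ``degree at infinity'' valuation $v := -\deg$, so that $R$ is a discrete valuation ring with uniformizer $1/t$ and residue field $K$, and a rational function $c \in K(t)$ lies in $R$ precisely when $\deg c \le 0$. In these terms the claim reduces to showing that for every $B \in {\cal B}_{\omega}$ (which we may assume is nonempty, since otherwise there is nothing to prove) the submodule $\langle E \rangle$ equals the $R$-span $RB = \langle B \rangle$; once that is established, freeness is immediate: as $B$ is a $K(t)$-basis of $K(t)^n$ it is a fortiori $R$-linearly independent, so $\langle E \rangle = RB \cong R^{\,n}$ is free with basis $B$.

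First I would prove the inclusion $\langle E \rangle \subseteq \langle B \rangle$, which amounts to showing $e \in RB$ for every $e \in E$. This is trivial if $e \in B$, so suppose $e \in E \setminus B$ and write $e = \sum_{b \in B} c_b\, b$ with $c_b \in K(t)$. By Cramer's rule, $c_b = \det(B^{b \to e}) / \det(B)$, where $B^{b \to e}$ denotes the matrix obtained from $B$ by replacing the column $b$ with $e$. If $B^{b \to e}$ is singular then $c_b = 0 \in R$. Otherwise $B - b + e$ is again a base of the underlying matroid, and $B \in {\cal B}_{\omega}$ together with Lemma~\ref{lem:opt} gives
\[
\deg \det (B^{b \to e}) = \omega(B - b + e) \le \omega(B) = \deg \det (B),
\]
whence $\deg c_b \le 0$, i.e.\ $c_b \in R$. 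Therefore $e \in RB$. Since $B \subseteq E$ gives the reverse inclusion $\langle B \rangle \subseteq \langle E \rangle$ for free, we conclude $\langle E \rangle = RB$.

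It then remains only to record that $RB$ is free with basis $B$: a $K(t)$-linearly independent subset of $K(t)^n$ is $R$-linearly independent (as $R \subseteq K(t)$), and it spans $RB = \langle E \rangle$ by construction. I do not expect any real obstacle here; the only points needing a little care are the correspondence between membership in $R$ and the degree inequality, the degenerate case where $B^{b \to e}$ is singular, and the mild nonemptiness of ${\cal B}_{\omega}$, which is guaranteed once $\omega = \omega^E$ is upper-bounded. The single substantive ingredient is the optimality characterization of Lemma~\ref{lem:opt}, which converts the greedy/exchange property of the valuated matroid $\omega^E$ into exactly the degree bound that keeps every Cramer coefficient inside the valuation ring.
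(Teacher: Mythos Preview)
Your proof is correct and follows essentially the same route as the paper's: use Cramer's rule to express each $e \in E$ in the $K(t)$-basis $B$ and then bound the degree of each coefficient using the maximality of $\omega(B)$. One minor remark: citing Lemma~\ref{lem:opt} is unnecessary here, since $B \in {\cal B}_{\omega}$ already means $\omega(B') \le \omega(B)$ for every base $B'$, so the needed inequality $\omega(B-b+e) \le \omega(B)$ follows directly from the definition of ${\cal B}_{\omega}$; the paper uses exactly this observation.
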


\begin{proof}
Choose any 	$B \in {\cal B}_{\omega}$.
Since $B$ is a $K(t)$-basis of ${K(t)}^n$, 
every element $u \in E$ is represented as 
$u = B \lambda$ for $\lambda \in K(t)^n$, where $B$ is regarded as a matrix.
By Cramer's rule, the $i$-th component $\lambda_i$ of $\lambda$
 is equal to $\det (B^{i}) / \det (B)$, 
where $B^i$ is obtained from $B$ by replacing the $i$-th column with $u$.
Then $\deg (\lambda_i) 
= \deg \det (B^{i})  - \deg \det (B) = \omega(B^i) - \omega(B) \leq 0$
by $B \in {\cal B}_{\omega}$. 
This means that $\lambda \in K^-(t)$. 
Consequently $\langle E \rangle$ is a free $K^-(t)$-module of basis $B$.
\end{proof}

\begin{proof}[Proof of Proposition~\ref{prop:L(E)}]
Obviously we have $L = \langle E^{z^L} \rangle$.
We show that $z^L \in {\cal L}(\omega)$.
Suppose indirectly that $p \in E$ is a loop in ${\cal B}_{\omega + z^L}$.
By the above lemma, for any 
$B \in {\cal B}_{\omega+ z^L}$, $B^{z^L}$ is a basis of $L$. 
Consider equation $B^{z^L} \lambda = t^{z^L(p)} p$. 
By using Cramer's rule as above, we have 
$\deg \lambda_i = (\omega+z^L)(B - e_i + p) - (\omega+z^L)(B) \leq - 1$ for each $i=1,2,\ldots,n$, 
where $e_i$ is the $i$-th column of $B$.
The inequality follows from the fact
that $p$ is a loop in ${\cal B}_{\omega + z^L}$.
This means that $t^{z^L(p)+1} p$ also belongs to $\langle B^{z^L} \rangle = L$. 
This is a contradiction to the definition of $z^L$.
Thus $z^L \in {\cal L}(\omega)$.
Also $L \mapsto z^L$ is the inverse of $z \mapsto \langle E^z \rangle$.
Indeed, $z^{E^z} \geq z$. If $z^{E^z}(e) > z(e)$, 
then one can see as above that $e$ does not belong to any base of ${\cal B}_{\omega + z}$, 
contradicting $z \in {\cal L}(\omega)$.  

(1). This follows from $z^{tL} = z^L + {\bf 1}$.

(2). Observe that the sublattice 
${\cal L}(B) = \{\langle B^z \rangle \mid z \in \ZZ^B\}$ 
of ${\cal L}(E)$ is isomorphic to~$\ZZ^n$. 
By Lemma~\ref{lem:basis}, 
we have $B \in {\cal B}_{\omega + z^L}$ for $L = \langle B^z \rangle \in {\cal L}(B)$.
By Lemma~\ref{lem:S(B)}, we have $B \in {\cal S}(B)$. 
Thus ${\cal L}(B) \subseteq {\cal S}(B)$.
Both ${\cal L}(B)$ and ${\cal S}(B)$ are isomorphic to $\ZZ^n$ 
with the same ascending operator.
Consequently, it must hold ${\cal L}(B) = {\cal S}(B)$.

(3). Suppose that $L'$ covers $L$.
We can choose $B \in {\cal B}$ with $L,L' \in {\cal S}(B)$.
Necessarily $L' = \langle B^{z'} \rangle$ and $L = \langle B^z \rangle$ 
for $z - z' = {\bf 1}_{e}$ for some $e \in B$.
Then $\deg \det B^{z'}= \deg \det B^z + 1$.

(4). It obviously holds that $\langle B^x \rangle \in {\cal L}(B) = {\cal S}(B)$, and 
$\langle B^x \rangle \subseteq \langle E^x \rangle_B$. 
Suppose indirectly that the inclusion is strict. 
Then, for some $e \in B$, it holds $\langle B^{x+{\bf 1}_e} \rangle \subseteq \langle E^x \rangle_B \subseteq \langle E^x \rangle$.
This means that $\langle E^{x+{\bf 1}_e} \rangle = \langle E^{x} \rangle$.
However this is a contradiction to $x = z^{\langle E^{x} \rangle}$.

From the definition, we have
$\omega^{{\cal L}(\omega),x}(B) 
= - r[\langle B^x \rangle, \langle E^{x} \rangle]
= \deg \det (B^x)  - r(\langle E^{x} \rangle) 
= \omega^{E^x}(B) - r(\langle E^{x} \rangle) = (\omega^{E}+x)(B) - r(\langle E^{x} \rangle)$ 
\end{proof}

\paragraph{Modular valuated matroids and Euclidean buildings.}
Analogous to a modular matroid --- 
a matroid whose lattice of flats is a modular lattice, 
a {\em modular valuated matroid} is defined as 
an integer-valued valuated matroid  $(E,\omega)$
such that the corresponding ${\cal L}(\omega)$ is a uniform modular lattice.
The companion work~\cite{HH18a} showed that uniform modular lattices
and Euclidean buildings of type A are cryptomorphically equivalent in the following sense. 
For a uniform modular lattice ${\cal L}$, 
define equivalence relation $\simeq$ on ${\cal L}$ by $x \simeq y$ if $x = (y)^{+k}$ for some $k$. Then the simplicial complex ${\cal C}({\cal L})$ modulo $\simeq$
is a Euclidean building of type~A; 
recall Theorem~\ref{thm:main1} for the simplicial complex ${\cal C}({\cal L})$ of short chains of ${\cal L}$.
Conversely, every Euclidean building of type A is obtained in this way.
Thus we have the following:
\begin{Thm}\label{thm:building}
	For a modular valuated matroid $(E,\omega)$, 
	the tropical linear space ${\cal T}(\omega)/ \RR{\bf 1}$ 
	is a geometric realization of the Euclidean building 
	associated with the uniform modular lattice ${\cal L}(\omega)$.
\end{Thm}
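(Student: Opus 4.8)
The plan is to assemble the statement from three facts that are already in place: the structure theorem for ${\cal L}(\omega)$ (Theorem~\ref{thm:main2}), the geometric realization of tropical linear spaces (Theorem~\ref{thm:main1}~(2), itself resting on Lemma~\ref{lem:chain_of_flats}~(2)), and the cryptomorphic equivalence of~\cite{HH18a} between uniform modular lattices and Euclidean buildings of type A. First I would observe that, by Theorem~\ref{thm:main2}, ${\cal L}(\omega) = {\cal T}(\omega) \cap \ZZ^E$ is a uniform semimodular lattice whose ascending operator is $x \mapsto x + {\bf 1}$; since $(E,\omega)$ is a modular valuated matroid, ${\cal L}(\omega)$ is by definition a uniform modular lattice. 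By~\cite{HH18a}, the simplicial complex ${\cal C}({\cal L}(\omega))$ modulo the relation $x \simeq y \Leftrightarrow x = (y)^{+k}$ is then a Euclidean building of type A, and this is, by definition, the Euclidean building associated with ${\cal L}(\omega)$.

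Next I would show that ${\cal T}(\omega)$ is a geometric realization of ${\cal C}({\cal L}(\omega))$, exactly as in the proof of Theorem~\ref{thm:main1}~(2). The integer points of ${\cal T}(\omega)$ are, by definition, the vertex set ${\cal L}(\omega)$ of ${\cal C}({\cal L}(\omega))$. By Lemma~\ref{lem:chain_of_flats}~(2), each $x \in {\cal T}(\omega)$ is written uniquely as $x = \lfloor x \rfloor + \sum_{i=1}^{m} \lambda_i {\bf 1}_{F_i}$ with $\emptyset \neq F_1 \subset F_2 \subset \cdots \subset F_m$ a strictly increasing chain of flats of ${\bf M}_{\omega + \lfloor x \rfloor}$, $\lambda_i > 0$, and $\sum_i \lambda_i < 1$; equivalently $x$ lies in the relative interior of the geometric simplex spanned by the chain $\lfloor x \rfloor \prec \lfloor x \rfloor + {\bf 1}_{F_1} \prec \cdots \prec \lfloor x \rfloor + {\bf 1}_{F_m}$ of ${\cal C}({\cal L}(\omega))$, whose top is $\preceq \lfloor x \rfloor + {\bf 1} = (\lfloor x \rfloor)^+$, with barycentric coordinates $(1 - \sum_i \lambda_i, \lambda_1, \ldots, \lambda_m)$. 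Since the incidence vectors of a strictly increasing chain of nonempty subsets of $E$ are linearly independent, every chain of ${\cal C}({\cal L}(\omega))$ spans a genuine simplex in $\RR^E$, distinct simplices meet in the simplex of their common subchain, and the above data determine $x$; this yields the desired homeomorphism between ${\cal T}(\omega)$ and the geometric realization of ${\cal C}({\cal L}(\omega))$.

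Finally I would descend to the quotient by $\RR{\bf 1}$. Translation by ${\bf 1}$ on ${\cal T}(\omega)$ realizes the ascending operator on ${\cal L}(\omega)$, hence the generator of $\simeq$; thus $\RR{\bf 1}$ acts freely and properly on ${\cal T}(\omega)$, and within any maximal simplex, spanned by a chain $x^0 \prec x^1 \prec \cdots \prec x^n = (x^0)^+$ with $n$ the uniform rank, passing to $\RR^E / \RR{\bf 1}$ identifies $x^0$ with $x^n$ and folds the $n$-simplex onto the $(n-1)$-simplex spanned by the images of $x^0, \ldots, x^{n-1}$, which is exactly the simplex of ${\cal C}({\cal L}(\omega))$ modulo $\simeq$ attached to that chain. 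Hence ${\cal T}(\omega)/\RR{\bf 1}$ is a geometric realization of ${\cal C}({\cal L}(\omega))$ modulo $\simeq$, which by the first paragraph is the Euclidean building associated with ${\cal L}(\omega)$, as claimed. The point requiring the most care is the compatibility in the last two paragraphs: verifying that the polyhedral decomposition of ${\cal T}(\omega)$ coming from Lemma~\ref{lem:chain_of_flats}~(2) is genuinely the geometric realization of the abstract complex ${\cal C}({\cal L}(\omega))$, and that it descends under $\RR{\bf 1}$ so that the quotient is again the realization of a simplicial complex and not merely a topological quotient; the building axioms themselves are imported from~\cite{HH18a} once ${\cal L}(\omega)$ is known to be uniform modular.
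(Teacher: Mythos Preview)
Your proposal is correct and follows essentially the same route as the paper: the paper presents Theorem~\ref{thm:building} as an immediate corollary of the preceding paragraph, i.e., of Theorem~\ref{thm:main1}(2) (that ${\cal T}(\omega)$ geometrically realizes ${\cal C}({\cal L}(\omega))$) combined with the result of~\cite{HH18a} (that ${\cal C}({\cal L})/\!\simeq$ is a Euclidean building of type~A for uniform modular ${\cal L}$), together with Theorem~\ref{thm:main2} identifying the ascending operator with $x\mapsto x+{\bf 1}$. The paper does not spell out the passage to the quotient by $\RR{\bf 1}$ at all; your last paragraph supplies exactly that missing detail, so your write-up is in fact more complete than the paper's one-line derivation.
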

Dress and Terhalle~\cite{DressTerhalle98} 
claimed this result on the Euclidean building for ${\rm SL} (F^n)$, 
where $F$ is a field with a discrete valuation. 
In the previous example, take the whole set $K(t)^n$ as $E$. 
In this case, ${\cal L}(E)$ is 
the lattice of all full-rank free $K(t)^-$-submodules of $K(t)^n$, 
and is a uniform modular lattice of uniform-rank $n$; 
see \cite[Example 3.3]{HH18a}.
In particular, 
valuated matroid $(E, \omega^E)$ is a modular valuated matroid.
The simplicial complex ${\cal C}({\cal L}(E))$ 
is nothing but the Euclidean building for ${\rm SL}(K(t)^n)$; see \cite[Section 19]{Garrett}.

\section*{Acknowledgments}
The author thanks Kazuo Murota, Yuni Iwamasa, and Koyo Hayashi 
for careful reading and helpful comments, and also thanks the referees for helpful comments.
This work was partially supported by JSPS KAKENHI Grant Numbers 
JP25280004, JP26330023, JP26280004, JP17K00029.

\end{document}